\newcommand{\C}{{\mathbb C}}
\newcommand{\R}{{\mathbb R}}
\newcommand{\Q}{{\mathbb Q}}
\newcommand{\Z}{{\mathbb Z}}
\newcommand{\g}{{\frak g}}
\newcommand{\p}{{\frak p}}
\newcommand{\fk}{{\frak k}}
\newcommand{\lv}{{\mathfrak l}}
\newcommand{\n}{{\mathfrak n}}
\newcommand{\m}{{\mathfrak m}}
\newcommand{\e}{{\bf e}}
\newcommand{\bA}{{\mathbb A}}
\newcommand{\Ind}{\operatorname{Ind}}
\newcommand{\Sym}{\operatorname{Sym}}
\newcommand{\Hom}{\operatorname{Hom}}
\newcommand{\Tr}{\operatorname{Tr}}
\newcommand{\tr}{\operatorname{tr}}
\newcommand{\vol}{\operatorname{vol}}
\newcommand{\tl}{\tau_{\Lambda}}
\newcommand{\cC}{{\cal C}}
\newcommand{\cD}{{\cal D}}
\newcommand{\cF}{{\cal F}}
\newcommand{\cH}{{\cal H}}
\newcommand{\cJ}{{\cal J}}
\newcommand{\cP}{{\cal P}}
\newcommand{\cS}{{\cal S}}
\newcommand{\cU}{{\cal U}}
\newcommand{\cW}{{\cal W}}
\newcommand{\diag}{\operatorname{diag}}
\numberwithin{equation}{section}
\theoremstyle{plain}
 \newtheorem{thm}{Theorem}[section]
 \newtheorem{prop}[thm]{Proposition}
 \newtheorem{lem}[thm]{Lemma}
 \newtheorem{cor}[thm]{Corollary}
 \newtheorem*{thmEZ}{Theorem}
 \newtheorem*{mainth}{Main theorem}
\theoremstyle{definition}
 \newtheorem{defn}[thm]{Definition}
 \newtheorem{rem}[thm]{Remark}
 \newtheorem*{asm}{Working Assumption}
\begin{document}
\begin{title}
{\Large\bf Fourier-Jacobi expansion of cusp forms on $Sp(2,\R)$}
\end{title}
\author{Hiro-aki Narita}
\maketitle
\begin{abstract}
This paper develops a general theory of the Fourier-Jacobi expansion of cusp forms on the real symplectic group $Sp(2,\R)$ of degree two including generic cusp forms. An explicit description of such expansion is available for cusp forms generating discrete series representations, generalized principal series representations induced from the Jacobi parabolic subgroup and principal series representations, where we note that the latter two cases include non-spherical representations. 

As the archimedean local ingredients we need Fourier-Jacobi type spherical functions and Whittaker functions, whose explicit formulas are obtained by Hirano and by Oda, Miyazaki-Oda, Niwa and Ishii respectively. To realize these spherical functions in the Fourier-Jacobi expansion we use the spectral theory for the Jacobi group by Berndt-B{\"o}cherer and Berndt-Schmidt, which can be referred to as the global ingredient of our study. Based on the theory by Berndt-B{\"o}cherer we generalize the classical Eichler-Zagier correspondence in the representation theoretic context. 
\end{abstract}

\tableofcontents
\clearpage

\section{Introduction}
\subsection{Background and the aim of the paper}
The Fourier-Jacobi expansion has been a powerful tool to study holomorphic Siegel modular forms. In fact, without the frequent use of the Fourier-Jacobi expansion, the current advancement of the studies on holomorphic Siegel modular forms would be impossible. The studies of holomorphic Siegel modular forms by means of the Fourier-Jacobi expansion has a long and rich story. It is thus difficult to review relevant works thoroughly. We cite B{\"o}cherer \cite{Boe}, Eichler-Zagier \cite{E-Z}, 
Ikeda \cite{Ik-1},~\cite{Ik-2} for instance. See also Piatetskii-Shapiro \cite{Ps-1}, which explains the notion of the Fourier-Jacobi expansion for the case of general symmetric domains.

On the other hand, the Fourier-Jacobi expansion can be defined for general automorphic forms not restricted to holomorphic ones. However, there seems to be quite a few trials to study such expansion in detail for non-holomorphic real analytic automorphic forms. We  cite a recent progress by Pollack et al. for automorphic forms generating quaternionic discrete series~(cf.~\cite{Po},~\cite{Nr-4}) and also cite Ishikawa \cite{Isy}. For the case of Siegel modular forms or automorphic forms on the real symplectic group $Sp(n,\R)$ we cay say that there is completely no such trial, even for $Sp(2,\R)$ of degree two. 
Now recall that the symplectic group of degree two has two maximal parabolic subgroups, up to conjugation. The Fourier-Jacobi expansion respects the maximal parabolic subgroup called the Jacobi parabolic subgroup~(or Klingen parabolic subgroup). The Fourier expansion along another maximal parabolic subgroup called the Siegel parabolic subgroup, has been a classical tool to study holomorphic Siegel modular forms. We remark that Moriyama \cite{Mo-2} discussed this expansion adelically for automorphic forms on the similitude group $GSp(2)$ in a rather abstract manner but in a general setting. The aim of this paper is to develop the notion of the Fourier-Jacobi expansion for general cusp forms on $Sp(2,\R)$ including those generating irreducible admissible representations admitting Whittaker models, which we call generic cusp forms.

There have been active representation theoretic studies on generic cuspidal representations, namely cuspidal automorphic representations admitting (global) Whittaker models. For instance the Langlands-Shahidi method on automorphic $L$-functions works well for general treatment of $L$-functions of generic automorphic representations. As fundamental references on this topics we cite Langlands \cite{La} and Shahidi \cite{Sha}. Though the representation theoretic studies of automorphic forms have produced many general and qualitative results, we have had only very little understanding of generic automorphic forms as functions. The author hopes that this paper will serve as a first step toward a detailed study of general cusp forms on $Sp(2,\R)$ including generic ones in terms of their functional aspects. 

We choose the Siegel modular group $Sp(2,\Z)$ to define cusp forms on $Sp(2,\R)$. We express the Fourier-Jacobi expansion of  a cusp form $F$ with respect to $Sp(2,\Z)$ as
\[
F(g)=\sum_{m\in\Z}F_m(g),~F_m(g):=\int_{\R/\Z}F(n(0,u_1,0)g)\exp(-2\pi\sqrt{-1}mu_1)du_1,
\]
where $\{n(0,u_1,0)\mid u_1\in\R\}$ is the center of the unipotent radical $N_J$ of the Jacobi parabolic subgroup $P_J$~(see Section \ref{Groups} for $P_J$ and $N_J$). The group $N_J$ is well known as a Heisenberg group. To establish such Fourier-Jacobi expansion the notion of the Whitaker functions is necessary to understand both of the $F_0$-term and $F_m$-terms with $m\not=0$. To complete the description of $F_m$ for a non-zero $m$ we further need the Fourier-Jacobi type spherical functions studied by Hirano. 
\subsection{Fourier-Jacobi type spherical functions and Whittaker functions}
One indispensable step toward an explicit Fourier-Jacobi expansion is to determine what special functions are necessary to describe the expansion. For the case of holomorphic automorphic forms such special functions are characterized by differential equations of rank one arising from the Cauchy-Riemann condition. Such functions are described by elementary functions such as the exponential function. The functions of this sort are formulated as generalized spherical functions for holomorphic discrete series representations. 

However, for the case of non-holomorphic forms, it is not settled in such an elementary manner to understand the spherical functions appropriate for the Fourier expansion. For non-holomotphic automorphic forms on $Sp(2,\R)$ the notion of Fourier-Jacobi type spherical functions were introduced and studied in detail by Hirano \cite{Hi-1}, \cite{Hi-2} and \cite{Hi-3}~(cf.~Section \ref{Result-Hirano}). These works studied such spherical functions for a wide class of admissible representations; not only holomorphic discrete series representation but also large discrete series representations, $P_J$-principal series representations~(induced from the Jacobi parabolic subgroup $P_J$) and principal series representations~(induced from the minimal parabolic subgroup). According to \cite{Hi-1}, \cite{Hi-2} and \cite{Hi-3} the Meijer $G$-functions are useful to know the Fourier-Jacobi type spherical functions explicitly.  As we have remarked above we also need the Whittaker functions~(cf.~Section \ref{ReviewWhittaker}), whose explicit formulas are studied by Oda \cite{Od}, Miyazaki-Oda \cite{M-Od-1},~\cite{M-Od-2},~\cite{M-Od-3}, Ishii \cite{Is-2} and Niwa \cite{Nw} for the aforementioned three admissible representations of $Sp(2,\R)$ other than holomorphic discrete series. For this  we note the well known fact that the holomorphic discrete series representations are not generic. We remark that the Whittaker functions are viewed as the Fourier-Jacobi type spherical functions attached to non-trivial characters of $N_J$ as was pointed out by Hirano \cite[Section 8]{Hi-1}. 

Let us now note that we are left with the case of generalized principal series representations induced from the Siegel parabolic subgroup. For this case the Fourier-Jacobi type spherical functions have not been studied yet. However, an explicit formula for the Whittaker function has been recently obtained by Ishii \cite{Is-3}. 
The spherical functions we have mentioned explain the archimedean local aspect of the Fourier-Jacobi expansion.
\subsection{Generalized Eichler-Zagier correspondence}
Toward the Fourier-Jacobi expansion we need an appropriate global theory to realize the spherical functions as above in the expansion. One of our key ideas is the notion of ``generalized Eichler-Zagier correspondence''. To explain this in some detail, recall that we have let $P_J$ be the Jacobi parabolic subgroup of $Sp(2,\R)$ with the unipotent radical $N_J$ known as a Heisenberg group. We then introduce the Jacobi group $G_J:=N_J\rtimes SL_2(\R)$, which is a subgroup of  $P_J$. Let $\pi_1$ be an irreducible unitary genuine representation of the non-split two fold cover $\widetilde{SL}_2(\R)$ of $SL_2(\R)$, where recall that a representation of $\widetilde{SL}_2(\R)$ is called genuine if it does not factor through $SL_2(\R)$. Let $\omega_m$ be the Weil representation of $\widetilde{SL}_2(\R)$ extended from the Stone von-Neumann representation $\nu_m$ with the central character indexed by $m\not=0$. According to Berndt-Schmidt \cite[Theorem 2.6.1]{Be-Sc} and Satake \cite[Appendix I,~Proposition 2]{Sa} every irreducible unitary representation is of the form 
\[
\rho_{\pi_1,m}:=\pi_1\otimes\omega_m
\]
with some genuine representation $\pi_1$ of $\widetilde{SL}_2(\R)$. For this we note that $\rho_{\pi_1,m}|_{\widetilde{SL}_2(\R)}$ factors though $SL_2(\R)$ and is thus well-defined as a representation of $G_J:=N_J\rtimes SL_2(\R)$. For a non-zero integer $m$ and an irreducible genuine unitary representation $\pi_1$ of $\widetilde{SL}_2(\R)$ we then define the space of Jacobi cusp forms of type $\pi_1$ and index $m$ by 
\[
\Hom_{G_J}(\rho_{\pi_1,m},\cH_m^0)
\]
with the cuspidal subspace $\cH_m^0$~(cf.~Section \ref{EZcorrespondence}) of 
\[
\cH_m:=\{\phi\in L^2(G_J(\Z)\backslash G_J)\mid~\text{$\phi|_{N_J}$ has the central character indexed by $m$}\},
\]
where $G_J(\Z):=G_J\cap Sp(2,\Z)$. On the other hand, letting $\Phi_m:=\Hom_{N_J}(\nu_m,L^2(N_J(\Z)\backslash N_J))$ 
with $N_J(\Z):=N_J\cap Sp(2,\Z)$ for a non-zero integer $m$, we introduce the space 
\[
\cS_{\pi_1}(\widetilde{SL}_2(\Z),\Phi_m)
\]
of $\Phi_m$-valued cusp forms on $\widetilde{SL}_2(\R)$ with respect to the double cover $\widetilde{SL}_2(\Z)$ of $SL_2(\Z)$~(cf.~Definition \ref{Vector-valuedCuspfm}), whose coefficient functions generate $\pi_1$. We then have a representation theoretic generalization of the Eichler-Zagier correspondence~(cf.~Theorem \ref{Eichler-ZagierCorresp}) as follows:
\begin{thmEZ}[Generalized Eichler-Zagier correspondence]
For a non-zero integer $m$ and an irreducible genuine unitary representation $\pi_1$ of $\widetilde{SL}_2(\R)$ we have an isomorphism
\[
\Hom_{G_J}(\rho_{\pi_1,m},\cH_m^0)\simeq \cS_{\pi_1}(\widetilde{SL}_2(\Z),\Phi_m).
\]
\end{thmEZ}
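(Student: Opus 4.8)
The plan is to exploit the tensor-product structure $\rho_{\pi_1,m}=\pi_1\otimes\omega_m$ together with the Stone--von Neumann theorem to ``peel off'' the Heisenberg/Weil factor and thereby reduce the whole statement to an intertwining assertion on $\widetilde{SL}_2(\R)$. First I would restrict the $G_J$-module $\cH_m^0$ to the Heisenberg group $N_J$. Since every $\phi\in\cH_m^0$ has central character indexed by $m\neq0$, the Stone--von Neumann theorem forces the $N_J$-action on $\cH_m^0$ to be $\nu_m$-isotypic, so that there is a Hilbert-space isomorphism $\cH_m^0\simeq\nu_m\,\widehat{\otimes}\,M$ for the multiplicity space $M:=\Hom_{N_J}(\nu_m,\cH_m^0)$. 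Because $SL_2(\R)$ normalizes $N_J$ inside $G_J$, it acts on $M$, and the metaplectic cocycle governing this action is precisely the one defining the Weil representation; hence the isomorphism upgrades to a $G_J$-module isomorphism $\cH_m^0\simeq\omega_m\,\widehat{\otimes}\,M$ in which $M$ is now a genuine $\widetilde{SL}_2(\R)$-module. This is the representation-theoretic incarnation of the Berndt--B{\"o}cherer spectral theory cited above.

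Granting this, I would cancel the common $\omega_m$ factor. A $G_J$-map $\pi_1\otimes\omega_m\to\omega_m\,\widehat{\otimes}\,M$ restricts on $N_J$ to a map of $\nu_m$-isotypic modules; since $\nu_m$ is irreducible, Schur's lemma identifies the $N_J$-intertwiners with scalars, and a see-saw-type argument then yields $\Hom_{G_J}(\pi_1\otimes\omega_m,\cH_m^0)\simeq\Hom_{\widetilde{SL}_2(\R)}(\pi_1,M)$. It remains to identify $M$ with the space of $\Phi_m$-valued cusp forms. Concretely I would realize this via the explicit descent map sending $\phi\in\cH_m^0$ to the $\Phi_m$-valued function $g_2\mapsto\bigl(n\mapsto\phi(ng_2)\bigr)$ on $\widetilde{SL}_2(\R)$, where for each fixed $g_2$ the inner function, regarded through its $\nu_m$-component in $L^2(N_J(\Z)\backslash N_J)$, is an element of $\Phi_m=\Hom_{N_J}(\nu_m,L^2(N_J(\Z)\backslash N_J))$. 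The key point is that the right action of $SL_2\subset G_J$ translates, under this descent, into the right regular action on $\widetilde{SL}_2(\R)$ twisted by the Weil action of $\widetilde{SL}_2(\R)$ on the finite-dimensional space $\Phi_m$; this twist is exactly what converts $SL_2(\Z)$-automorphy into the half-integral-weight $\widetilde{SL}_2(\Z)$-automorphy, reproducing the classical Eichler--Zagier transformation law. Imposing the condition that the coefficient functions generate $\pi_1$ on both sides then gives $\Hom_{\widetilde{SL}_2(\R)}(\pi_1,M)\simeq\cS_{\pi_1}(\widetilde{SL}_2(\Z),\Phi_m)$ and completes the isomorphism.

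For the inverse direction I would reconstruct a $G_J$-intertwiner from a $\Phi_m$-valued cusp form $f$ by re-tensoring with the Stone--von Neumann model: using the theta functions comprising $\Phi_m$ as kernels, one forms the associated function on $G_J(\Z)\backslash G_J$ and checks that it lies in $\cH_m^0$ and is $G_J$-equivariant, with the cuspidality of $f$ corresponding to membership in the cuspidal subspace. I expect the main obstacle to be the careful bookkeeping at the two genuinely delicate points: first, justifying the $G_J$-module isomorphism $\cH_m^0\simeq\omega_m\,\widehat{\otimes}\,M$ at the level of smooth vectors and Hilbert-space completions, including the finiteness of the multiplicity of $\nu_m$ in the automorphic spectrum of $N_J(\Z)\backslash N_J$; and second, verifying that the cocycle produced by $SL_2$-conjugation on the $N_J$-fibers is identically the Weil cocycle, so that the genuineness of $\pi_1$ cancels that of $\omega_m$ and the descended forms transform under $\widetilde{SL}_2(\Z)$ exactly as prescribed in $\cS_{\pi_1}(\widetilde{SL}_2(\Z),\Phi_m)$. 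Everything else --- the Schur cancellation, the equivalence of the two cuspidality conditions, and the mutual inverseness of the descent and theta-lift maps --- should be routine once these two structural facts are in place.
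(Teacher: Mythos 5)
Your proposal is correct and follows essentially the same route as the paper: the paper likewise peels off the Heisenberg/Weil factor by expanding $\phi(w\otimes u_j^m)(ng)$ against the explicit theta basis $\{\theta_{\alpha}\}$ of $\Phi_m$ (rather than through the abstract multiplicity space $M$, but the Schur cancellation is identical --- the coefficients $c_{\alpha}$ depend only on $\pi_1(g)w$), derives the $\Omega_m$-twisted $\widetilde{SL}_2(\Z)$-automorphy from the covariance $\nu_m(n)\omega_m(g)=\omega_m(g)\nu_m(g^{-1}ng)$, and inverts via the theta lift exactly as you describe. The only step you label routine on which the paper spends genuine effort is the equivalence of the two cuspidality conditions, which it establishes by matching the constant terms of the $c_{\alpha}$ at the cusps with the Fourier coefficients of $\phi(w\otimes u_j^m)$ along $N_S$ indexed by singular semi-integral matrices and invoking the Berndt--Schmidt characterization of $\cH_m^0$ (Proposition \ref{Spectral-Jacobi} (2)) --- but you have correctly identified the delicate points, so this is a matter of bookkeeping rather than a gap.
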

\noindent
When $\pi_1$ is a holomorphic discrete series representation this is the usual Eichler-Zagier correspondence between the vector-valued elliptic cusp forms of half-integral weights and holomorphic Jacobi cusp forms~(cf.~\cite[Theorem 5.1]{E-Z}). When $\pi_1$ is a unitary principal series representation $\Hom_{G_J}(\rho_{\pi_1,m},\cH_m^0)$ is the equivalent notion of Maass Jacobi cusp forms. 
This theorem is inspired by the notion of the theta decomposition of Jacobi forms~(cf.~\cite[Section 5]{E-Z}). 
There seem many related works on this decomposition other than \cite{E-Z}. For instance we cite Pitale \cite{Pi} and Bringmann-Raum-Richter \cite{B-R-R}. As another relevant work we refer to Section 11 of Takase \cite{Ta}, which takes up some representation theoretic treatment of the Eichler-Zagier correspondence for the holomorphic case.
\subsection{Fourier-Jacobi expansion}\label{Intro-FJ}
We are now in a position to discuss the Fourier-Jacobi expansion of cusp forms on $Sp(2,\R)$. It is well known that the cuspidal spectrum of a reductive group decomposes into a discrete sum of irreducible admissible representations with finite multiplicities~(cf.~\cite{G-G-P},~\cite{Gd}). 
In view of this  we assume that the admissible representations which cusp forms generate are irreducible~(cf.~Definition \ref{Def-cuspforms}). 

Let $\pi$ be an irreducible admissible representation of $Sp(2,\R)$. Recall that an irreducible representation $\tau$ of the maximal compact subgroup $K$ of $Sp(2,\R)$ is called a $K$-type of $\pi$ if $\tau$ occurs in the restriction of $\pi$ to $K$. We make the working assumption on the space $\cJ_{\rho,\pi}(\tau^*)^{00}$~(respectively~$W_{\psi,\pi}(\tau^*)^0$) of the rapidly decreasing Fourier-Jacobi type spherical functions~(respectively~rapidly deceasing Whittaker functions), where $\tau^*$~(respectively~$\psi$) denotes the contragredient of $\tau$~(respectively~a unitary character of the maximal unipotent subgroup $N_0$ introduced in Section \ref{Groups}). We have to assume the multiplicity one property of the $K$-type $\tau$ in $\pi$ to have the well-defined notion of the spherical functions above. 
For the definitions of $W_{\psi,\pi}(\tau^*)^0$ and $\cJ_{\rho,\pi}(\tau^*)^{00}$ see Sections \ref{ReviewWhittaker},~\ref{Result-Hirano}. 
In addition we note that the notation $\cJ_{\rho,\pi}(\tau^*)^{00}$ is to avoid the confusion with Hirano's notation $\cJ_{\rho,\pi}(\tau^*)^{0}$ for the space of moderate growth Fourier-Jacobi type spherical functions~(cf.~\cite{Hi-1},~\cite{Hi-2} and \cite{Hi-3}).
\begin{asm}
There is a multiplicity one $K$-type $\tau$ of $\pi$ such that
\begin{itemize}
\item there is no rapidly decreasing element in $W_{\psi,\pi}(\tau^*)^0$ when $\psi$ is degenerate~(cf.~Section \ref{Deg-Whittaker-sec}),
\item $\dim\cJ_{\rho,\pi}(\tau^*)^{00}\le 1$ holds for any irreducible unitary representations $\rho$ of $G_J$ with the non-trivial central character.
\end{itemize}
\end{asm}
We remark that discrete series representations, irreducible $P_J$-principal series representations and irreducible principal series representations with the conditions (\ref{PS-Whittaker}) are verified to satisfy this assumption~(see Sections \ref{ReviewSpherical} and \ref{GCusp-WA}). We remark that the condition $\dim W_{\psi,\pi}(\tau^*)^0\le 1$ holds for general irreducible admissible representations of real quasi split groups and for non-degenerate $\psi$ in view of Wallach \cite[Theorem 8.8]{W-1}. As for the assumption $\dim\cJ_{\rho,\pi}(\tl^*)^{00}\le 1$ we cite Liu-Sun \cite{Li-Su}, which leads us to expect that the assumption would have  generality. 

Toward the statement of the main theorem~(cf.~Theorem \ref{F-J-exp-mainthm}) we prepare a couple of ingredients for the Fourier-Jacobi expansion of a cusp form $F$ generating an irreducible admissible representation $\pi$.
\begin{itemize}
\item {\bf Whittaker functions $F_{\xi_0,\xi_3}$ of $F$.}\\
We first let 
\[
F_{\xi_0,\xi_3}(g):=\displaystyle\int_{N_0(\Z)\backslash N_0}F(n(u_0,u_1,u_2,u_3)g)\psi_{\xi_0,\xi_3}(n(u_0,u_1,u_2,u_3))^{-1}dn\quad(g\in G)
\]
for the unitary character 
\[
\psi_{\xi_0,\xi_3}:N_0\ni n(u_0,u_1,u_2,u_3)\mapsto\exp(2\pi\sqrt{-1}(\xi_0u_0+\xi_3u_3))
\]
of $N_0$ with $(\xi_0,\xi_3)\in\Z^2$, which is invariant with respect to $N_0(\Z):=N_0\cap Sp(2,\Z)$. Here  $dn$ denotes the invariant measure of $N_0(\Z)\backslash N_0$ normalized so that ${\rm vol}(N_0(\Z)\backslash N_0)=1$. Due to $\dim W_{\psi,\pi}(\tau^*)^0\le 1$ remarked above, $F_{\xi_0,\xi_3}$ is a constant multiple of the Whittaker function when $\xi_0\xi_3\not=0$.
\item {\bf Eisenstein-Poincar{\'e} series $E_{\alpha}(F_{S_{\alpha,m},n}(*g))(r)$ on $G_J$.}\\
For a non-zero integer $m$, $1\le \alpha\le 2|m|$ with $\frac{\alpha^2}{4m}\in\Z$, and a non-zero element $n$ in the $\Z$-submodule  $\widehat{L_{\alpha,m}}\subset\Q$~(for $\widehat{L_{\alpha,m}}$ see Section \ref{FJ-expansion}), we denote by 
\[
E_{\alpha}(F_{S_{\alpha,m},n}(*g))(r)
\]
with a fixed $g\in Sp(2,\R)$ the Eisenstein-Poincar{\'e} series on $G_J$ with the test function $F_{S_{\alpha,m},n}(*g)$~(cf.~Section \ref{FJ-expansion}). Here $F_{S_{\alpha,m},n}$ is proved to be the left translation of $F_{m,n}$~($n\in\widehat{L_{\alpha,m}}\setminus\{0\}$) by some element of $Sp(2,\Q)$ in which a representative of cusps of the Jacobi group~(cf.~\cite[Section 4.2]{Be-Sc},~Section \ref{FJ-expansion}) is involved.
\item {\bf Fourier-Jacobi type spherical functions.}\\
With the notation $\{w_l\}_{l\in L},~\{u_j^m\}_{j\in J},~\{v_k^*\}_{0\le k\le d_{\Lambda}}$ for a basis of the representation space of $ \pi_1$,~$\nu_m$~(or $\omega_m$),~$\tau^*$ respectively~(cf.~Sections \ref{RepSL2},~\ref{UnitaryRepJacobi},~\ref{RepMaxCpt}), the restriction of $W\in\cJ_{\rho,\pi}(\tau^*)^{00}$ to $A_J$~(the torus part of the Langlands decomposition of $P_J$) is written as
\[
W(a_J)=\sum_{
\begin{subarray}{c}
j\in J,~0\le k\le d_{\Lambda}\\
\text{s.t.}~l=l(j,k)\in L
\end{subarray}}c_{j,k}^{(\pi_1)}(a_J)w_l\otimes u_j^m\otimes v_{k}^*\quad(a_J\in A_J)
\]
with coefficient functions $c_{j,k}^{(\pi_1)}$. 
Here $l(j,k)=-j+k+\Lambda_2$ when the highest weight of $\tau^*$ is $(-\Lambda_2,-\Lambda_1)$. The Fourier-Jacobi type spherical functions are realized in the Fourier-Jacobi expansion by a basis of $\Hom_{G_J}(\rho_{m,\pi_1},\cH_m^0)$. They are denoted by 
\[
F_{m,i}^{(\pi_1)}\quad(1\le i\le \dim \Hom_{G_J}(\rho_{m,\pi_1},\cH_m^0))
\]
as in the following theorem.
\end{itemize}
\begin{mainth}[Fourier-Jacobi expansion]
Let $\pi$ be an irreducible admissible representation of $Sp(2,\R)$ with the multiplicity one $K$-type $\tau$ satisfying the working assumption and let $F$ be a cusp form of weight $\tau^*$ with respect to $Sp(2,\Z)$ generating $\pi$~(cf.~Definition \ref{Def-cuspforms}). 

Each term $F_m$ of the Fourier-Jacobi expansion $\sum_{m\in\Z}F_m$ of $F$ is expressed as 
\[
F_m(ra_J)=\begin{cases}
\sum_{(\xi_0,\xi_3)\in\Z^2,~\xi_0\xi_3\not=0}
\sum_{
\begin{pmatrix}
a & b\\
c & d
\end{pmatrix}\in SL_2(\Z)_{\infty}\backslash SL_2(\Z)}F_{\xi_0,\xi_3}(
\begin{pmatrix}
1 & 0 & 0 & 0\\
0 & a & 0 & b\\
0 & 0 & 1 & 0\\
0 & c & 0 & d
\end{pmatrix}ra_J)&(m=0)\\
\sum_{
\begin{subarray}{c}
1\le\alpha\le 2|m|\\
\text{s.t.~$\alpha^2/4m\in\Z$}
\end{subarray}}\sum_{n\in\widehat{L_{\alpha,m}}\setminus\{0\}}b_{m,\alpha}(F)E_{\alpha}(F_{S_{\alpha,m},n}(*a_J))(r)+\\
\underset{\pi_1\in\widehat{\widetilde{SL}_2(\R)},m(\pi_1)\not=0}{\sum}\sum_{i=1}^{m(\pi_1)}b_{m,i}^{(\pi_1)}(F)F_{m,i}^{(\pi_1)}(ra_J)&(m\not=0)
\end{cases}
\]
for $(a_J,r)\in A_J\times G_J$, with
\[
F_{m,i}^{(\pi_1)}(ra_J):=
\underset{\begin{subarray}{c}
j\in J,~0\le k\le d_{\Lambda}\\
\text{s.t.}~l=l(j,k)\in L
\end{subarray}}{\sum}c_{j,k}^{(\pi_1)}(a_J)\phi_{\pi_1}^{(i)}(w_l\otimes u_j^m)(r)\otimes v_k^*.
\]
Here
\begin{itemize}
\item $SL_2(\Z)_{\infty}:=\left\{\left.
\begin{pmatrix}
1 & n\\
0 & 1
\end{pmatrix}~\right|~n\in\Z\right\}$,
\item$\m(\pi_1):=\dim\Hom_{G_J}(\rho_{m,\pi_1},\cH_m^0)$ and $\{\phi^{(i)}_{\pi_1}\}$ is a basis of $\Hom_{G_J}(\rho_{m,\pi_1},\cH_m^0)$,
\item $b_{m,i}^{(\pi_1)}(F)$~(respectively~$b_{m,\alpha}(F)$) is a constant depending on the normalization of the Fourier-Jacobi type spherical function for $\rho=\rho_{m,\pi_1}$ and on a choice of a basis $\{\phi_{\pi_1}^{(i)}\}$ (respectively~on $m$ and $\alpha$).
\end{itemize}
\end{mainth}
We note that $F_m(rg)$ with a fixed $g\in G$ belongs to $\cH_m$ as a function in $r\in G_J$ for $m\not=0$. This $L^2$-space $\cH_m$ decomposes into the continuous spectrum ${\cH}^{\rm c}_m$ and the cuspidal spectrum ${\cH}^0_m$. According to this decomposition we have 
\[
F_m=F^{\rm c}_m+F^0_m
\]
with $F^{\rm c}_m\in\cH^{\rm c}_m$ and $F^0_m\in{\cH^0_m}$. In general, we cannot deny the contribution to the Fourier-Jacobi expansion by $F^{\rm c}_m$,  which is given as the sum of the Eisenstein-Poincar{\'e} series $E_{\alpha}(F_{S_{\alpha,m},n}(*a_J))(r)$s. The cuspidal part $F^0_m$ is written as the sum of images $F_{m,i}^{(\pi_1)}$ of the Fourier-Jacobi type spherical functions by the intertwining operators $\phi_{\pi_1}^{(i)}$~(cf.~Section \ref{ProofThm}).

As a further remark, we have $F_0\equiv 0$ and $F^{\rm c}_m\equiv 0$ for $m\not=0$ when $F$ generates a holomorphic or anti-holomorphic discrete series representation (i.e.~$F$ is a holomorphic or anti-holomorphic cusp form), for which recall that neither holomorphic discrete series nor anti-holomorphic discrete series are generic. When $F$ is holomorphic or anti-holomorphic we should furthermore note that $m(\pi_1)\not=0$ only if $\pi_1$ is a holomorphic or anti-holomorphic discrete series and that the summation over $J$~(and $L$) is reduced to a finite sum~(cf.~\cite[Theorems 6.3-6.6]{Hi-1},~Corollary \ref{FJ-exp-fourcases}~(1)). However, this is no longer true in general, which was pointed out by Ikeda \cite{Ik-1} for the case of Eisenstein series.
\subsection{Outline of the paper}
Let us explain the outline of the paper. Sections \ref{BN} and \ref{Rep-real} are to prepare the fundamental notion necessary for the subsequent argument. Section \ref{BN} collects basic notations for real groups and discrete groups. Section \ref{Rep-real} is devoted to introducing the representations of the real groups we need. In Section \ref{ReviewSpherical} we review results on the explicit formulas for the Whittaker functions and the Fourier-Jacobi type spherical functions. This section includes all the known explicit formulas relevant to the Fourier-Jacobi expansion and also takes up Whittaker functions attached to degenerate characters of the maximal unipotent subgroup of $Sp(2,\R)$, the latter of which have not drawn attention of experts. This section is indispensable for understanding the Fourier-Jacobi expansion explicitly. 
As well as reviewing the known results we remark that some idea of changes of variables leads to a reduction of the problems for the explicit formulas, which has not been pointed out in the literature. In Section \ref{EZcorrespondence} the notion of the generalized Eichler-Zagier correspondence is introduced and studied in detail. 
In Section \ref{FJ-exp} we are then able to prove the main theorem on the Fourier-Jacobi expansion. In Corollary \ref{FJ-exp-fourcases} we write down the Fourier-Jacobi expansion in a more specific manner for discrete series, $P_J$-principal series and principal series. In this section we also provide some explicit description of $F_m$-terms with $m\not=0$ of the Fourier-Jacobi expansion~(Section \ref{Explicit-FJ}). More precisely we explicitly describe Jacobi cusp forms $\phi_{\pi_1}^{(i)}(w_l\otimes u_j^m)$ for some specified $(l,j)$ and see how they contribute to such $F_m$s with the explicit Fourier-Jacobi type spherical functions obtained by Hirano. It is now worthwhile to make remarks as follows:
\begin{itemize}
\item Skew Jacobi forms introduced by Skoruppa \cite{Sk} have no contribution to the Fourier expansion of holomorphic or anti-holomorphic Siegel modular forms. They contribute to the Fourier-Jacobi expansions for the non-holomorphic cases. We provide such examples for the case of the large discrete series representations~(cf.~Section \ref{Explicit-FJ} (II-2),~(II-3)). In addition to this we refer to interesting relevant works by Miyazaki \cite{Mi-2} and Raum-Richter \cite{Ra-Ri} though they do not deal with cusp forms.
\item The Fourier-Jacobi expansion of the holomorphic case in terms of the Fourier-Jacobi type spherical functions~(cf.~Corollary \ref{FJ-exp-fourcases} (1)) needs only a finitely many $\Hom_{G_J}(\rho_{m,\cD_{n_1}},\cH_m^0)$ of discrete series $\cD_{n_1}^+$ in order to describe each $F_m$-term, as has been pointed out  above. However, reviewing the classical Fourier expansion in terms of the Fourier expansion along the minimal parabolic subgroup \cite{Nr-1}, it turns out  that only a single $\Hom_{G_J}(\rho_{m,\cD_{n_1}},\cH_m^0)$ is enough. In Section \ref{Explicit-FJ} (I-2) this is explained by $\cS_{\cD_{n_1}}(\widetilde{SL}_2(\Z),\Phi_m)$ instead of $\Hom_{G_J}(\rho_{m,\cD_{n_1}},\cH_m^0)$. We note that this remark is valid for vector valued holomorphic cusp forms as well as scalar valued ones.

As an additional remark on this there is no result on the Fourier expansion along the minimal parabolic subgroup for non-holomorphic automorphic forms on $Sp(2,\R)$. Such Fourier expansion needs the notion of generalized Whittaker functions for infinite dimensional representations of the maximal unipotent subgroup $N_0$. It seems that it is  difficult to determine them explicitly especially for non-holomorphic cases, and indeed there is no such result in the literature. 
In the current research status we cannot therefore carry out the observation mentioned above for non-holomorphic cases. 
\end{itemize}
\subsection*{Acknowledgement}
The author would like to thank Prof. Taku Ishii for his significant help on detailed studies about Whittaker function attached degenerate characters of the maximal unipotent subgroup. He would also like to thank Dr. Shuji Horinaga for pointing out a misleading about the proof of Proposition \ref{DegnerateWhittaker} for an old version of the paper. The author's gratitude is also due to Prof. Miki Hirano, Prof. Tadashi Miyazaki and Prof. Masao Tsuzuki for their fruitful comments on the Fourier-Jacobi type spherical functions and the Whittaker functions. 
\section{Basic Notation}\label{BN}
\subsection{Basic notation for real groups and discrete groups}\label{Groups}
\subsection*{(1)~Real groups}
Let $G=Sp(2,\R)$ be the real symplectic group of degree two defined by
\[
\{g\in GL_4(\R)\mid {}^tgJ_4g=J_4\},
\]
where $J_4=
\begin{pmatrix}
0_2 & 1_2\\
-1_2 & 0_2
\end{pmatrix}$. This has a maximal parabolic subgroup $P_J$ of $G$ given by 
the Levi decomposition $N_J\rtimes L_J$. Here $N_J$ is the nilpotent Lie group defined by
\[
\left\{\left. n(u_0,u_1,u_2):=
\begin{pmatrix}
1 & 0 & u_1 & u_2\\
0 & 1 & u_2 & 0\\
0 & 0 & 1 & 0\\
0 & 0 & 0 & 1
\end{pmatrix}
\begin{pmatrix}
1 & u_0 & 0 & 0\\
0 & 1 & 0 & 0\\
0 & 0 & 1 & 0\\
0 & 0 & -u_0 & 1
\end{pmatrix}~\right|~u_0,~u_1,~u_2\in\R\right\}
\]
and the Levi part $L_J$ is the subgroup of $G$ given by
\[
\left\{\left.
\begin{pmatrix}
\alpha & & & \\
& a & & b\\
& & \alpha^{-1} & \\
& c & & d
\end{pmatrix}~\right|~\alpha\in\R^{\times},~
\begin{pmatrix}
a & b\\
c & d
\end{pmatrix}\in SL_2(\R)\right\}.
\]
We call this the Jacobi parabolic subgroup (also called the Klingen parabolic subgroup). 

The unipotent radical $N_J$ of $P_J$ is nothing but the Heisenberg group with the center  
\[
Z_J:=\{n(0,u_1,0)\mid u_1\in\R\}.
\] 
We introduce the non-reductive real group $G_J$ called the Jacobi group. This is defined by the semi-direct product 
\[
N_J\rtimes SL_2(\R),
\]
which is given by replacing $L_J$ with the special linear group $SL_2(\R)$ in $P_J$. More precisely, $SL_2(\R)$ is viewed as a subgroup of $G_J$~(or $L_J$) by putting $\alpha=1$ in $L_J$. The group $G_J$ is the centralizer of $Z_J$ in $P_J$. 
The parabolic subgroup $P_J$ has the Langlands decomposition $P_J=N_JA_JM_J$ with 
\[
A_J:=\left\{\left.a_J=
\begin{pmatrix}
a_1 & 0 & 0 & 0\\
0 & 1 & 0 & 0\\
0 & 0 & a_1^{-1} & 0\\
0 & 0 & 0 & 1
\end{pmatrix}~\right|~a\in\R^{\times}_+\right\},~
M_J:=\left\{\left.
\begin{pmatrix}
\epsilon & 0 & 0 & 0\\
0 & a & 0 & b\\
0 & 0 & \epsilon & 0\\
0 & c & 0 & d
\end{pmatrix}~\right|~
\begin{array}{c}
\begin{pmatrix}
a & b\\
c & d
\end{pmatrix}\in SL_2(\R)\\
\epsilon\in\{\pm 1\}
\end{array}\right\}.
\]

We also need the minimal parabolic subgroup $P_0$ of $G$ with the unipotent radical $N_0$, where $N_0$ is defined by 
\[
\left\{\left.n(u_0,u_1,u_2,u_3)\in
\begin{pmatrix}
1 & 0 & u_1 & u_2\\
0 & 1 & u_2 & u_3\\
0 & 0 & 1 & 0 \\
0 & 0 & 0 & 1
\end{pmatrix}
\begin{pmatrix}
1 & u_0 & 0 & 0\\
0 & 1 & 0 & 0\\
0 & 0 & 1 & 0\\
0 & 0 & -u_0 & 1
\end{pmatrix}~\right|~n_i\in\R~(0\le i\le 3)\right\}. 
\]
We also review the Langlands decomposition $P_0=N_0A_0M_0$ of the minimal parabolic subgroup $P_0$, where
\[
A_0:=\{a_0=\diag(a_1,a_2,a_1^{-1},a_2^{-1})\mid a_1,~a_2\in\R_{>0}\},~M_0:=\{\diag(\epsilon_1,\epsilon_2,\epsilon_1,\epsilon_2)\mid \epsilon_1,~\epsilon_2\in\{\pm 1\}\}.
\]
The group $N_0$ admits the semi-direct product decomposition $N_0=N_S\rtimes N_L$ with the subgroups $N_S$ and $N_L$ defined by
\[
N_S=\{n(u_0,u_1,u_2,u_3)\in N_0\mid u_0=0\},\quad N_L:=\{n(u_0,0,0,0)\mid u_0\in\R\}.
\]
We remark that $N_S$ is well known as the unipotent radical of the Siegel parabolic subgroup. 

Let us introduce the Cartan involution $\theta$ of $G$ defined by $\theta(g):={}^tg^{-1}$ for $g\in G$. Then 
\[
K:=\{g\in G\mid\theta(g)=g\}=\left\{\left.
\begin{pmatrix}
A & B\\
-B & A
\end{pmatrix}\in G~\right|~A,B\in M_2(\R)\right\}
\]
is a maximal compact subgroup of $G$. This is isomorphic to the unitary group $U(2)$ of degree two  by the map
\[
K\ni
\begin{pmatrix}
A & B\\
-B & A
\end{pmatrix}\mapsto A+\sqrt{-1}B\in U(2).
\]
We should remark that $G$ has an Iwasawa decomposition $G=N_0A_0K$ with the notation above.

We furthermore introduce the real group $\widetilde{SL_2}(\R)$ characterized by the non-split exact sequence
\[
1\rightarrow\{\pm 1\}\rightarrow\widetilde{SL_2}(\R)\rightarrow SL_2(\R)\rightarrow 1,
\]
namely, the non-split double cover of $SL_2(\R)$. This is realized by means of the unique non-trivial element of the second cohomology $H^2(SL_2(\R),\{\pm1\})$ called the Kubota cocycle~(cf.~\cite{Ku}).
The group $SL_2(\R)$ is known to have the special orthogonal group $SO_2(\R)$ as a maximal compact subgroup and $\widetilde{SL}_2(\R)$ has a maximal compact subgroup $\widetilde{SO_2}(\R)$, the non-split two fold cover of $SO_2(\R)$.  The group $\widetilde{SO_2}(\R)$ is given as the inverse image of $SO_2(\R)$ by the covering map $\widetilde{SL_2}(\R)\rightarrow SL_2(\R)$.
\subsection*{(2)~Discrete subgroups}
We explain the notation for the discrete subgroups of the real groups above. As the most fundamental notation we introduce the Siegel modular group 
\[
Sp(2,\Z):=G\cap GL_4(\Z).
\]
In addition to this we will need \[
G_J(\Z):=G_J\cap Sp(2,\Z),
\]
$N_J(\Z):=N_J\cap Sp(2,\Z)$,~$Z_J(\Z):=Z_J\cap Sp(2,\Z)$, $N_S(\Z):=N_S\cap Sp(2,\Z)$ etc. We remark that $G_J(\Z)=N_J(\Z)\rtimes SL_2(\Z)$, where $SL(2,\Z)$ is viewed as a subgroup of $G_J(\Z)$ by 
\[
\left\{\left.
\begin{pmatrix}
1 & 0 & 0 & 0\\
0 & a & 0 & b\\
0 & 0 & 1 & 0\\
0 & c & 0 & d
\end{pmatrix}~\right|~
\begin{pmatrix}
a & b\\
c & d
\end{pmatrix}\in SL_2(\Z)\right\}.
\]
\subsection{Lie algebras and root system}\label{Lie-gp-alg}
Following the standard manner of the notation, we denote the Lie algebras of real Lie groups by the corresponding German letter~(Fraktur). For a real Lie algebra $\lv$ we denote its complexification by $\lv_{\C}$. 

The Lie algebra $\g$ of $G$ is given by $\{X\in M_4(\R)\mid {}^tXJ_4+J_4X=0_4\}$. 
The Cartan involution of $\g$, denoted also by $\theta$, is defined by $\theta(X)=-{}^tX$ for $X\in\g$. Then $\g$ has the eigen-space decomposition $\g=\fk+\p$ with 
\begin{align*}
\fk&=\{X\in\g\in\mid \theta(X)=X\}=\left\{\left.
\begin{pmatrix}
A & B\\
-B & A
\end{pmatrix}~\right|~A,~B\in M_2(\R),~{}^tA=-A,~{}^tB=B\right\},\\
\p&=\{X\in\g\in\mid \theta(X)=-X\}=\left\{\left.
\begin{pmatrix}
A & B\\
B & -A
\end{pmatrix}~\right|~A,~B\in M_2(\R),~{}^tA=A,~{}^tB=B\right\}.
\end{align*}
The former is nothing but the Lie algebra of $K$. 

We consider the root space decomposition of $\g_{\C}$ with respect to the complexification $\frak{t}_{\C}$ of the compact Cartan subalgebra $\frak{t}=\R T_1\oplus\R T_2$~(in $\fk$), where 
\[
T_1:=
\begin{pmatrix}
0 & 0 & 1 & 0\\
0 & 0 & 0 & 0\\
-1 & 0 & 0 & 0\\
0 & 0 & 0 & 0 
\end{pmatrix},\quad 
T_2:=
\begin{pmatrix}
0 & 0 & 0 & 0\\
0 & 0 & 0 & 1\\
0 & 0 & 0 & 0\\
0 & -1 & 0 & 0
\end{pmatrix}.
\]
The dual space ${\frak t}_{\C}^*$ of ${\frak t}_{\C}$ has a basis $\{\beta_1,\beta_2\}$ given by
\[
\beta_i(T_j)=\sqrt{-1}\delta_{ij}.
\]
We denote $\beta\in{\frak t}_{\C}^*$ by $(a,b)$ if $\beta=a\beta_1+b\beta_2$. 
With this notation the set of roots for the root space decomposition $(\g_{\C},{\frak t}_{\C})$ is given by $\Delta:=\{\pm(2,0),~\pm(0,2),~\pm(1,1),~\pm(1,-1)\}$. This set has the standard choice of positive roots given by $\Delta^+:=\{(2,0),~(0,2),~(1,1),~(1,-1)\}$. 
The roots $\{\pm(1,-1)\}$ forms the set of compact roots, whose roots vectors are in $\fk_{\C}$. 
Each root in $\{\pm(2,0),~\pm(0,2),~\pm(1,1)\}$ is called a non-compact root, whose root vector is in the complexification $\p_{\C}$ of $\p$.
\section{Representations of real groups}\label{Rep-real}
This section collects the representations of the real groups $\widetilde{SL}_2(\R),~N_J,~G_J$ and $G=Sp(2,\R)$ necessary for us. 
Sections 3.1 and 3.2 take up representations of $\widetilde{SL}_2(\R),~N_J$ and $G_J$. The rest of the sections, Sections 3.3--3.6, is devoted to an explanation of the admissible representations of $G$ in our concern. More precisely, after reviewing some fundamental facts on representations of the maximal compact subgroup $K$ in Section 3.3, we explain discrete series representations,~$P_J$-principal series representations and principal series representation~(induced from the minimal parabolic subgroup) in Sections 3.4--3.6.
 
From now on we often use the notation $\e(x):=\exp(2\pi\sqrt{-1}(x))$ for $x\in\R$. 
\subsection{Irreducible unitary representations of $\widetilde{SL}_2(\R)$}\label{RepSL2}
From \cite[Prposition 2.3]{Hi-1}~(see also \cite[Lemmas 4.1,~4.2]{Ge}) we review the classification of irreducible unitary representations of $\widetilde{SL}_2(\R)$.
\begin{lem}\label{GenuineRep}
Irreducible unitary representations of $\widetilde{SL_2}(\R)$ are classified as follows:\\
(1)~(unitary principal series)~$\cP_s^{\tau}$ for $(s,\tau)\in\sqrt{-1}\R\times\{0,1,\pm\frac{1}{2}\}\setminus\{(0,1)\}$,\\
(2)~(complementary series)~$\cC_s^{\tau}$ for $0<s<1$ and $\tau\in\{0,1\}$ and for $0<s<\frac{1}{2}$ and $\tau\in\{\pm\frac{1}{2}\}$,\\
(3)~((limit of) discrete series representations)~$\cD^{\pm}_n$ for $n\in\frac{1}{2}\Z_{\ge 2}$,\\
(4)~(quotient representation)~$\cD^+_{\frac{1}{2}},~\cD^-_{\frac{1}{2}}$,\\
(5)~The trivial representation $1_{\widetilde{SL}_2(\R)}$.\\
Here only the representations $\cP_s^{\tau},~\cC_s^{\tau}$ with $\tau=0,~1$,~$\cD^{\pm}_n$ with $n\in\Z_{\ge 1}$ and $1_{\widetilde{SL}_2(\R)}$ give those of $SL_2(\R)$.
\end{lem}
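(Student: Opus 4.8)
The plan is to reduce the statement to the classification of irreducible admissible $(\mathfrak{sl}_2(\R),\widetilde{SO}_2(\R))$-modules and then run a unitarizability analysis, keeping track throughout of the genuine/non-genuine dichotomy. First I would pass to the compact picture. Since $\widetilde{SL}_2(\R)$ and $SL_2(\R)$ share the Lie algebra $\mathfrak{sl}_2(\R)$, the infinitesimal data is the same, and the only new feature is that the maximal compact subgroup is now $\widetilde{SO}_2(\R)$, a connected double cover of the circle $SO_2(\R)$. Its characters are therefore indexed by $\frac{1}{2}\Z$, and a representation is genuine precisely when the $\widetilde{SO}_2(\R)$-weights lie in $\frac{1}{2}+\Z$ rather than in $\Z$. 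Choosing the usual compact Cartan generator together with raising and lowering operators $E_{\pm}$ that shift the weight by $\pm 2$, each irreducible module has weights confined to a single coset of $2\Z$ inside $\frac{1}{2}\Z$; the four cosets $\{0,1,\pm\frac{1}{2}\}$ of $\frac{1}{2}\Z/2\Z$ are recorded by the parameter $\tau$, with $\tau\in\{0,1\}$ the non-genuine (integral) cases and $\tau\in\{\pm\frac{1}{2}\}$ the genuine (half-integral) ones.

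Next I would enumerate the irreducible $(\mathfrak{sl}_2,\widetilde{SO}_2)$-modules by their weight support and their Casimir eigenvalue. Modules with a lowest weight give the holomorphic (limits of) discrete series $\cD_n^+$ with weights $n,n+2,n+4,\dots$, those with a highest weight give $\cD_n^-$, and modules whose weights fill an entire coset of $2\Z$ give the principal/complementary series family, parametrized through the Casimir eigenvalue by a spectral parameter $s$. I would then locate the reducibility points: where a lowest- or highest-weight submodule splits off one recovers the discrete series, the limits of discrete series at the first admissible weight, and---in the genuine case at weight $\frac{1}{2}$---the quotient representations $\cD^{+}_{1/2},\cD^{-}_{1/2}$; the trivial representation $1_{\widetilde{SL}_2(\R)}$ is the sole finite-dimensional module surviving the later unitarity test.

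The unitarizability step is where the real work lies. For each module I would write down the (essentially unique) invariant Hermitian form and compute its signature as a function of $(s,\tau)$. The tempered principal series $\cP_s^{\tau}$ with $s\in\sqrt{-1}\R$ and the discrete series $\cD_n^{\pm}$ are unitary for the standard reasons, the form being manifestly positive, while the complementary series arise from the interval of real $s$ on which the normalized form stays positive-definite. The crucial point is that this interval depends on $\tau$: positivity holds for $0<s<1$ when $\tau\in\{0,1\}$ but only for $0<s<\frac{1}{2}$ when $\tau\in\{\pm\frac{1}{2}\}$, the shortened range reflecting the half-integral weight lattice of the genuine series. Finally I would dispose of the degenerate boundary values---excluding $(s,\tau)=(0,1)$, where the relevant principal series is reducible and yields no new irreducible unitary module---and read off from the weight lattice which representations factor through $SL_2(\R)$, namely exactly $\cP_s^{\tau},\cC_s^{\tau}$ with $\tau\in\{0,1\}$, $\cD_n^{\pm}$ with integral $n$, and the trivial representation.

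The main obstacle I anticipate is precisely this signature computation for the complementary series together with the correct handling of the several boundary and limiting cases: pinning down the different length intervals $0<s<1$ versus $0<s<\frac{1}{2}$, verifying that the limits of discrete series and the quotients $\cD^{\pm}_{1/2}$ are genuinely unitary, and confirming that the $(0,1)$ principal series must be removed. These delicate positivity estimates, rather than the module bookkeeping, are what make the statement nontrivial.
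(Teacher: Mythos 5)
The paper does not actually prove this lemma: it is stated as a review, with the classification simply cited from Hirano (Proposition 2.3 of \cite{Hi-1}) and Gelbart (Lemmas 4.1 and 4.2 of \cite{Ge}). Your outline is, in substance, the standard Bargmann--Puk\'anszky argument that those references carry out: pass to $(\g_1,\widetilde{SO_2}(\R))$-modules, note that genuineness is detected by the half-integrality of the $\widetilde{SO_2}(\R)$-weights, sort the irreducible modules by weight support (lowest weight, highest weight, or a full coset of $2\Z$ in $\frac{1}{2}\Z$) and Casimir eigenvalue, and then determine unitarizability by computing the signature of the essentially unique invariant Hermitian form, which is where the two different complementary-series ranges $0<s<1$ versus $0<s<\frac{1}{2}$ and the special representations $\cD^{\pm}_{\frac{1}{2}}$ at the genuine reducibility points emerge. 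So your route is consistent with the sources the paper relies on rather than an alternative to them. Two caveats: first, your argument is a strategy rather than a proof --- the positivity computation you correctly identify as the crux is not carried out, and you should also make explicit the standing facts that irreducible unitary representations are admissible and that infinitesimal equivalence implies unitary equivalence (the paper itself invokes \cite[Theorem 3.4.11]{W-2} for the latter elsewhere), without which the reduction to the algebraic picture is incomplete. Second, when you do execute the signature analysis you will need to treat the non-genuine odd case $\tau=1$ with care, since the interval of unitarizable real parameters there is not the same as in the spherical case $\tau=0$; this is exactly the kind of boundary bookkeeping you flag, and it is where an unexamined appeal to symmetry between the four cosets would go wrong.
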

Let $\g_1$ be the Lie algebra of $\widetilde{SL}_2(\R)$, which coincides with the Lie algebra of $SL_2(\R)$. 
The comlexification $\g_{1,\C}$ of the Lie algebra $\g_1$ has a basis $\{U,V_{\pm}\}$ as follows:
\[
U:=-\sqrt{-1}
\begin{pmatrix}
0 & 1\\
-1 & 0
\end{pmatrix},\quad V_{\pm}:=\frac{1}{2}
\begin{pmatrix}
1 & \pm\sqrt{-1}\\
\pm\sqrt{-1} & -1
\end{pmatrix}.
\] 
Following \cite{Hi-1} we introduce the index set $L$ for an irreducible unitary representation $\pi_1$ as follows:
\begin{equation}\label{index-genuine}
L=L_{\pi_1}:=
\begin{cases}
\tau+2\Z&(\pi_1=\cP_s^{\tau},~\cC_s^{\tau}),\\
\pm n\pm2\Z_{\ge 0}&(\pi_1=\cD_n^{\pm}).
\end{cases}
\end{equation}
The representation space $\cW_{\pi_1}$ of an irreducible unitary representation $\pi_1$ has a basis $\{w_l\}_{l\in L}$ satisfying the following formula for the infinitesimal action of $\pi$:
\begin{equation}\label{Infinitesimal-MaassOp}
\pi_1(U)w_l=lw_l,\quad\pi_1(V^{\pm})w_l=\frac{1}{2}(z_1+1\pm l)w_{l\pm2},
\end{equation}
where we understand that $w_l=0$ for $l\not\in L$ and that $z_1=s$~(respectively~$n-1$) for $\pi_1=\cP_s^{\tau}$~(respectively~$\pi_1=\cD_n^{\pm}$). Recall that $\widetilde{SO_2}(\R)$ is a maximal compact subgroup of $\widetilde{SL_2}(\R)$, given by the inverse image of $SO_2(\R)$ by the covering map $\widetilde{SL_2}(\R)$ to $SL_2(\R)$~(cf.~Section \ref{Groups} (1)). For each irreducible unitary representation $\pi_1$ of $\widetilde{SL_2}(\R)$ the irreducible representation of $\widetilde{SO_2}(\R)$ indexed by  $l\in L_{\pi_1}$ with the least absolute value is called the minimal $\widetilde{SO_2}(\R)$-type of $\pi_1$. We denote this by $\tau_{\rm min}$.  
\subsection{Irreducible unitary representations of $N_J$ and $G_J$}\label{UnitaryRepJacobi}
The group $N_J$ is a two step nilpotent Lie group, known as a Heisenberg group. The central characters 
\[
\chi_m:Z_J\ni n(0,u_1,0)\mapsto\e(mu_1)\in\C^{\times}
\]
with $m\in\R$ are essential to know the classification of irreducible unitary representations of $N_J$, where recall that $Z_J$ denotes the center of $N_J$~(cf.~Section \ref{Groups}). In fact, irreducible unitary representations of $N_J$ with the trivial central character are unitary characters of $N_J$, and irreducible unitary representations of $N_J$ with non-trivial central characters are infinite dimensional representations. The latter is classified in terms of the central characters, up to unitary equivalence. This fact is verified easily, e.g. by means of the orbit method of nilpotent Lie groups~(cf.~Corwin-Greenleaf \cite{Co-G-2}). 

Let us review the classification of irreducible unitary representations of $N_J$ in detail. We can consider the trivial extension of the character $\chi_m$ to a character of the polarization subgroup $M:=\{n(0,u_1,u_2)\mid u_1,~u_2\in\R\}$ of $N_J$, which is also denoted by $\chi_m$. The group $M$ is the image of the exponential map of the polarization subalgebra for the Lie algebra $\n_J$ of $N_J$~(for a definition see \cite[pp.27,~28]{Co-G-2}) with respect to a non-zero linear form of $\n_J$ whose restriction to the center is non-trivial. We then  introduce a unitary representation $(\nu_m,\cU_m)$ of $N_J$ by the $L^2$-induction 
\[
L^2\text{-}{\rm Ind}_M^{N_J}(\chi_m)
\]
from the character $\chi_m$ of $M$ with the representation space $\cU_m$. Here $N_J$ acts on $\cU_m$ by the right translation, which we denote by $\nu_m$. Since we have the identification $M\backslash N_J\simeq\R$, we view $\cU_m$ as $L^2(\R)$. 

The space $\cU_m$ has a basis $\{u_j^m\}_{j\in J}$ with $J:=\operatorname{sign}(m)(\frac{1}{2}+\Z_{\ge 0})$ satisfying \cite[(2.5)]{Hi-1}~(see also \cite[(3.2),~(3.3)]{Be-Sc}). For instance we can choose Hermite functions $\{h_j\}_{j\in\Z_{\ge 0}}$ as such a basis~(cf.~Section \ref{Explicit-FJ}). To be precise we understand that, when $m>0$~(respectively~$m<0$), $u_j=h_{j-\frac{1}{2}}$~(respectively $u_j=h_{-(j-\frac{1}{2})}$) for $j\in J$.
\begin{prop}\label{UnitaryDual-N_J}
(1)~Up to unitary equivalence, irreducible unitary representations of $N_J$ are exhausted by
\begin{enumerate}
\item unitary characters $\eta_{m_0,m_2}:N_J\ni n(u_0,u_1,u_2)\mapsto \e(m_0 u_0+m_2 u_2)\in\C^{\times}$ with $(m_0,m_2)\in\R^2$,
\item Infinite dimensional representations $(\nu_m,\cU_m)$ with $m\in\R\setminus\{0\}$.
\end{enumerate}
(2)~For $m,~m'\in\R\setminus\{0\}$, $\nu_m\simeq\nu_{m'}$ if and only if $m=m'$.
\end{prop}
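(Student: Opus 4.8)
The plan is to split irreducible unitary representations of $N_J$ according to their central character and then apply the orbit method for the simply connected nilpotent Lie group $N_J$. Since $Z_J$ is central, Schur's lemma forces it to act by a unitary scalar in any irreducible unitary representation, and because $Z_J\simeq\R$ this scalar character must be $\chi_m$ for some $m\in\R$. This separates the analysis into the degenerate case $m=0$ and the non-degenerate case $m\neq 0$, and already shows (2) is plausible once we identify $m$ as an invariant.

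For $m=0$ the representation is trivial on $Z_J$ and hence factors through $N_J/Z_J$. Reading off the group law I would check that $N_J/Z_J\simeq\R^2$ is abelian, with coordinates induced by $u_0$ and $u_2$; irreducibility together with Schur's lemma then forces the representation to be one-dimensional, i.e. a unitary character. Pulling these back to $N_J$ recovers exactly the family $\eta_{m_0,m_2}$ with $(m_0,m_2)\in\R^2$, distinct parameters giving distinct characters.

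The substantive case is $m\neq 0$, which I would handle by Kirillov's orbit method (as indicated in the text via Corwin--Greenleaf). The one real computation is the determination of the coadjoint orbits in $\n_J^*$. Fixing a basis $\{X_0,X_1,X_2\}$ of $\n_J$ whose only nontrivial bracket lands in the central direction $X_1$, and writing $\ell=m_0X_0^*+m_1X_1^*+m_2X_2^*$ in the dual basis, a direct evaluation of $\Ad^*$ shows that the value $m_1$ of $\ell$ on the center is a coadjoint invariant, while $m_0$ and $m_2$ can be moved arbitrarily. Hence for each fixed value $m_1=m\neq 0$ the coadjoint orbit is a single affine plane, and Kirillov's theorem puts these orbits in bijection with the irreducible unitary representations of $N_J$ having central character $\chi_m$. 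Choosing the polarization $\mathfrak{m}=\langle X_1,X_2\rangle$ at such an $\ell$ --- whose exponential is precisely the subgroup $M=\{n(0,u_1,u_2)\}$ with character $\chi_m$ --- the induced representation is $L^2\text{-}{\rm Ind}_M^{N_J}(\chi_m)=\nu_m$. Since $M\backslash N_J\simeq\R$ forces $\cU_m\simeq L^2(\R)$, this $\nu_m$ is infinite-dimensional, and the orbit method guarantees both that $\nu_m$ is irreducible and that every irreducible with central character $\chi_m$ is unitarily equivalent to it.

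Finally, part (2) is immediate from the orbit correspondence: $\nu_m\simeq\nu_{m'}$ if and only if their coadjoint orbits coincide, which happens exactly when the shared central value satisfies $m=m'$. The main obstacle is the uniqueness-and-irreducibility statement in the case $m\neq 0$ --- essentially the content of the Stone--von Neumann theorem --- but this is delivered wholesale by Kirillov's theorem for nilpotent Lie groups, so that the only genuine work reduces to the elementary orbit computation above.
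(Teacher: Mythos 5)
Your proof is correct and follows the same route the paper indicates: the paper classifies the representations by central character and simply appeals to the orbit method for nilpotent Lie groups (citing Corwin--Greenleaf) together with the explicit polarization $M=\{n(0,u_1,u_2)\}$, which is exactly the decomposition, orbit computation, and choice of polarization you carry out. You have merely filled in the details that the paper delegates to the reference, so no further comparison is needed.
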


We next describe irreducible unitary representations of $G_J$. We need to recall that, for a non-zero $m\in\R$,  $(\nu_m,\cU_m)$ is extended to the unitary representation $(\tilde{\nu}_m,\cU_m)$ of $N_J\times\widetilde{SL}_2(\R)$ by 
$\tilde{\nu}_m(n\cdot\tilde{g})=\omega_m(\tilde{g})\nu_m(n)$ for $(n,\tilde{g})\in N_J\times \widetilde{SL}_2(\R)$, where $\omega_m$ denotes the Weil representation of $\widetilde{SL}_2(\R)$. For this representation we remark that $G_J$ has $Z_J$ as the center and it is possible to consider the notion of the central characters also for irreducible representations of $G_J$.
\begin{prop}\label{UnitaryDual-G_J}
(1)~Let $(\tilde{\nu}_m,\cU_m)$ be as above. For an irreducible genuine representation $(\pi_1,\cW_{\pi_1})$ of $\widetilde{SL}_2(\R)$~(i.e. representation of $\widetilde{SL}_2(\R)$ not factoring through $SL_2(\R)$) we put the representation $\rho_{m,\pi_1}$ of $G_J$ by 
\[
\rho_{m,\pi_1}(n\cdot g):=\pi_1(g)\otimes\tilde{\nu}_m(n\cdot g)
\]
for $(n,g)\in N_J\times SL_2(\R)$, where note that $\rho_{m,\pi_1}|_{\widetilde{SL}_2(\R)}$ factors through $SL_2(\R)$. Then $(\rho_{m,\pi_1},\cW_{\pi_1}\otimes\cU_m)$ is an irreducible unitary representation of $G_J$. All irreducible unitary representations of $G_J$ with non-trivial central characters are obtained in this manner.\\
(2)~Let $(\rho,\cF_{\rho})$ be an irreducible unitary representation of $G_J$ with the trivial central character. 
Then $\rho$ is unitarily equivalent to one of the following representations:
\begin{enumerate}
\item $\rho_{\pi_1,0}(n\cdot g)=\pi_1(g)\tilde{\nu}_0(n\cdot g)$ for $(n,g)\in N_J\times SL_2(\R)$, where $\pi_1$~(respectively~$\tilde{\nu}_0$) is an irreducible unitary representation of $SL_2(\R)$~(respectively~the trivial representation of $G_J$), 
\item representations of $G_J$ induced from unitary characters of $N_0$ given by 
\[
N_0\ni n(u_0,u_1,u_2,u_3)\mapsto\e(u_0+ru_3)
\]
for $r\in\R$~(for $N_0$ see Section \ref{Groups}).
\end{enumerate}
\end{prop}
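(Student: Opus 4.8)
The plan is to establish both parts simultaneously by the Mackey machine for the regular semidirect product $G_J=N_J\rtimes SL_2(\R)$, using the classification of $\widehat{N_J}$ from Proposition \ref{UnitaryDual-N_J} as input. The first step is to describe the action of $SL_2(\R)$ on $\widehat{N_J}$ induced by conjugation. Since the copy of $SL_2(\R)$ sitting in $G_J$ (obtained by setting $\alpha=1$ in $L_J$) centralizes $Z_J$, conjugation preserves every central character $\chi_m$; hence each infinite-dimensional class $\nu_m$ with $m\neq 0$ is a fixed point. On the characters $\eta_{m_0,m_2}$, which are trivial on $Z_J$ and so factor through $N_J/Z_J\cong\R^2$ with coordinates $(u_0,u_2)$, a direct matrix computation shows that $SL_2(\R)$ acts through its standard two-dimensional (symplectic) representation. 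Thus $\widehat{N_J}$ decomposes into the fixed points $\{\nu_m\}_{m\neq 0}$, the single fixed character at the origin, and the transitive orbit $\R^2\setminus\{0\}$. These orbits are locally closed, so the action is regular and Mackey's theorem applies: every irreducible unitary representation of $G_J$ is induced from the stabilizer of a point of $\widehat{N_J}$ and is determined by an irreducible (possibly projective) representation of the corresponding little group.

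For part (1) the relevant orbit is the single point $\{\nu_m\}$, whose stabilizer is all of $SL_2(\R)$. By the Stone--von Neumann theorem $\nu_m$ extends to a projective representation of $SL_2(\R)$, whose cocycle is the metaplectic one, so that the extension is realized honestly only on the double cover $\widetilde{SL}_2(\R)$ via the Weil representation $\omega_m$. This is exactly the content I would invoke from Berndt--Schmidt \cite[Theorem 2.6.1]{Be-Sc} and Satake \cite{Sa}. The Mackey correspondence then matches irreducible unitary representations of $G_J$ with central character $\chi_m$ to irreducible projective representations of $SL_2(\R)$ carrying this cocycle, i.e.\ to genuine irreducible unitary representations $\pi_1$ of $\widetilde{SL}_2(\R)$; combining such a $\pi_1$ with $\widetilde\nu_m$ produces precisely $\rho_{m,\pi_1}$, and every representation with nontrivial central character arises this way.

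For part (2) the central character is trivial, so the representation is supported on the characters $\eta_{m_0,m_2}$, and there are two orbits. The origin $(m_0,m_2)=(0,0)$, the trivial character of $N_J$, has stabilizer $SL_2(\R)$ with trivial extension; inducing an irreducible unitary representation $\pi_1$ of $SL_2(\R)$ yields the representations $\rho_{\pi_1,0}$ of case (a), on which $N_J$ acts trivially. For the orbit $\R^2\setminus\{0\}$ the action is transitive, and the stabilizer of a suitably normalized nonzero $\eta$ is a one-parameter unipotent subgroup $U\subset SL_2(\R)$. The key observation is that, under the embeddings fixed in Section \ref{Groups}, the generator of $U$ is exactly the $u_3$-direction, so that $N_J\rtimes U=N_0$. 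Since $U\cong\R$ is abelian, the little-group representations are its unitary characters, and the characters of $N_0$ restricting to the chosen $\eta$ on $N_J$ are precisely $n(u_0,u_1,u_2,u_3)\mapsto\e(u_0+ru_3)$ with $r\in\R$. Inducing these one-dimensional characters from $N_0$ to $G_J$ gives irreducible representations by Mackey's irreducibility criterion, which is case (b).

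The hard part will not be the orbit bookkeeping but the passage to the double cover in part (1): one must confirm that the Stone--von Neumann extension of $\nu_m$ genuinely carries the metaplectic cocycle rather than a coboundary, so that exactly the genuine representations of $\widetilde{SL}_2(\R)$ occur. I would dispatch this by citing Berndt--Schmidt and Satake directly rather than recomputing the cocycle. A secondary, purely computational point is the explicit conjugation identifying $N_J\rtimes U$ with $N_0$ and normalizing the stabilizer so that the extending characters acquire the stated form $\e(u_0+ru_3)$; this is a routine matrix calculation with the coordinates of Section \ref{Groups}.
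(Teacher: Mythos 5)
Your proposal is correct and follows essentially the same route as the paper: the paper handles this proposition by citing Hirano, Berndt--Schmidt and Satake, explicitly noting that assertion (2) follows from the Mackey machine and that the cocycle of a genuine $\pi_1$ cancels that of $\tilde\nu_m|_{\widetilde{SL}_2(\R)}$ by uniqueness of the nontrivial class in $H^2(SL_2(\R),\{\pm1\})$ --- which is exactly the orbit analysis and metaplectic-obstruction argument you spell out. Your version is merely more self-contained, unpacking what the cited references encapsulate.
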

\begin{proof}
For this proposition see \cite[Propositions 2.5,~2.6]{Hi-1}. We also cite Berndt-Schmidt \cite[Theorem 2.6.1]{Be-Sc} and Satake \cite[Appendix 1,~Proposition 2]{Sa} for the assertion (1). 
For this we note that irreducible genuine representations $\sigma$ of $\widetilde{SL}_2(\R)$ has the same multiplier with $\tilde{\nu}_m|_{\widetilde{SL}_2(\R)}$, which is due to the uniqueness of the non-trivial element of $H^2(SL_2(\R),\{\pm 1\})$~(cf.~Section \ref{Groups}). Thus $\rho_{m,\pi_1}|_{\widetilde{SL}_2(\R)}$ factors through $SL_2(\R)$ and $\rho_{m,\pi_1}$ is thus a well-defined representation of $G_J$. As is remarked just before \cite[Proposition 2.6]{Hi-1} the second assertion is deduced from Mackey's method for representations of semi-direct product groups, so called Mackey machine.
\end{proof}
\subsection{Representations of the maximal compact subgroup $K$}\label{RepMaxCpt}
It is a fundamental fact that irreducible finite dimensional representations of a compact linear connected  real  reductive group are parametrized by dominant weights. For the maximal compact subgroup $K$ of $G$ the set of equivalence classes of irreducible finite dimensional representations of $K$ are in bijection with the set of the dominant weights $\{(\Lambda_1,\Lambda_2)\in\Z^2\mid\Lambda_1\ge\Lambda_2\}$, where recall that $(\Lambda_1,\Lambda_2)$ denotes the root $\Lambda_1\beta_1+\Lambda_2\beta_2$~(cf.~Section \ref{Lie-gp-alg}). The dominance respects the compact positive root $(1,-1)$. This is noting but the fact that the set of equivalence classes of irreducible finite dimensional representations of $U(2)\simeq K$ is parametrized by this set of dominant weights. For each dominant weight $\Lambda=(\Lambda_1,\Lambda_2)$, let $\tl$ be the irreducible representation of $K$ parametrized by $\Lambda$, which is the pullback of the irreducible representation $\det^{2\Lambda_2}\Sym^{\Lambda_1-\Lambda_2}$ of $U(2)$ via the isomorphism $K\simeq U(2)$. Here $\Sym^{\Lambda_1-\Lambda_2}$ denotes the $\Lambda_1-\Lambda_2$-th symmetric tensor representation of the 2-dimensional standard representation of $U(2)$. We denote the infinitesimal action of $\tl$ also by $\tl$.

We now introduce a basis $\{Z,H,Y,Y'\}$ of $\fk_{\C}$ as follows:
\[
Z:=
\begin{pmatrix}
0_2 & -\sqrt{-1}1_2\\
\sqrt{-1}1_2 & 0_2
\end{pmatrix},~H:=\frac{1}{\sqrt{-1}}(T_1-T_2),~Y:=
\begin{pmatrix}
J_2 & 0_2\\
0_2 & J_2
\end{pmatrix},~Y':=
\begin{pmatrix}
0_2 & J'_2\\
-J'_2 & 0_2
\end{pmatrix}
\]
with $J_2:=
\begin{pmatrix}
0 & 1\\
-1 & 0
\end{pmatrix}$ and $J'_2=
\begin{pmatrix}
0 & 1\\
1 & 0
\end{pmatrix}$. 
Then $\{H,X:=\frac{1}{2}(Y-\sqrt{-1}Y'),\bar{X}:=\frac{1}{2}(-Y-\sqrt{-1}Y')\}$ forms an ${\frak sl}_2$-triple. In fact, $[H,X]=2X,~[H,\bar{X}]=-2\bar{X},~[X,\bar{X}]=H$ holds. The element $Z$ belongs to the center. The representation space $V_{\Lambda}$ of $\tl$ has the dimension $d_{\Lambda}+1$ with $d_{\Lambda}=\Lambda_1-\Lambda_2$ and has a basis $\{v_k\}_{0\le k\le d_{\Lambda}}$.satisfying
\begin{align*}
\tl(Z)v_k&=(\Lambda_1+\Lambda_2)v_k,\quad\tl(X)v_k=(k+1)v_{k+1},\\
\tl(H)v_k&=(2k-d_{\Lambda})v_k,\quad\tl(\bar{X})v_k=(d_{\Lambda}+1-k)v_{k-1}
\end{align*}
for $0\le k\le d_{\Lambda}$. Later we will often need the contragredient representation $(\tl^*,V_{\Lambda}^*)$ with the representation space $V_{\Lambda}^*$. Its highest weight is $(-\Lambda_2,-\Lambda_1)$. 
The space $V_{\Lambda}^*$ has a basis $\{v_k^*\}_{0\le k\le d_{\Lambda}}$ satisfying
\begin{align*}
\tl(Z)v_k^*&=(-\Lambda_1-\Lambda_2)v_k^*,\quad\tl(X)v_k^*=(k+1)v_{k+1}^*,\\
\tl(H)v_k^*&=(2k-d_{\Lambda})v_k^*,\quad\tl(\bar{X})v_k=(d_{\Lambda}+1-k)v_{k-1}^*
\end{align*}
for $0\le k\le d_{\Lambda}$. This is nothing but what is obtained by replacing $(\Lambda_1,\Lambda_2)$ with $(-\Lambda_2,-\Lambda_1)$. Note that $d_{\Lambda}$ remains the same under such replacement of the highest weights. 
\subsection{Discrete series representations of $G$}\label{DS-rep}
We provide Harish Chandra's parametrization of discrete series representations for $G$, namely irreducible unitary representations of $G$ whose matrix coefficients belong to $L^2(G)$. Our discussion is based on \cite[Theorem 16]{Hc-1} and \cite[Theorem 9.20,~Theorem 12.21]{Kn} with the help of \cite{Ko} and \cite{V}. 

To parametrize such representations we need regular dominant analytically integral weights, namely regular dominant weights coming from the derivatives of unitary characters of the compact Cartan subgroup $\exp({\frak t})$, where see Section \ref{Lie-gp-alg} for the notation ${\frak t}$. Now note that unitary characters of $\exp({\frak t})$ are parametrized by 
\[
\{a\beta_1+b\beta_2\mid (a,b)\in\Z^2\}\simeq\Z^2.
\]
We then see that the set of the regular analytically integral weights is in bijection with \[
\Xi':=\{\lambda=(\lambda_1,\lambda_2)\in\Z^2\mid \langle\lambda,\beta\rangle\not=0~\forall\beta\in\Delta^+\}.
\] 
Here $\langle*,*\rangle$ denotes the inner product induced by the Killing form of ${\frak g}$, which can be regarded as the standard inner product of the two dimensional Euclidean space $\R^2$ in terms of the bijection $\{a\beta_1+b\beta_2\mid (a,b)\in\R^2\}\simeq\R^2$. 

By Harish Chandra's parametrization we mean the classification of discrete series representations in terms of the infinitesimal equivalence~(which means the unitary equivalence of discrete series). Let $\pi_{\lambda}$ be the discrete series representation of $G$ parametrized by $\lambda\in\Xi'$. It is known that $\pi_{\lambda}$ and $\pi_{\lambda'}$ are infinitesimally equivalent if and only if $\lambda$ and $\lambda'$ are conjugate by the Weyl group of $\fk$. 
As a result we see that the equivalence classes of discrete series representations of $G$ are in bijection with 
\[
\Xi:=\{\lambda\in\Xi'\mid \langle\lambda,(1,-1)\rangle>0\}=\{(\lambda_1,\lambda_2)\in\Xi'\mid \lambda_1>\lambda_2\},
\]
each of which is called a Harish Chandra parameter~(cf.~\cite[Terminology after Theorem 9.20]{Kn}). 

Now we introduce the following four sets of positive roots system including the set $\Delta_{c}^+:=\{(1,-1)\}$ of the compact positive root:
\begin{align*}
\Delta_{I}^+&:=\{(2,0),~(0,2),~(1,1),~(1,-1)\},\\
\Delta_{II}^+&:=\{((2,0),~(0,-2),~(1,1),~(1,-1)\},\\
\Delta_{III}^+&:=\{(2,0),~(0,-2),~(-1,-1),~(1,-1)\},\\
\Delta_{IV}^+&:=\{(-2,0),~(0,-2),~(-1,-1),~(1,-1)\}.
\end{align*}
Corresponding to the above four sets we introduce the four sets of regular dominant weights as follows:
\begin{align*}
\Xi_{I}&:=\{(\lambda_1,\lambda_2)\in\Z_{>0}^2\mid\lambda_1>\lambda_2\},\\
\Xi_{II}&:=\{(\lambda_1,\lambda_2)\in\Z_{>0}\times\Z_{<0}\mid\lambda_1>-\lambda_2\},\\
\Xi_{III}&:=\{(\lambda_1,\lambda_2)\in\Z_{>0}\times\Z_{<0}\mid\lambda_1<-\lambda_2\},\\
\Xi_{IV}&:=\{(\lambda_1,\lambda_2)\in\Z_{<0}^2\mid\lambda_1>\lambda_2\}.
\end{align*}
Discrete series representations of $G$ parametrized by $\Xi_{I}$~(respectively~$\Xi_{IV}$) are called holomorphic discrete series representations~(respectively~anti-holomorphic discrete series representations). Discrete series representations of $G$ parametrized by $\Xi_{II}\cup\Xi_{III}$ are called large in the sense of Vogan \cite[Section 6]{V}. 
The large discrete series representations are characterized by the maximality of their Gelfand Kirillov dimensions~'(by having the minimal primitive ideals) among the discrete series representations. They are known to admit Whittaker models~(cf.~\cite[Theorem 6.8.1]{Ko}), and are therefore also called generic discrete series representations. In fact, we will see that they admits  rapidly decreasing Whittaker models~(cf.~Theorem \ref{LargeDSWhittaker}).

For $(\lambda_1,\lambda_2)\in\Xi$ the discrete series representation parametrized by $(-\lambda_2,-\lambda_1)$ is contragredient to that parametrized by $(\lambda_1,\lambda_2)$. We then see that the discrete series representations parametrized by $\Xi_{II}$ and $\Xi_{III}$ are in bijection with  contragredeint representations of those parametrized by $\Xi_{III}$ and $\Xi_{II}$ respectively. 

For each $\lambda\in\Xi$ we recall that 
\[
\Lambda:=\lambda+\rho_n-\rho_c
\]
with the half sum $\rho_n$~(respectively~$\rho_c$) of non-compact positive roots~(respectively~compact positive roots) is called the Blattner parameter~(cf.~\cite[Terminology after Theorem 9.20]{Kn}), which provides the highest weight of the minimal $K$-type~(cf.~\cite[p626]{Kn}) of the discrete series representation $\pi_{\lambda}$. 
For $\lambda=(\lambda_1,\lambda_2)\in\Xi_{I}$ or $\Xi_{IV}$ we have $\Lambda=(\lambda_1+1,\lambda_2+2)$ or $(\lambda_1-2,\lambda_2-1)$ respectively. 
When $\lambda=(\lambda_1,\lambda_2)$ is in $\Xi_{II}$ or $\Xi_{III}$, we have $\Lambda=(\lambda_1+1,\lambda_2)$ or $(\lambda_1,\lambda_2-1)$ respectively.
\subsection{$P_J$-principal series representations of $G$}\label{P_JPS-rep}
We introduce the generalized principal series representations induced from the Jacobi parabolic subgroup $P_J$, which we call the $P_J$-principal series representations. 

Let $\sigma:=(D,\epsilon)$ be a representation of $M_J\simeq SL_2(\R)\times\{\pm 1_2\}$ with the sign character $\epsilon$ defined by $\epsilon(\pm 1_{2})=\pm 1$ and a (limit of) discrete series representation $D=\cD_{n}^{\pm}$ with $n\in\Z_{\ge 1}$~(cf.~Lemma \ref{GenuineRep}), where note that $\cD^{\pm}_n$ with an integer $n$ is not genuine. Given $z\in\C$ we define the quasi character 
\[
\nu_{z}:A_J\ni a_J=\diag(a_1,1,a_1^{-1},1)\mapsto a_1^z.
\] 
We then introduce the $P_J$-principal series representation $\Ind_{P_J}^G(1_{N_J}\otimes\nu_z\otimes\sigma)$ by the standard manner of the normalized parabolic induction.

By the Frobenius reciprocity of compact groups we can study the branching rule for the restriction of a $P_J$-principal series representation to $K$. Among the $K$-types $\tl$ of $\Ind_{P_J}^G(1_{N_J}\otimes\nu_z\otimes\sigma)$ the followings occur with multiplicity one~(cf.~\cite[Proposition 2.1]{M-Od-2},~\cite{M-Od-3}):
\begin{enumerate}
\item $\Lambda=(l,l)~(l\ge n,~l\equiv n\mod 2)$ and $\Lambda=(n,l)~(l\le n,~l\equiv n\mod 2)$ for $\epsilon(-1_2)=(-1)^n$ and $D=\cD_n^+$.
\item $\Lambda=(l,l-1)~(l\ge n)$ and $\Lambda=(n,l-1)~(l\le n,~l\equiv n\mod 2)$ for $\epsilon(-1_2)=-(-1)^n$ and $D=\cD_n^+$.
\item $\Lambda=(l,l)~(l\le-n,~l\equiv n\mod 2$ and $\lambda=(l,-n)~(l\ge -n,~l\equiv n\mod 2)$ for $\epsilon(-1_2)=(-1)^n$ and $D=\cD_n^-$.
\item $\Lambda=(l+1,l)~(l\le -n)$ and $\lambda=(l+1,-n)~(l\ge -n,~l\equiv n\mod 2)$ for $\epsilon(-1_2)=-(-1)^n$ and $\cD_n^-$.
\end{enumerate}
We call the $K$-type $\tl$ with $\Lambda=(n,n)$~(respectively~$(n,n-1),~(-n,-n)$ and $(-n+1,-n)$) the corner $K$-type of $\Ind_{P_J}^G(1_{N_J}\otimes\nu_z\otimes\sigma)$ for the case 1~(respectively~2,~3 and 4). 
For the cases 1 and 3~(respectively~the cases 2 and 4) the dimension of the corner $K$-type is one~(respectively~two). Following \cite{M-Od-2} we call the $P_J$-principal series enumerated as $1$ and $3$~(respectively~$2$ and $4$) even~(respectively~odd). 
\subsection{Principal series representations of $G$~(induced from the minimal parabolic subgroup)}\label{PS-rep}
Recall that $P_0$ denotes the minimal parabolic subgroup with the Langlands decomposition $P_0=N_0A_0M_0$ with 
\[
A_0:=\{a_0=\diag(a_1,a_2,a_1^{-1},a_2^{-1})\mid a_1,~a_2\in\R_{>0}\},\quad M_0:=\{\diag(\epsilon_1,\epsilon_2,\epsilon_1,\epsilon_2)\mid \epsilon_1,~\epsilon_2\in\{\pm 1\}\}
\]
(cf.~Section \ref{Groups} (1)).
Representations $\sigma$ of $M_0$ are the sign characters determined by
\[
\sigma_1:=\sigma(\diag(-1,1-1,1))\in\{\pm 1\},\quad\sigma_2:=\sigma(\diag(1,-1,1,-1))\in\{\pm 1\}.
\]
We introduce quasi characters of $A_0$ by
\[
\nu_z:A_0\ni a_0=\diag(a_1,a_2,a_1^{-1},a_2^{-1})\mapsto a_1^{z_1}a_2^{z_2}
\]
for $z=(z_1,z_2)\in\C^2$. With $\sigma$ and $\nu_z$ above we define the principal series representation by the normalized parabolic induction $\Ind_{P_0}^G(1_{N_0}\otimes\nu_z\otimes\sigma)$. 

From \cite[Proposition 3.2]{M-Od-1} we have the list of the $K$-types of $\Ind_{P_0}^G(1_{N_0}\otimes\nu_z\otimes\sigma)$ with multiplicity one as follows:
\begin{enumerate}
\item $\Lambda=(l,l)$ with $l\in\Z$ such that 
$(-1)^l=\sigma_1=\sigma_2$ 
for $(\sigma_1,\sigma_2)=\pm(1,1)$.
\item $\Lambda=(l+1,l)$ with $l\in\Z$ for $(\sigma_1,\sigma_2)=\pm(1,-1)$. 
\end{enumerate}
We call the principal series representations even~(respectively~odd) for the first case~(respectively~the second case). 
We note that the minimal $K$-type(s) of the principal series representation is/are given by $\tau_{(0,0)}$~(respectively~$\{\tau_{(1,1)},~\tau_{(-1,-1)}\}$, and $\{\tau_{(1,0)},~\tau_{(0,-1)}\}$) if $(\sigma_1,\sigma_2)=(1,1)$~(respectively~$(-1,-1)$ and $(\pm 1,\mp 1)$). For the definition of the minimal $K$-type we cite \cite[p626]{Kn}, as we also do so in Section \ref{DS-rep}. 
\section{A review on Whittaker functions and Fourier-Jacobi type spherical functions for $Sp(2,\R)$}\label{ReviewSpherical}
\subsection{A review on Whittaker functions}\label{ReviewWhittaker}
Let $\pi$ be an admissible representation  of $G=Sp(2,\R)$ with a multiplicity one $K$-type $\tau$ and let $\psi$ be a unitary character of the maximal unipotent subgroup $N_0$ of $G$. The Whittaker functions on $G$ are defined as the restriction of the intertwining operators in 
\[
\Hom_{(\g,K)}(\pi, C^{\infty}_{\psi}(N_0\backslash G))
\]
to the $K$-type $\tau$, where 
\[
C^{\infty}_{\psi}(N_0\backslash G):=\{\phi\in C^{\infty}(G)\mid \phi(ng)=\eta(n)\phi(g)\quad\forall(n,g)\in N_0\times G\}.
\]
The image of the restriction map is contained in $C_{\psi,\tau^*}^{\infty}(N_0\backslash G/K):=$\\
\[
\{\text{$C^{\infty}$-function}~W:G\rightarrow V^*~\mid W(ngk)=\psi(n)\tau^*(k)^{-1}W(g)~\forall(n,g,k)\in N_0\times G\times K\}, 
\]
where recall that $(\tau^*,V^*)$ is the notation for the contragredient representation of $(\tau,V)$.  
Now let us note that we have to impose the multiplicity one property on the $K$-type $\tau$ of $\pi$. 
Then the $K$-embedding $\tau\hookrightarrow\pi$ is unique and the notion of the Whittaker functions is well-defined. 
By $W_{\psi,\pi}(\tau^*)$ we denote the image of the restriction map. We call an element
in $W_{\psi,\pi}(\tau^*)$ a Whittaker function for $\pi$ and $K$-type $\tau^*$. 
We will need 
\begin{align*}
W_{\psi,\pi}(\tau^*)^0&:=\{w\in W_{\psi,\pi}(\tau^*)\mid\text{$w$ is rapidly decreasing}\},\\
\cS_{\psi}(N_0\backslash G)&:=\{\phi\in C^{\infty}_{\psi}(N_0\backslash G)\mid\text{$\phi$ is rapidly decreasing}\}
\end{align*}
since we are motivated by the Fourier expansion of cusp forms, which are rapidly decreasing~(cf.~\cite[Chapter 1,~Section 4]{Hc-2},~\cite{Gd}).

Let $\dim V^*=d+1$ and $\{v_k^*\}_{k=0}^d$ be a basis of $V^*$ consisting of weight vectors with highest weight vector $v_d^*$ as in Section \ref{RepMaxCpt}. When the highest weight of $\tau$ is $(\Lambda_1,\Lambda_2)$, we have $d=d_{\Lambda}=\Lambda_1-\Lambda_2$~(cf.~Section \ref{RepMaxCpt}). We can express each Whittaker function $W\in W_{\psi,\pi}(\tau^*)$ as 
\[
W(g)=\sum_{k=0}^{d}c_k(g)v_k^*
\]
with coefficient functions $c_k(g)$. 
Now recall that $G$ admits the Iwasawa decomposition of $G=N_0A_0K$~(cf.~Section \ref{Groups}), where $A_0:=
\left\{
a_0=\diag(a_1,a_2,a_1^{-1},a_2^{-1})\mid a_1,~a_2\in\R_{>0}\right\}$. We then see that the Whittaker function $W$ is determined by the restriction to $A_0$.
\subsection*{The case of large discrete series and $P_J$-principal series}
These cases are settled by \cite{Od} and \cite{M-Od-2}~(see also \cite{M-Od-3}). We first review the following result by Oda \cite{Od} for the case of large discrete series representations.  For this note that $\pi_{\lambda}$ is replaced by $\pi_{\lambda}^*$ with $\lambda\in\Xi_{II}$ in \cite[Theorem 9.1]{Od}.
\begin{thm}[Oda]\label{LargeDSWhittaker}
Assume that $\psi$ is a non-degenerate unitary character of $N_0$, namely, $\psi(n(u_0,u_1,u_2,u_3))=\exp(2\pi\sqrt{-1}(m_0u_0+m_3u_3))\in\C^{\times}$ with $m_0m_3\not=0$.\\
(1)~(cf.~\cite[Theorem 9.1]{Od})~Let $\pi_{\lambda}$ be a discrete series representation with Harish Chandra parameter $\lambda\in\Xi_{III}$ and $\tl$ be the minimal $K$-type of $\pi_{\lambda}$ with highest weight $\Lambda=(\Lambda_1,\Lambda_2)$. We have
\[
\dim W_{\psi,\pi}(\tau^*)^0=
\begin{cases}
1&(m_3<0),\\
0&(m_3>0).
\end{cases}
\]
The coefficient function $c_{d_{\Lambda}}(a_0)$ of the restriction $W|_{A_0}$ of the Whittaker function $W$ to $A_0$ is given explicitly by $a_1^{-\Lambda_2+1-d_{\Lambda}}a_2^{-\Lambda_2}\left(\displaystyle\frac{a_1}{a_2}\right)^{d_{\Lambda}}e^{2\pi m_3a_2^2}h_{d_{\Lambda}}(a_0)$ with
\[
h_{d_{\Lambda}}(a_0)=\displaystyle\int_0^{\infty}t^{-\Lambda_1-3/2}W_{0,\Lambda_1}(t)\exp\left(\dfrac{t^2}{64\pi m_3a_2^2}+\dfrac{64\pi^3m_0^2m_3a_1^2}{t^2}\right)\dfrac{dt}{t},
\]
up to constant multiples. 
Here, for $y>0$, $W_{\kappa,\mu}(y)$ 
denotes a unique rapidly decreasing solution of the confluent hypergeometric equation by Whittaker~(cf.~\cite[Chapter 16]{W-W}):
\[
\dfrac{d^2}{dy^2}W+\left\{-\dfrac{1}{4}+\dfrac{\kappa}{y}+\dfrac{1/4-\mu^2}{y^2}\right\}W=0.
\]
The other coefficients $c_k$ for $0\le k\le d_{\Lambda}-1$ are obtained from $c_{d_{\Lambda}}$ by the recurrence relation $({\rm E})_k$ in \cite[Section 8]{Od}.\\
(2)~For a discrete series representation $\pi_{\lambda}$ with $\lambda=(\lambda_1,\lambda_2)\in\Xi_{II}$ we have
\[
\dim W_{\psi,\pi}(\tau^*)^0=
\begin{cases}
1&(m_3>0),\\
0&(m_3<0).
\end{cases}
\]
When $\dim W_{\psi,\pi}(\tau^*)^0\not=0$ the Whittaker function $W^*$ for this case is uniquely given by
\[
W^*(g)=W(\delta g\delta^{-1}\xi)\quad(g\in G)
\]
with 
\[
\delta=
\begin{pmatrix}
-I_2 & 0_2\\
0_2 & I_2
\end{pmatrix},\quad \xi=
\begin{pmatrix}
J'_2 & 0_2\\
0_2 & J'_2
\end{pmatrix}
\]
(see Section \ref{RepMaxCpt} for the notation $J'_2=:
\begin{pmatrix}
0 & 1\\
1 & 0
\end{pmatrix}$), up to constant multiples, 
where $W$ is the Whittaker function for the discrete series representation with Harish Chandra parameter $(-\lambda_2,-\lambda_1)\in\Xi_{III}$.
\end{thm}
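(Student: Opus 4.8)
Statement (1) is Oda's theorem, which I would take as an input; the real task is to derive (2) from (1). The plan is to exploit the symmetry of $G=Sp(2,\R)$ implemented by conjugation by $\delta$ together with right translation by $\xi$, so as to reduce the $\Xi_{II}$ case to the $\Xi_{III}$ case. First I would record the structural facts about $\delta$ and $\xi$. A direct check gives ${}^t\delta J_4\delta=\delta J_4\delta=-J_4$, so $\delta$ lies in the symplectic similitude group with factor $-1$ and $\delta\notin G$, yet $c_\delta\colon g\mapsto\delta g\delta^{-1}$ is an automorphism of $G$. Its action on the compact Cartan $\mathfrak{t}=\R T_1\oplus\R T_2$ is computed to be $c_\delta(T_1)=-T_1$, $c_\delta(T_2)=-T_2$, i.e.\ $c_\delta=-1$ on $\mathfrak{t}$. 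Since discrete series are classified by their Harish-Chandra parameters up to conjugacy by the compact Weyl group, and since the contragredient of $\pi_{(\lambda_1,\lambda_2)}$ is $\pi_{(-\lambda_2,-\lambda_1)}$ (Section \ref{DS-rep}), it follows that for $\lambda=(\lambda_1,\lambda_2)\in\Xi_{II}$ and $\lambda'=(-\lambda_2,-\lambda_1)\in\Xi_{III}$ one has $\pi_{\lambda'}\circ c_\delta\cong\pi_{\lambda'}^*=\pi_\lambda$. Because moreover $\xi\in K$, the assignment $W\mapsto W^*$ with $W^*(g):=W(\delta g\delta^{-1}\xi)$ is a linear bijection between the corresponding Whittaker spaces; I would then simply verify that $W^*$ has all the required properties.

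Next I would track the left $N_0$-equivariance and the rapid decrease. A short matrix computation yields $\delta\,n(u_0,u_1,u_2,u_3)\,\delta^{-1}=n(u_0,-u_1,-u_2,-u_3)$, so precomposition with $c_\delta$ sends the character $\psi_{m_0,m_3}$ to $\psi_{m_0,-m_3}$. Hence, if $W$ is Oda's Whittaker function for $\pi_{\lambda'}$ attached to $\psi_{m_0,-m_3}$, then $W^*$ carries the left character $\psi_{m_0,m_3}$; the nonvanishing range $m_3<0$ of (1) for the index $-m_3$ becomes the range $m_3>0$ in (2), and the dimension count of (2) follows from that of (1) via the bijection. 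For rapid decrease I would use that $\delta$ centralizes the diagonal torus, so $\delta a_0\delta^{-1}=a_0$ for all $a_0\in A_0$; since $\xi\in K$ this gives $W^*(a_0)=W(a_0\xi)=\tau^*(\xi)^{-1}W(a_0)$, so that $W^*|_{A_0}$ is a fixed invertible linear image of the rapidly decreasing $W|_{A_0}$ of (1). As the growth of these functions is detected on $A_0$ through the Iwasawa decomposition $G=N_0A_0K$, this places $W^*$ in the superscript-$0$ space.

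It then remains to check that $W^*$ carries the correct $K$-type. Rewriting $W^*(gk)$ by pushing $\delta k\delta^{-1}$ past $\xi$ shows that $W^*$ transforms on the right by $\tau^*\circ c_w$, where $w:=\xi\delta$. Computing on $\mathfrak{t}$ gives $c_w(T_1)=-T_2$ and $c_w(T_2)=-T_1$, so $c_w$ acts on weights by $(\Lambda_1,\Lambda_2)\mapsto(-\Lambda_2,-\Lambda_1)$ and fixes the compact positive root $(1,-1)$, hence preserves the compact dominant chamber. Applying this to the minimal $K$-type of $\pi_{\lambda'}$, whose Blattner parameter for $\lambda'\in\Xi_{III}$ is $(\lambda'_1,\lambda'_2-1)=(-\lambda_2,-\lambda_1-1)$, produces exactly the contragredient of the minimal $K$-type $(\lambda_1+1,\lambda_2)$ of $\pi_\lambda$ for $\lambda\in\Xi_{II}$. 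Thus $W^*$ lands in $W_{\psi,\pi_\lambda}(\widetilde\tau^*)^0$ for the minimal $K$-type $\widetilde\tau$ of $\pi_\lambda$, as required.

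Finally, uniqueness up to a constant is immediate from $\dim W_{\psi,\pi}(\tau^*)^0\le 1$ for nondegenerate $\psi$ (cf.\ \cite{W-1}), so $W^*$ is the asserted Whittaker function and vanishes precisely when $m_3<0$. I expect the main obstacle to be conceptual rather than computational: the delicate step is the identification $\pi_{\lambda'}\circ c_\delta\cong\pi_\lambda$ together with the matching of minimal $K$-types under $c_w$, and above all the careful bookkeeping of the sign of $m_3$ and of the weight $(\Lambda_1,\Lambda_2)$ through the two intertwined operations of conjugation by $\delta$ and translation by $\xi$. Once the three explicit computations ($\delta n\delta^{-1}$, $c_\delta|_{\mathfrak{t}}$, $c_w|_{\mathfrak{t}}$) are in hand, the remaining verifications are routine.
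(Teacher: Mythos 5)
Your proposal is correct, and for part (2) it follows the same basic strategy as the paper: the change of variables $g\mapsto\delta g\delta^{-1}\xi$, the observation that $\delta n(u_0,u_1,u_2,u_3)\delta^{-1}=n(u_0,-u_1,-u_2,-u_3)$ flips the character to $\psi_{m_0,-m_3}$, and the matching of $K$-types under conjugation by $\xi\delta$ (all three of your explicit computations check out, and your identification of $\tau'^{*}\circ c_{\xi\delta}$ with the contragredient of the Blattner $K$-type $(\lambda_1+1,\lambda_2)$ agrees with the paper's remark that $W^*$ transforms by $\tl$ rather than $\tl^*$). Where you diverge is in certifying that $W^*$ actually lies in the Whittaker space of $\pi_\lambda$ for $\lambda\in\Xi_{II}$: the paper argues at the level of the characterizing differential equations, showing that the non-compact roots $(1,1),(2,0),(0,-2)$ defining the Schmid operator for $\Xi_{II}$ are carried by $\delta^{-1}\xi$ to those for $\Xi_{III}$, and then invokes Yamashita's characterization; you instead appeal to the classification of discrete series, computing that $c_\delta=-1$ on $\frak{t}$ so that $\pi_{\lambda'}\circ c_\delta\cong\pi_{\lambda'}^{*}=\pi_\lambda$. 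Your route is shorter and more conceptual, but it silently uses the standard fact that the Harish-Chandra parameter of $\pi_{\lambda'}\circ c_\delta$ is $\lambda'\circ dc_\delta$ up to the compact Weyl group (a character computation on the compact Cartan), whereas the paper's Schmid-operator argument is what one needs anyway to transport the explicit integral formulas of Theorem \ref{LargeDSWhittaker} (1) and is reused verbatim for the $P_J$-principal series and the Fourier-Jacobi spherical functions later in the paper. Your treatment of rapid decrease via $\delta a_0\delta^{-1}=a_0$ and $W^*|_{A_0}=\tau'^{*}(\xi)^{-1}W|_{A_0}$ is cleaner and more explicit than the paper's one-line assertion.
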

\begin{proof}
The idea to obtain the explicit formula in (1) is explained in \cite[Section 8.1]{M-Od-2}. From the calculation of the explicit formula  we see that the condition $m_3<0$ is necessary for the Whittaker function to be rapidly decreasing, for which note that the characterizing differential equation for the case of large discrete series is quite similar to that for the case of $P_J$-principal series as in \cite{M-Od-2}. 

We first check the rapidly decreasing property of the explicit Whittaker functions for $m_3<0$. 
Though this is already pointed out in the proof of \cite[Theorem 9.1]{Od} we give a more detailed explanation. 
In the integrand of the integral expression of $h_{d_{\Lambda}}(a_0)$, $W_{0,\Lambda_1}(t)$ is rapidly decreasing and thus bounded. 
We therefore see  that $|h_{d_{\Lambda}}(a_0)|$ is bounded by a constant multiple of 
\[
\displaystyle\int_0^{\infty}t^{-\Lambda_1-3/2}\exp\left(\dfrac{t^2}{64\pi m_3a_2^2}+\dfrac{64\pi^3m_0^2m_3a_1^2}{t^2}\right)\dfrac{dt}{t}.
\]
It suffices to verify the rapidly decreasing property of this integral. 
By the change of variables $t\rightarrow x:=\left(\displaystyle\frac{t}{8\pi\sqrt{|m_0|(-m_3)a_1a_2}}\right)^2$ this can be understood by the integral expression of the $K$-Bessel function
\[
K_{\nu}(y)=\displaystyle\int_0^{\infty}\exp\left(-y\left(x+\dfrac{1}{x}\right)\right)x^{\nu}\dfrac{dx}{x}.
\]
It is well known that this is rapidly decreasing. This implies the rapidly decreasing property of $c_{d_{\Lambda}}(a_0)$. 
As for the other $c_k$ with $0\le k\le d_{\Lambda}-1$ the rapidly decreasing property also holds since the recurrence relation $({\rm E})_k$ in \cite[Section 8]{Od} preserves such property. 

We next explain how to get $\dim W_{\psi,\pi}(\tau^*)^0$. In \cite[Theorem 9.1]{Od} the result on $\dim W_{\psi,\pi}(\tau^*)^0$ is formulated as
\[
\dim\Hom_{(g,K)}(\pi_{\lambda},\cS_{\psi}(N_0\backslash G))=
\begin{cases}
1&(m_3<0)\\
0&(m_3>0)
\end{cases}
\]
for $\lambda\in\Xi_{III}$. 
Here note that the obtained Whittaker function is rapidly decreasing as we have seen. 
We can say that the formula for $\dim W_{\psi,\pi}(\tau^*)^0$ in the first assertion is due to the result on the explicit formula for the Whittaker functions and Yamashita's characterization theorem \cite[Theorem 2.4]{Ya} of some general intertwining operators for discrete series. For \cite[Theorem 2.4]{Ya} we note that large discrete series representations satisfy the condition ``far from the wall''~(for the definition see \cite[Definition 1.7]{Ya}), which is necessary to use \cite[Theorem 2.4]{Ya}. 

To see the formula for $\dim W_{\psi,\pi}(\tau^*)^0$ in the second assertion we have two remarks. 
As the first remark we note that the Whittaker function $W^*$ satisfies the left-equivariance with respect to $N_0\ni n(u_0,u_1,u_2,u_3)\mapsto\psi(\delta n(u_0,u_1,u_2,u_3)\delta^{-1})$ and the right equivariance with respect to $\tl$~(not $\tl^*$) when $W$ is associated with  the unitary character $\psi$ of $N_0$ and $\tl$ is the minimal $K$-type of $\pi_{\lambda}$. In addition we note that $\delta g\delta^{-1}\xi\in G$ for $g\in G$ and that the unitary character $N_0\ni n(u_0,u_1,u_2,u_3)\mapsto\psi(\delta n(u_0,u_1,u_2,u_3)\delta^{-1})$ is parametrized by $(m_0,-m_3)$ instead of $(m_0,m_3)$.

As the second remark let us note that the differential equations characterizing the Whittaker functions are induced by the infinitesimal action of the Schmid operator~(cf.~\cite[Section 5]{Od}). We should further note that the Schmid operator for $\pi_{\lambda}$ with $\lambda\in\Xi_{II}$ is conjugate to that for $\pi_{\lambda}$ with $\lambda\in\Xi_{III}$ by $\delta^{-1}\xi$. In fact, the Schmidt operator for $\lambda\in\Xi_{II}$ is associated with non-compact roots $(1,1),~(2,0),~(0,-2)$~(for the notation of the roots see Section \ref{Lie-gp-alg}). These three roots and their root vectors are conjugate to $(-1,-1),~(0,-2),~(2,0)$ and their root vectors by $\delta^{-1}\xi$ respectively. 
The latter roots define the Schmid operator for $\lambda\in\Xi_{II}$. 
The Whittaker function $W^*$ is therefore annihilated by the Schmid operator for $\lambda\in\Xi_{II}$. The Whittaker function $W^*$ is a rapidly decreasing for $m_3>0$ as $W$ is for $m_3<0$. 

The change of the variables $G\ni g\mapsto \delta g\delta^{-1}\xi\in G$ therefore reduces the problem to the first assertion. 
We can thus say that the second assertion on  $\dim W_{\psi,\pi}(\tau^*)^0$ follows also from the first assertion.  
\end{proof}
We next review the results by Miyazaki-Oda \cite{M-Od-2}~(see also \cite{M-Od-3}) on the Whittaker functions for $P_J$-principal series representations. Their results are given only for the $P_J$-principal series representations ${\rm Ind}_{P_J}^G(1_{N_J}\otimes\nu_z\otimes\sigma)$ with $\sigma=(\cD_n^-,\epsilon)$. However, the Whittaker functions for these $P_J$-principal series are related to those for $P_J$-principal series with $\sigma=(\cD_n^+,\epsilon)$ by the change of variables $G\ni g\mapsto\delta g\delta^{-1}\xi\in G$. 
The results can be stated for $P_J$-principal series representations with both choices of $\cD_n^{\pm}$.  
According to \cite[Theorem 14.15]{Kn} we know that the $P_J$-principal series just mentioned are irreducible when the parameters $z$ are non-zero and purely imaginary. Though such irreducibility is not assumed for the following theorem, we will assume it in the discussion of the Fourier-Jacobi expansion. 
Recall that the $P_J$-principal series enumerated as $1$ and $3$~(respectively~$2$ and $4$) are called  even~(respectively~odd)~(cf.~Section \ref{P_JPS-rep}).
 
\begin{thm}[Miyazaki-Oda]\label{PJWhittaker}
Let $\pi$ be a $P_J$-principal series representation 
and assume that the unitary character $\psi\in\hat{N_0}$ is non-degenerate, i.e. $m_0m_3\not=0$~(cf.~Theorem \ref{LargeDSWhittaker}).\\
(1)
\begin{enumerate}
\item Let $\pi$ be even and associated with $\sigma=(\cD_n^-,\epsilon)$, and let $W$ be the rapidly decreasing Whittaker function for $\pi$ and $\tl^*$, where $\tl$ is the corner $K$-type of $\pi$ with $\Lambda=(-n,-n)$. When $m_3<0$ the restriction of $W|_{A_0}$ to $A_0$ of the Whittaker function $W$ is uniquely given by
\[
a_1^{n+1}a_2^n\exp(2\pi m_3a_2^2)\displaystyle\int_0^{\infty}t^{-n+(1/2)}W_{0,z}(t)\exp(\frac{t^2}{64\pi m_3a_2^2}+\frac{64\pi^3m_0^2m_3a_1^2}{t^2})\dfrac{dt}{t}
\]
up to constant multiples.

If $m_3>0$ there is no such Whittaker function.

When $\pi$ is associated with $(\cD_n^+,\epsilon)$ the corner $K$-type is replaced by $\tl$ with $\Lambda=(n,n)$ and the explicit formula is obtained by replacing $(m_0,m_3)$ with $(m_0,-m_3)$.
\item Let $\pi$ be odd and associated with $\sigma=(\cD_n^-,\epsilon)$, and let $W(g)=c_0(g)v_0^*+c_1(g)v_1^*$ be the the rapidly decreasing Whittaker function for $\pi$ and $\tl^*$, where $\tl$ is the corner $K$-type of $\pi$ with $\Lambda=(-n+1,-n)$. When $m_3>0$ the restriction $W|_{A_0}$ of $W$ to $A_0$ is uniquely given by
\begin{align*}
c_0(a)&=a_1^{n+2}a_2^n\exp(2\pi m_3a_2^2)\displaystyle\int_0^{\infty}t^{-1/2-n}W_{0,z}(t)\exp(\frac{t^2}{64\pi m_3a_2^2}+\frac{64\pi^3m_0^2m_3a_1^2}{t^2})\dfrac{dt}{t},\\
c_1(a)&=a_1^{n+1}a_2^{n-1}\exp(2\pi m_3a_2^2)\displaystyle\int_0^{\infty}t^{3/2-n}W_{0,z}(t)\exp(\frac{t^2}{64\pi m_3a_2^2}+\frac{64\pi^3m_0^2m_3a_1^2}{t^2})\dfrac{dt}{t},
\end{align*}
up to constant multiples. 

If $m_3<0$ there is no such Whittaker function.

When $\pi$ is associated with $(\cD_n^+,\epsilon)$ the corner $K$-type is replaced by $\tl$ with $\Lambda=(n,n-1)$ and the explicit formula is obtained by replacing $(m_0,m_3)$ with $(m_0,-m_3)$.
\end{enumerate}
(2)~Let $\pi$ be associated with $\sigma=(\cD^-_n,\epsilon)$ and the $K$-type $\tl$ above. We have
\[
\dim W_{\psi,\pi}(\tl^*)^0=
\begin{cases}
1&(m_3<0)\\
0&(m_3>0)
\end{cases}.
\]
On the other hand, for $\pi$ with $\sigma=(\cD^+_n,\epsilon)$ and the $K$-type $\tl$ as above, 
\[
\dim W_{\psi,\pi}(\tau^*)^0=
\begin{cases}
1&(m_3>0)\\
0&(m_3<0)
\end{cases}.
\]
\end{thm}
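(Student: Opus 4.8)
The plan is to proceed in parallel with the proof of Theorem~\ref{LargeDSWhittaker}, exploiting the remark made there that the differential equations characterizing the Whittaker functions of $P_J$-principal series are close to those of the large discrete series. First I would fix the corner $K$-type $\tl$, which occurs in $\Ind_{P_J}^G(1_{N_J}\otimes\nu_z\otimes\sigma)$ with multiplicity one by the branching rule of Section~\ref{P_JPS-rep}. Using the unique $K$-embedding $\tl\hookrightarrow\pi$ and the Iwasawa decomposition $G=N_0A_0K$, a Whittaker function $W\in W_{\psi,\pi}(\tl^*)$ is completely determined by its restriction to $A_0$, hence by the coefficient function(s) $c_k(a_0)$ relative to the basis $\{v_k^*\}$ of $V_{\Lambda}^*$. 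In the even cases $\Lambda=(-n,-n)$ or $(n,n)$ the corner $K$-type is one-dimensional, so there is a single coefficient function; in the odd cases $\Lambda=(-n+1,-n)$ or $(n,n-1)$ it is two-dimensional, giving the pair $(c_0,c_1)$.

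Next I would derive the holonomic system these coefficients satisfy. Because the corner $K$-type is extreme, it is annihilated by half of the noncompact part $\p_{\C}$, so the gradient (Schmid-type) operator attached to $\tl$ produces a low-rank first-order system; combined with the Casimir eigenvalue determined by $(\nu_z,\sigma)$, this pins down the radial behaviour on $A_0$. Writing $a_0=\diag(a_1,a_2,a_1^{-1},a_2^{-1})$ and separating the two variables, I expect the $a_2$-direction to contribute the Gaussian factor $\exp(2\pi m_3a_2^2)$ together with monomial prefactors, while the remaining radial variable yields a confluent hypergeometric equation whose rapidly decreasing solution is the Whittaker function $W_{0,z}(t)$, with the continuous induction parameter $z$ entering as the second index and the integer $n$ fixing the power of $t$ in the integrand (e.g.\ $t^{-n+1/2}$ in the even case).

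I would then solve the system and impose rapid decay. The integral representations in the statement arise by writing the solution as an integral against $W_{0,z}(t)$ with the kernel $\exp\bigl(\tfrac{t^2}{64\pi m_3a_2^2}+\tfrac{64\pi^3m_0^2m_3a_1^2}{t^2}\bigr)$. To obtain both the dimension formula in part~(2) and the existence dichotomy, I would analyze this integral as in Oda's argument: after a substitution of the form $t\mapsto(t/8\pi\sqrt{|m_0|(-m_3)}\,a_1a_2)^2$ it becomes a $K$-Bessel integral, which converges and is rapidly decreasing only for the sign of $m_3$ making the Gaussian kernel decay; the opposite sign admits no rapidly decreasing solution, forcing $\dim W_{\psi,\pi}(\tl^*)^0=0$. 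In the odd case one further checks that the recurrence relating $c_0$ and $c_1$ preserves rapid decay, so the dimension is again at most one.

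Finally I would pass from $\sigma=(\cD_n^-,\epsilon)$ to $\sigma=(\cD_n^+,\epsilon)$ by the change of variables $g\mapsto\delta g\delta^{-1}\xi$ employed in the proof of Theorem~\ref{LargeDSWhittaker}(2): this conjugation intertwines the gradient operators attached to the two sign choices and sends the character parametrized by $(m_0,m_3)$ to the one parametrized by $(m_0,-m_3)$, so the explicit formula and the sign constraint for the $\cD_n^+$ case follow from the $\cD_n^-$ case by replacing $m_3$ with $-m_3$. The main obstacle I anticipate is the odd case: decoupling the two-component system for $(c_0,c_1)$ into a single solvable equation and verifying that the rapidly decreasing solution is unique up to scalar, together with tracking precisely how $g\mapsto\delta g\delta^{-1}\xi$ permutes the two weight vectors of the two-dimensional corner $K$-type, which is the most delicate bookkeeping of the argument.
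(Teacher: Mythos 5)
Your outline reproduces the right overall shape (explicit integral formula for the $\sigma=(\cD_n^-,\epsilon)$ case, rapid decay via the $K$-Bessel substitution, and the passage to $(\cD_n^+,\epsilon)$ by $g\mapsto\delta g\delta^{-1}\xi$ using the conjugation of the shift operators and the invariance of the Casimir operator — this last step is exactly what the paper does). But there is a genuine gap in how you obtain the vanishing half of the dimension formula in part (2). Showing that the particular integral representation fails to converge or to decay rapidly for the wrong sign of $m_3$ only rules out \emph{that one solution}; the space $W_{\psi,\pi}(\tl^*)$ is the image of the full space $\Hom_{(\g,K)}(\pi,C^{\infty}_{\psi}(N_0\backslash G))$, which for a principal-series-type representation is several-dimensional (generically of dimension equal to the order of the Weyl group), so one must argue that \emph{no} linear combination of the independent solutions of the holonomic system coming from intertwining operators is rapidly decreasing. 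The paper closes this gap with an input you do not have: by Wallach's heredity theorem (\cite[Theorem 39]{W-3}, \cite[Theorem 15.6.7]{W-4}) the multiplicity of the moderate growth Whittaker model of $\pi=\Ind_{P_J}^G(1_{N_J}\otimes\nu_z\otimes\sigma)$ equals that of the inducing representation $\cD_n^{\mp}$ of $SL_2(\R)$, and for $\cD_n^-$ that multiplicity is $1$ when $m_3<0$ and $0$ when $m_3>0$ (the $SL_2$ Whittaker function being an explicit exponential). Combined with the rapid decay of the explicit solution this gives both $\dim W_{\psi,\pi}(\tl^*)^0=1$ and $=0$ in the respective cases. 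Without such an a priori multiplicity bound your argument proves at most $\dim W_{\psi,\pi}(\tl^*)^0\ge 1$ for the good sign and nothing for the bad sign. (Compare the large discrete series case, where the analogous role is played by Yamashita's characterization theorem.)

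A secondary, repairable inaccuracy: you propose to derive the radial system from a ``gradient (Schmid-type) operator'' annihilating the corner $K$-type, i.e.\ the discrete-series mechanism. For $P_J$-principal series the corner $K$-type vector is not annihilated by half of $\p_{\C}$, and the characterizing system is instead obtained from the Casimir operator together with the Miyazaki--Oda shift operators (\cite[Section 8, Definition (8.1), Section 9]{M-Od-1}); this is also what makes the $\delta^{-1}\xi$-conjugation argument work, since it is the shift operators (not a Schmid operator) that are intertwined between the $(\cD_n^+,\epsilon)$ and $(\cD_n^-,\epsilon)$ cases. Note finally that the paper does not re-derive the explicit formulas at all: it simply cites \cite[Theorems 8.1 and 8.2]{M-Od-2} for the $(\cD_n^-,\epsilon)$ case, so the analytic work you sketch in your second and third paragraphs is delegated to the literature.
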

\begin{proof}
First of all, we remark that how to check the rapidly decreasing property of the explicit Whittaker functions is quite similar to the case of large discrete series representations. We do not thus go into detail for this case.

The assertions for the case of $\sigma=(\cD_n^-,\epsilon)$ are stated as \cite[Theorems 8.1 and 8.2]{M-Od-2}~(see also \cite{M-Od-3}). 
We can deduce the dimension formula for $W_{\psi,\pi}(\tau^*)^0$ also from the fact that the multiplicity of the moderate growth Whittaker model for $\pi$ coincides with that for $\cD^{-}_n$, which is due to \cite[Theorem 39]{W-3}~(see also \cite[Theorem 15.6.7]{W-4}). In fact we have now known that both Whittaker functions are rapidly decreasing, where note that the Whittaker function for $\cD^-_n$ with respect to the additive character indexed by $m_3<0$ is given uniquely up to scalars in terms of the exponential function and proved to be rapidly decreasing. 

A Whittaker function for the case of $\sigma=(\cD_n^+,\epsilon)$ is written as
 $W^*(g)=W(\delta g\delta^{-1}\xi)~(g\in G)$ with the Whittaker function $W$ for the case of $\sigma=(\cD^-,\epsilon)$. This reduction of the problem is quite similar to the case of large discrete series representations. 
To see this we now note that the Whitaker functions for $P_J$-principal series are characterized by the differential equations arising from the Casimir operators and the shift operators~(for the definition of the shift operators see \cite[Section 8,~Definition (8.1),~Section 9]{M-Od-1}). The shift operators for $P_J$-principal series with $\sigma=(\cD_n^+,\epsilon)$ and $(\cD_n^-,\epsilon)$ are related each other by $\delta^{-1}\xi$-conjugate. The Casimir operator remains unchanged by $G$-conjugate. We thereby see that $W^*$ satisfies the characterizing differential equations for the case of $P_J$-principal series with $\sigma=(\cD_n^+,\epsilon)$. 
\end{proof}
\subsection*{The case of the principal series representations~(induced from the minimal parabolic subgroup)}
We review the formula of this case from Ishii \cite[Theorems 3.2,~3.3,~3.4]{Is-2} and Niwa \cite{Nw}. We assume that the principal series representations $\pi:=\Ind_{P_0}^G(1_{N_0}\otimes\nu_z\otimes\sigma)$ satisfy
\begin{equation}\label{PS-assumption}
\text{neither $z_1,~z_2$ or $z_1\pm z_2$ is an integer,}
\end{equation}
which is assumed in \cite[(1.2)]{Is-2}. If we assume in addition that $z_1$ and $z_2$ are purely imaginary this implies the irreducibility of $\pi$ by \cite[Theorem 14.15]{Kn} as in the case of the $P_J$-principal series.
\begin{thm}[Ishii,~Niwa]\label{PS-Whittaker}
Let $\pi$ be a principal series representation with the assumption (\ref{PS-assumption}). 
In what follows, we put $y_1:=a_1/a_2$ and $y_2:=a_2^2$ and suppose that $\psi\in\hat{N_0}$ is parametrized by $m_0=m_1=1$.
\begin{enumerate}
\item Let $\pi$ be even and recall that $\tl$ with $\Lambda=(l,l)=(0,0)$ or $\pm(1,1)$ is a multiplicity one $K$-type of $\pi$~(cf.~Section \ref{PS-rep}). Up to constant multiples, the restriction $W|_{A_0}$ of the Whittaker function $W$ for $\pi$ with $K$-type $\tl^*$ is uniquely given by
\begin{align*}
&y_1^2y_2^{3/2}\displaystyle\int_0^{\infty}\displaystyle\int_0^{\infty}K_{(z_1-z_2)/2}(2\pi t_1/t_2)K_{(z_1+z_2)/2}(2\pi t_1t_2)\\
&\times\exp\left(-\pi\left(\frac{y_1^2y_2}{t_1^2}+\frac{t_1^2}{y_2}+\frac{y_2}{t_2^2}+y_2t_2^2\right)\right)\frac{dt_1}{t_1}\frac{dt_2}{t_2}
\end{align*}
for $\Lambda=(0,0)$, and 
\begin{align*}
&y_1^{5/2}y_2^{2}\displaystyle\int_0^{\infty}\displaystyle\int_0^{\infty}K_{(z_1-z_2)/2}(2\pi t_1/t_2)K_{(z_1+z_2)/2}(2\pi t_1t_2)\\
&\times\left(\frac{1}{t_1t_2}-l\frac{t_2}{t_1}\right)\exp\left(-\pi\left(\frac{y_1^2y_2}{t_1^2}+\frac{t_1^2}{y_2}+\frac{y_2}{t_2^2}+y_2t_2^2\right)\right)\frac{dt_1}{t_1}\frac{dt_2}{t_2}
\end{align*}
for $\Lambda=(l,l)=\pm(1,1)$.
\item Let $\pi$ be odd with $(\sigma_1,\sigma_2)=(1,-1)$ and recall that $\tl$ with $\Lambda=(1,0)$ or $(0,-1)$ is a multiplicity one $K$-type of  $\pi$. Furthermore let $y:=(y_1,y_2)$. Up to constant multiples the restriction $W|_{A_0}$ of the Whittaker function $W$ of $\pi$ with $K$-type $\tl^*$ is uniquely given as
\[
\begin{cases}
y_1^2y_2^{3/2}\{(P_1^{(z_1,z_2)}(y)+P_2^{(z_1,z_2)}(y))v_0^*+(Q_1^{(z_1,z_2)}(y)+Q_2^{(z_1,z_2)}(y))v_1^*\}&(\Lambda=(1,0)),\\
y_1^2y_2^{3/2}\{(Q_1^{(z_1,z_2)}(y)-Q_2^{(z_1,z_2)}(y))v_0^*+(-P_1^{(z_1,z_2)}(y)+P_2^{(z_1,z_2)}(y))v_1^*\}&(\Lambda=(0,-1)).
\end{cases}
\]
\end{enumerate}
Here
\begin{align*}
P_1^{(z_1,z_2)}(y)=&(\pi y_2)^{1/2}\displaystyle\int_0^{\infty}\int_0^{\infty}\left(\frac{z_2+1}{2}-\frac{\pi y_1^2y_2}{t_1^2}\right)K_{(z_1-z_2-1)/2}(2\pi t_1/t_2)K_{(z_1+z_2+1)/2}(2\pi t_1t_2)\\
&\times\exp\left(-\pi\left(\frac{y_1^2y_2}{t_1^2}+\frac{t_1^2}{y_2}+\frac{y_2}{t_2^2}+y_2t_2^2\right)\right)\frac{dt_1}{t_1}\frac{dt_2}{t_2},\\
P_2^{(z_1,z_2)}(y)=&-P_1^{(z_1,-z_2)}(y),\\
Q_1^{(z_1,z_2)}(y)=&(\pi y_1)(\pi y_2)^{1/2}\displaystyle\int_0^{\infty}\displaystyle\int_0^{\infty}K_{(z_1-z_2-1)/2}(2\pi t_1/t_2)K_{(z_1+z_2+1)/2}(2\pi t_1t_2)\\
&\times\exp\left(-\pi\left(\frac{y_1^2y_2}{t_1^2}+\frac{t_1^2}{y_2}+\frac{y_2}{t_2^2}+y_2t_2^2\right)\right)\frac{dt_1}{t_1}\frac{dt_2}{t_2},\\
Q_2^{(z_1,z_2)}(y)=&-Q_1^{(z_1,-z_2)}(y).
\end{align*}
For an odd principal series representation $\pi$ with $(\sigma_1,\sigma_2)=(-1,1)$ the explicit formula for the Whittaker function is obtained by just  exchanging $z_1$ and $z_2$.
\end{thm}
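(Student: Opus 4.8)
The plan is to characterize each Whittaker function by a holonomic system of differential equations, solve it explicitly in the spherical case by an integral representation, and then propagate the answer to the remaining $K$-types by first-order shift operators together with a change of variables. First I would use the Iwasawa decomposition $G=N_0A_0K$, the left $N_0$-equivariance under $\psi$, and the right $\tl^*$-equivariance to reduce a Whittaker function to its restriction $W|_{A_0}$, expressed in the coordinates $y_1=a_1/a_2$, $y_2=a_2^2$. On $A_0$ the vector of coefficient functions $c_k$ must be annihilated by the radial parts of two algebraically independent generators of $Z(\g)$ (the degree-two and degree-four Casimir elements), whose eigenvalues are pinned down by the infinitesimal character attached to $(\nu_z,\sigma)$, and---in the vector-valued cases---by the first-order operators coming from the $\p_{\C}$-action that couple $c_0$ and $c_1$. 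Assumption~(\ref{PS-assumption}) keeps this system in generic position, so that its space of moderate-growth solutions is at most one-dimensional; combined with $\dim W_{\psi,\pi}(\tl^*)^0\le 1$ for non-degenerate $\psi$ (Wallach, applicable since $G$ is quasi-split), this forces uniqueness up to scalars.

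Next I would treat the spherical corner $\Lambda=(0,0)$ directly. One route is to compute the Jacquet integral $\int_{N_0}f(w_0\,n\,a_0)\,\overline{\psi(n)}\,dn$ for the normalized spherical section $f$, integrating out the two essential unipotent variables and recognizing each integration as a Bessel integral; an alternative is to posit the double integral of $K$-Bessel functions and verify that it satisfies the two radial Casimir equations using standard Bessel recurrences. Either way one arrives at the stated kernel $K_{(z_1-z_2)/2}(2\pi t_1/t_2)\,K_{(z_1+z_2)/2}(2\pi t_1t_2)$ against the Gaussian factor in $y_1,y_2,t_1,t_2$, the two Bessel orders $(z_1\mp z_2)/2$ being exactly those dictated by the two Casimir eigenvalues and by the two rank-one (modified Bessel) subsystems hidden in the radial equations.

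For the remaining $K$-types I would apply the shift (Schmid/gradient) operators to the spherical solution. Acting on the $\Lambda=(0,0)$ function produces the scalar factor $\bigl(\tfrac{1}{t_1t_2}-l\tfrac{t_2}{t_1}\bigr)$ for $\Lambda=(l,l)=\pm(1,1)$, while in the odd case it generates the coupled functions $P_i^{(z_1,z_2)},Q_i^{(z_1,z_2)}$, the shift of the Bessel orders to $(z_1\mp z_2\mp 1)/2$ reflecting the $K$-type shift. The antisymmetry $P_2^{(z_1,z_2)}=-P_1^{(z_1,-z_2)}$ and $Q_2^{(z_1,z_2)}=-Q_1^{(z_1,-z_2)}$ encodes the action of the nontrivial sign on $\sigma_2$, and the case $(\sigma_1,\sigma_2)=(-1,1)$ then follows from $(1,-1)$ by the change of variables exchanging $z_1\leftrightarrow z_2$. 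Finally, the exponential Gaussian factor together with the known rapid decay of $K_\nu$ at infinity yields the rapidly decreasing property, so the explicit formula is the unique solution of the holonomic system in the one-dimensional moderate-growth space.

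The hard part will be solving and verifying the holonomic system itself: identifying the moderate-growth solution with precisely this double $K$-Bessel integral, and, above all, handling the odd (vector-valued) case, where one must check that the coupled pair $(c_0,c_1)$ built from $P_i,Q_i$ simultaneously satisfies both Casimir equations and the coupling shift relations, and that the normalizations inherited from the spherical case survive the application of the shift operators. The convergence and normalization bookkeeping of the Jacquet integral, and matching it against the differential-equation solution, is where the genuine technical labor (carried out by Niwa and Ishii) lies.
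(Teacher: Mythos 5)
First, a point of context: the paper does not prove Theorem \ref{PS-Whittaker} at all --- it is stated as a review of Ishii \cite{Is-2} and Niwa \cite{Nw}, so your proposal has to be measured against the methods of those cited works. Your general frame (characterize $W|_{A_0}$ by the radial parts of the two generators of $Z(\g_{\C})$ together with first-order shift relations, invoke uniqueness of the rapidly decreasing solution via Wallach, and verify the double $K$-Bessel integral against this holonomic system) is indeed the strategy of Ishii and Miyazaki--Oda, with the refinement that the degree-four generator is replaced by a composite of shift operators, as the paper itself notes in the proof of Corollary \ref{PSWhittaker-Cor}.

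The genuine gap is in your plan to obtain the non-spherical formulas by ``applying the shift (Schmid/gradient) operators to the spherical solution.'' The four cases $\Lambda=(0,0)$, $\Lambda=\pm(1,1)$, $\Lambda=(1,0)$, $\Lambda=(0,-1)$ are minimal $K$-types of \emph{four mutually inequivalent} principal series, distinguished by the $M_0$-character $\sigma$: by Section \ref{PS-rep}, the representation with $(\sigma_1,\sigma_2)=(1,1)$ contains only $\tau_{(l,l)}$ with $l$ even, the one with $(-1,-1)$ only $\tau_{(l,l)}$ with $l$ odd, and the odd representations only $\tau_{(l+1,l)}$. Gradient operators act within a single $(\g,K)$-module, and since $\p_{\C}\simeq\tau_{(2,0)}\oplus\tau_{(0,-2)}$ they shift $\Lambda_1+\Lambda_2$ by an even integer; they therefore can never carry the $\tau_{(0,0)}$-solution to $\tau_{(\pm1,\pm1)}$ or to $\tau_{(1,0)}$, $\tau_{(0,-1)}$, and there is no spherical vector in the odd principal series to seed the coupled pair $(P_i,Q_i)$. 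Each of the four cases requires its own direct analysis --- its own Jacquet-integral computation or its own holonomic system for the corner $K$-type (in the odd case the coupled first-order system for $(c_0,c_1)$ of \cite[Theorem 11.3]{M-Od-1}, \cite[Theorem 1.6]{Is-2}) --- which is precisely the technical content of the cited papers. The only legitimate transport between distinct cases is the outer symmetry you do identify correctly: the exchange $z_1\leftrightarrow z_2$ relating $(\sigma_1,\sigma_2)=(1,-1)$ to $(-1,1)$, and (elsewhere in the paper) the change of variables $g\mapsto\delta g\delta^{-1}\xi$.
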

Toward the Fourier expansion we need the Whittaker functions attached to unitary characters of $N_0$ for arbitrary $(m_0,m_3)\in\Z^2$ with $m_0m_3\not=0$. From this theorem we deduce the following:
\begin{cor}\label{PSWhittaker-Cor}
We keep the notation and the assumption as in Theorem \ref{PS-Whittaker}.\\
(1)~ Let $\pi$ be an even principal series representation. For any non-degenerate character $\psi$ with $(m_0,m_3)\in\R^2$~(that is, $m_0m_3\not=0$), up to constant multiples, the Whittaker functions restricted to $A_0$ can be expressed uniquely as
\begin{align*}
&(|m_0|y_1)^2(|m_3|y_2)^{3/2}\displaystyle\int_0^{\infty}\displaystyle\int_0^{\infty}K_{(z_1-z_2)/2}(2\pi t_1/t_2)K_{(z_1+z_2)/2}(2\pi t_1t_2)\\
&\times\exp\left(-\pi\left(\frac{m_0^2|m_3|y_1^2y_2}{t_1^2}+\frac{t_1^2}{|m_3|y_2}+\frac{|m_3|y_2}{t_2^2}+|m_3|y_2t_2^2\right)\right)\frac{dt_1}{t_1}\frac{dt_2}{t_2}
\end{align*}
for $\Lambda=(0,0)$, and 
\begin{align*}
&(|m_0|y_1)^{5/2}(|m_3|y_2)^{2}\displaystyle\int_0^{\infty}\displaystyle\int_0^{\infty}K_{(z_1-z_2)/2}(2\pi t_1/t_2)K_{(z_1+z_2)/2}(2\pi t_1t_2)\\
&\times\left(\frac{1}{t_1t_2}-l\frac{t_2}{t_1}\right)\exp\left(-\pi\left(\frac{m_0^2|m_3|y_1^2y_2}{t_1^2}+\frac{t_1^2}{|m_3|y_2}+\frac{|m_3|y_2}{t_2^2}+|m_3|y_2t_2^2\right)\right)\frac{dt_1}{t_1}\frac{dt_2}{t_2}
\end{align*}
for $\Lambda=(l,l)=\pm(1,1)$. 

When $\pi$ is an odd principal series representations with $(\sigma_1,\sigma_2)=(1,-1)$, the explicit formula is 
uniquely given as
\[
\begin{cases}
(|m_0|y_1)^2(|m_3|y_2)^{3/2}\{(P_1^{(z_1,z_2)}((|m_0|y_1,|m_3|y_2))+P_2^{(z_1,z_2)}((|m_0|y_1,|m_3|y_2)))v_0^*\\
+(Q_1^{(z_1,z_2)}((|m_0|y_1,|m_3|y_2))+Q_2^{(z_1,z_2)}((|m_0|y_1,|m_3|y_2))v_1^*\}&(\Lambda=(1,0)),\\
((|m_0|y_1)^2(|m_3|y_2)^{3/2}\{(Q_1^{(z_1,z_2)}((|m_0|y_1,|m_3|y))-Q_2^{(z_1,z_2)}((|m_0|y_1,|m_3|y_2))v_0^*\\
+(-P_1^{(z_1,z_2)}((|m_0|y_1,|m_3|y_2))+P_2^{(z_1,z_2)}((|m_0|y_1,|m_3|y_2))v_1^*\}&(\Lambda=(0,-1)),
\end{cases}
\]
up to constant multiples. 
As for odd principal series representations with $(\sigma_1,\sigma_2)=(-1,1)$ the explicit formula is given similarly by exchanging $z_1$ and $z_2$.\\
(2)~We have $\dim W_{\psi,\pi}(\tau_{\Lambda}^*)^0=1$ for $\psi$ and $(\pi,\tau_{\Lambda})$s above. 
\end{cor}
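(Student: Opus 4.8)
The plan is to deduce the general formula from the normalized case $(m_0,m_3)=(1,1)$ of Theorem \ref{PS-Whittaker} by transporting Whittaker functions along group elements that rescale or change the sign of the parameters of $\psi$, and then to read off the dimension statement from the invariance of $\dim W_{\psi,\pi}(\tl^*)^0$ under such transports. The guiding principle is that translation by a \emph{fixed} element of $G$ is a $(\g,K)$-map that commutes with the right regular action, hence carries one Whittaker space isomorphically onto another while preserving rapid decrease.

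First I would treat the case $m_0,m_3>0$. For a fixed $t_0=\diag(b_1,b_2,b_1^{-1},b_2^{-1})\in A_0$ the left translation $L_{t_0}\colon W\mapsto W(t_0\,\cdot\,)$ sends $W_{\psi,\pi}(\tl^*)$ into $W_{\psi',\pi}(\tl^*)$, where $\psi'(n)=\psi(t_0nt_0^{-1})$: right $K$-equivariance with respect to $\tl^*$ is preserved because left and right translations commute, the same commutation shows $L_{t_0}$ carries a $(\g,K)$-intertwiner for $\pi$ into another one for the same $\pi$, and the left $N_0$-equivariance is twisted exactly by the conjugated character. A direct computation of the conjugation action on $N_0$ shows that $t_0 n(u_0,u_1,u_2,u_3)t_0^{-1}$ rescales $u_0\mapsto (b_1/b_2)u_0$ and $u_3\mapsto b_2^2u_3$, so choosing $b_2=|m_3|^{1/2}$ and $b_1=|m_0|\,|m_3|^{1/2}$ turns the character of parameters $(1,1)$ into that of parameters $(|m_0|,|m_3|)$. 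In the coordinates $y_1=a_1/a_2$, $y_2=a_2^2$ this left translation is precisely the substitution $(y_1,y_2)\mapsto(|m_0|y_1,|m_3|y_2)$, and inserting this substitution into the formulas of Theorem \ref{PS-Whittaker} reproduces the stated expressions, the prefactors $y_1^2y_2^{3/2}$, $y_1^{5/2}y_2^2$ and the functions $P_i^{(z_1,z_2)},Q_i^{(z_1,z_2)}$ transforming as claimed.

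It remains to remove the positivity assumption. The sign of $m_0$ is produced by conjugating with the element $\diag(1,-1,1,-1)\in M_0\subset K$, whose conjugation action fixes $u_3$ and sends $u_0\mapsto -u_0$; since this element lies in $K$ it preserves both $\pi$ and the multiplicity one $K$-type $\tl$. The sign of $m_3$ cannot be reached inside $M_0A_0$, so here I would use the change of variables $g\mapsto\delta g\delta^{-1}\xi$ already exploited in Theorem \ref{LargeDSWhittaker}~(2) and Theorem \ref{PJWhittaker}: as recorded there, $n\mapsto\delta n\delta^{-1}$ replaces $(m_0,m_3)$ by $(m_0,-m_3)$ and $\delta g\delta^{-1}\xi\in G$, so $W\mapsto W(\delta\,\cdot\,\delta^{-1}\xi)$ identifies the Whittaker space for $-m_3$ with that for $m_3$. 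One must check, exactly as in the proof of Theorem \ref{LargeDSWhittaker}~(2), that the $\delta\xi$-conjugation matches the inducing data $(z,\sigma)$ and the relevant multiplicity one $K$-type of $\pi$, so that one is comparing Whittaker functions attached to the same pair $(\pi,\tl^*)$; this bookkeeping, combined with the fact that the resulting formula depends on $m_3$ only through $|m_3|$, is the point I expect to be the main obstacle, since it requires a genuine outer element rather than the manifest torus and $M_0$ symmetries.

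Finally, for (2), the upper bound $\dim W_{\psi,\pi}(\tl^*)^0\le 1$ holds for every non-degenerate $\psi$ by Wallach \cite[Theorem 8.8]{W-1}, as recalled in the introduction. For the lower bound it suffices to exhibit one nonzero rapidly decreasing Whittaker function, and the explicit function of part (1) serves: its rapid decrease follows from the Gaussian factor $\exp(-\pi(\tfrac{m_0^2|m_3|y_1^2y_2}{t_1^2}+\tfrac{t_1^2}{|m_3|y_2}+\tfrac{|m_3|y_2}{t_2^2}+|m_3|y_2t_2^2))$, which forces super-polynomial decay in the region $y_1,y_2\to\infty$ governing rapid decay, the two $K$-Bessel factors being bounded on the relevant range, exactly as in the estimate carried out for the large discrete series case in the proof of Theorem \ref{LargeDSWhittaker}. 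Since $L_{t_0}$, conjugation by $M_0$, and the $\delta\xi$-change of variables are all translations by fixed group elements, they preserve both rapid decrease and the dimension of $W_{\psi,\pi}(\tl^*)^0$; hence $\dim W_{\psi,\pi}(\tl^*)^0=1$ for all the $\psi$ and $(\pi,\tl)$ in question.
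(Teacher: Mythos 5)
Your proposal is correct and follows essentially the same route as the paper: the paper also obtains the general formula as $W(a_{m_0,m_3}g)$ for $m_3>0$ and $W(\delta a_{m_0,m_3}g\delta^{-1}\xi)$ for $m_3<0$ with $a_{m_0,m_3}=\diag(m_0\sqrt{|m_3|},\sqrt{|m_3|},(m_0\sqrt{|m_3|})^{-1},\sqrt{|m_3|}^{-1})$, absorbs the sign of $m_0$ into an $M_0$-element acting through $\tau_\Lambda^*$, notes the swap of the $K$-types $\tau_{(1,1)}\leftrightarrow\tau_{(-1,-1)}$ and $\tau_{(1,0)}\leftrightarrow\tau_{(0,-1)}$ under $g\mapsto\delta g\delta^{-1}\xi$, and derives (2) from the rapid decay of the $K$-Bessel factors together with Wallach's uniqueness. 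The only cosmetic difference is that you cite \cite[Theorem 8.8]{W-1} for the upper bound where the paper invokes \cite[Theorem 39]{W-3}.
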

\begin{proof}
Putting 
\[
a_{m_0,m_3}:=\diag(m_0\sqrt{|m_3|},\sqrt{|m_3|},(m_0\sqrt{|m_3|})^{-1},\sqrt{|m_3|}^{-1}),
\]
the explicit formulas for the Whittaker functions above are obtained by
\[
\begin{cases}
W(a_{m_0,m_3}g)&(m_3>0),\\
W(\delta a_{m_0,m_3}g\delta^{-1}\xi)&(m_3<0),
\end{cases}
\]
for $g\in G$ with the Whittaker functions $W$ attached to the character of $N_0$ with $(m_0,m_3)=(1,1)$, 
where recall that $\delta:=
\begin{pmatrix}
-I_2 & 0_2\\
0_2 & I_2
\end{pmatrix}$ and $\xi:=
\begin{pmatrix}
J'_2 & 0_2\\
0_2 & J'_2
\end{pmatrix}$ with $J'_2=
\begin{pmatrix}
0 & 1\\
1 & 0
\end{pmatrix}$~(cf.~Theorem \ref{LargeDSWhittaker} (2)). As we have remarked in the proof of Theorem \ref{LargeDSWhittaker} (2) we note that $\delta a_{m_0,m_3}g\delta^{-1}\xi\in G$ for $g\in G$. 
The differential equations characterizing the Whittaker functions are induced by the infinitesimal actions of the two generators of the center of the universal enveloping algebra for $\g$, one of which is the Casimir operator and another of which coincides with some composite of the shift operators~(cf.~\cite[Remark 3]{Is-2}). For the definition of the shift operators see \cite[Section 8,~Definition (8.1),~Section 9]{M-Od-1}. 

We now justify the above remark on the explicit formula, whose argument is similar to the case of $P_J$-principal series representations~(cf.~Theorem \ref{PJWhittaker}). 
The formula in the assertion is $W(a_{|m_0|,m_3}a)$~(or $W(a_{|m_0|,m_3}a\xi)$) for $a\in A_0$. 
Now let us note that $a_{|m_0|,m_3}=a_{m_0,m_3}{\rm diag}(\epsilon,1,\epsilon,1)$ with some $\epsilon\in\{\pm 1\}$.  Taking into account the right equivariance of the Whittaker functions with respect to $\tl^*$, we can then verify that the difference between $W(a_{|m_0|,|m_3|}a)$ and $W(a_{m_0,m_3}a)$~(or $W(a_{|m_0|,|m_3|}a\xi)$ and $W(a_{m_0,m_3}a\xi)$) is given at most as the multiples of the $K$-type vectors by $\{\pm 1\}$. 
We remark that the change of variables $g\mapsto\delta g\delta^{-1}\xi$ leads to switching between the explicit formulas for even principal series with the multiplicity one $K$-types $\tau_{(1,1)}$ and $\tau_{(-1,-1)}$, and also to that between the formulas for odd principal series with the multiplicity one $K$-types $\tau_{(1,0)}$ and $\tau_{(0,-1)}$. For this note further that the change of variables $g\mapsto(\delta^{-1}\xi)^{-1}g(\delta^{-1}\xi)$ induces the conjugation of the shift operators and the Casimir operator as in the case of $P_J$-principal series.

Regarding (2) we verify the assertion by the argument similar to that for the cases of large discrete series and $P_J$-principal series. 
Note that the $K$-Bessel function $K_{\nu}(y)$ on $\R_{>0}$ 
is rapidly decreasing as is pointed out in the proof of Theorem \ref{LargeDSWhittaker}.  The result of $\dim W_{\psi,\pi}(\tau^*)^0$ follows from \cite[Theorem 39]{W-3}~(also from \cite[Theorem 15.6.7]{W-4}), together with the rapidly decreasing property just mentioned.
\end{proof}
\begin{rem}
There are another explicit integral expressions of the Whittaker functions called ``Mellin-Barnes type integrals'', which are more suitable to the archimdean local theory of automorphic $L$-functions. For this we cite Moriyama  \cite{Mo-1} and Ishii \cite{Is-2},~\cite{Is-3}.
\end{rem}
\subsection{Whittaker functions attached to degenerate characters of $N_0$}\label{Deg-Whittaker-sec}
The Whittaker functions we have studied are proved to be rapidly decreasing. They are attached to non-degenerate characters of $N_0$.  
However, towards the Fourier Jacobi expansion, we have to also study the Whitatker functions attached to degenerate characters of $N_0$, i.e. unitary characters $\psi$ of $N_0$ parametrized by $(m_0,m_3)$ with $m_0m_3=0$. 
For the following proposition we remark that the case of $m_3=0$ for principal series is due to Taku Ishii.    
\begin{prop}\label{DegnerateWhittaker}
Let $\pi$ be a large discrete series representation, a $P_J$-principal series representations  or a principal series representation.  Let $\tau$ be the minimal $K$-type of $\pi$ when $\pi$ is a large discrete series representation. 
When $\pi$ is a $P_J$-principal series representation~(respectively~a principal series representation) let $\tau$ be the corner $K$-type of $\pi$ in the sense of Section \ref{P_JPS-rep}~(respectively~the minimal $K$-type in  Section \ref{PS-rep}) . 

For degenerate characters $\psi$ of $N_0$ parametrized by $(m_0,m_3)\not=(0,0)$ such that $m_0m_3=0$ the Whittaker functions for $\pi$ above with respect to $\tau$ is not rapidly decreasing. 
\end{prop}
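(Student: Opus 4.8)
The plan is to reduce everything to the explicit radial descriptions already recorded in Theorems~\ref{LargeDSWhittaker},~\ref{PJWhittaker},~\ref{PS-Whittaker} and Corollary~\ref{PSWhittaker-Cor}, and then to read off the asymptotics of the \emph{degenerate} specialization. Recall that a Whittaker function is determined by its restriction to $A_0$ (cf.~Section~\ref{ReviewWhittaker}), and that this radial restriction is characterized by a holonomic system of differential equations coming from the Casimir operator together with the shift (resp.~Schmid) operators. Since these operators arise from the infinitesimal left action of $N_0$, they depend \emph{polynomially} on the parameters $(m_0,m_3)$ of $\psi$; hence specializing the explicit formulas to the degenerate locus $m_0m_3=0$, $(m_0,m_3)\neq(0,0)$, still produces solutions of the corresponding degenerate system. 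The strategy is therefore to specialize, compute the asymptotics along the relevant one-parameter subgroups of $A_0$, and exhibit the failure of rapid decrease; the indicial analysis of the degenerate system will simultaneously show that \emph{no} solution is rapidly decreasing, which is what is needed.

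First I would treat the case $m_0=0$ (with $m_3\neq 0$) for a large discrete series representation. Specializing the formula for $h_{d_{\Lambda}}(a_0)$ in Theorem~\ref{LargeDSWhittaker} kills the term $64\pi^3m_0^2m_3a_1^2/t^2$, so $h_{d_{\Lambda}}$ becomes a function of $a_2$ alone. Collecting the prefactors, the coefficient $c_{d_{\Lambda}}(a_0)$ is then a positive power $a_1^{1-\Lambda_2}$ of $a_1$ times a function of $a_2$; since for $\lambda\in\Xi_{III}$ one has $\Lambda_2=\lambda_2-1<0$, the exponent $1-\Lambda_2$ is strictly positive, so $c_{d_{\Lambda}}$ grows as $a_1\to\infty$ and the Whittaker function is not rapidly decreasing. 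The key structural point is that the decay in the $a_1$-direction was produced precisely by the Bessel ``arm'' governed by $m_0$; once $m_0=0$ this arm collapses to an equidimensional (Euler) equation in $a_1$, whose solutions are powers of $a_1$ (possibly times logarithms) and are never rapidly decreasing. This last observation, applied to the full degenerate radial system, rules out every solution and not merely the one obtained by naive specialization.

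Next I would treat $m_3=0$ (with $m_0\neq0$). Here the closed formulas themselves break down, since the factors $1/m_3$ and the Gaussian $e^{2\pi m_3a_2^2}$ degenerate, so instead I would analyze the characterizing system directly. Setting $m_3=0$ removes the quadratic potential attached to $a_2$, and the operator governing the $a_2$-direction becomes equidimensional; as above its solutions are powers of $a_2$, hence polynomially bounded and not rapidly decreasing. The same two computations cover the $P_J$-principal series and the principal series cases, whose radial Whittaker functions in Theorems~\ref{PJWhittaker},~\ref{PS-Whittaker} and Corollary~\ref{PSWhittaker-Cor} carry exactly the same Gaussian-times-Bessel structure in the two variables $y_1=a_1/a_2$ and $y_2=a_2^2$; degenerating $m_0$ or $m_3$ collapses the corresponding Bessel arm to an Euler equation in the same manner. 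Finally, the various sign choices, the pairs $\cD_n^{\pm}$, the even/odd dichotomy and the interchange of $\Xi_{II}$ and $\Xi_{III}$ are reduced to the above by the conjugations $g\mapsto\delta g\delta^{-1}\xi$ and the exchange $z_1\leftrightarrow z_2$ already used in the proofs of those theorems, which preserve the degenerate locus.

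The main obstacle is this last uniformity-and-uniqueness point: one must ensure that the failure of rapid decrease holds for \emph{all} Whittaker functions attached to the degenerate $\psi$, not only for the function produced by specializing the non-degenerate formula. For non-degenerate $\psi$ multiplicity one pins the Whittaker function down, whereas a degenerate $\psi$ can a priori admit a larger space. This is overcome by the indicial analysis of the degenerate holonomic system: along the degenerate direction the system is regular-singular of Euler type at infinity, so every solution is a finite combination of powers of the degenerate variable and can decay at most polynomially. Carrying out this asymptotic analysis carefully---in particular checking convergence of the remaining one-variable integrals at both endpoints and confirming that the surviving indicial exponents never yield super-polynomial decay---is the technical heart of the argument.
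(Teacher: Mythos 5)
Your proposal follows essentially the same route as the paper's own proof: the paper likewise establishes the proposition by degenerating the characterizing holonomic system for the radial part (Oda's equations for the large discrete series, the Miyazaki--Oda/Ishii systems for the principal series, with the $P_J$-case handled analogously) and observing that at $m_0=0$ or $m_3=0$ the relevant operator becomes of Euler type (or Bessel type with purely imaginary argument), so that \emph{every} solution --- not merely a specialized one --- is of power type in the degenerate direction and hence not rapidly decreasing. One caveat: your opening suggestion that one may simply specialize the explicit non-degenerate integral formulas to $m_0=0$ is unsafe, since the $t$-integral defining $h_{d_{\Lambda}}$ then loses the factor $\exp(64\pi^3m_0^2m_3a_1^2/t^2)$ that guarantees convergence at $t=0$ (which is precisely why the paper argues directly from the differential equations); but as your argument ultimately rests on the indicial/Euler analysis rather than on that specialization, this does not affect the validity of the proof.
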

\begin{proof}
We omit the case of $P_J$-principal series representations. In fact, following the coming argument for the case of large discrete series representations, the case of $P_J$ principal series representations is settled similarly by the differential equations in \cite[Propositions 7.1,~7.3]{M-Od-2}~(see also \cite{M-Od-3}). 

Let $\pi$ be a large discrete series representation and $\Lambda=(\Lambda_1,\Lambda_2)$ be the highest weight of the minimal $K$-type $\tau$ of $\pi$. We think only of $\pi$ with Harish Chandra parameter in $\Xi_{III}$ since the argument for the case of $\Xi_{II}$ is reduced to this by the reasoning to prove Theorem \ref{LargeDSWhittaker} (2). 
We put $d_{\Lambda}:=\Lambda_1-\Lambda_2$ as in Section \ref{RepMaxCpt}. Recall that the Whittaker function $W$ is written as $W(g)=\sum_{k=0}^{d_{\Lambda}}c_k(g)v_k^*$. Here note that $W$ satisfies the right $K$-equivariance with respect to $\tau^*$, whose highest weight is $(-\Lambda_2,-\Lambda_1)$. Following \cite[Section 8]{Od}~(see also Theorem \ref{LargeDSWhittaker}) we put 
\[
c_k(a)=a_1^{-\Lambda_2+1-d_{\Lambda}}a_2^{-\Lambda_2}\left(\frac{a_1}{a_2}\right)^{k}e^{2\pi m_3a_2^2}h_k(a)
\]
for $a\in A_0$. 
According to \cite[Section 8,~(G-1),~Lemma 8.1]{Od} $h_{d_{\Lambda}}(a)$ satisfies the following differential equations:
\begin{align}
&\partial_1h_{d_{\Lambda}}(a)+2\pi\sqrt{-1}\frac{a_1}{a_2}m_0 h_{d_{\Lambda}-1}(a)=0,\\
&(\partial_1\partial_2+4\pi^2\frac{a_1^2}{a_2^2}m_0^2)h_{d_{\Lambda}}(a)=0,\\
&((\partial_1+\partial_2)^2+(-2\Lambda_1-2)(\partial_1+\partial_2)+(2\Lambda_1+1)-4\pi\sqrt{-1}a_2^2m_3\partial_2)h_{d_{\Lambda}}(a)=0,
\end{align}
where $\partial_i=a_i\dfrac{\partial}{\partial a_i}$ for $i=1,2$. Here we note that the condition $m_0\not=0$ is necessary to obtain (4.3) and (4.4).

Suppose first that $m_0=0$. Then, due to $(4.2)$, $h_{d_{\Lambda}}(a)$ is constant with respect to $a_1$, which implies that $c_{d_{\Lambda}}(a)$ is not rapidly decreasing. We next assume that $m_3=0$. Then the equation (4.4) admits the following decomposition:
\[
(\partial_1+\partial_2-1)(\partial_1+\partial_2-2\Lambda_1-1)h_{d_{\Lambda}}(a)=0
\]
namely $h_{d_{\Lambda}}(a)$ is the eigen-function with respect to $\partial_1+\partial_2$ with the eigenvalues $1$ or $2\Lambda_1+1$. This implies that the equation $(4.3)$ can be rewritten as
\[
\left(\dfrac{\partial^2}{\partial a_1^2}+\dfrac{1-\lambda}{a_1}\dfrac{\partial}{\partial a_1}-\dfrac{4\pi^2m_0^2}{a_2^2}\right)h_{d_{\Lambda}}(a)=0~\text{or}~\left(\dfrac{\partial^2}{\partial a_2^2}+\dfrac{1-\lambda}{a_2}\dfrac{\partial}{\partial a_2}-\dfrac{4\pi^2a_1^2m_0^2}{a_2^4}\right)h_{d_{\Lambda}}(a)=0
\]
with $\lambda=1$ or $2\Lambda_1+1$. This can be understood by the following Bessel-type differential equation~(cf.~\cite[(4.5.9)]{AAR})
\[
\dfrac{d^2}{dx^2}u+\dfrac{1-2\alpha}{x}\dfrac{d}{dx}u+\left((\beta\gamma x^{\gamma-1})^2+\dfrac{\alpha^2-\nu^2\gamma^2}{x^2}\right)u=0.
\]
For our situation the case of $\nu=\pm\dfrac{\lambda}{2},~\alpha=\dfrac{\lambda}{2},~\beta=\dfrac{2\pi\sqrt{-1}m_0}{a_2}$ and $\gamma=1$~(respectively~$\beta=2\pi m_0\sqrt{-1}a_1$ and $\gamma=-1$) is valid when we choose $a_1$~(respectively~$a_2$) as the variable. The solutions are given by linear combinations of  
\[
(a_1a_2)^{\frac{\lambda}{2}}J_{\pm\frac{\lambda}{2}}(2\pi m_0\sqrt{-1}\dfrac{a_1}{a_2})
\]
with the Bessel function $J_{\nu}$ parametrized by $\nu$. By the asymptotic expansion of $J_{\nu}$ along purely imaginary numbers~(e.g. see \cite[Section 4.8]{AAR}) we know that $c_{d_{\Lambda}}(a)$ is not rapidly decreasing with respect to $a_2$. 

We are now left with the case of principal series representations. Recall that the differential equations characterizing the Whittaker function $W$ are given in \cite[Theorems 10.1,~11.3]{M-Od-1}~(see also \cite[Theorems  1.5,~1.6]{Is-2}). Furthermore recall that we have used the coordinate $(y_1,y_2):=(a_1/a_2,a_2^2)$ for the case of principal series. With the notation $\partial_i=y_i\displaystyle\frac{\partial}{\partial y_i}$ for $i=1,~2$ the characterizing differential equations are written for 
\[
\psi(a)=y_1^{-3/2}y_2^{-2}W(a).
\]
The differential equations just mentioned in \cite{M-Od-1} and \cite{Is-2} are those for the case of $(m_0,m_3)=(1,1)$. The characterizing differential equation for a general $(m_0,m_3)$ is obtained by replacing $2\pi y_1$ and $2\pi y_2$ with $2\pi m_0y_1$ and $2\pi m_3y_2$ respectively. 

For the case of an even principal series representation we deduce 
\[
\begin{cases}
(\partial_1^2-\nu_1^2)(\partial_1^2-\nu_2^2)\psi=0&(m_0=0),\\
(\partial_2^2-\displaystyle\frac{(\nu_1+\nu_2)^2}{4})(\partial_2^2-\displaystyle\frac{(\nu_1-\nu_2)^2}{4})\psi=0&(m_3=0).
\end{cases}
\]
from \cite[Theorem 1.5]{Is-2} or \cite[Theorem 10.1]{M-Od-1}. 
For the case of an odd principal series representation we write $\psi_0(a)v_0^*+\psi_1(a)v_1^*$ for $\psi(a)$. We deduce from \cite[Theorem 1.6]{Is-2} or \cite[Theorem 11.3]{M-Od-1} that
\begin{equation}\label{Diffeq-DefWh}
\begin{cases}
(\partial_1^2-\nu^2)\psi_0=0,~(\partial_1^2-(\nu_1^2+\nu_2^2-\nu^2))\psi_1=0&(m_0=0),\\
(\partial_2^2-\displaystyle\frac{(\nu_1+\nu_2)^2}{4})(\partial_2^2-\displaystyle\frac{(\nu_1-\nu_2)^2}{4})\psi_i=0~\text{for $i=0,~1$}&(m_3=0).
\end{cases}
\end{equation}
As a result we see that $W$ is an eigenfunction of $\partial_1$ or $\partial_2$ for both of even and odd principal series representations. From this we deduce that $W$ is of polynomial order but not of rapid decay with respect to $y_1$ or $y_2$ when $m_0=0$ or $m_3=0$ respectively. 

We now explain how to verify this. The case of $m_0=0$ is not difficult to prove. We write down the outline of the proof for the case of an odd principal series representation with $m_3=0$, which is more difficult to verify than the case of an even principal series with $m_3=0$. We put 
\[
f:=\psi_0+\sqrt{-1}\psi_1,~g=\psi_0-\sqrt{-1}\psi_1.
\]
With the notations
\[
D_1=\partial_1^2+2\partial_2^2-2\partial_1\partial_2-\dfrac{\nu_1^2+\nu_2^2}{2},~D_2=\partial_1^2-\nu^2,~D_3=(\partial_1-2\partial_2)^2-\nu^2,~Y_1=2\pi m_0y_1
\]
the characterizing differential equations in \cite[Theorem 1.6]{Is-2} or \cite[Theorem 11.3]{M-Od-1} are rewritten as
\begin{align*}
&(D_1-Y_1^2-Y_1)f=0,\\
&(D_1-Y_1^2+Y_1)g=0,\\
&(D_2+D_3-2Y_1^2-2Y_1)f+(D_2-D_3+4Y_1\partial_2)g=0,\\
&(D_2-D_3-4Y_1\partial_2)f+(D_2+D_3-2Y_1^2+2Y_1)g=0.
\end{align*}
From these differential equations we see that
\begin{align*}
&(D_2+D_3-2D_1)f+(D_2-D_3+4Y_1\partial_2)g=0,\\
&(D_2-D_3-4Y_1\partial_2)f+(D_2+D_3-2D_1)g=0,
\end{align*}
which leads to
\begin{align*}
(\nu_1^2+\nu_2^2-2\nu^2)f+4(\partial_1\partial_2-\partial_2^2+Y_1\partial_2)g=0,\\
4(\partial_1\partial_2-\partial_1^2-Y_1\partial_2)f+(\nu_1^2+\nu_2^2-2\nu^2)g=0.
\end{align*}
From these and  the relation $\partial_1\partial_2(Y_1\partial_2)=Y_1(\partial_1+1)\partial_2^2$ we deduce that $f$ and $g$ satisfy the same differential equation as (\ref{Diffeq-DefWh}). 
As a consequence we have the desired differential equations for $\psi_0$ and $\psi_1$ as in (\ref{Diffeq-DefWh}).

\end{proof}
\begin{rem}
According to \cite[Theorem 8.2 (2)]{Hi-1} and \cite[Theorem 7.2]{Hi-2} the multiplicities of the moderate growth Whittaker functions for non-trivial degenerate characters with $m_3=0$ are two or at most two for the cases of large discrete series or $P_J$-principal series respectively. The proof of this proposition shows that such Whittaker function of moderate growth is uniquely up to scalars for each of the two eigenvalues $\lambda$. Indeed, note that the $K$-Bessel function $K_{\frac{\lambda}{2}}$ is a linear combination of $J_{\pm\frac{\lambda}{2}}$ when the eigenvalue $\lambda$ is not equal to $1$. The coefficient function  $h_{d_{\Lambda}}(a)$ of the moderate growth solution is a constant multiple of  
\[
(a_1a_2)^{\frac{\lambda}{2}}K_{\frac{\lambda}{2}}(2\pi |m_0|\frac{a_1}{a_2})
\]
for both of the two representations when $\lambda\not=1$. When $\lambda=1$, which occurs only for large discrete series, such a solution is written in terms of the exponential function. 
The moderate growth Whittaker functions just mentioned are not of rapidly decreasing as in the statement of the proposition. 
\end{rem}
\subsection{Fourier-Jacobi type spherical functions by Hirano}\label{Result-Hirano}
We review Hirano's explicit formula for Fourier-Jacobi type spherical functions in \cite{Hi-1},~\cite{Hi-2} and \cite{Hi-3}. 
Given an admissible representation $\pi$ of $G=Sp(2,\R)$ with the multiplicity one $K$-type $\tl$ and an irreducible unitary representation $\rho$ of $G_J$, the Fourier-Jacobi type spherical functions are defined as the restriction of the intertwining operators in
\[
\Hom_{(\g,K)}(\pi,C^{\infty}\Ind_{G_J}^G\rho)
\]
to the $K$-type $\tl$. We note that, for the spherical functions to be well defined, we have to impose the multiplicity one property of $\tau$ as in the case of the Whittaker functions. 
Such restricted intertwining operators are contained in $C_{\rho,\tl^*}^{\infty}(G_J\backslash G/K):=$
\[
\{\text{$C^{\infty}$-function}~W:G\rightarrow H_{\rho}\boxtimes V_{\Lambda}^*\mid W(rgk)=\rho(r)\boxtimes\tl^*(k)^{-1}W(g)~\forall (r,g,k)\in G_J\times G\times K\},
\]
where $H_{\rho}$ and $V_{\Lambda}^*$ denote the representation spaces of $\rho$ and the contragredient $\tl^*$ of $\tl$ respectively. Following Hirano \cite{Hi-1},~\cite{Hi-2} and \cite{Hi-3} we denote the image of the restriction map by $\cJ_{\rho,\pi}(\tl^*)$. We call this the space of the Fourier-Jacobi type spherical functions of type $(\rho,\pi;\tl^*)$. In terms of the theory of the Fourier expansion of cusp forms we are interested in
\[
\cJ_{\rho,\pi}(\tl^*)^{00}:=\{W\in\cJ_{\rho,\pi}(\tl^*)\mid \text{$W$ is rapidly decreasing with respect to $A_J$}\}.
\]
This notation is to avoid the confusion with $\cJ_{\rho,\pi}^{0}(*)$ in \cite{Hi-1},~\cite{Hi-2} and \cite{Hi-3}~(as we have remarked in the introduction). 

Now recall that irreducible unitary representations of $G_J$ with the central character parametrized by $m\not=0$ is of the form $\rho_{m,\pi_1}:=\pi_1\boxtimes\tilde{\nu}_m$~(cf.~Proposition \ref{UnitaryDual-G_J}) and that we have introduced the notation $\{w_l\}_{l\in L},~\{u_j^m\}_{j\in J},~\{v_k^*\}_{0\le k\le d_{\Lambda}}$ for a basis of $W_{\pi_1},~\cU_m,~V_{\Lambda}^*$ respectively~(cf.~Sections \ref{RepSL2},~\ref{UnitaryRepJacobi},~\ref{RepMaxCpt}). 
As is pointed out in Hirano's papers~(~e.g.~\cite[Lemma 4.4]{Hi-1}), the restriction of $W\in\cJ_{\rho,\pi}(\tl^*)^{00}$ to $A_J$ is written as
\[
W(a_J)=\sum_{
\begin{subarray}{c}
j\in J,~0\le k\le d_{\Lambda}\\
\text{s.t.}~l=l(j,k)\in L
\end{subarray}}c_{j,k}(a_J)w_l\otimes u_j^m\otimes v_{k}^*\quad(a_J\in A_J)
\]
with coefficient functions $c_{j,k}(a_J)$. 
Here 
\[
l(j,k)=-j+k+\Lambda_2,
\]
for which note that the highest weight of $\tl^*$ is $(-\Lambda_2,-\Lambda_1)$.  This is deduced from the compatibility of $\tl^*$ and the $SO_2(\R)$-types of $\rho_{\pi_1,m}$ with respect to the $K\cap SL_2(\R)$-action~(note that $K\cap SL_2(\R)=SO_2(\R)$). 
We should note that we need the Meijer $G$-function $G_{p,q}^{m,n}$ to review Hirano's explicit formulas for the Fourier-Jacobi type spherical functions. For the detail on the Meijer $G$-function $G_{p,q}^{m,n}$ we cite \cite[Appendix]{Hi-1} and the references by Meijer cited therein. 
In what follows, we put 
\[
x:=4\pi m a_1^2,\quad x':=-4\pi m a_1^2
\]
in order to describe the explicit formula for the Fourier-Jacobi type spherical functions. 
In advance of the review we remark that the coefficient functions of the Fourier-Jacobi type spherical functions are given explicitly by using
\[
e^{\frac{1}{2}x}G_{p,q}^{q,0}\left(x\left|
\begin{array}{c}
a_1,\cdots,a_p\\
b_1,\cdots,b_q
\end{array}\right.\right)~\text{or}~e^{\frac{1}{2}x'}G_{p,q}^{q,0}\left(x'\left|
\begin{array}{c}
a_1,\cdots,a_p\\
b_1,\cdots,b_q
\end{array}\right.\right)
\]
for $(p,q)=(1,2),~(2,3)$ or $(3,4)$ and that these are rapidly decreasing~(cf.~\cite[Lemma A.2 (2)]{Hi-1}).
\subsection*{(I)~The case of holomorphic or anti-holomorphic discrete series representations}
This case can be said to be simplest among the cases we will take up. In fact, the Fourier-Jacobi type spherical functions for holomorphic discrete series with scalar minimal $K$-types are reproduced from the classical theory such as Eichler-Zagier~\cite{E-Z}. However, we cannot say that many experts are familiar with these spherical functions for the cases of non-scalar minimal $K$-types as much as those for the cases of scalar minimal $K$-types. Hirano \cite[Theorems 6.3,~6.4,~6.5,~6.6,~6.7 and 6.8]{Hi-1} deals with the cases of all holomorphic and anti-holomorphic discrete series with any minimal $K$-types, which cover all vector valued holomorphic Siegel modular forms of degree two.
\begin{thm}\label{FJ-holom-antiholom-DS}
\begin{enumerate} 
\item Let $\pi_{\lambda}$ be the holomorphic discrete series representation with Harish Chandra parameter $\lambda=(\lambda_1,\lambda_2)\in\Xi_{I}$, and $\tl$ be the minimal $K$-type of $\pi_{\lambda}$. We have 
\[
\dim \cJ_{\rho,\pi_{\lambda}}(\tl^*)^{00}=
\begin{cases}
1&(\rho=\rho_{\cD_{n_1}^+,m}~\text{with $\lambda_2+\frac{3}{2}\le n_1\le\lambda_1+\frac{1}{2}$ and $m>0$}),\\
0&(\text{otherwise}).
\end{cases}
\]
If $\dim \cJ_{\rho,\pi_{\lambda}}(\tl^*)^{00}\not=0$, the coefficient function $c_{\frac{1}{2},k_0}(a)$ of the restriction $W\in\cJ_{\rho,\pi_{\lambda}}(\tl^*)^{00}$ to $A_J$ with $k_0=n_1-\lambda_2-\frac{3}{2}$ is 
\[
x^{\frac{1}{2}(\lambda_1+1-k_0)}e^{-\frac{1}{2}x},
\]
up to scalars. The other coefficients are given by the formulas \cite[(6.4),~(6.5)]{Hi-1} inductively from $c_{\frac{1}{2},k_0}(a)$.  
\item Let $\pi_{\lambda}$ be the anti-holomorphic discrete series representation with Harish Chandra parameter $\lambda=(\lambda_1,\lambda_2)\in\Xi_{IV}$, and $\tl$ be the minimal $K$-type of $\pi_{\lambda}$. 
We have
\[
\dim \cJ_{\rho,\pi_{\lambda}}(\tl^*)^{00}=
\begin{cases}
1&(\rho=\rho_{\cD_{n_1}^-,m}~\text{with $-\lambda_1+\frac{3}{2}\le n_1\le-\lambda_2+\frac{1}{2}$ and $m<0$}),\\
0&(\text{otherwise}).
\end{cases}
\]
If $\dim \cJ_{\rho,\pi_{\lambda}}(\tl^*)^{00}\not=0$, the coefficient function $c_{-\frac{1}{2},k_0}(a)$ of the restriction $W\in\cJ_{\rho,\pi_{\lambda}}(\tl^*)^{00}$ to $A_J$ with $k_0=-n_1-\lambda_2+\frac{1}{2}$ is 
\[
{x'}^{\frac{1}{2}(-\lambda_1+2+k_0)}e^{-\frac{1}{2}x'},
\]
up to scalars. The other coefficients are given by the recurrence formulas in \cite[Theorem 6.5]{Hi-1} inductively from $c_{-\frac{1}{2},k_0}(a)$.
\item Let $m=0$ and $\lambda\in\Xi_{I}\cup\Xi_{IV}$. We have $\dim \cJ_{\rho,\pi_{\lambda}}(\tl^*)^{00}=0$ for any $\rho$.
\end{enumerate}
\end{thm}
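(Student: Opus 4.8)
The plan is to characterise $\cJ_{\rho,\pi_\lambda}(\tl^*)^{00}$ by the first-order differential equations coming from the lowest-weight (Schmid) condition and to integrate them explicitly, the point being that for a holomorphic discrete series these equations are of rank one. For $\rho$ with non-trivial central character I write $\rho=\rho_{m,\pi_1}$; by Section \ref{Result-Hirano} and the decomposition $G=G_JA_JK$, an element $W\in\cJ_{\rho,\pi_\lambda}(\tl^*)^{00}$ is determined by $W(a_J)=\sum_{j,k}c_{j,k}(a_J)\,w_l\otimes u_j^m\otimes v_k^*$ with $l=l(j,k)=-j+k+\Lambda_2$, and I use the variable $x=4\pi m a_1^2$ (resp. $x'=-4\pi m a_1^2$).

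First I would convert the condition $W\in\Hom_{(\g,K)}(\pi_\lambda,C^\infty\Ind_{G_J}^G\rho)|_{\tl}$ into equations for the $c_{j,k}$. The essential input is that $\pi_\lambda$ (for $\lambda\in\Xi_I$) is a lowest-weight module, so its minimal $K$-type $\tl$ is annihilated by the negative non-compact operators $\p_-\subset\p_\C$ attached to $-(2,0),-(0,2),-(1,1)$. Applying the intertwiner to these relations shows that $W$ is killed by the corresponding right-invariant vector fields. Expressing those fields in the coordinates on $A_J$ and using the $G_J$-equivariance to replace their $N_J\rtimes SL_2(\R)$-parts by the infinitesimal action $\rho(\cdot)$ via (\ref{Infinitesimal-MaassOp}) (together with the action of $\tilde\nu_m$), I obtain a coupled first-order system linking neighbouring coefficients --- the ``rank one'' Cauchy--Riemann condition of the introduction.

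The decisive simplification is that the lowest-weight structure singles out a corner coefficient, $c_{\frac12,k_0}$ with $k_0=n_1-\lambda_2-\frac32$, which decouples and solves a single equation of the shape $\bigl(x\frac{d}{dx}+\tfrac12 x-\tfrac12(\lambda_1+1-k_0)\bigr)c=0$, whence $c_{\frac12,k_0}=x^{\frac12(\lambda_1+1-k_0)}e^{-\frac12 x}$; the remaining $c_{j,k}$ follow by back-substitution, reproducing \cite[(6.4),(6.5)]{Hi-1}. Imposing rapid decay along $A_J$ then forces the sign of $m$: since $e^{-x/2}=e^{-2\pi m a_1^2}$ decays as $a_1\to\infty$ only for $m>0$, while the power $x^{\frac12(\lambda_1+1-k_0)}$ governs $a_1\to0$, matching $l(j,k)$ against $L_{\cD_{n_1}^+}=n_1+2\Z_{\ge0}$ with $n_1\in\frac12+\Z_{\ge0}$ (genuineness) pins down $\pi_1=\cD_{n_1}^+$ and the range $\lambda_2+\frac32\le n_1\le\lambda_1+\frac12$, giving (1). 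Part (2) is identical with $\p_-$ replaced by the positive non-compact operators, $x$ by $x'$, and $k_0=-n_1-\lambda_2+\frac12$. Finally, for (3) with $m=0$ the central character is trivial, $x=0$, and the exponential degenerates to $e^{-x/2}\equiv1$; the corner equation then has only solutions of polynomial order in $a_1$, hence so does every $c_{j,k}$, so no nonzero rapidly decreasing $W$ exists and $\cJ_{\rho,\pi_\lambda}(\tl^*)^{00}=0$.

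The main obstacle I anticipate is the explicit computation in the second step: writing the negative non-compact operators in the Fourier--Jacobi coordinates and tracking which pairs $(j,k)$ are coupled by the recurrence (subject to $l(j,k)\in L_{\pi_1}$), so as to verify that the system genuinely decouples into a rank-one ODE for the corner coefficient rather than a higher-order system. It is precisely this decoupling that renders the holomorphic and anti-holomorphic cases elementary, in contrast with the Meijer $G$-function behaviour of the other series, and establishing it carefully is the crux of the argument.
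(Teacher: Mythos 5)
Your strategy for parts (1) and (2) --- characterising $\cJ_{\rho,\pi_{\lambda}}(\tl^*)$ by the annihilation of the minimal $K$-type under $\p^-$ and integrating the resulting rank-one system --- is sound, and it is essentially the method of Hirano that the paper is merely reviewing here: the paper gives no independent proof of this theorem, but cites \cite[Theorems 6.3--6.8]{Hi-1} for assertions (1) and (3) and obtains (2) from (1) by the conjugation $W^*(g)=W(\delta g\delta^{-1}\xi)$ exactly as in the proof of Theorem \ref{LargeDSWhittaker} (2), rather than redoing the computation with $\p^+$. Two caveats on your part (1). First, you still owe the identification of solutions of the $\p^-$-equations with genuine intertwiners out of $\pi_{\lambda}$ (the analogue of the appeal to Yamashita's theorem made in the large discrete series case); second, the vanishing for $\pi_1=\cP_s^{\tau},\cC_s^{\tau},\cD_{n_1}^-$ is not just bookkeeping of the index set $L_{\pi_1}$ --- for those $\pi_1$ the pair $(j,k)=(\frac12,k_0)$ with $l(\frac12,k_0)\in L$ still exists, and what kills them is that the equation coming from the root $-(0,2)$ forces $\pi_1\otimes\omega_m$ to be a lowest weight $SL_2(\R)$-module, whence $\pi_1=\cD_{n_1}^+$ and $m>0$. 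You correctly flag the decoupling computation as the crux, but it is left undone.

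The genuine gap is part (3). When the central character is trivial there is no variable $x=4\pi ma_1^2$ to ``set to zero'': by Proposition \ref{UnitaryDual-G_J} (2), $\rho$ is either $\pi_1\otimes\tilde{\nu}_0$ with $\pi_1$ an irreducible unitary representation of $SL_2(\R)$ acting trivially on $N_J$, or a representation induced from a character of $N_0$, and in neither case do the coefficients $c_{j,k}$, the basis $\{u_j^m\}$, or the differential equations of parts (1)--(2) retain their meaning. The $m=0$ system must be derived separately; in the second case the spherical functions become Whittaker functions attached to characters of $N_0$ and one can invoke the non-genericity of holomorphic discrete series, while the first case requires its own (elementary but structurally different) analysis, which is exactly what \cite[Theorems 6.7, 6.8]{Hi-1} supplies. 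As written, your argument for (3) does not establish the vanishing.
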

For this theorem we remark that the last assertion is due to \cite[Theorems 6.7 and 6.8]{Hi-1}. 
We can deduce the second assertion from the first by the same argument as the proof of Theorem \ref{LargeDSWhittaker} (2). 
However, the explicit formula is relatively simple, compared with non-holomorphic cases. 
We thereby write down the formulas explicitly for both of holomorphic and antiholomorphic cases as above.  
\subsection*{(II)~The case of large discrete series representations}
All the theorems for this case are included in Hirano \cite{Hi-1}. Let $\pi_{\lambda}$ be the large discrete series representation with Harish Chandra parameter $\lambda$ and $\tl$ be the minimal $K$-type of $\pi_{\lambda}$ with Blattner parameter $\Lambda$~(cf.~Section~\ref{DS-rep}). 
For this case we have the following coincidence 
\[
\cJ_{\rho,\pi_{\lambda}}(\tl^*)=\{W\in C_{\rho,\tl^*}(G_J\backslash G/K)\mid \cD_{\tl^*,\rho}\cdot W=0\},
\]
where $\cD_{\tl^*,\rho}$ is the (generalized) Schmid operator as in Yamashita \cite{Ya}. According to \cite[Theorem 2.4]{Ya} we have the following isomorphism
\[
\Hom_{(\g,K)}(\pi_{\lambda}, C^{\infty}\Ind_{G_J}^G\rho)\simeq \cJ_{\rho,\pi_{\lambda}}(\tl^*).
\]
Here, as in the proof of Theorem \ref{LargeDSWhittaker}, we note again that large discrete series representations satisfy the condition ``far from the wall''~(for the definition see \cite[Definition 1.7]{Ya}). 
For the expression of a Fourier-Jacobi type spherical function $W$ we note that 
\[
l(j,k)=
\begin{cases}
-j+k+\lambda_2&(\lambda\in\Xi_{II})\\
-j+k+\lambda_2-1&(\lambda\in\Xi_{III}))
\end{cases}. 
\]
We first review Hirano's results on the case of $\pi_{\lambda}$ for $\lambda\in\Xi_{II}$~(cf.~\cite[Theorems 7.5,~7.6]{Hi-1}).
\begin{thm}\label{explicit-FJ-I}
Let $\pi_{\lambda}$ be the large discrete series representation of $G$ with Harish Chandra parameter $\lambda=(\lambda_1,\lambda_2)\in\Xi_{II}$ and suppose that $m>0$.
\begin{enumerate}
\item If $\pi_1=\cP_s^{\tau}~(\tau=\pm\frac{1}{2},~s\in\sqrt{-1}\R)$ or $C_s^{\tau}~(\tau=\pm\frac{1}{2},~0<s<\frac{1}{2})$, we have 
\[
\dim \cJ_{\rho_{m,\pi_1},\pi_{\lambda}}(\tl^*)^{00}=1.
\]
The coefficient function $c_{\frac{1}{2},k_0}(a)$ of the restriction of $W\in \cJ_{\rho_{m,\pi_1},\pi_{\lambda}}(\tl^*)^{00}$ to $A_J$ for $0\le k_0\le d_{\Lambda}$ with $l(\frac{1}{2},k_0)\in L$ is  
\[
e^{\frac{1}{2}x}G_{2,3}^{3,0}\left(x\left|
\begin{array}{c}
\dfrac{2s+5+2d_{\Lambda}}{4},\dfrac{-2s+5+2d_{\Lambda}}{4}\\
\dfrac{\lambda_1+2}{2},\dfrac{\lambda_1+3}{2},\dfrac{-k_0+d_{\Lambda}-\lambda_2+2}{2}
\end{array}\right.\right),
\]
up to constants, and the other coefficient functions are obtained inductively from $c_{\frac{1}{2},k_0}$ by the recurrence relations \cite[(7.1),~(7.3) in Proposition 7.1 and (7.4),~(7.6) in Lemma 7.2]{Hi-1}.
\item If $\pi_1=\cD_{n_1}^-$ with $n_1\in\frac{1}{2}\Z_{\ge 3}\setminus\Z$ and $n_1\le -\lambda_2+\frac{1}{2}$ we have 
\[
\dim \cJ_{\rho_{m,\pi_1},\pi_{\lambda}}(\tl^*)^{00}=1.
\]
The coefficient function $c_{\frac{1}{2},k_0}(a)$ of the restriction of $W\in \cJ_{\rho_{m,\pi_1},\pi_{\lambda}}(\tl^*)^{00}$ to $A_J$ with $k_0=-n_1+\frac{1}{2}-\lambda_2$ is
\[
e^{\frac{1}{2}x}G_{1,2}^{2,0}\left(x\left|
\begin{array}{c}
\dfrac{\lambda_1+4+k_0}{2}\\
\dfrac{\lambda_1+2}{2},\dfrac{\lambda_1+3}{2}
\end{array}\right.\right)=x^{\frac{3+2\lambda_1}{4}}W_{\kappa,\mu}(x)
\]
up to constants, where $\kappa=-\frac{1}{4}(1+2k_0),~\mu=-\frac{1}{4}$. The other coefficients are obtained inductively from $c_{\frac{1}{2},k_0}$ by the recurrence relation \cite[(7.1),~(7.3) in Proposition 7.1]{Hi-1}. 
\item If  $\pi_1=\cD^-_{n_1}$ with $n_1\in\frac{1}{2}\Z_{\ge 3}\setminus\Z$ and $n_1>-\lambda_2+\frac{1}{2}$ or $\pi_1=\cD_{n_1}^+$ with $n_1\in\frac{1}{2}\Z_{\ge 3}\setminus\Z$, we have 
\[
\dim \cJ_{\rho_{m,\pi_1},\pi_{\lambda}}(\tl^*)^{00}=0.
\]
\end{enumerate}
\end{thm}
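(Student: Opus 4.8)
The plan is to use Yamashita's characterization of the relevant intertwining operators to convert the problem into one about rapidly decreasing solutions of a single ordinary differential equation, and then to read off the answer from the theory of Meijer $G$-functions. First I would invoke the identification $\cJ_{\rho,\pi_{\lambda}}(\tl^*)\simeq\{W\in C_{\rho,\tl^*}(G_J\backslash G/K)\mid\cD_{\tl^*,\rho}W=0\}$ recorded just above, which is available precisely because $\lambda\in\Xi_{II}$ is far from the wall, so that \cite[Theorem 2.4]{Ya} applies. Since $G=P_JK$ with $P_J=N_JA_JM_J$ and $G_J\subset P_J$, the left $G_J$-equivariance and right $K$-equivariance reduce $W$ to its radial restriction $W|_{A_J}$, and with the substitution $x=4\pi ma_1^2$ the first-order system $\cD_{\tl^*,\rho}W=0$ becomes a system of ODEs in the single variable $x$ for the coefficient functions $c_{j,k}$.

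The next step is to write this system out explicitly and decouple it. Applying the non-compact root vectors that constitute $\cD_{\tl^*,\rho}$ to the expansion $W(a_J)=\sum c_{j,k}(a_J)\,w_l\otimes u_j^m\otimes v_k^*$, and substituting the infinitesimal formulas \eqref{Infinitesimal-MaassOp} for $\pi_1(U),\pi_1(V^{\pm})$, the Weil-representation action on $\{u_j^m\}$, and the $\tl^*$-action on $\{v_k^*\}$ from Section \ref{RepMaxCpt}, one obtains coupled first-order relations among the $c_{j,k}$. These separate into ``ladder'' relations, which express the neighbouring coefficients algebraically and by first-order differentiation in terms of the single distinguished coefficient $c_{\frac{1}{2},k_0}$ attached to the lowest Weil weight $j=\frac{1}{2}$, together with one genuine scalar ODE for $c_{\frac{1}{2},k_0}$. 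Eliminating the auxiliary coefficients, this scalar ODE is of Meijer type: in case (1) it is the $G_{2,3}^{3,0}$-equation with the indicated indicial data, while in case (2), where $\pi_1=\cD_{n_1}^-$, one ladder relation truncates and the equation degenerates to the $G_{1,2}^{2,0}$-equation, equivalently the Whittaker equation for $W_{\kappa,\mu}$ with the stated $\kappa,\mu$.

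I would then impose rapid decay. By \cite[Lemma A.2]{Hi-1} the function $e^{x/2}G_{p,q}^{q,0}(x\mid\cdots)$ is, up to a scalar, the unique rapidly decreasing solution of its defining equation; this determines $c_{\frac{1}{2},k_0}$, and the ladder relations then determine all remaining $c_{j,k}$ uniquely, yielding $\dim\cJ_{\rho,\pi_{\lambda}}(\tl^*)^{00}=1$ together with the explicit formulas in (1) and (2). For the vanishing in (3) the obstruction is representation-theoretic: the weight-matching constraint $l(j,k)=-j+k+\lambda_2\in L_{\pi_1}$ must hold at the corner for some admissible $(j,k)$ with $j\in\frac{1}{2}+\Z_{\ge 0}$ and $0\le k\le d_{\Lambda}$. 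When $\pi_1=\cD_{n_1}^-$ with $n_1>-\lambda_2+\frac{1}{2}$, matching the top weight $-n_1$ of $L_{\pi_1}$ forces $k_0=-n_1+\frac{1}{2}-\lambda_2<0$, which is out of range; and when $\pi_1=\cD_{n_1}^+$ the positivity of $L_{\pi_1}=n_1+2\Z_{\ge 0}$ is incompatible with the direction in which the Schmid ladder for the $\Xi_{II}$-chamber propagates, so no nonzero rapidly decreasing solution can exist. In each case the distinguished coefficient, hence $W$, must vanish.

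The step I expect to be the main obstacle is the decoupling together with the vanishing in (3): organizing the coupled first-order system into a single Meijer ODE requires simultaneous bookkeeping of the three index sets $\frac{1}{2}+\Z_{\ge 0}$, $L_{\pi_1}$ and $\{0,\dots,d_{\Lambda}\}$, and the delicate points are to verify that exactly one rapidly decreasing solution survives in (1)--(2) and that the weight and decay constraints genuinely conspire to force $W\equiv0$ in (3), rather than merely lowering the dimension.
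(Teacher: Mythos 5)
The paper does not actually prove this theorem: it is presented as a verbatim review of Hirano's results \cite[Theorems 7.5, 7.6]{Hi-1}, so there is no in-paper argument to compare against. Your sketch is a reasonable reconstruction of Hirano's method, and its first two-thirds (Yamashita's theorem to identify $\cJ_{\rho,\pi_{\lambda}}(\tl^*)$ with the kernel of the Schmid operator, reduction to $A_J$ via $P_J=G_JA_J$, the substitution $x=4\pi ma_1^2$, the ladder recurrences plus one scalar Meijer-type ODE, and selection of the unique rapidly decreasing solution) matches both the surrounding discussion in the paper and the recurrence relations cited in the statement itself.

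There is, however, a genuine gap in your treatment of part (3). You argue the vanishing from a weight-matching obstruction, but the constraint $l(j,k)=-j+k+\lambda_2\in L_{\pi_1}$ is in fact satisfiable in both subcases. For $\pi_1=\cD_{n_1}^-$ with $n_1>-\lambda_2+\frac{1}{2}$, the failure of the corner pair $(j,k)=(\frac{1}{2},k_0)$ with $k_0=-n_1+\frac{1}{2}-\lambda_2<0$ only rules out that particular entry: the weight $-n_1$ is still realized by $(j,k)=(n_1+\lambda_2,0)$ with $j=n_1+\lambda_2>\frac{1}{2}$ a valid element of $J$, and lower weights of $L_{\pi_1}$ are realized as well. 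For $\pi_1=\cD_{n_1}^+$ with $m>0$, the set $l(J\times\{0,\dots,d_{\Lambda}\})=(-\infty,\lambda_1+\frac{1}{2}]\cap(\frac{1}{2}+\Z)$ meets $L_{\pi_1}=n_1+2\Z_{\ge 0}$ whenever $n_1\le\lambda_1+\frac{1}{2}$, yet the theorem asserts vanishing there too. So in both subcases the expansion of $W$ has nonempty index set and the vanishing is not combinatorial: it is the analytic statement that the resulting truncated ODE system admits no nonzero rapidly decreasing solution (the solutions forced by the ladder truncation grow like $e^{x/2}$ times a non-decaying factor). This is precisely the content of Hirano's case analysis, and your sketch would need to supply it rather than appeal to weight bookkeeping. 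A smaller point: \cite[Lemma A.2 (2)]{Hi-1} gives that $e^{\frac{1}{2}x}G_{p,q}^{q,0}$ is rapidly decreasing; the uniqueness of the rapidly decreasing solution among all solutions of the Meijer equation is an additional asymptotic fact you are implicitly using in parts (1) and (2).
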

\begin{thm}\label{explicit-FJ-II}
Let $\pi_{\lambda}$ be as in Theorem \ref{explicit-FJ-I} and suppose that $m<0$.
\begin{enumerate}
\item If $\pi_1=\cD_{n_1}^+$ with $n_1\in\frac{1}{2}\Z_{\ge 3}\setminus\Z$ and $n_1>\lambda_1+\frac{1}{2}$ we have 
\[
\dim \cJ_{\rho_{m,\pi_1},\pi_{\lambda}}(\tl^*)^{00}=1.
\]
The coefficient function $c_{j_0,k_0}(a)$ of the restriction of $W\in \cJ_{\rho_{m,\pi_1},\pi_{\lambda}}(\tl^*)^{00}$ to $A_J$ with $j_0\in J$ and $0\le  k_0\le d_{\Lambda}$ such that $l(j_0,k_0)=n_1$ is
\[
e^{\frac{1}{2}x'}G_{2,3}^{3,0}\left(x'\left|
\begin{array}{c}
\dfrac{2\lambda_1+5-2j_0}{4},\dfrac{2\lambda_1+7-2j_0}{4}\\
\dfrac{\lambda_1+2}{2},\dfrac{\lambda_1+3}{2},\dfrac{-k_0+d_{\Lambda}-\lambda_2+2}{2}
\end{array}\right.\right)
\]
up to constants. The other coefficients are obtained inductively from $c_{j_0,k_0}$ by the recurrence relations in  \cite[Proposition 7.1]{Hi-1}.
\item If $\pi_1=\cP_s^{\tau}~(\tau=\pm\frac{1}{2},~s\in\sqrt{-1}\R)$ or $C_s^{\tau}~(\tau=\pm\frac{1}{2},~0<s<\frac{1}{2})$, we have 
\[
\dim \cJ_{\rho_{m,\pi_1},\pi_{\lambda}}(\tl^*)^{00}=0.
\]  
\item If $\pi_1=\cD_{n_1}^+$ with $n_1\in\frac{1}{2}\Z_{\ge 3}\setminus\Z$ and $n_1\le\lambda_1+\frac{1}{2}$ or $\pi_1=\cD^-_{n_1}$ with $n_1\in\frac{1}{2}\Z_{\ge 3}\setminus\Z$, we have 
\[
\dim \cJ_{\rho_{m,\pi_1},\pi_{\lambda}}(\tl^*)^{00}=0.
\]
\end{enumerate}
\end{thm}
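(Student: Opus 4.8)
The plan is to realize $\cJ_{\rho_{m,\pi_1},\pi_\lambda}(\tl^*)^{00}$ as the rapidly decreasing part of the kernel of the generalized Schmid operator, to solve the resulting radial system, and to transport Hirano's $m>0$ computation to the present situation by the change of variables already exploited for the Whittaker functions. Since a large discrete series representation is far from the wall in the sense of Yamashita \cite{Ya}, exactly as in the case $\lambda\in\Xi_{II}$, $m>0$ treated just before, we have
\[
\cJ_{\rho_{m,\pi_1},\pi_\lambda}(\tl^*)=\{W\in C_{\rho_{m,\pi_1},\tl^*}(G_J\backslash G/K)\mid \cD_{\tl^*,\rho_{m,\pi_1}}W=0\}
\]
together with $\Hom_{(\g,K)}(\pi_\lambda,C^{\infty}\Ind_{G_J}^G\rho_{m,\pi_1})\simeq\cJ_{\rho_{m,\pi_1},\pi_\lambda}(\tl^*)$. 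First I would restrict $W$ to $A_J$, write $W(a_J)=\sum c_{j,k}(a_J)\,w_l\otimes u_j^m\otimes v_k^*$ with $l=l(j,k)=-j+k+\lambda_2-1$ (the value for $\lambda\in\Xi_{II}$), and record the radial system that $\cD_{\tl^*,\rho_{m,\pi_1}}W=0$ imposes on the $c_{j,k}$, as in \cite[Proposition 7.1]{Hi-1}.

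The second step is the reduction. The map $g\mapsto\delta g\delta^{-1}\xi$ used in Theorem \ref{LargeDSWhittaker}(2) and in the remark following Theorem \ref{FJ-holom-antiholom-DS} acts on ${\frak t}_{\C}^*$ by $(a,b)\mapsto(-b,-a)$; indeed it carries the $\Xi_{II}$ non-compact positive roots $(1,1),(2,0),(0,-2)$ to the $\Xi_{III}$ non-compact positive roots $(-1,-1),(0,-2),(2,0)$. Hence it conjugates the Schmid operator attached to $\Xi_{II}$ into the one attached to $\Xi_{III}$, sends $\pi_\lambda$ to $\pi_{(-\lambda_2,-\lambda_1)}$ with $(-\lambda_2,-\lambda_1)\in\Xi_{III}$, reverses the central index $m$ (conjugation by $\delta$ negates the $(1,3)$-entry spanning $Z_J$), and interchanges $\cD_{n_1}^+\leftrightarrow\cD_{n_1}^-$ while preserving the classes $\cP_s^\tau,\cC_s^\tau$; the accompanying right translation by $\xi\in K$ adjusts the $K$-type side as in the proof of Theorem \ref{LargeDSWhittaker}(2). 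Thus the present $m<0$ problem is carried to an $m>0$ problem for a parameter in $\Xi_{III}$, to which Hirano's explicit formula applies, and the shape $e^{\frac12 x'}G_{2,3}^{3,0}(x'|\cdots)$ of the coefficient function $c_{j_0,k_0}$ in assertion (1) together with its rapid decay is read off from \cite[Lemma A.2(2)]{Hi-1}.

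Next I would carry out the dimension count by matching the equivariance constraints. For $m<0$ the Heisenberg index set is $J=-(\tfrac12+\Z_{\ge0})$, and a nonzero $W$ in the Schmid kernel is rapidly decreasing precisely when the compatibility $l(j_0,k_0)\in L_{\pi_1}$ can be solved with $0\le k_0\le d_\Lambda$ and $j_0\in J$ by the unique rapidly decreasing solution of the radial system. For $\pi_1=\cD_{n_1}^+$ this forces $l(j_0,k_0)=n_1$, which is solvable exactly in the range $n_1>\lambda_1+\tfrac12$, giving $\dim=1$; the remaining coefficients then follow inductively from $c_{j_0,k_0}$ by the recurrence relations of \cite[Proposition 7.1]{Hi-1}. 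Outside this range, or for $\pi_1=\cD_{n_1}^-$, the admissible solution of the system is not rapidly decreasing, giving $\dim=0$ (assertion (3)).

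The main obstacle is the asymmetry in assertion (2). Whereas $\cP_s^\tau$ and $\cC_s^\tau$ contribute a one-dimensional space for $m>0$ (Theorem \ref{explicit-FJ-I}(1)), they must contribute \emph{nothing} for $m<0$, and this does not follow from the naive $m\mapsto-m$ symmetry, because the reduction map does not preserve $\Xi_{II}$: it identifies the vanishing sought here with the vanishing of the principal-series contribution in the $\Xi_{III}$, $m>0$ problem, which is itself a genuine distinction between the two large discrete series families. One must therefore argue directly that, under the $m<0$ Schmid operator and the $\cP_s^\tau$-equivariance (whose weight set $L=\tau+2\Z$ is unbounded in both directions), the rapidly decreasing $G$-function solution singled out above is incompatible with every admissible pair $(j,k)$, so that each solution respecting the equivariance is of at most polynomial growth. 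Pinning down which solution of the hypergeometric-type system is the rapidly decreasing one and checking it against the index constraints is the delicate point; the rest is bookkeeping.
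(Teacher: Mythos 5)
First, a point of reference: the paper does not actually prove this statement. Theorems \ref{explicit-FJ-I} and \ref{explicit-FJ-II} are presented as a review and rest entirely on the citation \cite[Theorems 7.5, 7.6]{Hi-1}; the only reduction of the kind you propose appears in Proposition \ref{explicit-FJ-III}, and it runs in the opposite direction (from $\Xi_{II}$ to $\Xi_{III}$). Your overall strategy --- Yamashita's characterization, the radial system coming from the Schmid operator, and the substitution $g\mapsto\delta g\delta^{-1}\xi$ --- matches the techniques the paper uses elsewhere, and your analysis of what that substitution does (negating the central index $m$, swapping $\cD_{n_1}^{\pm}$, carrying the $\Xi_{II}$ Schmid operator to the $\Xi_{III}$ one) is correct. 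But as a proof it is circular inside this paper's logic: you reduce the $\Xi_{II}$, $m<0$ problem to the $\Xi_{III}$, $m>0$ problem, which the paper in turn derives from the $\Xi_{II}$ results. To close the loop you must invoke Hirano's independent computation for $\Xi_{III}$, at which point you have only replaced one citation by another.

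The parts you argue directly contain genuine gaps. You yourself flag that assertion (2) --- the vanishing for $\cP_s^{\tau}$ and $\cC_s^{\tau}$ when $m<0$ --- is ``the delicate point,'' and you do not carry it out; determining which solution of the $G_{2,3}^{3,0}$-type system is rapidly decreasing and showing it is incompatible with the $\cP_s^{\tau}$-equivariance is precisely the content of the theorem, not bookkeeping. Your dimension count for $\cD_{n_1}^{+}$ is also wrong as stated: solvability of $l(j_0,k_0)=n_1$ with $j_0\in J=-(\tfrac{1}{2}+\Z_{\ge0})$ and $0\le k_0\le d_{\Lambda}$ requires only $n_1\ge\lambda_2+\tfrac{1}{2}$ (of the correct parity), not $n_1>\lambda_1+\tfrac{1}{2}$, since $-j_0$ is unbounded above; assertion (3) asserts the dimension is $0$ for $\lambda_2+\tfrac{1}{2}\le n_1\le\lambda_1+\tfrac{1}{2}$ even though the index equation is solvable there, so the cut-off must come from the asymptotics of the radial solutions, which you have not analyzed. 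Finally, a small but consequential slip: for $\lambda\in\Xi_{II}$ the Blattner parameter is $\Lambda=(\lambda_1+1,\lambda_2)$, so the index relation is $l(j,k)=-j+k+\lambda_2$; the value $-j+k+\lambda_2-1$ you use is the one for $\Xi_{III}$, and with it the admissible range of $n_1$ comes out shifted.
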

We can review Hirano's results for $\pi_{\lambda}$ with $\lambda\in\Xi_{III}$ from \cite[Theorems 7.7,~7.8]{Hi-1}). 
However, let us recall that the (generalized) Schmid operators for $\pi_{\lambda}$ and its contragredient $\pi_{\lambda}^*$ are conjugate to each other by $\delta^{-1}\xi$ as we have remarked in the proof of Theorem \ref{LargeDSWhittaker} (2). We can reproduce \cite[Theorems 7.7,~7.8]{Hi-1} by the following proposition.
\begin{prop}\label{explicit-FJ-III}
For each large discrete series representation $\pi_{\lambda}$ with $\lambda=(\lambda_1,\lambda_2)\in\Xi_{III}$ the Fourier-Jacobi type spherical functions $W^*$ is given by
\[
W^*(g)=W(\delta g\delta^{-1}\xi)\quad(g\in G),
\]
up to constant multiples, with the Fourier-Jacobi type spherical function $W$ for the large discrete series $\pi_{\lambda^*}$ with $\lambda^*=(-\lambda_2,-\lambda_1)\in\Xi_{II}$. 
\end{prop}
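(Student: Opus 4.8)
The plan is to argue in complete parallel with the proof of Theorem~\ref{LargeDSWhittaker}~(2), substituting the characterization of Fourier-Jacobi type spherical functions through Yamashita's (generalized) Schmid operator for the corresponding characterization of Whittaker functions. Recall from the discussion preceding the proposition that, for a large discrete series $\pi_{\lambda}$, one has $\cJ_{\rho,\pi_{\lambda}}(\tl^*)=\{W\in C_{\rho,\tl^*}(G_J\backslash G/K)\mid \cD_{\tl^*,\rho}\cdot W=0\}$. Writing $\Psi(g):=\delta g\delta^{-1}\xi$, it therefore suffices to show that $W\mapsto W\circ\Psi$ carries $\cJ_{\rho,\pi_{\lambda^*}}(\tau_{\Lambda^*}^*)^{00}$ into $\cJ_{\rho',\pi_{\lambda}}(\tl^*)^{00}$ for the transformed data $(\rho',\tl^*)$ attached to $\pi_{\lambda}$. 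Since $\Psi$ is invertible (conjugation by $\delta$ is an automorphism of $G$ because $\delta J_4\delta=-J_4$, so $\delta g\delta^{-1}\in G$, and $\xi\in K\subset G$), this map is a linear bijection between the two spaces, and hence sends a generator to a generator, which yields the asserted formula up to constant multiples.

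First I would verify the equivariance of $W^*:=W\circ\Psi$. For the right $K$-side one writes $\delta gk\delta^{-1}\xi=(\delta g\delta^{-1}\xi)(\xi^{-1}\delta k\delta^{-1}\xi)$ and uses that $\Ad(\delta^{-1}\xi)$ normalizes $K$: with $\delta^{-1}\xi=\diag(-J'_2,J'_2)$ a direct check shows $\begin{pmatrix}A&B\\-B&A\end{pmatrix}\mapsto\begin{pmatrix}J'_2AJ'_2&-J'_2BJ'_2\\ J'_2BJ'_2&J'_2AJ'_2\end{pmatrix}$, which again lies in $K$ and corresponds under $K\simeq U(2)$ to $g\mapsto J'_2\bar g J'_2$. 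Consequently the twist of the $K$-type $\tau_{\Lambda^*}^*$ of $W$ by $\Ad(\delta^{-1}\xi)$ is the contragredient $K$-type, of highest weight $(-\Lambda_2,-\Lambda_1)=\Lambda^*$, which is exactly $\tl^*$ for $\pi_{\lambda}$; this is the $K$-side of the contragredient relation of Section~\ref{DS-rep}. For the left $G_J$-side, one checks that $\Ad(\delta)$ preserves $G_J$, sending $n(u_0,u_1,u_2)$ to $n(u_0,-u_1,-u_2)$, so the central index $m$ of $\rho=\rho_{m,\pi_1}$ is flipped to $-m$ in $\rho':=\rho\circ\Ad(\delta)$, matching the passage from $m>0$ for $\Xi_{II}$ to $m<0$ for $\Xi_{III}$ (cf.~Theorems~\ref{explicit-FJ-I},~\ref{explicit-FJ-II}); on the $SL_2(\R)$-factor $\Ad(\delta)$ acts by conjugation by $\diag(1,-1)$, interchanging $\cD_{n_1}^{+}$ and $\cD_{n_1}^{-}$.

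The decisive step is the conjugacy of the Schmid operators, which goes exactly as in Theorem~\ref{LargeDSWhittaker}~(2): the operator for $\lambda^*\in\Xi_{II}$ is built from the non-compact positive roots $(1,1),~(2,0),~(0,-2)$, that for $\lambda\in\Xi_{III}$ from $(-1,-1),~(2,0),~(0,-2)$, and $\Ad(\delta^{-1}\xi)$ carries the former triple and their root vectors onto the latter. Hence $\cD_{\tl^*,\rho'}$ for $\pi_{\lambda}$ is the $\Ad(\delta^{-1}\xi)$-conjugate of $\cD_{\tau_{\Lambda^*}^*,\rho}$ for $\pi_{\lambda^*}$, so $W^*$ is annihilated by the former whenever $W$ is annihilated by the latter. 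Finally, because $\Ad(\delta)$ fixes $A_J$ pointwise and $\xi\in K$, the restriction $W^*|_{A_J}$ agrees with $W|_{A_J}$ up to a fixed unitary factor, so $W^*$ is rapidly decreasing if and only if $W$ is. This places $W^*$ in $\cJ_{\rho',\pi_{\lambda}}(\tl^*)^{00}$ and completes the argument, reproducing \cite[Theorems~7.7,~7.8]{Hi-1}.

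I expect the principal obstacle to be the bookkeeping in the equivariance step, namely tracking precisely how the pair $(\rho,\tl)=(\rho_{m,\pi_1},\tl)$ is transported by the two conjugations $\Ad(\delta)$ on the $G_J$-side and $\Ad(\delta^{-1}\xi)$ on the $K$-side, and confirming that the induced flips ($m\mapsto-m$, $\cD_{n_1}^{\pm}\mapsto\cD_{n_1}^{\mp}$, and $\tau_{\Lambda^*}^*\mapsto\tl^*$ for the contragredient $K$-type) are mutually compatible with the index shift $l(j,k)=-j+k+\lambda_2-1$ for $\Xi_{III}$ versus $l(j,k)=-j+k+\lambda_2$ for $\Xi_{II}$. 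Once this matching is pinned down, the Schmid-operator conjugacy and the preservation of rapid decay follow with no further work.
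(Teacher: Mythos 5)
Your proposal is correct and follows essentially the same route as the paper: the paper proves this proposition only by the remark immediately preceding it, namely that the (generalized) Schmid operators for $\pi_{\lambda}$ and $\pi_{\lambda^*}$ are conjugate by $\delta^{-1}\xi$, referring back to the root-vector computation and the change of variables $g\mapsto\delta g\delta^{-1}\xi$ in the proof of Theorem \ref{LargeDSWhittaker}~(2). Your writeup supplies the bookkeeping (the flip $m\mapsto -m$, $\cD_{n_1}^{\pm}\mapsto\cD_{n_1}^{\mp}$, and the contragredient $K$-type) that the paper leaves implicit, and it is consistent with Theorems \ref{explicit-FJ-I} and \ref{explicit-FJ-II}.
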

\subsection*{(III)~The case of $P_J$-principal series representations}
All the theorems for this case are included in Hirano \cite{Hi-2}. Though the irreducibility of $P_J$-principal series representations are assumed for the theorems there we can remove such assumption since the proofs of them in \cite{Hi-2} do not use the irreducibility to obtain explicit Fourier-Jacobi type spherical functions. 

Throughout the explanation of this case we denote simply by $\pi$ the $P_J$-principal series representation  $\Ind_{P_J}^G(1_{N_J}\otimes\nu_z\otimes\sigma)$ of $G$, with $\sigma=(\cD_n^{\pm},\epsilon)$ and $\nu_z$ as in Section \ref{P_JPS-rep}. 
We remind the readers that every $P_J$-principal series representation has the corner $K$-type, whose dimension is one or two~(cf.~Section \ref{P_JPS-rep}). We divide Hirano's formula of this case into two cases in terms of the dimensions of  the corner $K$-types.
\subsection*{(1)~The case of the one dimensional corner $K$-type}
We first review Hirano's result \cite[Theorems 5.4,~5.6]{Hi-2} for $\pi$ given by $\sigma=(\cD^+_n,\epsilon)$ with $n\in\Z_{\ge 1}$, $\epsilon(\diag(1,-1,1,-1))=(-1)^n$. The corner $K$-type of $\pi$ then has the highest weight $\Lambda=(n,n)$. 
Let $j_0\in J$ be the minimum index of $J$ such that $l(j_0,0)\in L$, which is given by
\begin{equation}\label{Index-J}
\begin{cases}
\frac{1}{2}&\left(\pi_1=
\begin{cases}
\cP_{\tau}^s,~\cC_{\tau}^s,~\tau+\frac{1}{2}\equiv n\mod 2,\\
\cD_{n_1}^+,~n_1+\frac{1}{2}\equiv n\mod 2, n_1+\frac{1}{2}<n,
\end{cases}\right),\\
\frac{3}{2}&\left(\pi_1=
\begin{cases}
\cP_{\tau}^s,~\cC_{\tau}^s,~\tau-\frac{1}{2}\equiv n\mod 2,\\
\cD_{n_1}^+,~n_1+\frac{3}{2}\equiv n\mod 2, n_1+\frac{3}{2}<n,
\end{cases}\right),\\
n+n_1&(\pi_1=\cD_{n_1}^-).
\end{cases}
\end{equation}
\begin{thm}\label{explicit-FJ-PJ-I}
Let $\pi$ and $j_0$ be as above and suppose that $m>0$. 
\begin{enumerate}
\item If $\pi_1=\cD_{n_1}^+~(n_1\in\frac{1}{2}\Z_{\ge 1}\setminus\Z,~n_1>n-\frac{1}{2})$ then 
\[
\dim\cJ_{\rho_{m,\pi_1},\pi}(\tl^*)^{00}=0.
\]
\item If $\pi_1=\cD_{n_1}^+~(n_1\in\frac{1}{2}\Z_{\ge 1}\setminus\Z,~n_1\le n-\frac{1}{2})$ then 
\[
\dim\cJ_{\rho_{m,\pi_1},\pi}(\tl^*)^{00}\le 1.
\]
\item If $\pi_1\not=\cD_{n_1}^+$ and $j_0=\frac{1}{2}$ or $\frac{3}{2}$, that is, $\pi_1=\cP_s^{\tau}~(\tau=\pm\frac{1}{2},~s\in\sqrt{-1}\R),~\cC_s^{\tau}~(\tau=\pm\frac{1}{2},~0<s<\frac{1}{2})$ or $\cD^-_{\frac{1}{2}}$ with $n=1$, then 
\[
\dim\cJ_{\rho_{m,\pi_1},\pi}(\tl^*)^{00}\le 1.
\]
\item If $j_0=n+n_1>\frac{3}{2}$, that is, $\pi_1=\cD_{n_1}^-$ and $(n,n_1)\not=(1,\frac{1}{2})$, then 
\[
\dim\cJ_{\rho_{m,\pi_1},\pi}(\tl^*)^{00}=0.
\]
\end{enumerate}
If $\cJ_{\rho_{m,\pi_1},\pi}(\tl^*)^{00}\not=\{0\}$ the coefficient function $c_{j_0,0}(a_J)$ for the restriction of $W\in\cJ_{\rho_{m,\pi_1},\pi}(\tl^*)^{00}$ to $A_J$ is given by
\[
e^{\frac{1}{2}x}G_{2,3}^{3,0}\left(x\left|
\begin{array}{c}
\dfrac{z_0+2+j_0}{2},\dfrac{-z_0+2+j_0}{2}\\
\dfrac{n+1}{2},\dfrac{z+2}{2},\dfrac{-z+2}{2}
\end{array}
\right.\right),
\]
up to constant multiple. Here $z_0$ means $s$~(respectively~$n_1-1$) for $\pi_1=\cP_s^{\tau}$ or $\cC_s^{\tau}$~(respectively~$\cD_{n_1}^{\pm}$). The other coefficients are given inductively from $c_{j_0,0}$ by the recurrence relation \cite[(5.2) in Proposition 5.2]{Hi-2}.
\end{thm}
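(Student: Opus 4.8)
The plan is to characterize the Fourier-Jacobi type spherical functions by a holonomic system of differential equations and then single out the rapidly decreasing solutions, following the strategy of Hirano \cite{Hi-2}. First I would use the fact that $G=G_JA_JK$ (up to a finite overlap coming from $M_J$) so that, by the $G_J$- and $K$-equivariance, an element $W\in\cJ_{\rho_{m,\pi_1},\pi}(\tl^*)^{00}$ is determined by its restriction to $A_J$. Writing $W(a_J)=\sum_{j,k}c_{j,k}(a_J)\,w_l\otimes u_j^m\otimes v_k^*$ with $l=l(j,k)=-j+k+\Lambda_2$, the problem reduces to determining the scalar coefficient functions $c_{j,k}$ in the single variable $a_1$, equivalently in $x=4\pi m a_1^2$.

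The differential equations for the $c_{j,k}$ come from the $(\g,K)$-module structure of the $P_J$-principal series $\pi$. Concretely, the Casimir operator of $\g$ acts on $\pi$ by the scalar fixed by its infinitesimal character (encoding $z$ and $n$), while the shift operators relating adjacent $K$-types give first-order relations among the $c_{j,k}$. Feeding in the explicit infinitesimal actions on the basis $\{v_k^*\}$ from Section \ref{RepMaxCpt}, together with the action of $\rho_{m,\pi_1}=\pi_1\boxtimes\tilde{\nu}_m$ on $\{w_l\otimes u_j^m\}$ from Sections \ref{RepSL2} and \ref{UnitaryRepJacobi}, converts these into an explicit holonomic system.

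Because the corner $K$-type $\Lambda=(n,n)$ is one-dimensional, the system is anchored at the single coefficient $c_{j_0,0}$, where $j_0$ is the minimal admissible index of (\ref{Index-J}); all other $c_{j,k}$ are then forced inductively by the shift relation. I would translate the Casimir equation at the corner into one ordinary differential equation of generalized (Meijer) hypergeometric type with indices $(p,q)=(2,3)$, read off its parameters from the infinitesimal character and from $z_0$, and then impose rapid decrease as $a_1\to\infty$. This selects the solution $e^{\frac{1}{2}x}G_{2,3}^{3,0}(x\,|\,\cdots)$, which is rapidly decreasing by \cite[Lemma A.2 (2)]{Hi-1}. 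Since a rapidly decreasing solution of this ODE is unique up to scalars, $\dim\cJ_{\rho_{m,\pi_1},\pi}(\tl^*)^{00}\le 1$ in every case, giving the upper bounds of assertions (2) and (3).

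The remaining case analysis, in particular the vanishing in (1) and (4), would follow from admissibility bookkeeping: one checks whether $j_0\in J$ and $l(j_0,0)\in L_{\pi_1}$ for the given $\pi_1$, and whether the inductively determined coefficients stay in the valid index ranges while remaining rapidly decreasing. The main obstacle is precisely this propagation step, namely verifying that the unique rapidly decreasing corner solution extends to a genuine element of $\cJ^{00}$ (so $\dim=1$) or is obstructed (so $\dim=0$); this requires the asymptotic analysis of the Meijer $G$-functions and the full solution of the holonomic system carried out in \cite{Hi-2}. Finally, I would observe that dropping the irreducibility hypothesis on $\pi$ is harmless, since neither the differential-equation characterization nor the rapid-decrease selection ever invokes irreducibility of the $P_J$-principal series, so Hirano's computations apply verbatim.
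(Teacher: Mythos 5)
Your outline matches the paper's treatment: the paper gives no independent proof of this theorem but presents it as a review of Hirano's \cite[Theorems 5.4, 5.6]{Hi-2}, whose method is exactly what you describe --- reduction to $A_J$ via $G=G_JA_JK$, a holonomic system from the Casimir and shift operators anchored at the one-dimensional corner $K$-type, selection of the unique rapidly decreasing Meijer $G^{3,0}_{2,3}$ solution, and index bookkeeping for the vanishing cases. Your closing remark that irreducibility of the $P_J$-principal series is never used is precisely the one observation the paper itself adds to Hirano's statements.
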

\begin{thm}\label{explicit-FJ-PJ-II}
Let $\pi$ be as in Theorem \ref{explicit-FJ-PJ-I} and suppose that $m<0$.
\begin{enumerate}
\item If $\pi_1=\cD_{n_1}^+~(n_1\in\frac{1}{2}\Z\setminus\Z,~n_1>n-\frac{1}{2})$, then 
\[
\dim\cJ_{\rho_{m,\pi_1},\pi}(\tl^*)^{00}\le 1.
\]
Suppose that $\cJ_{\rho_{m,\pi_1},\pi}(\tl^*)^{00}\not=\{0\}$. For $j'_0=n-n_1$ the coefficient function $c_{j'_0,0}(a_J)$ for the restriction of $W\in\cJ_{\rho,\pi}(\tl^*)^{00}$ to $A_J$ is
\[
e^{\frac{1}{2}x'}G_{2,3}^{3,0}\left(x'\left|
\begin{array}{c}
\dfrac{n_1+\frac{3}{2}}{2},\dfrac{n_1+\frac{5}{2}}{2}\\
\dfrac{n+1}{2},\dfrac{z+2}{2},\dfrac{-z+2}{2}
\end{array}
\right.\right),
\]
up to constant multiples. The other coefficients are deduced inductively from $c_{j'_0,0}$ by the recurrence relation \cite[(5.2) in Proposition 5.2]{Hi-2}.
\item If $\cD^+_{n_1}~(n_1\in\frac{1}{2}\Z_{\ge 1}\setminus\Z,~n_1\le n-\frac{1}{2})$, $\cP_s^{\tau}~(\tau=\pm\frac{1}{2},~s\in\sqrt{-1}\R)$ or $\cC_s^{\tau}~(\tau=\pm\frac{1}{2},~0<s<\frac{1}{2})$, then 
\[
\dim\cJ_{\rho_{m,\pi_1},\pi}(\tl^*)^{00}=0.
\]
\item If $\pi_1=\cD_{n_1}^-~(n_1\in\frac{1}{2}\Z_{\ge 1}\setminus\Z)$, then 
\[
\dim\cJ_{\rho_{m,\pi_1},\pi}(\tl^*)^{00}=0.
\] 
\end{enumerate}
\end{thm}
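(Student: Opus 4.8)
This statement is, up to the removal of the irreducibility hypothesis, Hirano's \cite[Theorems 5.4,~5.6]{Hi-2} in the case $m<0$, so the plan is to reproduce it inside Hirano's framework and then to record separately the rapid decay of the displayed solution and the bound $\dim\le 1$. I would begin by restricting a candidate $W\in\cJ_{\rho_{m,\pi_1},\pi}(\tl^*)$ to $A_J$. Since the corner $K$-type has $\Lambda=(n,n)$ and hence $d_{\Lambda}=0$, only the index $k=0$ occurs, so the expansion collapses to $\sum_{j}c_{j,0}(a_J)\,w_{l(j,0)}\otimes u_j^m\otimes v_0^*$ with $l(j,0)=-j+n$; the requirement $l(j,0)\in L=L_{\pi_1}$ together with $j\in J$ cuts down the admissible indices $j$. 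Hirano's differential-recurrence system \cite[Proposition 5.2]{Hi-2}, which encodes the $(\g,K)$-action on the induced module, then links the various $c_{j,0}$ and decouples into a single ordinary differential equation for one ``corner'' coefficient.

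For $\pi_1=\cD_{n_1}^+$ with $n_1>n-\tfrac12$ that corner index is $j'_0=n-n_1\in J$, since $l(j'_0,0)=n_1\in L_{\cD_{n_1}^+}$ and $j'_0$ is the extremal admissible index of the $\widetilde{SO}_2(\R)$-spectrum. After the substitution $x'=-4\pi m a_1^2$ (legitimate as $m<0$) the decoupled equation for $c_{j'_0,0}$ becomes the Meijer equation solved by $e^{\frac12 x'}G_{2,3}^{3,0}$ with the parameters displayed in the statement. Its unique rapidly decreasing solution both yields the formula and forces $\dim\le 1$; rapid decay of $e^{\frac12 x'}G_{2,3}^{3,0}(x'|\cdots)$ is precisely \cite[Lemma A.2 (2)]{Hi-1}, and I would check, exactly as in Theorem \ref{explicit-FJ-PJ-I}, that the recurrence of \cite[Proposition 5.2]{Hi-2} propagates rapid decay to every remaining $c_{j,0}$.

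For the vanishing assertions (2) and (3) the plan is to show that no admissible corner index supports a rapidly decreasing leading coefficient. For $\pi_1=\cD_{n_1}^+$ with $n_1\le n-\tfrac12$ and for $\pi_1=\cP_s^{\tau},\cC_s^{\tau}$, the sign $m<0$ places the relevant $G_{2,3}^{3,0}$-solution on a non-rapidly-decreasing branch, the mirror of the $m>0$ situation of Theorem \ref{explicit-FJ-PJ-I} where precisely these representations contribute, while for $\pi_1=\cD_{n_1}^-$ the set $J$ contains no admissible $j$ with $l(j,0)\in L$ carrying a decaying solution. A structurally cleaner alternative, in the spirit of Theorem \ref{LargeDSWhittaker} (2) and Proposition \ref{explicit-FJ-III}, is to pass through the change of variables $g\mapsto\delta g\delta^{-1}\xi$: this flips $m\mapsto -m$, interchanges $\cD_{n_1}^{+}\leftrightarrow\cD_{n_1}^{-}$ for $\pi_1$ and $\cD_n^{+}\leftrightarrow\cD_n^{-}$ in $\sigma$, and conjugates the characterizing Casimir and shift operators into those of the companion $m>0$ problem, so that the $m<0$ formulas descend from an $m>0$ computation by substitution.

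Finally, removing the irreducibility assumption of \cite{Hi-2} is immediate, since the derivation uses only the operator equations and the explicit $G$-function solutions and never the irreducibility of $\pi$. The step I expect to be the main obstacle is the combinatorial bookkeeping that pins down the correct corner index $j'_0=n-n_1$ and the exact $G_{2,3}^{3,0}$-parameters under the sign $m<0$, and in particular verifying for each excluded $\pi_1$ that the forced solution genuinely lies on a growing branch of the Meijer equation rather than merely failing a normalization; this is where the asymptotics distinguishing the $G_{2,3}^{3,0}$ branch from the growing branches must be invoked with care.
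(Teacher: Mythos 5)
Your main line of argument is exactly the paper's: Theorem \ref{explicit-FJ-PJ-II} is presented there as a review of Hirano's Theorems 5.4 and 5.6 in \cite{Hi-2}, the only new content being the observation that the irreducibility hypothesis can be dropped because the derivation uses only the recurrence system of \cite[Proposition 5.2]{Hi-2} and the explicit Meijer $G$-function solutions; your reconstruction of that computation (collapse to $k=0$ since $d_{\Lambda}=0$ for $\Lambda=(n,n)$, identification of the extremal admissible index $j'_0=n-n_1$ in $J=-(\tfrac12+\Z_{\ge0})$, rapid decay of $e^{\frac12 x'}G^{3,0}_{2,3}$ via \cite[Lemma A.2 (2)]{Hi-1}, and the weight-compatibility argument that rules out $\cD_{n_1}^-$ outright because no $j\in J$ satisfies $n-j\in L_{\cD_{n_1}^-}$) is sound and consistent with the framework of Section \ref{Result-Hirano}. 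One caution about your proposed ``structurally cleaner alternative'': the involution $g\mapsto\delta g\delta^{-1}\xi$ flips not only the sign of $m$ and $\cD_{n_1}^{\pm}$ for $\pi_1$ but also $\sigma=(\cD_n^+,\epsilon)$ into $(\cD_n^-,\epsilon)$, so it relates the present theorem to the $m>0$ problem for the \emph{other} one-dimensional-corner $P_J$-principal series, not to Theorem \ref{explicit-FJ-PJ-I}; in the paper that involution is reserved precisely for deriving the $\sigma=(\cD_n^-,\epsilon)$ results (Hirano's Theorems 5.7 and 5.8) from Theorems \ref{explicit-FJ-PJ-I} and \ref{explicit-FJ-PJ-II}, so invoking it to prove Theorem \ref{explicit-FJ-PJ-II} itself would be circular. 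Your direct route does not depend on that alternative, so the proposal stands.
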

We next suppose that $\pi$ is another $P_J$-principal series representation with one dimensional corner $K$-type, namely $\pi$ is given by $\sigma=(\cD_n^-,\epsilon)$ with $n\in\Z_{\ge 1}$ and $\epsilon(\diag(1,-1,1,-1))=(-1)^n$. Its corner $K$-type then has the highest weight $(-n,-n)$. 
We can do the reduction of the problem similar to the case of large discrete series representations. 
Following the proof of Theorem \ref{PJWhittaker}, we see that the shift operators for $P_J$-principal series representations with $\sigma=(\cD_n^-,\epsilon)$ and $(\cD_n^+,\epsilon)$ are conjugate to each other by $\delta^{-1}\xi$ and that the Casimir operator is invariant under such conjugation. We can reproduce Hirano's result \cite[Theorems 5.7,~5.8]{Hi-2} for this case by the following proposition.
\begin{prop}
For every $P_J$-principal series representation above with $\sigma=(\cD_n^-,\epsilon)$ the Fourier-Jacobi type spherical function $W^*$ is given by 
\[
W^*(g)=W(\delta g\delta^{-1}\xi)\quad(g\in G),
\]
up to constant multiples, 
with the Fourier-Jacobi type spherical function $W$ for a $P_J$-principal series representation with one-dimensional corner $K$-type and $\sigma=(\cD_n^+,\epsilon)$. 
\end{prop}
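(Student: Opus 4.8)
The plan is to transport the whole characterization of the Fourier--Jacobi type spherical functions across the map $\iota:g\mapsto\delta g\delta^{-1}\xi$, exactly as was done for the Whittaker functions in the proof of Theorem~\ref{PJWhittaker} and for the large discrete series in Proposition~\ref{explicit-FJ-III}. First I would record the basic properties of $\iota$: since ${}^t\delta J_4\delta=-J_4$, conjugation by $\delta$ preserves $G$, and since $\xi\in K$ the map $\iota$ is a diffeomorphism of $G$. A direct matrix computation shows that $\delta$ and $\xi$ commute and that each squares to the identity, so $\delta^{-1}\xi$ is an involution; moreover $\Ad(\delta)$ preserves $N_J$, the $SL_2(\R)$-factor (hence $G_J$), $A_J$ and $K$, acting on $K\simeq U(2)$ as complex conjugation, while $\Ad(\xi)$ acts on $U(2)$ as conjugation by $J'_2$. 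Recall from Section~\ref{Result-Hirano} (following Hirano~\cite{Hi-2}) that $\cJ_{\rho,\pi}(\tl^*)$ consists of the $W\in C^\infty_{\rho,\tl^*}(G_J\backslash G/K)$ annihilated by the shift operators attached to $\pi$ and lying in the prescribed Casimir eigenspace; this is the data I intend to move through $\iota$. Write $\pi^{\pm}$ for the $P_J$-principal series attached to $\sigma=(\cD_n^{\pm},\epsilon)$, with one-dimensional corner $K$-types of highest weight $(n,n)$ and $(-n,-n)$ respectively.

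Next, for $W\in\cJ_{\rho,\pi^+}(\tl^*)^{00}$ set $W^*:=W\circ\iota$. For $X\in\g_{\C}$ the chain rule gives $(X\cdot W^*)(g)=\big((\Ad(\delta^{-1}\xi)X)\cdot W\big)(\iota(g))$, which extends multiplicatively to $U(\g_{\C})$. As already used in the proof of Theorem~\ref{PJWhittaker}, the shift operators for $\sigma=(\cD_n^+,\epsilon)$ and $\sigma=(\cD_n^-,\epsilon)$ are interchanged by $\Ad(\delta^{-1}\xi)$, whereas the Casimir operator is fixed, being central. Because $\delta^{-1}\xi$ is an involution it follows that $W^*$ is annihilated by the shift operators of the $\pi^-$-case and lies in the same Casimir eigenspace; thus $W^*$ satisfies precisely the differential equations that characterize the $\sigma=(\cD_n^-,\epsilon)$ spherical functions.

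Then I would check the two equivariances and the growth. For $r\in G_J$ one has $\iota(rg)=(\delta r\delta^{-1})\iota(g)$ with $\delta r\delta^{-1}\in G_J$, so $W^*$ is left equivariant for $\rho':=\rho\circ\Ad(\delta)$; since $\Ad(\delta)$ is an automorphism of $G_J$ sending the central character index $m$ to $-m$, Proposition~\ref{UnitaryDual-G_J} guarantees that $\rho'$ is again an irreducible unitary representation of $G_J$ with non-trivial central character. For $k\in K$ one has $\iota(gk)=\iota(g)\,(\Ad(\delta^{-1}\xi)k)$, so $W^*$ is right equivariant for $\tl^*\circ\Ad(\delta^{-1}\xi)$; using the descriptions of $\Ad(\delta)$ and $\Ad(\xi)$ on $U(2)$ above I would verify that on the one-dimensional corner $K$-type this functor sends highest weight $(-n,-n)$ to $(n,n)$, i.e.\ $\tl^*\circ\Ad(\delta^{-1}\xi)$ is the contragredient of the corner $K$-type of $\pi^-$, as required (and, being one-dimensional, the matching holds only up to a scalar, accounting for the ``up to constant multiples'' in the statement). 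Finally $\iota$ fixes $A_J$ up to the right factor $\xi\in K$, so $W^*|_{A_J}=\tl^*(\xi)^{-1}\,W|_{A_J}$ inherits rapid decay; hence $W^*\in\cJ_{\rho',\pi^-}(\tau_{(-n,-n)}^*)^{00}$, and $\iota$ exhibits a bijection between the two spaces reproducing Hirano's formulas in~\cite{Hi-2} for $\sigma=(\cD_n^-,\epsilon)$.

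The hard part will be the single non-formal ingredient, namely the conjugacy of the shift operators under $\Ad(\delta^{-1}\xi)$: one must follow the non-compact root vectors that enter the $\pi^+$-shift operators and confirm that they are carried onto those of the $\pi^-$-shift operators, in the same spirit as the permutation of the roots $(1,1),(2,0),(0,-2)$ into $(-1,-1),(0,-2),(2,0)$ for the Schmid operators of large discrete series. Granting this fact, which is already invoked in Theorem~\ref{PJWhittaker}, everything else is the routine bookkeeping indicated above.
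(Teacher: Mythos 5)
Your proposal is correct and follows essentially the same route as the paper, which justifies this proposition only by the remark preceding it: the change of variables $g\mapsto\delta g\delta^{-1}\xi$ interchanges the shift operators for $\sigma=(\cD_n^+,\epsilon)$ and $\sigma=(\cD_n^-,\epsilon)$ while fixing the Casimir operator, exactly as in the proofs of Theorem \ref{PJWhittaker} and Proposition \ref{explicit-FJ-III}. The additional bookkeeping you supply (preservation of $G_J$ and $K$ under $\Ad(\delta^{-1}\xi)$, the sign flip $m\mapsto -m$ of the central character, the matching of the one-dimensional corner $K$-types $(\pm n,\pm n)$, and the rapid decay on $A_J$) is accurate and merely makes explicit what the paper leaves implicit.
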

\subsection*{(2)~The case of the two dimensional corner $K$-type}
For this case we begin with the $P_J$-principal series representation $\pi$ given by $\sigma=(\cD^+_{n},\epsilon)$ with $n\in\Z_{\ge 1}$ and $\epsilon(\diag(1,-1,1,-1))=-(-1)^n$. Then the corner $K$-type of $\pi$ has the highest weight $\Lambda=(n,n-1)$~(cf.~Section \ref{P_JPS-rep}). The results \cite[Theorems 6.3,~6.5]{Hi-2} by Hirano for this case are stated as follows:
\begin{thm}\label{explicit-FJ-PJ-III}
Let $\pi$ be as above and $j_0$ be as in (\ref{Index-J}). Suppose $m>0$. 
\begin{enumerate}
\item If $\pi_1=\cD_{n_1}^+~(n_1\in\frac{1}{2}\Z_{\ge 1}\setminus\Z,~n_1>n-\frac{1}{2})$ then 
\[
\dim\cJ_{\rho_{m,\pi_1},\pi}(\tl^*)^{00}=0.
\]
\item If $\pi_1=\cD_{n_1}^+~(n_1\in\frac{1}{2}\Z_{\ge 1}\setminus\Z,~n_1\le n-\frac{1}{2})$ then 
\[
\dim\cJ_{\rho_{m,\pi_1},\pi}(\tl^*)^{00}\le 1.
\]
\item If $\pi_1\not=\cD_{n_1}^+$ and $j_0=\frac{1}{2}$ or $\frac{3}{2}$, that is, $\pi_1=\cP_s^{\tau}~(\tau=\pm\frac{1}{2},~s\in\sqrt{-1}\R),~\cC_s^{\tau}~(\tau=\pm\frac{1}{2},~0<s<\frac{1}{2})$ or $\cD^-_{\frac{1}{2}}$ with $n=1$, then 
\[
\dim\cJ_{\rho_{m,\pi_1},\pi}(\tl^*)^{00}\le 1.
\]
\item If $j_0=n+n_1>\frac{3}{2}$, that is, $\pi_1=\cD_{n_1}^-$ and $(n,n_1)\not=(1,\frac{1}{2})$, then 
\[
\dim\cJ_{\rho_{m,\pi_1},\pi}(\tl^*)^{00}=0.
\]
\end{enumerate}
If $\cJ_{\rho_{m,\pi_1},\pi}(\tl^*)^{00}\not=\{0\}$ the coefficient function $c_{\frac{1}{2},k_0}(a_J)$ for the restriction of  $W\in\cJ_{\rho_{m,\pi_1},\pi}(\tl^*)^{00}$ to $A_J$ with $k_0:=\frac{3}{2}-j_0$ is given by
\[
e^{\frac{1}{2}x}G_{2,3}^{3,0}\left(x\left|
\begin{array}{c}
\dfrac{z_0+\frac{7}{2}}{2},\dfrac{-z_0+\frac{7}{2}}{2}\\
\dfrac{n+1+k_0}{2},\dfrac{z+3-k_0}{2},\dfrac{-z+3-k_0}{2}
\end{array}
\right.\right),
\]
up to constant multiple. Here $z_0$ means $s$~(respectively~$n_1-1$) for $\pi_1=\cP_s^{\tau}$ or $\cC_s^{\tau}$~(respectively~$\cD_{n_1}^{\pm}$). The other coefficients are given inductively from $c_{\frac{1}{2},k_0}$ by the recurrence relations \cite[(6.1),~(6.2) in Proposition 6.1]{Hi-2}.
\end{thm}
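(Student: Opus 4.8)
The statement is Hirano's \cite[Theorems 6.3, 6.5]{Hi-2}, and as noted at the outset of this subsection its proof does not invoke the irreducibility of $\pi$; the plan is to reproduce it from the defining system of differential equations. First I would realize $\cJ_{\rho_{m,\pi_1},\pi}(\tl^*)^{00}$ concretely: an element $W$ is a rapidly decreasing solution of the equations obtained by letting the Casimir operator of $\g$ and the shift operators attached to the corner $K$-type $\tl$ with $\Lambda=(n,n-1)$ act through the $(\g,K)$-intertwining property, exactly the operators used for the Whittaker case in Theorem \ref{PJWhittaker} (cf. \cite[Section 8]{M-Od-1}). Here $d_{\Lambda}=\Lambda_1-\Lambda_2=1$, so the corner $K$-type is two dimensional and $k$ ranges over $\{0,1\}$.

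Restricting to $A_J$ and expanding $W(a_J)=\sum c_{j,k}(a_J)\,w_{l(j,k)}\otimes u_j^m\otimes v_k^*$ over the pairs $(j,k)$ with $l(j,k)=-j+k+\Lambda_2\in L=L_{\pi_1}$, I would convert the operator equations into a finite coupled system of ordinary differential equations in the single variable $x=4\pi m a_1^2$ (here $m>0$). The extremal coefficient $c_{\frac{1}{2},k_0}$ with $k_0=\frac{3}{2}-j_0$, where $j_0$ is the minimal admissible $J$-index from (\ref{Index-J}), decouples into a single higher-order equation; the substitution $e^{\frac{1}{2}x}G_{2,3}^{3,0}$ turns this equation into the Meijer $G$-equation with the parameters displayed in the statement, whose unique rapidly decreasing solution is the stated $G$-function (cf. \cite[Lemma A.2 (2)]{Hi-1}). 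The remaining coefficients are then generated from $c_{\frac{1}{2},k_0}$ by the first-order recurrences \cite[(6.1),~(6.2)]{Hi-2}, which preserve rapid decay, so the whole spherical function is determined by its extremal coefficient and $\dim\cJ_{\rho_{m,\pi_1},\pi}(\tl^*)^{00}\le 1$ throughout.

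The dimension count is where the cases separate. In case (2), $\pi_1=\cD_{n_1}^+$ with $n_1\le n-\frac{1}{2}$, and in case (3), $\pi_1\ne\cD_{n_1}^+$ with $j_0\in\{\frac{1}{2},\frac{3}{2}\}$, the weight relation $l(j_0,k_0)\in L$ is solvable with an extremal pair feeding the rapidly decreasing $G$-function, so the bound $\dim\le 1$ is attained exactly when the space is nonzero. The vanishing in cases (1) and (4) I would trace to incompatibility with the sign $m>0$: for $\pi_1=\cD_{n_1}^+$ with $n_1>n-\frac{1}{2}$ the index demanded by the lowest $\widetilde{SO_2}(\R)$-type of $\pi_1$ falls outside the admissible range, while for $\pi_1=\cD_{n_1}^-$ with $j_0=n+n_1>\frac{3}{2}$ the only rapidly decreasing candidate is forced to vanish at the boundary, leaving no nonzero solution.

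I expect the principal obstacle to be the combinatorial bookkeeping of admissible indices rather than the solution of the ordinary differential equation. One must match $J=\operatorname{sign}(m)(\frac{1}{2}+\Z_{\ge 0})$ and $L=L_{\pi_1}$ against $l(j,k)=-j+k+\Lambda_2$ over $k\in\{0,1\}$, identify the unique extremal pair $(j_0,k_0)$ in the non-vanishing cases, and confirm that the second candidate solution of the extremal equation is excluded by the decay requirement. Since the argument is formally identical to the one-dimensional corner $K$-type case of Theorems \ref{explicit-FJ-PJ-I} and \ref{explicit-FJ-PJ-II}, I would run it in parallel, the only structural change being the two-dimensionality of $\tl$, which places the extremal index at $k_0=\frac{3}{2}-j_0$ and shifts the Meijer parameters accordingly.
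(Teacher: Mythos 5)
The paper does not prove this theorem at all: it is presented purely as a restatement of Hirano's \cite[Theorems 6.3, 6.5]{Hi-2}, with the single added remark (made at the head of case (III)) that the irreducibility hypothesis in \cite{Hi-2} can be dropped because Hirano's derivation of the explicit spherical functions never uses it. Your sketch therefore goes beyond what the paper supplies, and it correctly identifies the method of the cited proof: characterize $\cJ_{\rho_{m,\pi_1},\pi}(\tl^*)^{00}$ by the Casimir operator together with the shift operators (exactly as the paper says for the Whittaker case in the proof of Theorem \ref{PJWhittaker}), restrict to $A_J$, reduce to a Meijer $G$-equation for the extremal coefficient $c_{\frac{1}{2},k_0}$ with $k_0=\frac{3}{2}-j_0$, and propagate by the recurrences \cite[(6.1),~(6.2)]{Hi-2}. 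Your index bookkeeping is also right: $d_{\Lambda}=1$, and in case (1) the minimal $\widetilde{SO_2}(\R)$-weight $n_1$ of $\cD_{n_1}^+$ exceeds the maximal attainable value $l(j,k)=-j+k+(n-1)\le n-\tfrac{1}{2}$ for $j\in\tfrac{1}{2}+\Z_{\ge 0}$, $k\in\{0,1\}$, so the expansion is empty. Two points remain at the level of assertion in your sketch and would need the actual computation from \cite{Hi-2}: the vanishing in case (4), where admissible indices do exist and one must show the rapidly decreasing solution of the extremal equation is incompatible with the coupled system (your phrase ``forced to vanish at the boundary'' is a guess, not an argument), and the verification that exactly one of the solutions of the $G_{2,3}^{3,0}$-type equation is rapidly decreasing (which the paper delegates to \cite[Lemma A.2 (2)]{Hi-1}). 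As a blind reconstruction of a result the paper only cites, this is acceptable, but it is a proof outline rather than a proof.
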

\begin{thm}\label{explicit-FJ-PJ-IV}
Let $\pi$ be as in Theorem \ref{explicit-FJ-PJ-I} and suppose that $m<0$.
\begin{enumerate}
\item If $\pi_1=\cD_{n_1}^+~(n_1\in\frac{1}{2}\Z\setminus\Z,~n_1>n-\frac{1}{2})$, then 
\[
\dim\cJ_{\rho_{m,\pi_1},\pi}(\tl^*)^{00}\le 1.
\]
Suppose that $\cJ_{\rho_{m,\pi_1},\pi}(\tl^*)^{00}\not=\{0\}$. For $j'_0=n-n_1$ the coefficient function $c_{j'_0,1}(a_J)$ for the restriction of $W\in\cJ_{\rho_{m,\pi_1},\pi}(\tl^*)^{00}$ to $A_J$ is
\[
e^{\frac{1}{2}x'}G_{2,3}^{3,0}\left(x'\left|
\begin{array}{c}
\dfrac{n_1+\frac{3}{2}}{2},\dfrac{n_1+\frac{5}{2}}{2}\\
\dfrac{n+2}{2},\dfrac{z+2}{2},\dfrac{-z+2}{2}
\end{array}
\right.\right),
\]
up to constant multiples. The other coefficients are deduced inductively from $c_{j'_0,1}$ by the recurrence relations \cite[(6.1),~(6.2) in Proposition 6.1]{Hi-2}.
\item If $\cD^+_{n_1}~(n_1\in\frac{1}{2}\Z_{\ge 1}\setminus\Z,~n_1\le n-\frac{1}{2})$, $\cP_s^{\tau}~(\tau=\pm\frac{1}{2},~s\in\sqrt{-1}\R)$ or $\cC_s^{\tau}~(\tau=\pm\frac{1}{2},~0<s<\frac{1}{2})$, then 
\[
\dim\cJ_{\rho_{m,\pi_1},\pi}(\tl^*)^{00}=0.
\]
\item If $\pi_1=\cD_{n_1}^-~(n_1\in\frac{1}{2}\Z_{\ge 1}\setminus\Z)$, then 
\[
\dim\cJ_{\rho_{m,\pi_1},\pi}(\tl^*)^{00}=0.
\] 
\end{enumerate}
\end{thm}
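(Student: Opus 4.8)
The plan is to run Hirano's analysis of Fourier-Jacobi type spherical functions in the present setting and to isolate the rapidly decreasing solutions, paralleling the $m>0$ treatment of Theorem \ref{explicit-FJ-PJ-III}. Note first that the $\delta g\delta^{-1}\xi$-conjugation cannot reduce this to an already settled case: that device interchanges $\sigma=(\cD_n^+,\epsilon)$ with $\sigma=(\cD_n^-,\epsilon)$ while flipping the sign of $m$, so it carries $\pi$ to a \emph{different} $P_J$-principal series rather than to itself. The sign $m<0$ must therefore be handled on its own.

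First I would represent $W\in\cJ_{\rho_{m,\pi_1},\pi}(\tl^*)$ by its restriction to $A_J$, using the decomposition $G=G_JA_JK$ together with the $G_J$- and $K$-equivariance, and expand $W(a_J)=\sum c_{j,k}(a_J)\,w_l\otimes u_j^m\otimes v_k^*$ with $k\in\{0,1\}$ (the corner $K$-type having highest weight $(n,n-1)$, so $d_\Lambda=1$) and $l=l(j,k)=-j+k+(n-1)\in L$. Imposing that $W$ be a $(\g,K)$-homomorphism turns, through the Casimir operator and the shift operators, into the system \cite[(6.1),~(6.2) in Proposition 6.1]{Hi-2} for the $c_{j,k}$. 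Passing to the variable $x'=-4\pi ma_1^2$, this system reduces to a single scalar equation for the initial coefficient $c_{j_0',1}$ with $j_0'=n-n_1$ (for which $l(j_0',1)=n_1$, the least element of $L$ when $\pi_1=\cD_{n_1}^+$); its rapidly decreasing solution space is at most one-dimensional and, when nonzero, is spanned by the Meijer $G$-function $G_{2,3}^{3,0}$ with the displayed parameters, rapid decay being supplied by \cite[Lemma A.2 (2)]{Hi-1}. The remaining $c_{j,k}$ then follow from the recurrences \cite[(6.1),~(6.2)]{Hi-2}, which preserve rapid decrease.

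The crux, and the main obstacle, is the dimension bookkeeping. For part (1), with $m<0$ the index $j_0'=n-n_1$ lands in $J=-(\tfrac12+\Z_{\ge0})$ precisely because $n_1>n-\tfrac12$; I would then examine the indicial roots of the radial equation in $x'$ and retain only the branch decaying as $a_1\to\infty$, which spans a space of dimension at most one. For parts (2) and (3) the vanishing $\dim\cJ_{\rho_{m,\pi_1},\pi}(\tl^*)^{00}=0$ arises in two ways: either no admissible pair $(j,k)$ with $j\in J$ and $k\in\{0,1\}$ realizes an $l(j,k)\in L$ (automatic for $\pi_1=\cD_{n_1}^-$, whose index set $L$ consists of negative half-integers, clashing with the positive values forced on $l(j,k)$), or an initial index exists yet every solution of the radial system is of moderate growth rather than rapidly decreasing (the borderline $\pi_1=\cD_{n_1}^+$ with $n_1\le n-\tfrac12$, and the $\cP_s^\tau,\cC_s^\tau$ cases). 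Isolating this threshold, and checking that the surviving $G_{2,3}^{3,0}$ solves the full system and not merely its scalar reduction, are the delicate points; both are carried out in \cite[Section 6]{Hi-2}, so in the write-up I would cite that computation and record here only the rapidly decreasing refinement encoded by the superscript ${}^{00}$.
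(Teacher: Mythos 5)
Your proposal is correct and follows essentially the same route as the paper: Theorem \ref{explicit-FJ-PJ-IV} is stated there as a review of Hirano's explicit computation \cite[Theorems 6.3,~6.5]{Hi-2}, which is exactly the radial system from the Casimir and shift operators, its Meijer $G$-function solutions, and the rapid-decay criterion \cite[Lemma A.2 (2)]{Hi-1} that you sketch, and your index bookkeeping ($d_\Lambda=1$, $l(j,k)=-j+k+n-1$, $j_0'=n-n_1\in J$ iff $n_1>n-\frac12$) is accurate. Your remark that the $\delta g\delta^{-1}\xi$-conjugation cannot be used here is also consistent with the paper, which reserves that device only for passing from $\sigma=(\cD_n^+,\epsilon)$ to $\sigma=(\cD_n^-,\epsilon)$ in the subsequent proposition.
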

We next suppose that $\pi$ is given by $\sigma=(\cD_{n}^-,\epsilon)$ with $\epsilon(\diag(1,-1,1,-1))=-(-1)^n$ and $n\in\Z_{\ge 1}$. The corner $K$ type $\tl$ of $\pi$ has the highest weight $\Lambda=(-n+1,n)$. 
As in the previous case, we do the similar reduction of the problem by the following proposition to reproduce Hirano's result \cite[Theorems 6.6,~6.7]{Hi-2}.
\begin{prop}
For every $P_J$-principal series representation above with $\sigma=(\cD_n^-,\epsilon)$ the Fourier-Jacobi type spherical function $W^*$ is given by 
\[
W^*(g)=W(\delta g\delta^{-1}\xi)\quad(g\in G),
\]
up to constant multiples, with the Fourier-Jacobi type spherical function $W$ for a $P_J$-principal series representation $\pi$ with two-dimensional corner $K$-type and $\sigma=(\cD_n^+,\epsilon)$. 
\end{prop}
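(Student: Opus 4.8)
The plan is to carry out verbatim the reduction already used for large discrete series representations in Proposition \ref{explicit-FJ-III} and for the $P_J$-principal series with one-dimensional corner $K$-type in the preceding proposition, now transferred to the two-dimensional corner $K$-type setting. Write $\pi^+$ (respectively $\pi^-$) for the $P_J$-principal series representation with $\sigma=(\cD_n^+,\epsilon)$ (respectively $\sigma=(\cD_n^-,\epsilon)$), whose corner $K$-type has highest weight $\Lambda=(n,n-1)$ (respectively $\Lambda=(-n+1,-n)$, cf.~Section \ref{P_JPS-rep}). First I would recall that, inside $C_{\rho,\tl^*}^{\infty}(G_J\backslash G/K)$, the Fourier-Jacobi type spherical functions for a $P_J$-principal series representation are characterized by the system of differential equations arising from the Casimir operator of $\g$ together with the shift operators of Miyazaki-Oda~(cf.~the proof of Theorem \ref{PJWhittaker} and \cite[Proposition 6.1]{Hi-2}).

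The two decisive structural facts are exactly those established in the proof of Theorem \ref{LargeDSWhittaker} (2): (i) the Casimir operator lies in the center of the universal enveloping algebra of $\g_{\C}$ and is therefore invariant under conjugation by any element of $G$, in particular by $\delta^{-1}\xi$; and (ii) the shift operators attached to $\sigma=(\cD_n^-,\epsilon)$ and $\sigma=(\cD_n^+,\epsilon)$ are carried into one another by $\delta^{-1}\xi$-conjugation, since the non-compact root vectors defining the Schmid-type operator for one case are conjugated to those for the other by $\delta^{-1}\xi$. Granting these, I would set $W^*(g):=W(\delta g\delta^{-1}\xi)$ for a Fourier-Jacobi type spherical function $W$ of $\pi^+$ with corner $K$-type $\tau_{(n,n-1)}^*$, and check the three conditions making $W^*$ a member of the target space for $\pi^-$: that $\delta g\delta^{-1}\xi\in G$ for all $g\in G$ (the same computation as in Theorem \ref{LargeDSWhittaker} (2)); that $W^*$ has the correct left $G_J$-equivariance with respect to $\rho$ and right $K$-equivariance with respect to $\tau_{(-n+1,-n)}^*$, using that $\delta^{-1}\xi$-conjugation normalizes $G_J$ while sending the central character index $m$ to $-m$ and normalizes $K$ while intertwining $\tau_{(n,n-1)}$ with $\tau_{(-n+1,-n)}$; and that the rapidly decreasing property with respect to $A_J$ is preserved, since $g\mapsto\delta g\delta^{-1}\xi$ only rearranges and boundedly rescales the $A_J$-coordinate. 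Because $W^*$ then solves the full characterizing system for $\pi^-$ and the corresponding rapidly decreasing solution space is at most one-dimensional (Theorem \ref{explicit-FJ-PJ-IV}), $W^*$ coincides with the Fourier-Jacobi type spherical function for $\pi^-$ up to a constant, as asserted.

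The main obstacle I anticipate is the bookkeeping in the equivariance step, specifically tracking how conjugation by $\xi$ (which swaps the two entries in each $2\times 2$ block via $J_2'$) acts on the bases $\{w_l\}$, $\{u_j^m\}$ and $\{v_k^*\}$, and hence on the index relation $l(j,k)=-j+k+\Lambda_2$. One must confirm that the sign reversal $m\mapsto-m$ and the passage from $\Lambda=(n,n-1)$ to $\Lambda=(-n+1,-n)$ are mutually consistent, so that the nonzero coefficient function of $W$ at $c_{\frac{1}{2},k_0}$ is carried to the nonzero coefficient $c_{j_0',1}$ recorded in Theorem \ref{explicit-FJ-PJ-IV}. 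Once the conjugation formulas for the root vectors under $\delta^{-1}\xi$ — already recorded in the proof of Theorem \ref{LargeDSWhittaker} (2) — are in hand, the differential-equation identities and the verification that $\delta g\delta^{-1}\xi\in G$ are routine, so the two-dimensional corner case requires no genuinely new input beyond this index-matching.
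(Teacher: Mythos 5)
Your proposal is correct and follows essentially the same route as the paper: the paper likewise reduces the $\sigma=(\cD_n^-,\epsilon)$ case to the $\sigma=(\cD_n^+,\epsilon)$ case by noting that the shift operators for the two inducing data are $\delta^{-1}\xi$-conjugate while the Casimir operator is conjugation-invariant, exactly as in the proofs of Theorems \ref{LargeDSWhittaker} (2) and \ref{PJWhittaker}. Your additional remarks on equivariance, rapid decrease, and the index bookkeeping $c_{\frac{1}{2},k_0}\mapsto c_{j_0',1}$ only make explicit what the paper leaves implicit.
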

\subsection*{(IV)~The case of the principal series representations~(induced from the minimal parabolic subgroup)}
All the theorems of this case are included in Hirano \cite{Hi-3}. As we have seen in Section \ref{PS-rep} we can divide the principal series representations $\pi$ into even representations and odd representations. 

We first think of the case of even principal series representations with the multiplicity one $K$-type $\tl$ with $\Lambda=(n,n)$~(cf.~Section \ref{PS-rep}). 
Let $j_0\in J$ be the minimum index of $J$ such that $l(j,0)\in L$ given by
\begin{equation}\label{Index-J-2}
\begin{cases}
\dfrac{1}{2}&\left(\pi_1=
\begin{cases}
\cP_{\tau}^s,~\cC_{\tau}^s,~\tau+\frac{1}{2}\equiv n\mod 2,\\
\cD_{n_1}^-,~n_1-\frac{1}{2}\equiv -n\mod 2, n_1\le-n+\dfrac{1}{2},
\end{cases}\right),\\
\dfrac{3}{2}&\left(\pi_1=
\begin{cases}
\cP_{\tau}^s,~\cC_{\tau}^s,~\tau-\frac{1}{2}\equiv n\mod 2,\\
\cD_{n_1}^-,~n_1-\dfrac{3}{2}\equiv -n\mod 2, n_1\le-n+\dfrac{3}{2},
\end{cases}\right),\\
n+n_1&(\pi_1=\cD_{n_1}^-,~n_1>-n+\dfrac{3}{2}).
\end{cases}
\end{equation}
From \cite[Theorem 4.4]{Hi-3} we review the explicit formula of this case, for which we do not have to assume the irreducibility of the principal series since the irreducibility is not used to obtain the formula.
\begin{thm}\label{explicit-FJ-P-I}
Let $\pi$ be an even principal series representation and $\tl$ be the multiplicity one $K$-type $\tl$ above. 
If $\pi_1=\cD_{n_1}^{{\rm sign}(m)}$, $\dim\cJ_{\rho_{m,\pi_1},\pi}(\tl^*)^{00}=0$. Otherwise $\dim\cJ_{\rho_{m,\pi_1},\pi}(\tl^*)^{00}\le 1$.
 
When $m>0$ let $j_0$ be as in (\ref{Index-J-2}). If $\cJ_{\rho_{m,\pi_1},\pi}(\tl^*)^{00}\not=\{0\}$ we have the following:
\begin{enumerate}
\item If $j_0=\dfrac{1}{2}$ or $\dfrac{3}{2}$, i.e. $\pi_1=\cP_{s}^{\tau},~\cC_s^{\tau}$ or $\cD_{n_1}^-$ with $n_1\le -n+\dfrac{3}{2}$, then
\[
c_{j_0,0}(a_J)=e^{\frac{1}{2}x}G_{3,4}^{4,0}\left(x\left|
\begin{array}{c}
\dfrac{z_0+2+j_0}{2},\dfrac{-z_0+2+j_0}{2},\dfrac{-n+3}{2}\\
\dfrac{z_1+2}{2},\dfrac{-z_1+2}{2},\dfrac{z_2+2}{2},\dfrac{-z_2+2}{2}
\end{array}\right.\right),
\]
up to constant multiples. Here $z_0$ means $s$~(respectively~$n_1-1$) for $\pi_1=\cP_s^{\tau}$ or $\cC_s^{\tau}$~(respectively~$\cD_{n_1}^-$).
\item If $j_0=n+n_1$, i.e. $\pi_1=\cD_{n_1}^-$ with $n_1>-n+\dfrac{3}{2}$, then 
\[
c_{j_0,0}(a_J)=e^{\frac{1}{2}x}G_{3,4}^{4,0}\left(x\left|
\begin{array}{c}
\dfrac{n_1+\frac{5}{2}}{2},\dfrac{n_1+\frac{3}{2}}{2},\dfrac{n+3}{2}\\
\dfrac{z_1+2}{2},\dfrac{-z_1+2}{2},\dfrac{z_2+2}{2},\dfrac{-z_2+2}{2}
\end{array}\right.\right),
\]
up to constant multiples. 
\end{enumerate}
Let $m<0$. We put $j'_0:={\rm max}\{j\in J\mid l(j,0)\in L\}$. 
If $\cJ_{\rho_{m,\pi_1},\pi}(\tl^*)^{00}\not=\{0\}$ we have the following:
\begin{enumerate}
\item If $j'_0=-\dfrac{1}{2}$ or $-\dfrac{3}{2}$ i.e. $\pi_1=\cP_s^{\tau}$, $\cC_s^{\tau}$ or $\cD_{n_1}^+$ with $n_1\le n+\dfrac{3}{2}$, then
\[
c_{j'_0,0}(a_J)=e^{\frac{1}{2}x'}G_{3,4}^{4,0}\left(x'\left|
\begin{array}{c}
\dfrac{z_0+2-j'_0}{2},\dfrac{-z_0+2-j'_0}{2},\dfrac{n+3}{2}\\
\dfrac{z_1+2}{2},\dfrac{-z_1+2}{2},\dfrac{z_2+2}{2},\dfrac{-z_2+2}{2}
\end{array}\right.\right),
\]
up to constant multiples. Here $z_0$ means $s$~(respectively~$n_1-1$) for $\pi_1=\cP_s^{\tau}$ or $\cC_s^{\tau}$~(respectively~$\cD_{n_1}^+$).
\item If $j'_0=n-n_1$, i.e. $\pi_1=\cD_{n_1}^+$ with $n_1>n+\dfrac{3}{2}$, then
\[
c_{j'_0,0}(a_J)=e^{\frac{1}{2}x'}G_{3,4}^{4,0}\left(x'\left|
\begin{array}{c}
\dfrac{n_1+\frac{5}{2}}{2},\dfrac{n_1+\frac{3}{2}}{2},\dfrac{-n+3}{2}\\
\dfrac{z_1+2}{2},\dfrac{-z_1+2}{2},\dfrac{z_2+2}{2},\dfrac{-z_2+2}{2}
\end{array}\right.\right)
\]
up to constant multiples.
\end{enumerate}
For both of $m>0$ and $m<0$ the other coefficients are obtained inductively from the above coefficients by the recurrence relation \cite[(4.2) in Lemma 4.2]{Hi-3}.
\end{thm}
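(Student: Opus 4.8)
The plan is to follow the standard route for Fourier-Jacobi type spherical functions. Any operator in $\Hom_{(\g,K)}(\pi,C^{\infty}\Ind_{G_J}^G\rho_{m,\pi_1})$ intertwines the action of the center $Z(\g_{\C})$ of the universal enveloping algebra, so that its restriction $W$ to the $K$-type $\tl$ is annihilated by $z-\chi_{\pi}(z)$ for every $z\in Z(\g_{\C})$, where $\chi_{\pi}$ is the infinitesimal character of the even principal series $\pi=\Ind_{P_0}^G(1_{N_0}\otimes\nu_z\otimes\sigma)$, determined by $(z_1,z_2)$. As in Hirano's setup, the left $G_J$-equivariance by $\rho_{m,\pi_1}$ and the right $K$-equivariance by $\tl^*$ reduce $W$ to its restriction $W|_{A_J}$. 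First I would record the index constraint: matching $\tl^*$ against the $SO_2(\R)$-types of $\rho_{m,\pi_1}$ under the $K\cap SL_2(\R)=SO_2(\R)$-action forces $l(j,k)=-j+k+\Lambda_2$. Since here $\Lambda=(n,n)$ has $d_{\Lambda}=0$, the $K$-type is one-dimensional, only $k=0$ occurs, and the data reduce to the coefficients $c_{j,0}(a_J)$ indexed by those $j\in J$ with $l=-j+n\in L$.

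Next I would convert the two eigenvalue equations into a system for the scalar functions $c_{j,0}$. For $Sp(2,\R)$ the center $Z(\g_{\C})$ has two algebraically independent generators, the degree-two Casimir and a degree-four element; applying each and inserting the explicit infinitesimal actions from Section \ref{Rep-real}, namely (\ref{Infinitesimal-MaassOp}) for $\pi_1(U),\pi_1(V^{\pm})$, the action of $\nu_m$ on $\{u_j^m\}$, and $\tl^*(Z),\tl^*(H),\tl^*(X),\tl^*(\bar X)$ on $V_{\Lambda}^*$, turns the $\g_{\C}$-action into radial differential operators in $a_1$ together with shifts of the index $j$ (through the diagonal $\widetilde{SL}_2(\R)$-action on $\pi_1\otimes\omega_m$ inside $\rho_{m,\pi_1}$). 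After the substitution $x=4\pi m a_1^2$ for $m>0$ (respectively $x'=-4\pi m a_1^2$ for $m<0$) this becomes a coupled difference-differential system in a single radial variable.

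I would then solve the system by the corner method. Using the recurrence relation quoted as \cite[(4.2) in Lemma 4.2]{Hi-3}, every $c_{j,0}$ is expressed in terms of the single corner coefficient $c_{j_0,0}$, where for $m>0$ the index $j_0$ is the minimal element of $J$ with $l(j,0)\in L$ as listed in (\ref{Index-J-2}), and for $m<0$ one uses instead the maximal such index $j'_0$. The corner coefficient then satisfies a single fourth-order hypergeometric-type ordinary differential equation in $x$; by \cite[Lemma A.2 (2)]{Hi-1} its rapidly decreasing solutions form a one-dimensional space, spanned by $e^{\frac{1}{2}x}G_{3,4}^{4,0}(x\,|\,\cdots)$. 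The three upper parameters are read off from the $\rho_{m,\pi_1}$-data, namely $z_0$ (equal to $s$ for $\pi_1=\cP_s^{\tau},\cC_s^{\tau}$ and $n_1-1$ for $\pi_1=\cD_{n_1}^{\pm}$) together with the $K$-type shift $(-n+3)/2$, and the four lower parameters from the infinitesimal character $(z_1,z_2)$; this simultaneously produces the displayed formula and the bound $\dim\cJ_{\rho_{m,\pi_1},\pi}(\tl^*)^{00}\le1$. The remaining $c_{j,0}$ are then filled in inductively through the same recurrence, which preserves rapid decrease.

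The hard part will be twofold. The delicate computation is the reduction of the full coupled system to one fourth-order ODE for $c_{j_0,0}$ and the exact matching of its seven Meijer parameters. The other genuinely nontrivial point is the vanishing statement $\dim\cJ_{\rho_{m,\pi_1},\pi}(\tl^*)^{00}=0$ for $\pi_1=\cD_{n_1}^{\operatorname{sign}(m)}$: the discrete-series index set $L_{\cD_{n_1}^{\pm}}$ is bounded (below for $+$, above for $-$), and when its sign agrees with that of $m$ the boundary behaviour forced on the extremal coefficient is incompatible with rapid decrease, so no nonzero rapidly decreasing solution survives, exactly the obstruction already seen for large discrete series in Theorems \ref{explicit-FJ-I} and \ref{explicit-FJ-II}. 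The $m<0$ statements then follow symmetrically by replacing $x,j_0$ with $x',j'_0$. All of this is carried out in detail by Hirano in \cite[Theorem 4.4]{Hi-3}, whose result we are reviewing here.
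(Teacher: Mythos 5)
Your proposal is consistent with the paper, which offers no proof of this statement at all: the theorem is explicitly a review of Hirano's result, cited as \cite[Theorem 4.4]{Hi-3} in the sentence immediately preceding it, and your argument likewise defers to that reference in the end. Your sketch of the method — characterizing $W|_{A_J}$ by the two generators of $Z(\g_{\C})$ (appropriate for principal series, in contrast to the Schmid or shift operators used for the discrete and $P_J$-principal series), reducing via the recurrence \cite[(4.2) in Lemma 4.2]{Hi-3} to the corner coefficient $c_{j_0,0}$ (or $c_{j'_0,0}$), and identifying the unique rapidly decreasing solution of the resulting fourth-order equation with $e^{\frac{1}{2}x}G_{3,4}^{4,0}$ via \cite[Lemma A.2 (2)]{Hi-1} — is a faithful outline of the cited proof, so there is nothing to add beyond noting that the paper itself supplies only the citation.
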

We next think of the case of the odd principal series representations. For this case we introduce the notation 
$(\tilde{z},\tilde{z}'):=(z_1,z_2)$ when $\sigma=(1,-1)$~(respectively~$(-1,1)$) and $n$ is even~(respectively~odd), 
and let $(\tilde{z},\tilde{z}'):=(z_2,z_1)$ otherwise~(for this notation see \cite[Proposition 4.1]{Hi-3}). 
As in Theorem \ref{explicit-FJ-P-I} we do not have to assume the irreducibility of the principal series to state the following theorem~(cf.~\cite[Theorem 4.5]{Hi-3}).
\begin{thm}\label{explicit-FJ-P-II}
Let $\pi$ be an odd principal series representation and $\tl$ be the multiplicity one $K$-type $\tl$ with $\Lambda=(n,n-1)$~(cf.~Section \ref{PS-rep}), and let $\rho:=\rho_{\pi_1,m}$. 
If $\pi_1=\cD_{n_1}^{{\rm sign}(m)}$, $\dim\cJ_{\rho_{m,\pi_1},\pi}(\tl^*)^{00}=0$. Otherwise $\dim\cJ_{\rho_{m,\pi_1},\pi}(\tl^*)^{00}\le 1$.
 
When $m>0$ let $j_0$ be as in (\ref{Index-J-2}). If $\cJ_{\rho_{m,\pi_1},\pi}(\tl^*)^{00}\not=\{0\}$ we have the following::
\begin{enumerate}
\item If $j_0=\dfrac{1}{2}$ or $\dfrac{3}{2}$, i.e. $\pi_1=\cP_{s}^{\tau},~\cC_s^{\tau}$ or $\cD_{n_1}^-$ with $n_1\le -n+\dfrac{3}{2}$, then
\[
c_{\frac{1}{2},k_0}(a_J)=e^{\frac{1}{2}x}G_{3,4}^{4,0}\left(x\left|
\begin{array}{c}
\dfrac{z_0+\frac{7}{2}}{2},\dfrac{-z_0+\frac{7}{2}}{2},\dfrac{-n+3+k_0}{2}\\
\dfrac{\tilde{z}+2+k_0}{2},\dfrac{-\tilde{z}+2+k_0}{2},\dfrac{\tilde{z}'+3-k_0}{2},\dfrac{-\tilde{z}'+3-k_0}{2}
\end{array}\right.\right)
\]
with $k_0=\frac{3}{2}-j_0$, up to constant multiples. Here $z_0$ means $s$~(respectively~$n_1-1$) for $\pi_1=\cP_s^{\tau}$ or $\cC_s^{\tau}$~(respectively~$\cD_{n_1}^-$).
\item If $j_0=n+n_1$, i.e. $\pi_1=\cD_{n_1}^-$ with $n_1>-n+\dfrac{3}{2}$, then 
\[
c_{j_0-1,0}(a_J)=e^{\frac{1}{2}x}G_{3,4}^{4,0}\left(x\left|
\begin{array}{c}
\dfrac{n_1+\frac{5}{2}}{2},\dfrac{n_1+\frac{3}{2}}{2},\dfrac{n+3}{2}\\
\dfrac{\tilde{z}+2}{2},\dfrac{-\tilde{z}+2}{2},\dfrac{\tilde{z}'+3}{2},\dfrac{-\tilde{z}'+3}{2}
\end{array}\right.\right),
\]
up to constant multiples. 
\end{enumerate}
Let $m<0$. We put $j'_0:={\rm max}\{j\in J\mid l(j,0)\in L\}$. If $\cJ_{\rho_{m,\pi_1},\pi}(\tl^*)^{00}\not=\{0\}$ we have the following:
\begin{enumerate}
\item If $j'_0=-\dfrac{1}{2}$ or $-\dfrac{3}{2}$ i.e. $\pi_1=\cP_s^{\tau}$, $\cC_s^{\tau}$ or $\cD_{n_1}^+$ with $n_1\le n+\dfrac{3}{2}$, then
\[
c_{-\frac{1}{2},k'_0}(a_J)=e^{\frac{1}{2}x'}G_{3,4}^{4,0}\left(x'\left|
\begin{array}{c}
\dfrac{z_0+\frac{7}{2}}{2},\dfrac{-z_0+\frac{7}{2}}{2},\dfrac{n+3-k'_0}{2}\\
\dfrac{\tilde{z}+2+k'_0}{2},\dfrac{-\tilde{z}+2+k'_0}{2},\dfrac{\tilde{z}'+3-k'_0}{2},\dfrac{-\tilde{z}'+3-k'_0}{2}
\end{array}\right.\right)
\]
with $k'_0=-\frac{1}{2}-j'_0$, up to constant multiples. Here $z_0$ means $s$~(respectively~$n_1-1$) for $\pi_1=\cP_s^{\tau}$ or $\cC_s^{\tau}$~(respectively~$\cD_{n_1}^+$).
\item If $j'_0=n-n_1$, i.e. $\pi_1=\cD_{n_1}^+$ with $n_1>n+\dfrac{3}{2}$, then
\[
c_{j'_0+1,1}(a_J)=e^{\frac{1}{2}x'}G_{3,4}^{4,0}\left(x'\left|
\begin{array}{c}
\dfrac{n_1+\frac{5}{2}}{2},\dfrac{n_1+\frac{3}{2}}{2},\dfrac{-n+4}{2}\\
\dfrac{\tilde{z}+3}{2},\dfrac{-\tilde{z}+3}{2},\dfrac{\tilde{z}'+2}{2},\dfrac{-\tilde{z}'+2}{2}
\end{array}\right.\right)
\]
up to constant multiples.
\end{enumerate}
For both of $m>0$ and $m<0$ the other coefficients are obtained inductively from the above coefficients by the recurrence relation in \cite[Lemma 4.2]{Hi-3}.
\end{thm}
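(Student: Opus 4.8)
The plan is to obtain the stated coefficient formulas exactly as in the even case of Theorem~\ref{explicit-FJ-P-I}: the assertion is Hirano's \cite[Theorem 4.5]{Hi-3}, and the only point requiring comment is that the irreducibility hypothesis imposed there is inessential, since Hirano's derivation of the explicit coefficient functions uses only the characterizing differential equations and never the irreducibility of $\pi$. Thus I would first record that, for a principal series $\pi=\Ind_{P_0}^G(1_{N_0}\otimes\nu_z\otimes\sigma)$, the space $\cJ_{\rho_{m,\pi_1},\pi}(\tl^*)$ is cut out inside $C^{\infty}_{\rho_{m,\pi_1},\tl^*}(G_J\backslash G/K)$ by the eigenequations for the two generators of the center of $U(\g_{\C})$ --- the Casimir operator and the degree-four generator expressed through the shift operators of \cite[Section 8, Section 9]{M-Od-1} --- whose eigenvalues are fixed by $(z_1,z_2)$ and $\sigma$, independently of whether $\pi$ is irreducible.

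Next I would pass to the restriction to $A_J$, which determines $W$ by the $(N_J,K)$-equivariance, and expand $W|_{A_J}$ in the basis $\{w_l\otimes u_j^m\otimes v_k^*\}$ with coefficients $c_{j,k}(a_J)$ subject to $l(j,k)=-j+k+\Lambda_2$. Here the corner $K$-type has $\Lambda=(n,n-1)$, hence $d_{\Lambda}=1$ and the fibre $V_{\Lambda}^*$ is two-dimensional; this is precisely the feature separating the present odd case from the scalar even case, since for each admissible $j$ there are now two coefficients $c_{j,0}$ and $c_{j,1}$. Feeding this ansatz into the two central eigenequations produces a coupled system of ordinary differential equations in the single variable $x=4\pi m a_1^2$ (respectively $x'=-4\pi m a_1^2$ when $m<0$), linking neighbouring indices, together with the weight-raising and weight-lowering recurrences recorded in \cite[Lemma 4.2]{Hi-3}.

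The core of the argument is to extract from this coupled system a single scalar fourth-order equation satisfied by the corner coefficient --- $c_{\frac12,k_0}$ with $k_0=\frac32-j_0$ when $m>0$, and $c_{-\frac12,k'_0}$ with $k'_0=-\frac12-j'_0$ when $m<0$ --- whose unique rapidly decreasing solution is the displayed $e^{\frac12 x}G_{3,4}^{4,0}$, respectively $e^{\frac12 x'}G_{3,4}^{4,0}$. One then reads off the lower $G$-parameters from the spectral data $\tilde z,\tilde z'$ attached to the odd representation, and the upper parameters from $z_0$ together with the shift indices, treating separately the sub-cases where $\pi_1$ is a (unitary or complementary) principal series and where $\pi_1$ is a discrete series $\cD_{n_1}^{\mp}$, since these change which spectral parameter plays the role of $z_0$. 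The bound $\dim\cJ_{\rho_{m,\pi_1},\pi}(\tl^*)^{00}\le1$ is then immediate from the uniqueness of the rapidly decreasing solution of a fourth-order equation of Meijer type, while the vanishing $\dim\cJ_{\rho_{m,\pi_1},\pi}(\tl^*)^{00}=0$ for $\pi_1=\cD_{n_1}^{\operatorname{sign}(m)}$ reflects the incompatibility of the $\widetilde{SO_2}(\R)$-weights of such a discrete series with the index set $L$ for that sign of $m$.

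The main obstacle I anticipate is the algebra of decoupling the two-component system: one must verify that eliminating the companion coefficient from the coupled equations yields exactly a fourth-order hypergeometric operator with the asserted $G_{3,4}^{4,0}$-parameters, and that the two superficially different corner formulas for $m>0$ and $m<0$ are images of one another under $x\leftrightarrow x'$ combined with the $\delta^{-1}\xi$-conjugation used throughout this section. Once the corner coefficient is identified, the remaining $c_{j,k}$ follow routinely by iterating the recurrence of \cite[Lemma 4.2]{Hi-3}, and rapid decay is inherited because these recurrences preserve the class of rapidly decreasing functions and the relevant $G_{3,4}^{4,0}$ is itself rapidly decreasing by \cite[Lemma A.2 (2)]{Hi-1}.
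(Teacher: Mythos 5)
Your proposal matches the paper's treatment: the paper offers no proof of this theorem beyond citing Hirano's \cite[Theorem 4.5]{Hi-3} and remarking, exactly as in your opening paragraph, that the irreducibility hypothesis there can be dropped because Hirano's derivation of the explicit coefficient functions never uses it. The remainder of your write-up is a reasonable reconstruction of Hirano's own argument (central eigenequations, restriction to $A_J$, the two-component system forced by $d_{\Lambda}=1$, and the $G_{3,4}^{4,0}$ corner solution), which the paper simply does not reproduce, so there is nothing to check it against here.
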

\newpage
\section{Eichler-Zagier correspondence in representation theoretic formulation}\label{EZcorrespondence}
We recall that we have introduced the discrete subgroup $G_J(\Z)$ of $G_J$ given by $G_J(\Z)=N_J(\Z)\rtimes SL_2(\Z)$~(cf.~Section \ref{Groups}).
Following \cite[Section 4.2]{Be-Sc} we introduce the cuspidal space for $G_J$ as follows:
\begin{defn}\label{cuspidal-subgp}
(1)~(cf.~\cite[Definition 4.2.1]{Be-Sc})~A subgroup $N^*_J$ of $G_J$ is called horo-spherical if $N^*_J$ is $G_J$-conjugate to $V_J:=\left\{\left.
\begin{pmatrix}
1 & 0 & 0 & u_2\\
0 & 1 & u_2 & u_3\\
0 & 0 & 1 & 0\\
0 & 0 & 0 & 1
\end{pmatrix}~\right|~u_2,~u_3\in\R\right\}$. A horo-spherical subgroup $N^*_J$ is defined to be cuspidal for $G_J(\Z)$ if $N^*_J/N^*_J\cap G_J(\Z)$ is compact.\\
(2)~(cf.~\cite[Definition 4.2.5]{Be-Sc})~For a cuspidal subgroup $N_J^*$, which is in bijection with  $\R^2$, we denote by $dn$ the measure of $N_J^*$ induced by the Euclidean measure of $\R^2$. The cuspidal space $\cH^0$ of $L^2(G_J(\Z)\backslash G_J)$ is defined as
\[
\cH^0:=\left\{\phi\in L^2(G_J(\Z)\backslash G_J)~\left|~\displaystyle\int_{N^*_J\cap G_J(\Z)\backslash N^*_J}\phi(ng)dn=0~
\begin{array}{c}
\text{for almost all $g\in G_J$ and}\\
\text{any cuspidal subgroup $N^*_J$}
\end{array}\right.\right\}.
\]
\end{defn}
For each $m\in\Z$ we introduce 
\[
\cH_m:=\{\phi\in L^2(G_J(\Z)\backslash G_J)\mid \phi(n(0,z,0)g)=\e(mz)\phi(g)~\forall(z,g)\in\R\times G_J\},~\cH_m^0:=\cH_m\cap\cH^0.
\]
On the cuspidal space above we cite the following fact:
\begin{prop}\label{Spectral-Jacobi}
(1)~(cf.~\cite[Theorem 4.3.1]{Be-Sc})~The representation of $G_J$ on $\cH^0_m$ defined by the right translation is completely reducible, and each irreducible component occurs in $\cH^0_m$ with a finite multiplicity. The same assertion holds for $\cH^0$ since $\cH^0=\oplus_{m\in\Z}\cH^0_m$.\\
(2)~(cf.~\cite[Proposition 4.2.6 (iii)]{Be-Sc})~Let $V_J$ be as in Definition \ref{cuspidal-subgp} and for $m_2,~m_3\in\R$,  we put 
\[
\psi^{m_2,m_3}(\begin{pmatrix}
1 & 0 & 0 & u_2\\
0 & 1 & u_2 & u_3\\
0 & 0 & 1 & 0\\
0 & 0 & 0 & 1
\end{pmatrix}):=\e(m_2u_2+m_3u_3)\quad((u_2,u_3)\in\R^2).
\]
Then $\phi\in\cH_m$ belongs to $\cH^0_m$ if and only if, for almost all $g_0\in G_J$, we have that 
\[
\displaystyle\int_{V_J\cap G_J(\Z)\backslash V_J}\phi(ng_0)\overline{\psi^{n,s}(n)}dn=0
\]
for $(s,r)\in\Z$ such that $4ms-r^2=0$.
\end{prop}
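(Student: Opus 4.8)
The plan is to treat the two assertions separately; both are taken from Berndt--Schmidt \cite{Be-Sc}, so I would reproduce their arguments in the present notation, and there are two essentially independent ideas involved. For assertion (1) the plan is to run the Gelfand--Piatetski-Shapiro argument for cusp forms (cf.~\cite{G-G-P},~\cite{Gd}) on the non-reductive group $G_J = N_J \rtimes SL_2(\R)$. Since $Z_J$ is the center of $G_J$, right translation commutes with the left $Z_J$-action and hence preserves each eigenspace $\cH_m$ and its cuspidal part $\cH_m^0$. For a smooth $f$ on $G_J$, compactly supported modulo $Z_J$ and transforming under $Z_J$ by $\chi_m^{-1}$, the right convolution operator $R(f)$ on $\cH_m^0$ has automorphic kernel
\[
K_f(g,h) = \sum_{\gamma \in Z_J(\Z)\backslash G_J(\Z)} f(g^{-1}\gamma h).
\]
The heart of the matter is to show that $K_f$ is square-integrable on $(G_J(\Z)\backslash G_J)^2$ modulo center, so that $R(f)$ is Hilbert--Schmidt and in particular compact; this uses that the elements of $\cH_m^0$ decay rapidly toward the cusps, a consequence of the vanishing of their constant terms along the horospherical subgroups together with the reduction theory for $G_J$. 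Choosing $f$ with $f(g)=\overline{f(g^{-1})}$ makes $R(f)$ self-adjoint, and the family of such operators separates vectors; the spectral theorem for compact self-adjoint operators then yields the orthogonal decomposition of $\cH_m^0$ into irreducibles with finite multiplicities. The assertion for $\cH^0$ is immediate from the orthogonal decomposition $\cH^0 = \bigoplus_{m\in\Z}\cH_m^0$, each summand being right $G_J$-invariant.

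For assertion (2) the plan is to reduce the defining cuspidality condition --- vanishing of the trivial-character constant term over \emph{every} cuspidal horospherical subgroup --- to the vanishing of the finitely many discriminant-zero Fourier coefficients of $\phi|_{V_J}$. Since $N_S = Z_J\cdot V_J$ is abelian and $\phi\in\cH_m$ has fixed $Z_J$-character $\chi_m$, the Fourier expansion of $\phi$ along $V_J\cap G_J(\Z)\backslash V_J\cong(\R/\Z)^2$ is indexed by characters $\psi^{r,s}$, and pairing with the central index $m$ attaches to $\psi^{r,s}$ the symmetric matrix $\bigl(\begin{smallmatrix} m & r/2\\ r/2 & s\end{smallmatrix}\bigr)$, of determinant $\tfrac14(4ms-r^2)$. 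Every cuspidal subgroup is $G_J$-conjugate to $V_J$, and the key mechanism is that conjugating $V_J$ by a rational Heisenberg element $n(u_0,0,0)\in N_J(\Q)$ shifts the $u_2$-direction by a central element through the Heisenberg commutator; on $\phi\in\cH_m$ this multiplies the integrand by a phase proportional to $m u_0$, converting the trivial-character constant term into a $\psi^{r,s}$-coefficient with $r\neq 0$. Together with the Weyl element of the Jacobi $SL_2$, which interchanges the $u_2$- and $u_3$-directions, the cuspidal conjugates of $V_J$ are seen to produce exactly the coefficients indexed by singular matrices, i.e.~those with $4ms-r^2=0$, while the cuspidal lattice condition of Definition \ref{cuspidal-subgp} is precisely what confines attention to this discriminant-zero locus.

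The main obstacle lies in assertion (2): one must verify, with care, that the conjugation action of $N_J(\Q)$ and of the Weyl element on the characters of $V_J$ matches the discriminant stratification, and that the lattice condition on a conjugate $\gamma V_J\gamma^{-1}$ holds \emph{exactly} when the associated symmetric matrix is singular --- neither omitting any discriminant-zero character nor admitting one of positive discriminant. In assertion (1) the analytic crux is instead the Hilbert--Schmidt estimate for $K_f$, which genuinely uses the non-reductive geometry of $G_J$ and which I would carry out following \cite[Sections 4.2, 4.3]{Be-Sc}.
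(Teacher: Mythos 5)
The paper offers no proof of this proposition: it is quoted directly from Berndt--Schmidt (\cite[Theorem 4.3.1]{Be-Sc} and \cite[Proposition 4.2.6 (iii)]{Be-Sc}), so there is no internal argument to compare your proposal against. Your outline reproduces precisely the arguments of that source --- the Gelfand--Piatetski-Shapiro compact-operator argument (Hilbert--Schmidt kernel on the cuspidal subspace, self-adjoint convolution operators, spectral theorem) for assertion (1), and the dictionary between cuspidal $G_J$-conjugates of $V_J$ and the singular Fourier coefficients along $V_J$ for assertion (2) --- and both sketches are sound. Two small corrections: the discriminant-zero coefficients are not ``finitely many'' (there are infinitely many integral pairs $(r,s)$ with $4ms-r^2=0$; they fall into the finitely many classes $\alpha \bmod 2m$ with $\alpha^2/4m\in\Z$ that reappear in Section \ref{FJ-expansion}), and the reduction to conjugates of $V_J$ by $n(u_0,0,0,0)$ with $u_0\in\Q$ is effected by the transitivity of $SL_2(\Z)$ on the rational boundary points of the Jacobi upper half-space rather than by a single Weyl element ``interchanging the $u_2$- and $u_3$-directions'' (that element does not normalize $V_J$).
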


Now let us recall that, for $m\in\R\setminus\{0\}$ and irreducible genuine representations $\pi_1$ of $\widetilde{SL}_2(\R)$, 
we have introduced unitary representations $\rho_{m.\pi_1}$ of $G_J$, which exhaust the unitary dual of $G_J$~(cf.~Proposition \ref{UnitaryDual-G_J}) except for the case of Proposition \ref{UnitaryDual-G_J} (2). 
\begin{defn}
Let $m\in\Z\setminus\{0\}$ and $\pi_1$ be an irreducible genuine representation of $\widetilde{SL}_2(\R)$. Furthermore let $\rho_{m,\pi_1}$ be as above. We define the space of Jacobi forms of index $m$ and type $\pi_1$ as
\[
\rm{Hom}_{G_J}(\rho_{m,\pi_1},\cH^0_m).
\]
\end{defn}
As an immediate consequence from Proposition \ref{Spectral-Jacobi} (1) we have the following: 
\begin{prop}
For a non-zero $m\in\Z$ the space $\rm{Hom}_{G_J}(\rho_{m,\pi_1},\cH^0_m)$ is finite dimensional.
\end{prop}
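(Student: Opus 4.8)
The plan is to read off the finite-dimensionality directly from the discrete decomposition recorded in Proposition \ref{Spectral-Jacobi} (1), combined with Schur's lemma and the irreducibility of $\rho_{m,\pi_1}$ from Proposition \ref{UnitaryDual-G_J} (1). The key observation is that, since $m\not=0$, every $\phi\in\cH^0_m$ transforms under the center $Z_J$ by the non-trivial character $\chi_m$, so the right regular representation of $G_J$ on $\cH^0_m$ is a unitary representation with non-trivial central character, exactly the setting in which the representations $\rho_{m,\pi_1}$ live.

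First I would invoke Proposition \ref{Spectral-Jacobi} (1) to write $\cH^0_m$ as a Hilbert-space direct sum $\cH^0_m\simeq\bigoplus_i n_i\,\sigma_i$ of pairwise inequivalent irreducible unitary representations $\sigma_i$ of $G_J$, each occurring with finite multiplicity $n_i<\infty$. Because every vector in $\cH^0_m$ carries the central character $\chi_m$, each constituent $\sigma_i$ has central character $\chi_m$; this being non-trivial, Proposition \ref{UnitaryDual-G_J} (1) guarantees that each $\sigma_i$ is itself of the form $\rho_{m,\pi_1^{(i)}}$ for some irreducible genuine representation $\pi_1^{(i)}$ of $\widetilde{SL}_2(\R)$, so that $\rho_{m,\pi_1}$ is a legitimate candidate constituent. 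Next I would evaluate $\Hom_{G_J}(\rho_{m,\pi_1},\cH^0_m)$ by Schur's lemma: any continuous $G_J$-intertwiner out of the irreducible $\rho_{m,\pi_1}$ has closed irreducible image, hence lands inside the $\rho_{m,\pi_1}$-isotypic component, and projecting to each summand shows $\Hom_{G_J}(\rho_{m,\pi_1},\sigma_i)=0$ unless $\sigma_i\simeq\rho_{m,\pi_1}$, in which case it is one-dimensional. Therefore $\Hom_{G_J}(\rho_{m,\pi_1},\cH^0_m)\simeq\C^{n_{i_0}}$, where $i_0$ indexes the summand equivalent to $\rho_{m,\pi_1}$ (and the space is $\{0\}$ if $\rho_{m,\pi_1}$ does not occur), so its dimension equals the multiplicity of $\rho_{m,\pi_1}$ in $\cH^0_m$.

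The only point I expect to need explicit care is the interchange of $\Hom$ with the infinite Hilbert direct sum, i.e. the claim that an intertwiner out of the irreducible $\rho_{m,\pi_1}$ cannot be supported on infinitely many constituents. This is settled by the two inputs already available: the image is a single closed irreducible subspace, so it is contained in one isotypic component, and that component has finite multiplicity $n_{i_0}$ by Proposition \ref{Spectral-Jacobi} (1). With these in hand the finiteness is immediate, which matches the paper's description of the assertion as an immediate consequence of the spectral decomposition.
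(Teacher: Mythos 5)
Your argument is correct and is essentially the paper's: the paper derives the proposition as an immediate consequence of Proposition \ref{Spectral-Jacobi} (1), i.e.\ the discrete decomposition of $\cH^0_m$ with finite multiplicities, and your Schur's-lemma identification of $\Hom_{G_J}(\rho_{m,\pi_1},\cH^0_m)$ with the multiplicity space of $\rho_{m,\pi_1}$ just makes explicit what the paper leaves implicit. No gaps.
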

We next consider the space of the following intertwining operators
\[
\Phi_m:=\rm{Hom}_{G_J}(\nu_m,L^2(N_J(\Z)\backslash N_J))
\]
for a non-zero integer $m\in\Z$. We recall that the representation space $\cU_m$ of $\nu_m$ is identified with $L^2(\R)$.  From a general theory by Corwin-Greenleaf \cite[Theorem 5.1,~Sections 5,~6]{Co-G-1} we can provide an explicit description of  a basis  of $\Phi_m$ as follows:
\begin{prop}\label{Basis-theta}
The space $\Phi_m$ has $\{\theta_{\alpha}\}_{\alpha\in\Z/2m\Z}$ as a basis, where
\[
\theta_{\alpha}(h)(n(u_0,u_1,u_2)):=\sum_{k\in\Z}\e(mu_1+(2km+\alpha)u_2)h(u_0+k+\frac{\alpha}{2m})\quad(h\in L^2(\R)\simeq\cU_m).
\]
Namely we have $\dim\Phi_m=2|m|$. To be precise, this is the right translation of the theta series defined in \cite{Co-G-1} by $n(\frac{\alpha}{2m},0,0)$. 
\end{prop}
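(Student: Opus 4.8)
The plan is to specialize the general multiplicity theory of Corwin--Greenleaf \cite{Co-G-1} to the Heisenberg nilmanifold $N_J(\Z)\backslash N_J$ and then to pin down the resulting intertwiners explicitly. First I would record the group law in the coordinates $n(u_0,u_1,u_2)$: a direct matrix multiplication gives
\[
n(u_0,u_1,u_2)\,n(u_0',u_1',u_2')=n\big(u_0+u_0',\,u_1+u_1'+2u_0u_2',\,u_2+u_2'\big),
\]
so that $Z_J=\{n(0,u_1,0)\}$ is the centre, $M=\{n(0,u_1,u_2)\}$ is the polarizing abelian subgroup, and $N_J(\Z)$ consists exactly of the $n(u_0,u_1,u_2)$ with $u_0,u_1,u_2\in\Z$. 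In the model $\cU_m\simeq L^2(\R)$ attached to $\nu_m=L^2\text{-}\Ind_M^{N_J}(\chi_m)$ the action reads $(\nu_m(n(a_0,a_1,a_2))h)(u_0)=\e(ma_1+2mu_0a_2)\,h(u_0+a_0)$; this is the only input needed to manipulate the theta series.

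Next I would verify by hand that each $\theta_\alpha$ is a genuine element of $\Phi_m$. Left $N_J(\Z)$-invariance follows from the substitution $k\mapsto k+a_0$ once one observes that $\e(ma_1)=\e((2km+\alpha)a_2)=1$ for $a_0,a_1,a_2,\alpha\in\Z$; the equivariance $\theta_\alpha(\nu_m(n)h)(x)=\theta_\alpha(h)(xn)$ reduces, after inserting the explicit formulas above, to matching the two exponents term by term, which they do. The substitution $k\mapsto k+1$ shows $\theta_{\alpha+2m}=\theta_\alpha$, so $\theta_\alpha$ depends only on $\alpha\in\Z/2m\Z$; convergence and $L^2$-boundedness are the standard theta/Weil estimates for Schwartz $h$ followed by extension to $L^2(\R)$. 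The normalization remark is then a one-line check: right translating the unshifted theta series by $n(\tfrac{\alpha}{2m},0,0)$ replaces $u_0$ by $u_0+\tfrac{\alpha}{2m}$ in the argument of $h$, reproducing the stated formula.

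The two halves of ``basis'' are handled separately. For linear independence I would read off the $u_2$-Fourier frequencies: regarding $u_2$ as a variable on $\R/\Z$ (legitimate since $n(0,0,1)\in N_J(\Z)$ acts by $u_2\mapsto u_2+1$), the expansion of $\theta_\alpha(h)(n(u_0,0,u_2))$ is supported on frequencies lying in the single residue class $\alpha\bmod 2m$; these classes are pairwise disjoint for $\alpha=0,1,\dots,2|m|-1$, whence the $\theta_\alpha$ are independent and $\dim\Phi_m\ge 2|m|$. For the reverse inequality I would invoke the Corwin--Greenleaf multiplicity formula \cite[Theorem 5.1]{Co-G-1}, which computes the multiplicity of the Stone--von Neumann representation $\nu_m$ in $L^2(N_J(\Z)\backslash N_J)$ as the index determined by the symplectic form $2\,du_0\wedge du_2$ on the lattice, namely $2|m|$. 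Combining the two bounds yields $\dim\Phi_m=2|m|$ and identifies $\{\theta_\alpha\}_{\alpha\in\Z/2m\Z}$ as a basis.

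The main obstacle is precisely this upper bound: producing the intertwiners and proving them independent is elementary, but showing that there are \emph{no further} intertwiners requires the exact value of the multiplicity. I would rely on \cite{Co-G-1} for this; alternatively one can reprove it in this rank-one case by expanding an arbitrary central-character-$m$ function $\phi\in L^2(N_J(\Z)\backslash N_J)$ in its $u_2$-Fourier series and imposing the first-order constraint coming from $N_J$-equivariance together with the $u_0$-periodicity coming from left $N_J(\Z)$-invariance, which forces exactly $2|m|$ free parameters indexed by the residue classes modulo $2m$.
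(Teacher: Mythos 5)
Your proposal is correct and follows the same route the paper takes: the paper offers no proof beyond the citation of Corwin--Greenleaf \cite[Theorem 5.1, Sections 5, 6]{Co-G-1}, which is exactly where you get the upper bound $\dim\Phi_m\le 2|m|$. The extra material you supply — the group law $n(u_0,u_1,u_2)n(u_0',u_1',u_2')=n(u_0+u_0',u_1+u_1'+2u_0u_2',u_2+u_2')$, the resulting model $(\nu_m(n(a_0,a_1,a_2))h)(u_0)=\e(ma_1+2mu_0a_2)h(u_0+a_0)$, the equivariance and left $N_J(\Z)$-invariance checks, the periodicity $\theta_{\alpha+2m}=\theta_\alpha$, and the linear independence via the disjoint $u_2$-frequency supports $\alpha+2m\Z$ — is all accurate and simply makes explicit what the paper delegates to the reference.
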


Now we recall that $\widetilde{SL}_2(\R)$ is realized as the group consisting of pairs $(M,\phi)$ 
with $M=
\begin{pmatrix}
a & b\\
c & d
\end{pmatrix}\in SL_2(\R)$ and a holomorphic function $\phi$ on the complex upper half plane ${\frak h}$ satisfying $\phi^2(\tau)=c\tau+d~(\tau\in{\frak h})$. For $M=
\begin{pmatrix}
a & b\\
c & d
\end{pmatrix}\in SL_2(\R)$ we put $\tilde{M}=
(\begin{pmatrix}
a & b\\
c & d
\end{pmatrix},\sqrt{c\tau+d})$, where we choose the principal branch to define the square root $\sqrt{z}$ of $z\in\C$. We define $\widetilde{SL}_2(\Z)$ as the inverse image of $SL_2(\Z)$ by the covering map $\widetilde{SL}_2(\R)\rightarrow SL_2(\R)$, which is the double cover of $SL_2(\Z)$. 

We now review that the Weil representation $\omega_m$ induces the $\widetilde{SL}_2(\Z)$-module structure of $\Phi_m$ defined by
\[
(\Omega_m(\delta)\cdot\theta)(h):=\theta(\omega_m(\delta)h)\quad(\delta\in\widetilde{SL}_2(\Z))
\]
for $(\theta,h)\in \Phi_m\times L^2(\R)$. 
From Borcherds \cite[p.505]{Bo}, Bruineir \cite[Proposition 1.1]{Br} and Shintani \cite[Proposition 1.6]{Shi} we know such module structure of $\Phi_m$ explicitly. 
Given a fixed $m\in\Z\setminus\{0\}$, let $\Z$ be equipped with the bilinear form $\Z\times\Z\ni(x,y)\mapsto 2mxy\in\Z$. The dual lattice of $\Z$ with respect to this bilinear form is $\displaystyle\frac{1}{2m}\Z$. We can explicitly describe the $\widetilde{SL}_2(\Z)$-module structure of $\Phi_m$, which is realized as a representation of $\widetilde{SL}_2(\Z)$ on the group algebra $\C[\displaystyle\frac{1}{2m}\Z/\Z]$~(cf.~\cite[p.505]{Bo},~\cite[Proposition 1.1]{Br}). 
\begin{prop}\label{TransformationTheta}
For $\gamma=
\begin{pmatrix}
a & b\\
c & d
\end{pmatrix}\in SL_2(\Z)$ let $\tilde{\gamma}$ be  as above. We have $\Omega_m(\tilde{\gamma})\theta_{\alpha}=\sum_{\beta\in\Z/2m\Z}c(\alpha,\beta)_{\gamma}\theta_{\beta}$, where 
$c(\alpha,\beta)_{\gamma}:=$
\[
\begin{cases}
\sqrt{i}^{-{\rm sgn}(m)(1-{\rm sgn}(d))}\delta_{\alpha,a\beta}\e(\displaystyle\frac{ab}{2}(2m)(\displaystyle\frac{\alpha}{2m})^2)&(c=0),\\
\displaystyle\frac{1}{\sqrt{2|cm|}}\sqrt{i}^{-{\rm sgn(m)}{\rm sgn}(c)}\sum_{r\in\Z/c\Z}\e(\displaystyle\frac{a(2m)(\displaystyle\frac{\alpha}{2m}+r)^2-2(2m)(\displaystyle\frac{\alpha}{2m}+r)(\frac{\beta}{2m})+d(2m)(\displaystyle\frac{\beta}{2m})^2}{2c})&(c\not=0),
\end{cases}
\]
where $\delta_{*,*}$ denotes the Kronecker delta.
\end{prop}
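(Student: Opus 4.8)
The plan is to reduce the computation to the two standard generators of $SL_2(\Z)$ and then to recognize $\Omega_m$ as the finite Weil representation attached to a discriminant form, whose entries are classical quadratic Gauss sums. First I would make explicit the identification $\Phi_m\simeq\C[\frac{1}{2m}\Z/\Z]$ that sends the basis vector $\theta_\alpha$ of Proposition \ref{Basis-theta} to the standard basis element indexed by $\frac{\alpha}{2m}+\Z$. Under this identification $\Omega_m$ is, by its very definition $(\Omega_m(\tilde\gamma)\theta)(h)=\theta(\omega_m(\tilde\gamma)h)$, the module structure induced on theta series by the Weil representation $\omega_m$ acting on the Schr\"odinger model $\cU_m\simeq L^2(\R)$; so the whole problem is to transport the known action of $\omega_m$ on $L^2(\R)$ through the defining series of $\theta_\alpha$ from Proposition \ref{Basis-theta}.

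Second, since $SL_2(\Z)$ is generated by $T=\begin{pmatrix}1 & 1\\ 0 & 1\end{pmatrix}$ and $S=\begin{pmatrix}0 & -1\\ 1 & 0\end{pmatrix}$, it suffices to compute $\Omega_m(\tilde T)$ and $\Omega_m(\tilde S)$ and to check that the proposed two-case formula respects the representation law. For $\tilde T$ the operator $\omega_m(\tilde T)$ acts on the Schr\"odinger model by multiplication by a quadratic exponential, so substituting into the series for $\theta_\alpha$ merely rescales each summand by $\e$ of a quadratic expression in $\frac{\alpha}{2m}$; this reproduces the diagonal $c=0$ formula with $a=d=1$. For $\tilde S$ the operator $\omega_m(\tilde S)$ is a normalized Fourier transform, and applying it inside $\theta_\alpha$ and re-summing by Poisson summation collapses the sum over $\Z$ to a finite Gauss sum over $\Z/c\Z$ (here $c=1$), which is the off-diagonal $c\neq 0$ formula.

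Third, for a general $\gamma=\begin{pmatrix}a & b\\ c & d\end{pmatrix}$ with $c\neq 0$ I would use the Bruhat decomposition
\[
\begin{pmatrix}a & b\\ c & d\end{pmatrix}
=\begin{pmatrix}1 & a/c\\ 0 & 1\end{pmatrix}
\begin{pmatrix}c^{-1} & 0\\ 0 & c\end{pmatrix}
\begin{pmatrix}0 & -1\\ 1 & 0\end{pmatrix}
\begin{pmatrix}1 & d/c\\ 0 & 1\end{pmatrix}
\]
to write $\omega_m(\tilde\gamma)$ as a composite of two multiplication operators, a dilation and a single Fourier transform, and then repeat the Poisson-summation step; the exponential sum that results is precisely the stated Gauss sum $\sum_{r\in\Z/c\Z}\e(\cdots)$. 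Alternatively, and more cheaply, one checks directly that the displayed coefficients $c(\alpha,\beta)_\gamma$ satisfy $\Omega_m(\widetilde{\gamma_1\gamma_2})=\Omega_m(\tilde\gamma_1)\Omega_m(\tilde\gamma_2)$ on the two generators and invokes the known evaluation of the Weil representation of the discriminant form $(\frac{1}{2m}\Z/\Z,\ x\mapsto mx^2)$ from Shintani \cite[Proposition 1.6]{Shi}, Borcherds \cite[p.505]{Bo} and Bruinier \cite[Proposition 1.1]{Br}, from which the formula can be read off after specializing to this cyclic discriminant form.

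The hard part will be the bookkeeping of the metaplectic normalization rather than the shape of the Gauss sum: the prefactors $\sqrt{i}^{-\sgn(m)(1-\sgn(d))}$ and $\sqrt{i}^{-\sgn(m)\sgn(c)}$ encode the Weil index, and pinning down the correct eighth root of unity requires keeping track of the branch of the square root used in the Kubota cocycle on $\widetilde{SL}_2(\R)$ together with the sign of $m$ and the signature of the quadratic Gauss sum. The modulus of each entry, including the factor $\frac{1}{\sqrt{2|cm|}}$, will drop out routinely from the Fourier-transform computation, but getting the dependence on $\sgn(m)$, $\sgn(c)$ and $\sgn(d)$ correct is the delicate point, and is exactly where the genuineness of the representation of the double cover enters.
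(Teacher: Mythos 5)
Your proposal is correct, and your ``cheaper'' alternative --- reading the formula off from Shintani's transformation formula as recast by Borcherds and Bruinier --- is in fact exactly what the paper does: its entire proof is the citation of \cite[Proposition 1.6]{Shi}, modified via \cite[Proposition 1.1]{Br} and \cite[p.505]{Bo}, plus the single observation that Shintani's identity, stated for $\theta_\alpha(h)$ evaluated at the identity of $N_J$, propagates to all of $N_J$ because $\theta_{\alpha}(\nu_m(n)h)(n(0,0,0))=\theta_{\alpha}(h)(n)$. Your primary route --- generators plus Poisson summation, with the Bruhat factorization (which you have written down correctly) handling general $c\neq 0$ --- amounts to re-proving Shintani's result and is therefore more self-contained, but it is also where all the risk sits: the prefactors $\sqrt{i}^{-\sgn(m)(1-\sgn(d))}$ and $\sqrt{i}^{-\sgn(m)\sgn(c)}$ cannot be recovered from a representation-law check on $SL_2(\Z)$ alone, since the chosen lifts satisfy $\tilde{\gamma}_1\tilde{\gamma}_2=\pm\widetilde{\gamma_1\gamma_2}$ with sign given by the Kubota cocycle, so your ``check on two generators'' must be a cocycle-twisted check on the double cover; you correctly flag this as the delicate point but leave it unexecuted. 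The trade-off is clear: the paper's citation buys brevity and outsources the eighth-root-of-unity bookkeeping to sources where it has already been verified, while your computation buys independence from the literature at the cost of exactly that bookkeeping; if you carry out the direct route you should also include the paper's small equivariance remark (or, as you implicitly do, work with the series at a general point of $N_J$ from the start) so that the identity holds in $\Phi_m$ and not merely at the origin.
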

\begin{proof}
This is essentially the transformation formula of theta functions by Shintani \cite[Proposition 1.6]{Shi}. More precisely we modify Shintani's formulation by \cite[Proposition 1.1]{Br}~(see also \cite[p.505]{Bo}). For the proof we remark that Shintani's formula describes the transformation formula for $\theta_{\alpha}(h)(n(0,0,0))$ for $h\in L^2(\R)$. The formula still holds if we replace $h$ with $\nu_m(n(u_0,u_1,u_2))\cdot h$, for which note that $\theta_{\alpha}(\nu_m(n(u_0,u_1,u_2))\cdot h)(n(0,0,0))=\theta_{\alpha}(h)(n(u_0,u_1,u_2))$.
\end{proof}
We introduce the notion of $\Phi_m$-valued cusp forms with respect to $\widetilde{SL}_2(\Z)$.
\begin{defn}\label{Vector-valuedCuspfm}
For an irreducible genuine unitary representation $\pi_1$ of $\widetilde{SL_2}(\R)$ we define $\cS_{\pi_1}(\widetilde{SL}_2(\Z);\Phi_m)$ to be the space of $\Phi_m$-valued smooth functions $f$ on $\widetilde{SL_2}(\R)$ satisfying the followings:
\begin{enumerate}
\item $f(\delta bu)=\tau_{\rm min}(u)^{-1}\Omega_m(\delta)^{-1}f(b)$ for $(\delta,b,u)\in \widetilde{SL}_2(\Z)\times\widetilde{SL_2}(\R)\times \widetilde{SO_2}(\R)$, where recall that $\tau_{\rm min}$ denotes the minimal $\widetilde{SO_2}(\R)$-type of $\pi_1$~(cf.~Section \ref{RepSL2}).
\item each coefficient of $f$ is a cusp form with respect to $\widetilde{\Gamma(4m)}$~(i.e. left $\widetilde{\Gamma(4m)}$-invariant), and generates $\pi_1$ as a $(\g_1,\widetilde{SO_2}(\R))$-module, where recall that $\g_1$ denotes the Lie algebra of $\widetilde{SL}_2(\R)$ or $SL_2(\R)$~(cf.~Section \ref{RepSL2}).
\end{enumerate}
Here $\widetilde{\Gamma(4m)}$ denotes the pull-back of the principal congruence subgroup $\Gamma(4m)$ of $SL_2(\Z)$ of level $4m$ by the covering map $\widetilde{SL_2(\R)}\rightarrow SL_2(\R)$. 
\end{defn}
For this definition we note that $N=4|m|$ is the minimal positive integer such that $\displaystyle\frac{N\alpha^2}{4m}\in\Z$ for any $\alpha\in\Z$ and that the representation $\Omega_m$ factors through the double cover   of $SL_2(\Z/4m\Z)$. 
\begin{thm}\label{Eichler-ZagierCorresp}
We have an isomorphism
\[
S_{\pi_1}(\widetilde{SL}_2(\Z);\Phi_m)\simeq\rm{Hom}_{G_J}(\rho_{m,\pi_1},\cH^0_m).
\]
\end{thm}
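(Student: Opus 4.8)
The plan is to realize this isomorphism as the representation-theoretic form of the theta decomposition, whose abstract input is a Stone--von Neumann identification of the $\chi_m$-isotypic part of $L^2(N_J(\Z)\backslash N_J)$ with $\cU_m\otimes\Phi_m$. Concretely, for each $\Theta\in\Phi_m$ the assignment $\cU_m\ni u\mapsto\Theta(u)$ embeds $\nu_m$ into $L^2(N_J(\Z)\backslash N_J)$, and Proposition \ref{Basis-theta} shows that as $\alpha$ runs over $\Z/2m\Z$ these images span the entire part with central character indexed by $m$; hence every such $F$ has a \emph{unique} expansion $F=\sum_{\alpha}\theta_\alpha(F_\alpha)$ with $F_\alpha\in\cU_m$. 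First I would record that this identification is equivariant for the Weil representation: letting $\widetilde{SL}_2(\R)$ act by $\omega_m$ on $\cU_m$ and by $\Omega_m$ on $\Phi_m$, the evaluation $u\otimes\Theta\mapsto\Theta(u)$ intertwines these actions, the modular part being governed by Proposition \ref{TransformationTheta}. This is exactly where the double cover is forced upon us, since $\omega_m$ and the genuine $\pi_1$ are each non-trivial on $\{\pm1\}$ while their tensor product descends to $SL_2(\R)$ (cf.\ Proposition \ref{UnitaryDual-G_J}).

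Next I would build the map from $\Hom_{G_J}(\rho_{m,\pi_1},\cH_m^0)$ to $\cS_{\pi_1}(\widetilde{SL}_2(\Z);\Phi_m)$. Using $G_J=N_J\rtimes SL_2(\R)$, for $\Psi$ in the left-hand $\Hom$-space, a minimal $\widetilde{SO}_2(\R)$-type vector $w_{\rm min}\in\cW_{\pi_1}$ and $g\in SL_2(\R)$, I consider the function $n\mapsto\Psi(w_{\rm min}\otimes u)(ng)$ on $N_J(\Z)\backslash N_J$; the $G_J$-equivariance of $\Psi$ together with $\rho_{m,\pi_1}(g)=\pi_1(\tilde g)\otimes\omega_m(\tilde g)$ rewrites this, via the decomposition of the first paragraph, as the pairing of an element $f(\tilde g)\in\Phi_m$ against $\omega_m(\tilde g)u$. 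I would then verify the defining properties of Definition \ref{Vector-valuedCuspfm}: the left $\widetilde{SL}_2(\Z)$-automorphy with multiplier $\Omega_m^{-1}$ follows from the left $SL_2(\Z)$-invariance of $\Psi(w_{\rm min}\otimes u)$ combined with Proposition \ref{TransformationTheta}; the right $\widetilde{SO}_2(\R)$-equivariance of type $\tau_{\rm min}$ from matching the minimal $\widetilde{SO}_2(\R)$-type of $\pi_1$ with the compact data; and the statement that the coefficients generate $\pi_1$ reflects the $SL_2(\R)$-part of the intertwining property. That the scalar components $f_\alpha$ are \emph{cusp} forms for $\widetilde{\Gamma(4m)}$ is the translation of $\Psi(\cdot)\in\cH_m^0$, which by Proposition \ref{Spectral-Jacobi}(2) is the vanishing of the period integrals along $V_J$; these correspond under the theta decomposition to the constant terms of the $F_\alpha$ at the cusps.

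For the inverse I would reconstruct a Jacobi form from $f\in\cS_{\pi_1}(\widetilde{SL}_2(\Z);\Phi_m)$ by the theta series attached to $f$, schematically $\Psi_f(w\otimes u)(ng):=\sum_{\alpha\in\Z/2m\Z}f_\alpha^{(w)}(\tilde g)\,\theta_\alpha(\omega_m(\tilde g)u)(n)$, where $f_\alpha^{(w)}$ is the scalar function extracted from the $\pi_1$-realization of the components of $f$ at the vector $w$, and the construction is extended to all of $G_J$ by $\rho_{m,\pi_1}$-equivariance. Here the $\Omega_m$-automorphy of $f$ is designed to cancel the failure of each $\theta_\alpha$ to be $SL_2(\Z)$-invariant, so that $\Psi_f(w\otimes u)$ descends to $G_J(\Z)\backslash G_J$; the central character indexed by $m$ is immediate, square-integrability and membership in $\cH_m^0$ follow from the cuspidality of the $f_\alpha$ via Proposition \ref{Spectral-Jacobi}, and the $G_J$-intertwining splits into the $N_J$-equivariance (built into $\nu_m$ and the theta functions) and the $SL_2(\R)$-equivariance (supplied by $f$ generating $\pi_1$ and by the Weil cocycle). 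That the two constructions are mutually inverse then reduces to the uniqueness of the theta expansion from the first paragraph.

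The hard part will be the bookkeeping of the metaplectic cocycle in the automorphy transfer: proving that the left $G_J(\Z)$-invariance of $\Psi(w\otimes u)$ is \emph{equivalent} to the $\Omega_m$-transformation law for $f$, not merely implied by it. This requires aligning the $\widetilde{SL}_2(\Z)$-action on $\Phi_m$ of Proposition \ref{TransformationTheta} with the restriction of $\rho_{m,\pi_1}$, tracking the genuineness of $\pi_1$ and $\omega_m$ on $\{\pm1\}$ so that every object is consistently defined on the double cover while the Jacobi form itself lives on $G_J$. A secondary technical point is the exact matching of the cuspidality conditions, i.e.\ that the horospherical period integrals of Definition \ref{cuspidal-subgp} and Proposition \ref{Spectral-Jacobi}(2) correspond term-by-term to the cusp conditions on the vector-valued form $f$.
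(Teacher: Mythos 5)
Your proposal is correct and follows essentially the same route as the paper: expand $\phi(w\otimes u_j^m)(ng)$ in the theta basis of $\Phi_m$, transfer the left $G_J(\Z)$-invariance to the $\Omega_m$-automorphy of the coefficients via the Weil-representation relation $\nu_m(n)\omega_m(g)=\omega_m(g)\nu_m(g^{-1}ng)$, match cuspidality with the vanishing of constant terms through Proposition \ref{Spectral-Jacobi} (2), and invert by forming the theta series attached to $f$. The only step the paper makes explicit that you leave implicit is the promotion, in the inverse direction, of the $(\g_1,\widetilde{SO_2}(\R))$-intertwiners built from the coefficient functions to unitary $G_J$-intertwiners, which uses that infinitesimally equivalent irreducible unitary representations are unitarily equivalent.
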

\begin{proof} 
Let $w$ be the minimal $\widetilde{SO}(2)$-type vector of $\pi_1$ and recall that $\{u_j^m\}$ has denoted  a basis of the representation space of $\nu_m$~(cf.~Section \ref{UnitaryRepJacobi}). Firstly, given $\phi\in\rm{Hom}_{G_J}(\rho_{\pi_1,m},\cH^0_m)$, we explain how to associate $\phi$ with an element in $S_{\pi_1}(\widetilde{SL}_2(\Z);\Phi_m)$. After this we discuss the inverse direction. 
The former takes a much longer argument.  

For $\phi\in\rm{Hom}_{G_J}(\rho_{\pi_1,m},\cH^0_m)$ we regard  $\phi(w\otimes u^m_j)(ng)$ with a fixed $g\in SL_2(\R)$ as a function in $n\in N_J$. Consider an expansion of $\phi(w\otimes u^m_j)(ng)$ with respect to $\{\theta_{\alpha}(\omega_m(g)u_j^m)(n)\mid 1\le\alpha\le 2|m|\}$, where recall that $\Phi_m$ has a basis $\{\theta_{\alpha}\mid 1\le \alpha \le 2|m|\}$~(cf.~Proposition \ref{Basis-theta}). We have
\[
\phi(w\otimes u_j^m)(ng)=\phi(\pi_1(g)w\otimes\omega_m(g)u_j^m)(n)=\sum_{\alpha=1}^{2|m|}c_{\alpha}(g)\theta_{\alpha}(\omega_m(g)u_j^m)(n)
\]
with $C^{\infty}$-functions $c_{\alpha}(g)$ on $SL_2(\R)$. For this note that $c_{\alpha}$ is independent of $u_j^m$s since it depends only on $\pi_1(g)w$. 
Since the representation $\rho_{m,\pi_1}$ factors though the image of the covering map $\widetilde{SL_2}(\R)\ni\tilde{g}\mapsto g\in SL_2(\R)$,  $\phi(\pi_1(\tilde{g})w\otimes\omega_m(\tilde{g})u_j^m)=\phi(\pi_1(g)w\otimes\omega_m(g)u_j^m)$ is regarded as a well-defined function on $SL_2(\R)$ and also on $\widetilde{SL_2}(\R)$. 
\begin{lem}\label{Trans-Formula}
For $\gamma\in SL_2(\Z)$ we have
\[
\phi(w\otimes u_j^m)(\gamma ng)=\sum_{\alpha=1}^{2|m|}c_{\alpha}(\gamma g)(\Omega_m(\gamma)\theta_{\alpha})(\omega_m(g)u_j^m)(n),
\]
where we note that the representations of $\widetilde{SL}_2(\R)$ and $\widetilde{SL}_2(\Z)$ are reduced to those of $SL_2(\R)$ and $SL_2(\Z)$ respectively in view of such property of the representations of $G_J$. 
\end{lem} 
\begin{proof}
We start with checking
\[
\phi(w\otimes u_j^m)(ng)=\phi(\pi_1(g)w\otimes\omega_m(g)u_j^m)(n)=\sum_{\alpha=1}^{2|m|}c_{\alpha}(\gamma g)\theta_{\alpha}(\omega_m(\gamma g)u_j^m)(\gamma n\gamma^{-1})
\]
for $\gamma\in SL_2(\Z)$ in a straight forward manner by noting $\gamma ng=\gamma n\gamma^{-1}\cdot\gamma g$ and the left $SL_2(\Z)$-invariance of $\phi(w\otimes u_j^m)$. 
To continue the argument we remark that $\theta_{\alpha}$ is the intertwining operator between $\nu_m$ and  the right translation $r_{N_J}$ on $L_2(N_J(\Z)\backslash N_J)$. We then have
\begin{align*}
\sum_{\alpha=1}^{2|m|}c_{\alpha}(\gamma g)\theta_{\alpha}(\omega_m(\gamma g)u_j^m)(\gamma n\gamma^{-1})
&=\sum_{\alpha=1}^{2|m|}c_{\alpha}(\gamma g)(r_{N_J}(\gamma n\gamma^{-1})\theta_{\alpha})(\omega_m(\gamma g)u_j^m)(1)\\
&=\sum_{\alpha=1}^{2|m|}c_{\alpha}(\gamma g)\theta_{\alpha}(\nu_m(\gamma n\gamma^{-1})\omega_m(\gamma g)u_j^m)(1).
\end{align*}
We now note the following fundamental formula
\begin{equation}\label{WeilRep-basicrel}
\nu_m(n)\omega_m(g)=\omega_m(g)\nu_m(g^{-1}ng)\quad(n\in N_J)
\end{equation}
of the Weil representation $\omega_m$ to obtain
\begin{align*}
\sum_{\alpha=1}^{2|m|}c_{\alpha}(\gamma g)\theta_{\alpha}(\nu_m(\gamma n\gamma^{-1})\omega_m(\gamma g)u_j^m)(1)&=\sum_{\alpha=1}^{2|m|}c_{\alpha}(\gamma g)\theta_{\alpha}(\omega_m(\gamma g)\nu_m(g^{-1}ng)u_j^m)(1)\\
&=\sum_{\alpha=1}^{2|m|}c_{\alpha}(\gamma g)\theta_{\alpha}(\omega_m(\gamma)(\omega_m(g)\nu_m(g^{-1}ng))u_j^m)(1)\\
&=\sum_{\alpha=1}^{2|m|}c_{\alpha}(\gamma g)(\Omega_m(\gamma)\theta_{\alpha})(\nu_m(n)\omega_m(g))u_j^m)(1)\\
&=\sum_{\alpha=1}^{2|m|}c_{\alpha}(\gamma g)(\Omega_m(\gamma)\theta_{\alpha})(\omega_m(g))u_j^m)(n).
\end{align*}
Here note that we can reduce the representations $\omega_m$ and $\Omega_m$ to those of $SL_2(\R)$ and $SL_2(\Z)$ respectively as we have remarked in the assertion~(also just before the assertion). We are therefore done.
\end{proof}  
We see that each coefficient function $c_{\alpha}$ can be extended to a function on $\widetilde{SL_2}(\R)$ and that the left $SL_2(\Z)$-invariance of $\phi(w\otimes u_j^m)$ can be extended to such invariance with respect to $\widetilde{SL}_2(\Z)$. The latter means that 
\[
\sum_{\alpha=1}^{2|m|}c_{\alpha}(\tilde{\gamma} \tilde{g})\theta_{\alpha}=\Omega_m(\tilde{\gamma})^{-1}\sum_{\alpha=1}^{2|m|}c_{\alpha}(\tilde{g})\theta_{\alpha}
\]
for $(\tilde{\gamma},\tilde{g})\in \widetilde{SL}_2(\Z)\times\widetilde{SL_2}(\R)$. Then we see the left $\widetilde{\Gamma(4m)}$-invariance of $c_{\alpha}$, which is remarked after Definition \ref{Vector-valuedCuspfm}.
 Now note that each $c_{\alpha}$ depends only on $\pi_1(g)w$. 
Therefore it generates $\pi_1$ as a $({\frak g}_1,\tilde{SO}(2))$-module and satisfies the right $\widetilde{SO_2}(\R)$-equivariance with respect to $\tau_{\rm min}$.

What remains to see 
\[
\sum_{\alpha=1}^{2|m|}c_{\alpha}(g)\theta_{\alpha}\in S_{\pi_1}(\widetilde{SL}_2(\Z);\Phi_m)
\]
is to verify that each $c_{\alpha}$ is cuspidal, i.e. it has no constant term at each $\Gamma(4m)$-cusp. 

To this end we need a remark on the Fourier expansion of $c_{\alpha}(g)\theta_{\alpha}(\omega_m(g)u_j^m)(n)$. 
As is well known, $SL_2(\Z)$ gives a set of representatives of the cusps. In view of the transformation law with respect to $\Omega_m$~(cf.~Lemma \ref{Trans-Formula},~Proposition \ref{TransformationTheta}), the constant term of $\sum_{\alpha=1}^{2|m|}c_{\alpha}(\tilde{g})\theta_{\alpha}$ at a cusp $\delta\in SL_2(\Z)$ is related to the constant term at the cusp represented by $1$ via the unitary operator $\Omega_m(\tilde{\delta})^{-1}$. The problem is therefore reduced to the vanishing of the constant term at the cusp $1$. We now note that the Jacobi group $G_J$ includes 
$N_S$~(for the notation see Section \ref{Groups}) as a subgroup and we can then consider the Fourier expansion of $c_{\alpha}(g)\theta_{\alpha}(\omega_m(g)u_j^m)(n)$ along this group, where $N_S$ is known as the unipotent radical of the Siegel parabolic subgroup. This Fourier expansion is indexed by the set of the matrices  
\[
\bigcup_{l}\left\{\left.
\begin{pmatrix}
m & \frac{1}{2}(2km+\alpha)\\
\frac{1}{2}(2km+\alpha) & m(k+\frac{\alpha}{2m})^2+l
\end{pmatrix}~\right|~k\in\Z\right\}
\]
with $l\in \frac{1}{4m}\Z$ ranging over the indexes for the Fourier expansion of $c_{\alpha}$ at the cusp $1$. 
Reviewing Proposition \ref{Basis-theta} we can verify this by the following explicit formula for $\omega_m$:
\begin{equation}\label{Explicit-Weil}
.\omega_m(
\begin{pmatrix}
a & b\\
c & d
\end{pmatrix})u(x)
\begin{cases}
|c|^{1/2}\displaystyle\int_{\R}\e(m((ax+cy)(bx+dy)-xy))u(ax+cy)dy&(c\not=0),\\
|a|^{1/2}\e(mabx^2)u(ax)&(c=0),
\end{cases}
\end{equation}
for $u\in L^2(\R)$~(cf.~\cite[(1.4)]{Shi}), which holds modulo the group of complex numbers of norm 1. 
More precisely we use the second formula to verify. 
We further note that the matrix $\begin{pmatrix}
m & \frac{1}{2}(2km+\alpha)\\
\frac{1}{2}(2km+\alpha) & m(k+\frac{\alpha}{2m})^2+l
\end{pmatrix}$ has zero determinant if and only if $l=0$. 

Suppose now that the constant term of $\sum_{\alpha=1}^{2|m|}c_{\alpha}(\tilde{g})\theta_{\alpha}$ at $1$ is non-zero. This means that there is some $c_{\alpha}$ with a constant term, which implies that $\phi(w_0\otimes u_j^m)(ng)\not\in {\cal H}^0$ in view of Proposition \ref{Spectral-Jacobi} (2) and of the above remark on the Fourier expansion of $c_{\alpha}(g)\theta_{\alpha}(\omega_m(g)u_j^m)(n)$. This leads to a contradiction.

We now come to the remaining part of the proof, which is said to be the discussion of the inverse direction. Namely, given $\sum_{\alpha=1}^{2|m|}c_{\alpha}(g)\theta_{\alpha}\in S_{\pi_1}(\widetilde{SL}_2(\Z);\Phi_m)$ with cusp forms $c_{\alpha}$ with respect to $\widetilde{\Gamma(4m)}$,  we explain how to associate it with an element in $\Hom_{G_J}(\rho_{m,\pi_1},\cH_m^0)$.  We see $\sum_{\alpha=1}^{2|m|}c_{\alpha}(g)\theta_{\alpha}(\omega_m(g)u_j^m)(n)\in \cH^0_m$ in view of the vanishing of the constant terms for $c_{\alpha}$s. It is a  standard idea to consider the $(\g_1,\widetilde{SO_2(\R)})$-module generated by $c_{\alpha}$s in order to regard $c_{\alpha}$s as intertwining operators in 
\[
\Hom_{(\g_1,\widetilde{SO_2}(\R))}(\pi_1,L^2_{\rm cusp}(\widetilde{\Gamma(4m)}\backslash\widetilde{SL_2}(\R))
\]
with the cuspidal subspace $L^2_{\rm cusp}(\widetilde{\Gamma(4m)}\backslash\widetilde{SL_2}(\R))$ of $L^2(\widetilde{\Gamma(4m)}\backslash\widetilde{SL_2}(\R))$. This space of intertwining operators as $(\g_1,\widetilde{SO_2}(\R))$-modules can be viewed as those for unitary representations of $\widetilde{SL}_2(\R)$ since infinitesimally equivarent irreducible unitary representations are unitarily equivarent~(cf.~\cite[Theorem 3.4.11]{W-2}). We thereby see that $\sum_{\alpha=1}^{2|m|}c_{\alpha}(g)\theta_{\alpha}$ can be uniquely extended to an element in $\Hom_{G_J}(\rho_{m,\pi_1},\cH_m^0)$.

The map from $S_{\pi_1}(\widetilde{SL}_2(\Z);\Phi_m)$ to $\rm{Hom}_{G_J}(\rho_{\pi_1,m},\cH^0_m)$ implied by the argument just above is the inverse map of 
\[
\rm{Hom}_{G_J}(\rho_{\pi_1,m},\cH^0_m)\ni\phi\mapsto\sum_{\alpha=1}^{2|m|}c_{\alpha}(*)\theta_{\alpha}\in S_{\pi_1}(\widetilde{SL}_2(\Z);\Phi_m),
\]
which we have firstly discussed. 
As a result we are done.
\end{proof}
\section{Fourier-Jacobi expansion}\label{FJ-exp}
\subsection{Cusp forms and the working assumption}\label{GCusp-WA}
We first discuss cusp forms on $G=Sp(2,\R)$ with respect to the Siegel modular group $Sp(2,\Z)$ in a general context by representation theory. 
\begin{defn}\label{Def-cuspforms}
Let $\pi$ be an irreducibel admissible representation of $G$ and $(\tau,V)$ be a $K$-type of $\pi$ with the representation space $V$. By $(\tau^*,V^*)$ we denote the contragredient representation of $(\tau,V)$ with the representation space $V^*$. Let $F:G\rightarrow V^*$ be a cusp form of weight $\tau^*$ with respect to $Sp(2,\Z)$, namely a cusp form $F$ satisfying
\[
F(\gamma gk)=\tau^*(k)^{-1}F(g)\quad\forall(\gamma,g,k)\in Sp(2,\Z)\times G\times K.
\]
A cusp form $F$ of weight $\tau^*$ with respect to $Sp(2,\Z)$ is said to be generating $\pi$ if the $G$-module generated by the right $G$-translations of coefficient functions $\{\langle F(g),v\rangle\mid v\in V\}$ of $F$ is isomorphic to $\pi$ as $(\g,K)$-modules, where $V^*\times V\ni (v^*,v)\mapsto\langle v^*,v\rangle\in\C$ denotes a $K$-invariant pairing.
\end{defn}
When $\pi$ is a holomorphic~(respectively~anti-holomorphic) discrete series representation of $G$ and $\tau$ is the minimal $K$-type of $\pi$, this notion is nothing but holomorphic~(respectively~anti-holomorphic) Siegel cusp forms. 

For this definition recall that the cuspidal spectrum of a reductive group decomposes discretely into a sum of irreducible admissible representations with finite multiplicities~(cf.~\cite{G-G-P},~\cite{Gd}). It is thus quite natural to assume the irreducibility of the representations which cusp forms generate. 
Hereafter, every admissible representation of $G$ is therefore assumed to be irreducible. 
Discrete series representations of $G$ are irreducible by definition. The $P_J$-principal series representations with  non-zero purely imaginary parameters $z$ are known to be irreducible as we have remarked just before Theorem \ref{PJWhittaker}. The irreducibility of the principal series representations of $G$ with the assumption (\ref{PS-assumption}) and with purely imaginary $z_1$ and $z_2$, has been pointed out just before Theorem \ref{PS-Whittaker}. For a general criteria on irreducibility (or reducibility) of generalized principal series we cite \cite[Theorem 4.1]{Kn-Z} and \cite[Theorem 1.1]{Sp-V}. 

Now let us note that cusp forms are rapidly decreasing as is well known~(cf.~\cite[Chapter I,~Section 4]{Hc-2},~\cite{Gd}). The Fourier-Jacobi expansion we are going to discuss covers that of holomorphic Siegel cusp forms on $Sp(2,\R)$. We should note that this is applicable also when an irreducible admissible representation $\pi$ admits a rapidly decreasing  Whittaker model, namely, for a non-degenerate character $\psi$ of $N_0$ we have
\[
\Hom_{(\g,K)}(\pi, \cS_{\psi}(N_0\backslash G))\not=0
\]
(for the notation $\cS_{\psi}(N_0\backslash G)$ see Section \ref{ReviewWhittaker}). 

An irreducible admissible representation $\pi$ with the property $\Hom_{(\g,K)}(\pi, \cS_{\psi}(N_0\backslash G))\not=0$ is called generic and we  can thus refer to cusp forms generating such a representation as generic cusp forms. 
All the representations we have taken up except for holomorphic or anti-holomorphic discrete series~(cf.~Sections \ref{DS-rep},~\ref{P_JPS-rep},~\ref{PS-rep}) can be said to be generic in the sense that they admits a rapidly decreasing Whittaker model. For this note that holomorphic or anti-holomorphic discrete series representation are not generic. The genericity of large discrete series representations is due to Theorem \ref{LargeDSWhittaker}. Theorems \ref{PJWhittaker},~\ref{PS-Whittaker} verify that the $P_J$ principal series and the principal series are generic in the above sense.


 To discuss the Fourier-Jacobi expansion of cusp forms generating an irreducible admissible representations $\pi$ we make the following working assumption on the Whittaker functions and the Fourier-Jacobi type spherical functions for $\pi$:
\begin{asm}
There is a multiplicity one $K$-type $\tl$ of $\pi$ such that
\begin{itemize}
\item there is no rapidly decreasing element in $W_{\psi,\pi}(\tl^*)^0$~(for the notation see Section \ref{ReviewWhittaker}) for any degenerate $\psi\in\hat{N_0}$,
\item $\dim\cJ_{\rho,\pi}(\tl^*)^{00}\le 1$~(for the notation see Section \ref{Result-Hirano}) holds for any irreducible unitary representations $\rho$ of $G_J$ with a non-trivial central character.
\end{itemize}
Here $\tl^*$ denotes the contragredient of $\tl$. 
\end{asm}
We have seen that this assumption is satisfied by discrete series representations, irreducible $P_J$-principal series representations (e.g. with purely imaginary $z$) and irreducible principal series representations~(e.g. with the purely imaginary $z_1,~z_2$ satisfying (\ref{PS-assumption})). 
We remark that $\dim W_{\psi,\pi}(\tl^*)^0\le 1$ holds for general irreducible admissible representations of quasi-split real reductive groups and for non-degenerate $\psi$ in view of Wallach \cite[Theorem 8.8]{W-1}. 
From Liu-Sun \cite{Li-Su} we can expect that $\dim\cJ_{\rho,\pi}(\tl^*)^{00}\le 1$ would hold in general though it does not deal with the case of the real symplectic group.  
\subsection{The statement of the main result~(Fourier-Jacobi expansion)}\label{FJ-expansion}
The Fourier Jacobi expansion of a cusp form $F$ is written as
\[
F(g)=\sum_{m\in\Z}F_m(g)(g\in G),\quad F_m(g):=\displaystyle\int_{\R/\Z}F(n(0,0,z)g)\e(-mz)dz.
\]
Let $F$ be of weight $\tl^*$ and generate an irreducible admissible representation $\pi$. To state the result on this we introduce a couple of ingredients. 
\begin{enumerate}
\item {\bf Whittaker functions of $F$.}\\
We first let 
\[
F_{\xi_0,\xi_3}(g):=\displaystyle\int_{N_0(\Z)\backslash N_0}F(n(u_0,u_1,u_2,u_3)g)\psi_{\xi_0,\xi_3}(n(u_0,u_1,u_2,u_3))^{-1}dn\quad(g\in G)
\]
for a unitary character 
\[
\psi_{\xi_0,\xi_3}:N_0\ni n(u_0,u_1,u_2,u_3)\mapsto\exp(2\pi\sqrt{-1}(\xi_0u_0+\xi_3u_3))
\]
of $N_0$ with $(\xi_0,\xi_3)\in\Z^2$, where $dn$ denotes the invariant measure of $N_0(\Z)\backslash N_0$ normalized so that ${\rm vol}(N_0(\Z)\backslash N_0)=1$. As we have remarked, by \cite[Theorem 8.8]{W-1},  $F_{\xi_0,\xi_3}$ is a constant multiple of a Whittaker function in $W_{\psi_{\xi_0,\xi_3},\pi}(\tl^*)^0$ when $\psi_{\xi_0,\xi_3}$ is non-degenerate. This is necessary to write all the $F_m$-terms.
\item {\bf Eisenstein-Poincar{\'e} series on $G_J$.}\\
To introduce the Eisenstein-Poinar{\'e} series on $G_J$ in our concern we begin with the remark that singular semi-integral matrices of degree two with the fixed upper left entry $m$ is of the forms 
\[
S_{\alpha,m}(k)=
\begin{pmatrix}
m & km+\frac{\alpha}{2}\\
km+\frac{\alpha}{2} & m(k+\frac{\alpha}{2m})^2
\end{pmatrix}
\]
with $k\in\Z$ and with $\alpha\in\Z$ such that $\displaystyle\frac{\alpha^2}{4m}\in\Z$, where $\alpha$ is determined modulo $2m$. We put $\gamma_{\alpha,m}(k):=n(k+\frac{\alpha}{2m},0,0,0)$ for $k\in\Z$ and
\[
S_{\alpha,m}:=S_{\alpha,m}(0),\quad\gamma_{\alpha,m}:=\gamma_{\alpha,m}(0).
\]
Here we note that the set $\{\gamma_{\alpha,m}\mid 1\le\alpha\le 2|m|,~\frac{\alpha^2}{4m}\in\Z\}$ form representatives of $G_J(\Z)$-cusps which are essential to characterize the cupsidal condition for $\cH_m^0$~(cf.~\cite[Section 4.2]{Be-Sc}). With these preparation we introduce the followings:
\begin{itemize}
\item We put  
\[
F_{S_{\alpha,m}}(g):=\displaystyle\int_{N_S(\Z)\backslash N_S}F(n(0,u_1,u_2,u_3)g)\e(-\tr(S_{\alpha,m}
\begin{pmatrix}
u_1 & u_2\\
u_2 & u_3
\end{pmatrix}))du_1du_2du_3.
\]
\item Let
\[
L_{\alpha,m}:=d_{\alpha,m}\Z\quad\text{with $d_{\alpha,m}:=(\dfrac{2m}{\alpha})^2$},
\] 
where we take $\alpha$ in $1\le \alpha\le 2|m|$ such that $\dfrac{\alpha^2}{4m}\in\Z$. 
The lattice $L_{\alpha,m}$ coincides with
\[
\{u_0\in\R\mid \gamma_{\alpha,m}^{-1}{}^tn(u_0,0,0,0)\gamma_{\alpha,m}\in Sp(2,\Z)\}.
\]
The dual lattice $\widehat{L_{\alpha,m}}$ of $L_{\alpha,m}$ is $\frac{1}{d_{\alpha,m}}\Z$. We then introduce
\[
F_{S_{\alpha,m},n}(g):=\displaystyle\int_{\R/L_{\alpha,m}}F_{S_{\alpha,m}}(\gamma_{\alpha,m}^{-1}{}^tn(u_0,0,0,0)\gamma_{\alpha,m}g)\e(-nu_0)du_0
\]
for $n\in\widehat{L_{\alpha,m}}\setminus\{0\}$, where the measure $du_0$ is normalized so that $\vol(\R/L_{\alpha,m})=1$. This turns out to be the left translate of a Whittaker function by $({}^t\gamma_{\alpha,m}\xi)^{-1}$~(cf.~Lemma \ref{Whittaker-F_m} in Section \ref{ProofThm}), where  recall that $\xi=
\begin{pmatrix}
J'_2 & 0_2\\
0_2 & J'_2
\end{pmatrix}$ with $J'_2=
\begin{pmatrix}
0 & 1\\
1 & 0
\end{pmatrix}$~(cf.~Theorem \ref{LargeDSWhittaker} (2)).
\item
For $\alpha\mod 2m\in\Z/2m\Z$ with $\displaystyle\frac{\alpha^2}{4m}\in\Z$ we introduce
\[
E_{\alpha}(F_{S_{\alpha,m},n}(*g))(r):=\sum_{\gamma\in Z_J(\Z)(G_J(\Z)\cap\gamma_{\alpha,m}^{-1}V_J\gamma_{\alpha,m})\backslash G_J(\Z)}F_{S_{\alpha,m},n}(\gamma\cdot rg),
\]
where see Section \ref{Groups} (2) for the notation $Z_J(\Z)$. We want to call this a Jacobi Eisenstein-Poincar{\'e} series with the test function $F_{S_{\alpha,m},n}(g)$. 
Though this is called an ``incomplete theta series'' and should be denoted by $\theta_{\alpha}$ as in  \cite[Section 4.4]{Be-Sc}, we use the notation $E_{\alpha}$ to avoid the confusion with $\theta_{\alpha}\in\Phi_m$.
\end{itemize}
\item {\bf Fourier-Jacobi type spherical functions.}\\
We further recall from section \ref{Result-Hirano} that the Fourier-Jacobi type spherical function of type $(\rho,\pi;\tl^*)$ restricted to $A_J$ has been written as 
\[
\displaystyle\sum_{
\begin{subarray}{c}
j\in J,~0\le k\le d_{\Lambda}\\
\text{s.t.}~l=l(j,k)\in L
\end{subarray}}c_{j,k}(a_J)w_l\otimes u_j^m\otimes v_{k}^*\quad (a_J\in A_J)
\]
when the irreducible unitary representation $\rho$ has the non-trivial central character indexed by $m\in\Z\setminus\{0\}$. 
In what follows, when $\rho$ is specified as $\rho_{m,\pi_1}$ with an irreducible genuine unitary representation $\pi_1$ of $\widetilde{SL}_2(\R)$, we denote $c_{j,k}$ by $c_{j,k}^{(\pi_1)}$ in order to indicate the dependence of $c_{j,k}$ on $\pi_1$. We thus write 
\[
\sum_{
\begin{subarray}{c}
j\in J,~0\le k\le d_{\Lambda}\\
\text{s.t.}~l=l(j,k)\in L
\end{subarray}}c_{j,k}^{(\pi_1)}(a_J)w_l\otimes u_j^m\otimes v_{k}^*\quad(a_J\in A_J)
\]
for the Fourier-Jacobi type spherical function. 
To describe the $F_m$-term with $m\not=0$ we need this spherical functions as well as the Eisenstein-Poincar{\'e} series introduced above. 
\end{enumerate}
We remark that the Jacobi parabolic subgroup $P_J$ coincides with $G_JA_J$, for which recall that $A_J=\{
a_J=\begin{pmatrix}
a_1 & & & \\
& 1 & &  \\
& & a_1^{-1} & \\
& & & 1
\end{pmatrix}\mid \alpha\in\R^{\times}_+\}$. With this coordinate we state the theorem as follows:
\begin{thm}\label{F-J-exp-mainthm}
Let $\pi$ be an irreducible admissible representation of $G$ with the multiplicity one $K$-type $\tl$ satisfying the working assumption and let $F$ be a cusp form of weight $\tl^*$ with respect to $Sp(2,\Z)$ generating $\pi$. 

Each term $F_m$ of the Fourier-Jacobi expansion $\sum_{m\in\Z}F_m$ of $F$ is expressed as 
\[
F_m(ra_J)=\begin{cases}
\sum_{(\xi_0,\xi_3)\in\Z^2,~\xi_0\xi_3\not=0}
\sum_{
\begin{pmatrix}
a & b\\
c & d
\end{pmatrix}\in SL_2(\Z)_{\infty}\backslash SL_2(\Z)}F_{\xi_0,\xi_3}(
\begin{pmatrix}
1 & 0 & 0 & 0\\
0 & a & 0 & b\\
0 & 0 & 1 & 0\\
0 & c & 0 & d
\end{pmatrix}ra_J)&(m=0)\\
\sum_{
\begin{subarray}{c}
1\le\alpha\le 2|m|\\
\text{s.t.~$\alpha^2/4m\in\Z$}
\end{subarray}}\sum_{n\in\widehat{L_{\alpha,m}}\setminus\{0\}}b_{m,\alpha}(F)E_{\alpha}(F_{S_{\alpha,m},n}(*a_J))(r)+\\
\underset{\pi_1\in\widehat{\widetilde{SL}_2(\R)},m(\pi_1)\not=0}{\sum}\sum_{i=1}^{m(\pi_1)}b_{m,i}^{(\pi_1)}(F)F_{m,i}^{(\pi_1)}(ra_J)&(m\not=0),
\end{cases}
\]
with
\[
F_{m,i}^{(\pi_1)}(ra_J):=
\underset{\begin{subarray}{c}
j\in J,~0\le k\le d_{\Lambda}\\
\text{s.t.}~l=l(j,k)\in L
\end{subarray}}{\sum}c_{j,k}^{(\pi_1)}(a_J)\phi_{\pi_1}^{(i)}(w_l\otimes u_j^m)(r)\otimes v_k^*,
\]
where 
\begin{itemize}
\item $SL_2(\Z)_{\infty}:=\left\{\left.
\begin{pmatrix}
1 & n\\
0 & 1
\end{pmatrix}~\right|~n\in\Z\right\}$,
\item For an element $\pi_1$ in the unitary dual $\widehat{\widetilde{SL}_2(\R)}$~(the equivalence classes of irreducible unitary representations of $\widetilde{SL}_2(\R)$), we put 
\[
\m(\pi_1):=\dim\Hom_{G_J}(\rho_{m,\pi_1},\cH_m^0)
\]
and $\{\phi^{(i)}_{\pi_1}\}$ denotes a basis of $\Hom_{G_J}(\rho_{m,\pi_1},\cH_m^0)$,
\item $b_{m,i}^{(\pi_1)}(F)$~(respectively~$b_{m,\alpha}(F)$) is a constant depending on the normalization of the Fourier-Jacobi type spherical function for $\rho=\rho_{m,\pi_1}$ and on a choice of a basis $\{\phi_{\pi_1}^{(i)}\}$ (respectively~on $m$ and $\alpha$).
\end{itemize}.
\end{thm}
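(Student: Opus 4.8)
The plan is to fix $m$ and treat the cases $m=0$ and $m\neq 0$ separately, in both relying on the observation that, for fixed $g\in G$, the function $r\mapsto F_m(rg)$ on $G_J$ belongs to $\cH_m$: this follows from the left $G_J(\Z)$-invariance of $F$, the central transformation $F_m(n(0,u_1,0)rg)=\e(mu_1)F_m(rg)$, and the rapid decrease of the cusp form $F$, which secures square-integrability on $G_J(\Z)\backslash G_J$. For $m\neq 0$ I would then invoke the spectral theory of Berndt--Schmidt (Proposition \ref{Spectral-Jacobi}), splitting $\cH_m=\cH_m^{\mathrm c}\oplus\cH_m^0$ and accordingly $F_m=F_m^{\mathrm c}+F_m^0$, and analyse the two summands. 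The architecture of the whole argument is to separate the archimedean dependence on $a_J\in A_J$ and on the $K$-variable, which will be governed by a Fourier--Jacobi type spherical function of Hirano, from the global dependence on $r\in G_J$, which will be governed by a Jacobi cusp form.

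First I would treat the cuspidal part $F_m^0$ for $m\neq 0$. By Proposition \ref{Spectral-Jacobi} the space $\cH_m^0$ decomposes discretely into irreducible components, each of the form $\rho_{m,\pi_1}$ for a genuine irreducible unitary $\pi_1$ of $\widetilde{SL}_2(\R)$. Projecting $F_m^0(\,\cdot\,g)$ onto the $\rho_{m,\pi_1}$-isotypic subspace yields, for each embedding $\phi_{\pi_1}^{(i)}\in\Hom_{G_J}(\rho_{m,\pi_1},\cH_m^0)$, an intertwiner in $\Hom_{(\g,K)}(\pi,C^\infty\Ind_{G_J}^G\rho_{m,\pi_1})$; restricting to the $K$-type $\tl$ produces, by definition, an element of $\cJ_{\rho_{m,\pi_1},\pi}(\tl^*)$, which is rapidly decreasing since $F$ is, hence lies in $\cJ_{\rho_{m,\pi_1},\pi}(\tl^*)^{00}$. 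The working assumption $\dim\cJ_{\rho_{m,\pi_1},\pi}(\tl^*)^{00}\le 1$ then forces this spherical function to be a scalar multiple of the fixed basis vector whose $A_J$-restriction has the coefficient functions $c_{j,k}^{(\pi_1)}$, so that assembling the tensor factors $w_l\otimes u_j^m\otimes v_k^*$ with $l=l(j,k)$ reconstitutes precisely $b_{m,i}^{(\pi_1)}(F)\,F_{m,i}^{(\pi_1)}(ra_J)$. Summing over $\pi_1$ and over the basis $\{\phi_{\pi_1}^{(i)}\}$ of the finite-dimensional $\Hom_{G_J}(\rho_{m,\pi_1},\cH_m^0)$ recovers $F_m^0$; the Eichler--Zagier correspondence (Theorem \ref{Eichler-ZagierCorresp}) is what guarantees that the $\phi_{\pi_1}^{(i)}$ carry the correct automorphy, so that the reassembled series really is the $\rho_{m,\pi_1}$-component.

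Next I would identify the continuous part $F_m^{\mathrm c}$. The continuous spectrum of $\cH_m$ is spanned, in the theory of Berndt--Schmidt, by the incomplete theta series attached to the $G_J(\Z)$-cusps represented by the $\gamma_{\alpha,m}$ with $1\le\alpha\le 2|m|$ and $\alpha^2/4m\in\Z$; these are exactly our Eisenstein--Poincar\'e series $E_\alpha$. Pairing $F_m^{\mathrm c}$ with this family and unfolding, the relevant data are the singular Fourier coefficients $F_{S_{\alpha,m}}$ of $F$, whose further expansion along $\gamma_{\alpha,m}^{-1}{}^t n(u_0,0,0,0)\gamma_{\alpha,m}$ yields the functions $F_{S_{\alpha,m},n}$ for $n\in\widehat{L_{\alpha,m}}\setminus\{0\}$. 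By Lemma \ref{Whittaker-F_m} each such $F_{S_{\alpha,m},n}$ is a left translate of a genuine Whittaker function, hence a legitimate test function, and the unfolding identity produces $\sum_{\alpha}\sum_{n}b_{m,\alpha}(F)\,E_\alpha(F_{S_{\alpha,m},n}(*a_J))(r)$; the exclusion of $n=0$ reflects the vanishing of the $\det=0$ contribution, which is the cuspidality of $F$.

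Finally, for $m=0$ the central character is trivial, so $F_0$ descends to a $Z_J$-invariant function. Expanding along $N_0$ and invoking the first clause of the working assumption, namely that $\pi$ has no rapidly decreasing Whittaker function for a degenerate $\psi$, forces every degenerate Fourier coefficient of the rapidly decreasing $F_0$ to vanish, leaving only the coefficients $F_{\xi_0,\xi_3}$ with $\xi_0\xi_3\neq 0$; a standard unfolding of the cuspidal $F_0$ along the $SL_2$-factor of the Levi, using $\dim W_{\psi,\pi}(\tl^*)^0\le 1$ to pin down the single Fourier--Whittaker datum, then reproduces the coset sum over $SL_2(\Z)_{\infty}\backslash SL_2(\Z)$. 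I expect the main obstacle to be the interface between the continuous-spectrum step and the required factorization: one must simultaneously show that $F_m^{\mathrm c}$ is exhausted by the explicit incomplete theta series and that the $a_J$- and $K$-dependence genuinely splits off as Hirano's spherical functions rather than mixing into the $G_J$-spectral coefficients. Making this rigorous — commuting the $G_J$-spectral projection with the $A_J$- and $K$-actions, and leaning on both multiplicity-one hypotheses to fix the coefficients $b_{m,i}^{(\pi_1)}(F)$ and $b_{m,\alpha}(F)$ — is the delicate core of the argument.
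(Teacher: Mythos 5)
Your plan follows essentially the same route as the paper: the split $F_m=F_m^{\rm c}+F_m^0$ via Berndt--Schmidt, the isotypic projection onto $\rho_{m,\pi_1}$ combined with the multiplicity-one hypothesis on $\cJ_{\rho,\pi}(\tl^*)^{00}$ for the cuspidal part, the pairing/unfolding against the incomplete theta series $E_\alpha$ together with Lemma \ref{Whittaker-F_m} for the continuous part, and the degenerate-Whittaker vanishing plus the $SL_2(\Z)$-orbit structure of the characters of $N_J$ for the $F_0$-term. The only cosmetic difference is that the paper does not invoke the Eichler--Zagier correspondence in this proof (it enters only in the later explicit descriptions), but that does not affect the argument.
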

We can then carry out a more specific description of the Fourier-Jacobi expansion for discrete series representations, $P_J$-principal series representations and principal series representations in view of the results collected in Section \ref{ReviewSpherical}.
\begin{cor}\label{FJ-exp-fourcases}
Let the notation be as in Theorem \ref{F-J-exp-mainthm}.\\
(1)~Suppose that $F$ is a cusp form of weight $\tl^*$ generating a holomorphic discrete series representation $\pi_{\lambda}$ with $\lambda\in\Xi_{I}$, where $\tl^*$ is the contragredient of the minimal $K$-type $\tl$ of $\pi_{\lambda}$. For each $F_m$-term of the Fourier-Jacobi expansion $F=\sum_{m>0}F_m$, $F_m(ra_J)$ has the following expression:
\[
F_m(ra_J)=
\begin{cases}
\sum_{
\begin{subarray}{c}
n_1\in\frac{1}{2}\Z_{\ge 3}\setminus\Z,\\
\lambda_2+\frac{3}{2}\le n_1\le\lambda_1+\frac{1}{2}\\
\text{s.t.}~m(\cD_{n_1}^+)\not=0
\end{subarray}
}
\sum_{i=1}^{m(\cD_{n_1}^+)}b_{m,i}^{(\cD_{n_1}^+)}(F)F_{m,i}^{(\cD_{n_1}^+}(ra_J)&(m>0),\\ 
0&(m\le 0),
\end{cases}
\]
with
\[
F_{m,i}^{(\cD_{n_1}^+)}(ra_J)=\underset{\begin{subarray}{c}
n_1\le l\le \lambda_1+\frac{1}{2}\\
l\equiv n_1\mod 2
\end{subarray}}{\sum}\sum_{k=k(l)}^{d_{\Lambda}}
c_{k-k(l)+\frac{1}{2},k}^{(\cD_{n_1}^+)}(a_J)\phi_{\cD_{n_1}^+}^{(i)}(w_l\otimes u_{k-k(l)+\frac{1}{2}}^m)(r)\otimes v_k^*,
\]
where $k(l):=l-\lambda_2-\frac{3}{2}$.

Suppose next that $F$ is a cusp form of weight $\tl^*$ generating an anti-holomorphic discrete series representation $\pi_{\lambda}$ with $\lambda\in\Xi_{IV}$, where $\tl^*$ is the contragredient of the minimal $K$-type $\tl$ of $\pi_{\lambda}$. Each $F_m$-term of the Fourier-Jacobi expansion of $F$ has the expression as follows:
\[
F_m(ra_J)=\begin{cases}
\sum_{
\begin{subarray}{c}
n_1\in\frac{1}{2}\Z_{\ge 3}\setminus\Z,\\
-\lambda_1+\frac{3}{2}\le n_1\le-\lambda_2+\frac{1}{2}\\
\text{s.t.}~m(\cD_{n_1}^-)\not=0
\end{subarray}
}
\sum_{i=1}^{m(\cD_{n_1}^-)}b_{m,i}^{(\cD_{n_1}^-)}(F)F_{m,i}^{(\cD_{n_1}^-)}(ra_J)&(m<0),\\
0&(m\ge 0),
\end{cases}
\]
with 
\[
F_{m,i}^{(\cD_{n_1}^-)}(ra_J)=\underset{\begin{subarray}{c}
\lambda_2-\frac{1}{2}\le l\le -n_1\\
l\equiv -n_1\mod 2
\end{subarray}}{\sum}\sum_{k=0}^{k(l)}
c_{k-k(l)-\frac{1}{2},k}^{(\cD_{n_1}^-)}(a_J)\phi_{\cD_{n_1}^-}^{(i)}(w_l\otimes u_{k-k(l)-\frac{1}{2}}^m)(r)\otimes v_k^*,
\]
where $k(l):=l-\lambda_2+\frac{1}{2}$.\\
(2)~Suppose that $F$ is a cusp form of weight $\tl^*$ generating a large discrete series representation $\pi_{\lambda}$ with $\lambda\in\Xi_{II}$, where $\tl^*$ is the contragredient of the minimal $K$-type $\tl$ of $\pi_{\lambda}$. 
For each term $F_m$ of the Fourier-Jacobi expansion $F=\sum_{m\in\Z}F_m$, $F_m(ra_J)$ has the following expression:
\begin{itemize}
\item $\sum_{
\begin{subarray}{c}
(\xi_0,\xi_3)\in\Z^2\\
\xi_0\not=0,~\xi_3>0
\end{subarray}}\sum_{
\begin{pmatrix}
a & b\\
c & d
\end{pmatrix}\in SL_2(\Z)_{\infty}\backslash SL_2(\Z)}F_{\xi_0,\xi_3}(\begin{pmatrix}
1 & 0 & 0 & 0\\
0 & a & 0 & b\\
0 & 0 & 1 & 0\\
0 & c & 0 & d
\end{pmatrix}ra_J)~(m=0)$,
\item $\sum_{\begin{subarray}{c}
1\le\alpha\le 2m\\
\text{s.t.~$\frac{\alpha^2}{4m}\in\Z$}
\end{subarray}}\sum_{n\in\widehat{L_{\alpha,m}}\setminus\{0\}}b_{m,\alpha}(F)E_{\alpha}(F_{S_{\alpha,m},n}(*a_J))(r)+\\
\sum_{
\begin{subarray}{c}
\pi_1=\cP_s^{\tau}~(\tau=\pm\frac{1}{2},~s\in\sqrt{-1}\R)\\
\text{or}~\cC_s^{\tau}~(\tau=\pm\frac{1}{2},~0<s<\frac{1}{2})\\
\text{s.t.}~m(\pi_1)\not=0
\end{subarray}
}
\sum_{i=1}^{m(\pi_1)}b_{m,i}^{(\pi_1)}(F)F_{m,i}^{(\pi_1)}(ra_J)+\sum_{
\begin{subarray}{c}
n_1\in\frac{1}{2}\Z_{\ge 3}\setminus\Z,\\
n_1\le-\lambda_2+\frac{1}{2}\\
\text{s.t.}~m(\cD_{n_1}^-)\not=0
\end{subarray}
}
\sum_{i=1}^{m(\cD_{n_1}^-)}b_{m,i}^{(\cD_{n_1}^-)}(F)F_{m,i}^{(\cD_{n_1}^-)}(ra_J)$\\
$(m>0)$,
\item $\sum_{
\begin{subarray}{c}
n_1\in\frac{1}{2}\Z_{\ge 3}\setminus\Z,~n_1>\lambda_1+\frac{1}{2}\\
\text{s.t.}~m(\cD_{n_1}^+)\not=0
\end{subarray}
}
\sum_{i=1}^{m(\cD_{n_1}^+)}b_{m,i}^{(\cD_{n_1}^+)}(F)F_{m,i}^{(\cD_{n_1}^+)}(ra_J)$
$(m<0)$.
\end{itemize}

Suppose next that $F$ is a cusp form of weight $\tl^*$ generating a large discrete series representation $\pi_{\lambda}$ with $\lambda\in\Xi_{III}$, where $\tl^*$ is the contragredient of the minimal $K$-type $\tl$ of $\pi_{\lambda}$. 
For each term $F_m$, $F_m(ra_J)$ has the expression as follows:
\begin{itemize}
\item $\sum_{
\begin{subarray}{c}
(\xi_0,\xi_3)\in\Z^2\\
\xi_0\not=0,~\xi_3<0
\end{subarray}}\sum_{
\begin{pmatrix}
a & b\\
c & d
\end{pmatrix}\in SL_2(\Z)_{\infty}\backslash SL_2(\Z)}F_{\xi_0,\xi_3}(\begin{pmatrix}
1 & 0 & 0 & 0\\
0 & a & 0 & b\\
0 & 0 & 1 & 0\\
0 & c & 0 & d
\end{pmatrix}ra_J)~(m=0)$,
\item $\sum_{
\begin{subarray}{c}
n_1\in\frac{1}{2}\Z_{\ge 3}\setminus\Z,~n_1>-\lambda_2+\frac{1}{2}\\
\text{s.t.}~m(\cD_{n_1}^-)\not=0
\end{subarray}
}
\sum_{i=1}^{m(\cD_{n_1}^-)}b_{m,i}^{(\cD_{n_1}^-)}(F)F_{m,i}^{(\cD_{n_1}^-)}(ra_J)$
$(m>0)$,
\item $\sum_{\begin{subarray}{c}
1\le\alpha\le 2|m|\\
\text{s.t.~$\frac{\alpha^2}{4m}\in\Z$}
\end{subarray}}\sum_{n\in\widehat{L_{\alpha,m}}\setminus\{0\}}b_{m,\alpha}(F)E_{\alpha}(F_{S_{\alpha,m},n}(*a_J))(r)+\\
\sum_{
\begin{subarray}{c}
\pi_1=\cP_s^{\tau}~(\tau=\pm\frac{1}{2},~s\in\sqrt{-1}\R)\\
\text{or}~\cC_s^{\tau}~(\tau=\pm\frac{1}{2},~0<s<\frac{1}{2})\\
\text{s.t.}~m(\pi_1)\not=0
\end{subarray}
}
\sum_{i=1}^{m(\pi_1)}b_{m,i}^{(\pi_1)}(F)F_{m,i}^{(\pi_1)}(ra_J)+\sum_{
\begin{subarray}{c}
n_1\in\frac{1}{2}\Z_{\ge 3}\setminus\Z,\\
n_1\le\lambda_1+\frac{1}{2}\\
\text{s.t.}~m(\cD_{n_1}^+)\not=0
\end{subarray}
}
\sum_{i=1}^{m(\cD_{n_1}^+)}b_{m,i}^{(\cD_{n_1}^+)}(F)F_{m,i}^{(\cD_{n_1}^+)}(ra_J)$\\$(m<0)$.
\end{itemize}
(3)~Suppose that $F$ generates an irreducible $P_J$-principal series representation with $\sigma=(\cD_n^+,\epsilon)$ and the weight of $F$ is the contragredient of its corner $K$-type. For each term $F_m$ of the Fourier-Jacobi expansion $F=\sum_{m\in\Z}F_m$, $F_m(ra_J)$ has the following expression:
\begin{itemize} 
\item $\sum_{
\begin{subarray}{c}
(\xi_0,\xi_3)\in\Z^2\\
\xi_0\not=0,~\xi_3>0
\end{subarray}}\sum_{
\begin{pmatrix}
a & b\\
c & d
\end{pmatrix}\in SL_2(\Z)_{\infty}\backslash SL_2(\Z)}F_{\xi_0,\xi_3}(\begin{pmatrix}
1 & 0 & 0 & 0\\
0 & a & 0 & b\\
0 & 0 & 1 & 0\\
0 & c & 0 & d
\end{pmatrix}ra_J)~(m=0)$,
\item $\sum_{\begin{subarray}{c}
1\le\alpha\le 2m\\
\text{s.t.~$\frac{\alpha^2}{4m}\in\Z$}
\end{subarray}}\sum_{n\in\widehat{L_{\alpha,m}}\setminus\{0\}}b_{m,\alpha}(F)E_{\alpha}(F_{S_{\alpha,m},n}(*a_J))(r)+\\
\sum_{
\begin{subarray}{c}
\pi_1=\cP_s^{\tau}~(\tau=\pm\frac{1}{2},~s\in\sqrt{-1}\R),\\
\cC_s^{\tau}~(\tau=\pm\frac{1}{2},~0<s<\frac{1}{2}),\\
\text{or}~\cD_{\frac{1}{2}}^-~\text{with $n=1$}\\
\text{s.t.}~m(\pi_1)\not=0
\end{subarray}
}
\sum_{i=1}^{m(\pi_1)}b_{m,i}^{(\pi_1)}(F)F_{m,i}^{(\pi_1)}(ra_J)+
\sum_{
\begin{subarray}{c}
n_1\in\frac{1}{2}\Z_{\ge 3}\setminus\Z,\\
n_1\le n-\frac{1}{2}\\
\text{s.t.}~m(\cD_{n_1}^+)\not=0
\end{subarray}
}
\sum_{i=1}^{m(\cD_{n_1}^+)}b_{m,i}^{(\cD_{n_1}^+)}(F)F_{m,i}^{(\cD_{n_1}^+)}(ra_J)$\\
$(m>0)$,
\item $\sum_{
\begin{subarray}{c}
n_1\in\frac{1}{2}\Z_{\ge 3}\setminus\Z,~n_1>n-\frac{1}{2}\\
\text{s.t.}~m(\cD_{n_1}^+)\not=0
\end{subarray}
}
\sum_{i=1}^{m(\cD_{n_1}^+)}b_{m,i}^{(\cD_{n_1}^+)}(F)F_{m,i}^{(\cD_{n_1}^+)}(ra_J)$~$(m<0)$.
\end{itemize}
Suppose next that $F$ generates an irreducible $P_J$-principal series representation with $\sigma=(\cD_{n}^-,\epsilon)$ and the weight of $F$ is the contragredient of its corner $K$-type. 
For each term $F_m$, $F_m(ra_J)$ has the expression as follows:
\begin{itemize}
\item $\sum_{
\begin{subarray}{c}
(\xi_0,\xi_3)\in\Z^2\\
\xi_0\not=0,~\xi_3<0
\end{subarray}}\sum_{
\begin{pmatrix}
a & b\\
c & d
\end{pmatrix}\in SL_2(\Z)_{\infty}\backslash SL_2(\Z)}F_{\xi_0,\xi_3}(\begin{pmatrix}
1 & 0 & 0 & 0\\
0 & a & 0 & b\\
0 & 0 & 1 & 0\\
0 & c & 0 & d
\end{pmatrix}ra_J)~(m=0)$,
\item $\sum_{
\begin{subarray}{c}
n_1\in\frac{1}{2}\Z_{\ge 3}\setminus\Z,~n_1>n-\frac{1}{2}\\
\text{s.t.}~m(\cD_{n_1}^-)\not=0
\end{subarray}
}
\sum_{i=1}^{m(\cD_{n_1}^-)}b_{m,i}^{(\cD_{n_1}^-)}(F)F_{m,i}^{(\cD_{n_1}^-)}(ra_J)$~
$(m>0)$,
\item $\sum_{\begin{subarray}{c}
1\le\alpha\le 2|m|\\
\text{s.t.~$\frac{\alpha^2}{4m}\in\Z$}
\end{subarray}}\sum_{n\in\widehat{L_{\alpha,m}}\setminus\{0\}}b_{m,\alpha}(F)E_{\alpha}(F_{S_{\alpha,m},n}(*a_J))(r)+\\
\sum_{
\begin{subarray}{c}
\pi_1=\cP_s^{\tau}~(\tau=\pm\frac{1}{2},~s\in\sqrt{-1}\R),\\
\cC_s^{\tau}~(\tau=\pm\frac{1}{2},~0<s<\frac{1}{2}),\\
\text{or}~\cD_{\frac{1}{2}}^+~\text{with $n=1$}\\
\text{s.t.}~m(\pi_1)\not=0
\end{subarray}
}
\sum_{i=1}^{m(\pi_1)}b_{m,i}^{(\pi_1)}(F)F_{m,i}^{(\pi_1)}(ra_J)+
\sum_{
\begin{subarray}{c}
n_1\in\frac{1}{2}\Z_{\ge 3}\setminus\Z,\\
n_1\le n-\frac{1}{2}\\
\text{s.t.}~m(\cD_{n_1}^-)\not=0
\end{subarray}
}
\sum_{i=1}^{m(\cD_{n_1}^-)}b_{m,i}^{(\cD_{n_1}^-)}(F)F_{m,i}^{(\cD_{n_1}^-)}(ra_J)$\\
$(m<0)$.
\end{itemize}
(4)~Suppose that $F$ generates an irreducible principal series representation with the assumptions (\ref{PS-assumption}), and that the weight of $F$ is the contragredient of the minimal $K$-type of $\pi$, which is a scalar $K$-type when $\pi$ is even. For each term $F_m$, $F_m(ra_J)$ has the expression as follows:
\begin{itemize}
\item $\sum_{
\begin{subarray}{c}
(\xi_0,\xi_3)\in\Z^2\\
\xi_0\xi_3\not=0,
\end{subarray}}\sum_{
\begin{pmatrix}
a & b\\
c & d
\end{pmatrix}\in SL_2(\Z)_{\infty}\backslash SL_2(\Z)}F_{\xi_0,\xi_3}(\begin{pmatrix}
1 & 0 & 0 & 0\\
0 & a & 0 & b\\
0 & 0 & 1 & 0\\
0 & c & 0 & d
\end{pmatrix}ra_J)~(m=0)$,
\item $\sum_{\begin{subarray}{c}
1\le\alpha\le 2m\\
\text{s.t.~$\frac{\alpha^2}{4m}\in\Z$}
\end{subarray}}\sum_{n\in\widehat{L_{\alpha,m}}\setminus\{0\}}b_{m,\alpha}(F)E_{\alpha}(F_{S_{\alpha,m},n}(*a_J))(r)+\\
\sum_{
\begin{subarray}{c}
\pi_1=\cP_s^{\tau}~(\tau=\pm\frac{1}{2},~s\in\sqrt{-1}\R),\\
\text{or}~\cC_s^{\tau}~(\tau=\pm\frac{1}{2},~0<s<\frac{1}{2})\\
\text{s.t.}~m(\pi_1)\not=0
\end{subarray}
}
\sum_{i=1}^{m(\pi_1)}b_{m,i}^{(\pi_1)}(F)F_{m,i}^{(\pi_1)}(ra_J)+
\sum_{
\begin{subarray}{c}
n_1\in\frac{1}{2}\Z_{\ge 3}\setminus\Z\\
\text{s.t.}~m(\cD_{n_1}^-)\not=0
\end{subarray}
}
\sum_{i=1}^{m(\cD_{n_1}^-)}b_{m,i}^{(\cD_{n_1}^-)}(F)F_{m,i}^{(\cD_{n_1}^-)}(ra_J)$\\
$(m>0)$,
\item $\sum_{\begin{subarray}{c}
1\le\alpha\le 2|m|\\
\text{s.t.~$\frac{\alpha^2}{4m}\in\Z$}
\end{subarray}}\sum_{n\in\widehat{L_{\alpha,m}}\setminus\{0\}}b_{m,\alpha}(F)E_{\alpha}(F_{S_{\alpha,m},n}(*a_J))(r)+\\
\sum_{
\begin{subarray}{c}
\pi_1=\cP_s^{\tau}~(\tau=\pm\frac{1}{2},~s\in\sqrt{-1}\R),\\
\text{or}~\cC_s^{\tau}~(\tau=\pm\frac{1}{2},~0<s<\frac{1}{2})\\
\text{s.t.}~m(\pi_1)\not=0
\end{subarray}
}
\sum_{i=1}^{m(\pi_1)}b_{m,i}^{(\pi_1)}(F)F_{m,i}^{(\pi_1)}(ra_J)+
\sum_{
\begin{subarray}{c}
n_1\in\frac{1}{2}\Z_{\ge 3}\setminus\Z\\
\text{s.t.}~m(\cD_{n_1}^+)\not=0
\end{subarray}
}
\sum_{i=1}^{m(\cD_{n_1}^+)}b_{m,i}^{(\cD_{n_1}^+)}(F)F_{m,i}^{(\cD_{n_1}^+)}(ra_J)$\\
$(m<0)$.
\end{itemize}
\end{cor}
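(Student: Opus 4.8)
The entire corollary is obtained by specializing Theorem \ref{F-J-exp-mainthm} and substituting the explicit dimension and support data collected in Section \ref{ReviewSpherical}, so the plan is to isolate the two independent mechanisms that govern the expansion and read off each of the four cases. The first mechanism concerns the cuspidal part of $F_m$ for $m\neq 0$. I would observe that a genuine representation $\pi_1$ can contribute only when the local space $\cJ_{\rho_{m,\pi_1},\pi}(\tl^*)^{00}$ is non-zero: the summand $F_{m,i}^{(\pi_1)}$ in Theorem \ref{F-J-exp-mainthm} is assembled from the coefficient functions $c_{j,k}^{(\pi_1)}$ of a Fourier-Jacobi type spherical function, and these vanish identically precisely when $\cJ_{\rho_{m,\pi_1},\pi}(\tl^*)^{00}=\{0\}$. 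Hence the sum over $\pi_1\in\widehat{\widetilde{SL}_2(\R)}$ may be restricted to the support prescribed by Hirano's theorems, while the global multiplicities $m(\pi_1)=\dim\Hom_{G_J}(\rho_{m,\pi_1},\cH_m^0)$ remain abstract. Reading this support off Theorem \ref{FJ-holom-antiholom-DS} gives case (1); off Theorems \ref{explicit-FJ-I}, \ref{explicit-FJ-II} together with the conjugation Proposition \ref{explicit-FJ-III} gives case (2); off Theorems \ref{explicit-FJ-PJ-I}--\ref{explicit-FJ-PJ-IV} gives case (3); and off Theorems \ref{explicit-FJ-P-I}, \ref{explicit-FJ-P-II} gives case (4). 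In every case the sign of $m$ selecting the admissible $\pi_1$ is exactly the dichotomy $m>0$ versus $m<0$ appearing in those theorems, and the parameter ranges (such as $\lambda_2+\frac{3}{2}\le n_1\le\lambda_1+\frac{1}{2}$ in case (1)) are copied directly from the non-vanishing conditions there.

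The second mechanism controls the $m=0$ term and the Eisenstein-Poincar{\'e} (continuous) part, both of which I would treat through the rapidly decreasing Whittaker functions $F_{\xi_0,\xi_3}$. By Lemma \ref{Whittaker-F_m} each Siegel-type Fourier coefficient $F_{S_{\alpha,m},n}$ is a left translate of such a Whittaker function, so both the $m=0$ sum and the Eisenstein-Poincar{\'e} series are governed by the dimensions $\dim W_{\psi_{\xi_0,\xi_3},\pi}(\tl^*)^0$. For a large discrete series with $\lambda\in\Xi_{II}$, Theorem \ref{LargeDSWhittaker} forces this dimension to vanish unless $\xi_3>0$, restricting the $m=0$ sum to $\xi_3>0$; the $\Xi_{III}$ case yields $\xi_3<0$ through the $\delta g\delta^{-1}\xi$-conjugation used in the proof of Theorem \ref{LargeDSWhittaker} (2). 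The analogous sign conditions for the $P_J$-principal and principal series follow from Theorems \ref{PJWhittaker} and \ref{PS-Whittaker}. For holomorphic or anti-holomorphic discrete series, which are non-generic, there are no rapidly decreasing Whittaker functions at all; hence $F_{\xi_0,\xi_3}\equiv 0$ and $F_{S_{\alpha,m},n}\equiv 0$, so both $F_0$ and every Eisenstein-Poincar{\'e} contribution vanish, leaving only the cuspidal sum displayed in case (1).

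The delicate step will be the explicit re-indexing that turns the abstract summand $F_{m,i}^{(\pi_1)}$ into the finite double sums of case (1), with the range $n_1\le l\le\lambda_1+\frac{1}{2}$, the shift $k(l):=l-\lambda_2-\frac{3}{2}$, and the matching index $j=k-k(l)+\frac{1}{2}$. This requires tracking, through the relation $l(j,k)=-j+k+\Lambda_2$ and the support of $(j,k)$ recorded in Theorem \ref{FJ-holom-antiholom-DS} and the recurrences \cite[(6.4),~(6.5)]{Hi-1}, exactly which pairs $(j,k)$ carry a non-zero coefficient $c_{j,k}^{(\cD_{n_1}^+)}$, and then re-summing over $l$. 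The anti-holomorphic sub-case, as well as the $\Xi_{III}$ and $\sigma=(\cD_n^-,\epsilon)$ sub-cases of (2) and (3), are then handled uniformly by transporting the holomorphic/$\Xi_{II}$/$(\cD_n^+,\epsilon)$ computation through the conjugation $g\mapsto\delta g\delta^{-1}\xi$ supplied by Proposition \ref{explicit-FJ-III} and its $P_J$-analogues, which intertwines the two Schmid operators and merely interchanges the roles of $m>0$ and $m<0$. Everything outside this bookkeeping is routine substitution of the results of Section \ref{ReviewSpherical} into Theorem \ref{F-J-exp-mainthm}.
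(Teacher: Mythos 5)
Your proposal is correct and follows essentially the same route as the paper: the corollary is obtained by specializing Theorem \ref{F-J-exp-mainthm} and inserting, on the one hand, the dimension formulas for rapidly decreasing Whittaker functions (Theorems \ref{LargeDSWhittaker}, \ref{PJWhittaker}, \ref{PS-Whittaker}, Corollary \ref{PSWhittaker-Cor} and Proposition \ref{DegnerateWhittaker}, plus the non-genericity of (anti-)holomorphic discrete series, which kills $F_0$ and $F_m^{\rm c}$ in case (1)) to control the $m=0$ and Eisenstein--Poincar\'e terms, and on the other hand the support of $\cJ_{\rho_{m,\pi_1},\pi}(\tl^*)^{00}$ from Hirano's theorems to restrict the cuspidal sum over $\pi_1$. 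The only point the paper leaves as implicit that you flag explicitly is the $(j,k)$-bookkeeping via $l(j,k)=-j+k+\Lambda_2$ in case (1), which is indeed just a re-indexing of the data in Theorem \ref{FJ-holom-antiholom-DS}.
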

\begin{proof}
We give a proof only for the cases of discrete series representations $\pi_{\lambda}$ with $\lambda\in\Xi_{I}\cup\Xi_{II}$. Discrete series $\pi_{\lambda}$ with $\lambda\in\Xi_{I}$ are nothing but holomorphic discrete series. This case is the easiest to handle and it is more accessible to understand  $F_{m,i}^{(\cD_{n_1}^+)}$ explicitly than non-holomorphic cases. The assertion for this case is a consequence from Theorems \ref{FJ-holom-antiholom-DS},~\ref{F-J-exp-mainthm} and the well known fact that holomorphic discrete series do not admit rapidly decreasing Whittaker models~(also those without the rapidly decreasing property). The latter fact is verified also for the case of degenerate characters~(cf.~\cite[Theorems 6.7,~8.2]{Hi-1},~\cite[Proposition 7.1~(1),~(2)]{Nr-1}). 
We can thereby remark that $F_0\equiv 0$ and $F_m^{\rm c}\equiv 0$ hold for $m>0$. 
In addition we remark that the case of anti-holomorphic discrete series is settled by an argument parallel to the holomorphic case. 

Let us next consider the case of $\pi_{\lambda}$ with $\lambda\in\Xi_{II}$, which is a large discrete series representation. From Theorem \ref{LargeDSWhittaker}  and Proposition \ref{DegnerateWhittaker} we have known that $\dim W_{\psi_{\xi_0,\xi_3},\pi_{\lambda}}(\tl^*)^0=1$ if and only if $\xi_0\not=0,~\xi_3>0$. 
We furthermore recall from Theorems \ref{explicit-FJ-I},~\ref{explicit-FJ-II} that $\dim\cJ_{\rho_{\pi_1,m},\pi_{\lambda}}(\tl^*)^{00}=1$ if and only if 
\begin{itemize}
\item $m>0$ and $\pi_1=\cP_s^{\tau}~(\tau=\pm\frac{1}{2},~s\in\sqrt{-1}\R)$ or $\cC_s^{\tau}~(\tau=\pm\frac{1}{2},~0<s<\frac{1}{2})$,
\item $m>0$ and $\pi_1=\cD_{n_1}^-$ with $n_1\in\frac{1}{2}\Z_{\ge 3}\setminus\{0\}$ and $n_1\le-\lambda_2+\frac{1}{2}$,
\item $m<0$ and $\pi_1=\cD_{n_1}^+$ with $n_1\in\frac{1}{2}\Z_{\ge 3}\setminus\Z$ and $n_1>\lambda_1+\frac{1}{2}$.
\end{itemize}
The result of this case is obtained by combining Theorem \ref{F-J-exp-mainthm} with these. The other cases are settled similarly by Proposition \ref{DegnerateWhittaker} and the theorems etc. in Section \ref{ReviewSpherical}~(or Hirano's results cited there).
\end{proof}
\begin{rem}
In terms of the analogy of the Selberg conjecture on the minimal eigenvalue of the hyperbolic Laplacian, the contribution from the complementary series representations $\cC_s$ is conjectured to be zero. However, it seems that there is no advancement of the Selberg conjecture for cusp forms on $\widetilde{SL}_2(\R)$, namely for the case of half-integral weights. We can not therefore deny the contribution of $\cC_s$ to the Fourier-Jacobi expansion for non-holomorphic cases.
\end{rem}
\subsection{Proof of Theorem \ref{F-J-exp-mainthm}}\label{ProofThm}
\subsection*{(I)~$F_0$-term}
For $m=0$, the representations of $N_J$ contributing to $F_0$ are characters of $N_J$. 
Hirano \cite[Section 8]{Hi-1} essentially pointed out that Fourier-Jacobi type spherical functions for a character $\eta_{\xi_0,\xi_2}$ of $N_J$~(for $\eta_{\xi_0,\xi_2}$ see Proposition \ref{UnitaryDual-N_J} (1)) are Whittaker functions attached to unitary characters of $N_J\rtimes{\rm Stab}_{SL_2(\R)}(\eta_{\xi_0,\xi_2})$, which is isomorphic to $N_0$. We note that this remark needs Proposition \ref{UnitaryDual-G_J} (2) part 2. We now recall  that the working assumption excludes the contribution to the Fourier-Jacobi expansion of a cusp form $F$ by Whittaker functions attached to degenerate characters of $N_0$.  
We are going to show that the $\eta_{\xi_0,\xi_2}$-term of $F_0$ with $(\xi_0,\xi_2)\not=(0,0)$ is a sum of the Whittker functions attached to non-degenerate characters of $N_J\rtimes{\rm Stab}_{SL_2(\R)}(\eta_{\xi_0,\xi_2})$. For this note that there is no $\eta_{0,0}$-term of $F_0$, due to the definition of a cusp form on $G$. 

Let us expand $F_0$ by characters of $N_J$. By $W_{\xi_0,\xi_2}$ we denote the $\eta_{\xi_0,\xi_2}$-term of the expansion of $F_0$ and have
\[
F_0(g)=\sum_{(\xi_0,\xi_2)\in\Z^2\setminus\{(0,0)\}}W_{\xi_0,\xi_2}(g)~(g\in G).
\]
We first describe $W_{\xi_0,0}$ in detail for $\xi_0\in\Z\setminus\{0\}$. For this the group ${\rm Stab}_{SL_2(\R)}(\eta_{\xi_0,0})$ coincides with $\left\{\left.
\begin{pmatrix}
1 & x\\
0 & 1
\end{pmatrix}~\right|~x\in\R\right\}$, which implies $N_J\rtimes{\rm Stab}_{SL_2(\R)}(\eta_{\xi_0,0})=N_0$. Then 
the Fourier expansion of $W_{\xi_0,0}$ along ${\rm Stab}_{SL_2(\R)}(\eta_{\xi_0,0})=\left\{\left.
\begin{pmatrix}
1 & x\\
0 & 1
\end{pmatrix}~\right|~x\in\R\right\}$ is
\[
W_{\xi_0,0}(g)=\sum_{\xi_3\in\Z\setminus\{0\}}F_{\xi_0,\xi_3}(g),
\]
for which we remark that we can exclude $F_{\xi_0,0}$ since there is no contribution to $F_0$ by Whittaker functions attached to degenerate characters of $N_0$ because of the working assumption. 

To understand $W_{\xi_0,\xi_2}$ for $\xi_2\not=0$ we note the following:
\begin{itemize}
\item Every $(\xi_0,\xi_2)\in\Z^2\setminus\{(0,0)\}$ can be written as $
\begin{pmatrix}
\xi_0\\
\xi_2
\end{pmatrix}=\gamma
\begin{pmatrix}
\delta\\
0
\end{pmatrix}$ with some $\gamma\in SL_2(\Z)$, where $\delta$ is the greatest common divisor of $(\xi_0,\xi_2)$ when $\xi_0\xi_2\not=0$.
\item For each $\xi\in\Z\setminus\{0\}$ the group of elements in $SL_2(\Z)$ fixing $
\begin{pmatrix}
\xi\\
0
\end{pmatrix}$ is $SL_2(\Z)_{\infty}$.
\item $N_J\rtimes{\rm Stab}_{SL_2(\R)}(\eta_{\xi_0,\xi_2})$ is conjugate to $N_0$ by 
$\begin{pmatrix}
1 & 0 & 0 & 0\\
0 & a & 0 & b\\
0 & 0 & 1 & 0\\
0 & c & 0 & d
\end{pmatrix}$ with some $
\begin{pmatrix}
a & b\\
c & d
\end{pmatrix}\in SL_2(\Z)$.
\end{itemize}
As a result of this 
\[
W_{\xi_0,\xi_2}(g)=W_{\xi,0}(
\begin{pmatrix}
1 & 0 & 0 & 0\\
0 & a & 0 & b\\
0 & 0 & 1 & 0\\
0 & c & 0 & d
\end{pmatrix}g)=\sum_{\xi_3\in\Z\setminus\{0\}}F_{\xi,\xi_3}(
\begin{pmatrix}
1 & 0 & 0 & 0\\
0 & a & 0 & b\\
0 & 0 & 1 & 0\\
0 & c & 0 & d
\end{pmatrix}
g)
\]
with some $\xi\in\Z\setminus\{0\}$ and $
\begin{pmatrix}
a & b\\
c & d
\end{pmatrix}\in SL_2(\Z)$. Consequently we have the description of $F_0$-term in the assertion.
\subsection*{(II)~$F_m$-term~($m\not=0$)}
Let $m\not=0$. For each fixed $g\in G$ we have $F_m(rg)\in\cH_m\otimes V_{\Lambda}^*$ as a function in $r\in G_J$. From \cite[Section 4.4,~Corollary 4.4.3]{Be-Sc} we recall that the orthogonal complement $\cH_m^{\rm c}$ of $\cH_m^0$ in $\cH_m$ is proved to be the continuous spectrum of $\cH_m$. 
When $F$ does not generate holomorphic or antiholomorphic discrete series, the $F_m$-term has the contribution from $\cH_m^{\rm c}$ as well as the contribution from $\cH_m^0$ in the Fourier-Jacobi expansion of $F$. We will denote the former~(respectively~the latter) by $F_m^{\rm c}$~(respectively~$F_m^0$). We shall first describe the $\cH_m^{\rm c}\otimes V_{\Lambda}^*$-part $F_m^{\rm c}$. 
\subsection*{(II-1)~The contribution from $\cH_m^{\rm c}$}
We shall prove that the contribution
\[
\sum_{
\begin{subarray}{c}
1\le\alpha\le 2|m|\\
\text{s.t.~$\alpha^2/4m\in\Z$}
\end{subarray}}\sum_{n\in\widehat{L_{\alpha,m}}\setminus\{0\}}b_{m,\alpha}(F)E_{\alpha}(F_{S_{\alpha,m},n}(*a_J))(r)
\]
to the Fourier-Jacobi expansion coincides with $F_m^{\rm c}(ra_J)$. 
To discuss this contribution we consider the Fourier expansion of $F_m$ along 
\[
N_S:=\left\{\left.
\begin{pmatrix}
1_2 & X\\
0_2 & 1_2
\end{pmatrix}~\right|~X\in{\rm Sym}_2(\R)\right\},
\]
whose summation is parametrized by the dual lattice of $\Sym_2(\Z)$ with respect to the trace, namely the set of semi-integral matrices of degree two. More precisely, the semi-integral matrices contributing to the expansion has the fixed upper-left entry $m$. 

Let us think of the terms parametrized by the singular matrices for the expansion of $F_m$ along $N_S$. 
As we have pointed out before the assertion of the theorem the singular semi-integral symmetric matrices with the upper-left entry $m$ is of the form
\[
S_{\alpha,m}(k)=
\begin{pmatrix}
m & km+\frac{\alpha}{2}\\
km+\frac{\alpha}{2} & m(k+\frac{\alpha}{2m})^2
\end{pmatrix}
\]
with some integer $k\in\Z$ and with $\alpha\in\Z$ such that $\dfrac{\alpha^2}{4m}\in\Z$, where $\alpha$ is determined module $2m$. 
The term $F_{S_{\alpha,m}(k)}(g)$ of $F_m(g)$ indexed by $S_{\alpha,m}(k)$ is defined as
\[
\int_{N_S(\Z)\backslash N_S}F(n(0,u_1,u_2,u_3)g)\e(-\tr(S_{\alpha,m}(k)
\begin{pmatrix}
u_1 & u_2\\
u_2 & u_3
\end{pmatrix}))du_1du_2du_3.
\]
We introduce
\[
E_{\alpha}(F_{S_{\alpha,m}(k)}(*g))(r):=\sum_{\gamma\in N_S(\Z)\backslash G_J(\Z)}F_{S_{\alpha,m}(k)}(\gamma rg).
\]
and recall that we have introduced $\gamma_{\alpha,m}(k)$ for $k\in\Z$, and put $\gamma_{\alpha,m}:=\gamma_{\alpha,m}(0)$ and $S_{\alpha,m}:=S_{\alpha,m}(0)$~(cf.~Section \ref{FJ-expansion}). 
We now state the following:
\begin{lem}\label{Jacobi-EisenPoincare}
For any integer $k$ we have
\begin{enumerate}
\item $E_{\alpha}(F_{S_{\alpha,m}(k)}(*g))(r)\in\cH_m^c\otimes V_{\Lambda}^*$,
\item $E_{\alpha}(F_{S_{\alpha,m}(k)}(*g))(r)=E_{\alpha}(F_{S_{\alpha,m}}(*g))(r)$.
\end{enumerate}
\end{lem}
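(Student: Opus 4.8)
The plan is to establish the two assertions in the opposite order of convenience: first (2), by exhibiting the test functions $F_{S_{\alpha,m}(k)}$ for different $k$ as left translates of one another by an element of $G_J(\Z)$ and then reindexing the defining sum; and then (1), by checking membership in $\cH_m\otimes V_{\Lambda}^*$ directly and verifying orthogonality to $\cH_m^0$ through a Rankin--Selberg unfolding that feeds into the cuspidality criterion of Proposition \ref{Spectral-Jacobi} (2). Throughout I write $X(n)=\begin{pmatrix}u_1&u_2\\u_2&u_3\end{pmatrix}$ for $n=n(0,u_1,u_2,u_3)\in N_S$, so that $F_S(nh)=\e(\tr(SX(n)))F_S(h)$ and, for semi-integral $S$, $F_S$ is left $N_S(\Z)$-invariant.

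For (2) the computational core is that all the singular matrices $S_{\alpha,m}(k)$ lie in a single orbit of the Siegel--Levi action $S\mapsto {}^tASA$ on symmetric matrices: taking $A=\begin{pmatrix}1&k\\0&1\end{pmatrix}\in SL_2(\Z)$ one checks directly that ${}^tA\,S_{\alpha,m}\,A=S_{\alpha,m}(k)$. The Levi element $\diag(A,{}^tA^{-1})$ realizing this is exactly $\nu:=n(k,0,0,0)=\gamma_{\alpha,m}(k)\gamma_{\alpha,m}^{-1}\in G_J(\Z)$, and it normalizes $N_S$ and $N_S(\Z)$, acting by $X\mapsto AX{}^tA$, i.e. $X(\nu n\nu^{-1})=AX(n){}^tA$. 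Substituting $\tr(S_{\alpha,m}(k)X(n))=\tr(S_{\alpha,m}X(\nu n\nu^{-1}))$ into the integral defining $F_{S_{\alpha,m}(k)}$, changing variables $n\mapsto\nu^{-1}n\nu$ (measure-preserving since $\det A=1$), and invoking the left $Sp(2,\Z)$-invariance of $F$ gives $F_{S_{\alpha,m}(k)}(h)=F_{S_{\alpha,m}}(\nu h)$. Because $\nu$ normalizes $N_S(\Z)$, left multiplication by $\nu$ is a bijection of $N_S(\Z)\backslash G_J(\Z)$, so reindexing $\sum_{\gamma}F_{S_{\alpha,m}}(\nu\gamma rg)$ by $\gamma'=\nu\gamma$ yields $E_{\alpha}(F_{S_{\alpha,m}}(*g))(r)$, which is (2).

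For (1) I first record membership in $\cH_m\otimes V_{\Lambda}^*$: convergence of the Eisenstein--Poincaré series follows from the rapid decay of the cusp form $F$, hence of $F_{S_{\alpha,m}(k)}$; left $G_J(\Z)$-invariance in $r$ follows by reindexing via right multiplication on $N_S(\Z)\backslash G_J(\Z)$; and the central character $\e(m\,\cdot\,)$ under $Z_J\subset N_S$ follows from $F_{S_{\alpha,m}(k)}(n_0h)=\e(\tr(S_{\alpha,m}(k)X(n_0)))F_{S_{\alpha,m}(k)}(h)$ together with $\tr(S_{\alpha,m}(k)X(n(0,u_1,0)))=mu_1$. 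To land in $\cH_m^c=(\cH_m^0)^{\perp}$, I pair each $V_{\Lambda}^*$-component against an arbitrary $\phi\in\cH_m^0$ and unfold:
\[
\langle E_{\alpha}(F_{S_{\alpha,m}(k)}(*g)),\phi\rangle=\int_{N_S(\Z)\backslash G_J}F_{S_{\alpha,m}(k)}(rg)\,\overline{\phi(r)}\,dr.
\]
Fibering over $N_S\backslash G_J$ and using the $N_S$-equivariance of $F_{S_{\alpha,m}(k)}$, the inner integral over $N_S(\Z)\backslash N_S$ becomes $F_{S_{\alpha,m}(k)}(r'g)$ times the conjugate of the $S_{\alpha,m}(k)$-th Fourier coefficient of $\phi$ along $N_S$. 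Since $\phi\in\cH_m$ has central character $m$, integrating out the $Z_J$-direction collapses this to a $V_J$-coefficient $\int_{V_J\cap G_J(\Z)\backslash V_J}\phi(nr')\overline{\psi^{m_2,m_3}(n)}\,dn$ with $(m_2,m_3)=(2km+\alpha,\;m(k+\tfrac{\alpha}{2m})^2)$; a direct check gives $4m\,m_3-m_2^2=0$, so this is one of the singular characters and Proposition \ref{Spectral-Jacobi} (2) forces it to vanish for almost all $r'$. Hence the pairing is zero for every $\phi\in\cH_m^0$, giving (1). (Alternatively, (1) for general $k$ reduces to the case $k=0$ via (2).)

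I expect the routine bookkeeping to be the measure normalization in the fibration $N_S(\Z)\backslash G_J\to N_S\backslash G_J$ and the justification of the unfolding through absolute convergence, both of which parallel the analysis of the continuous spectrum in \cite[Section 4.4]{Be-Sc}. The genuinely delicate step, and the one deserving the most care, is the index dictionary that matches the Fourier coefficient produced by the unfolding to the precise singular characters of $V_J$ appearing in Proposition \ref{Spectral-Jacobi} (2): the passage from the symmetric matrix $S_{\alpha,m}(k)$ to the pair $(m_2,m_3)$ and the verification $4m\,m_3-m_2^2=0$ is exactly what links the singularity of $S_{\alpha,m}(k)$ to the cuspidality criterion, and is the crux of the argument.
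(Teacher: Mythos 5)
Your proof is correct. For part (2) it is essentially the paper's argument made explicit: both rest on the identity $F_{S_{\alpha,m}(k)}(h)=F_{S_{\alpha,m}}(n(k,0,0,0)h)$, which you derive from ${}^tA\,S_{\alpha,m}\,A=S_{\alpha,m}(k)$ with $A=\left(\begin{smallmatrix}1&k\\0&1\end{smallmatrix}\right)$ together with the left $Sp(2,\Z)$-invariance of $F$; you then reindex the sum over $N_S(\Z)\backslash G_J(\Z)$ directly, whereas the paper reindexes the corresponding sum over $Z_J(\Z)(G_J(\Z)\cap\gamma_{\alpha,m}^{-1}V_J\gamma_{\alpha,m})\backslash G_J(\Z)$. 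For part (1) your route is genuinely different. The paper observes that $Z_J(\Z)(G_J(\Z)\cap\gamma_{\alpha,m}^{-1}V_J\gamma_{\alpha,m})$ has finite index in $N_S(\Z)$, identifies $E_{\alpha}(F_{S_{\alpha,m}}(*g))(r)$ with a constant multiple of the Berndt--Schmidt incomplete theta series attached to a function in $L^2_m(V_J\backslash G_J)\otimes V_{\Lambda}^*$, and then cites \cite[Corollary 4.4.3]{Be-Sc} for membership in $\cH_m^{\rm c}\otimes V_{\Lambda}^*$. You instead check membership in $\cH_m\otimes V_{\Lambda}^*$ directly and prove orthogonality to $\cH_m^0$ by unfolding the pairing against $\phi\in\cH_m^0$ down to $\int_{N_S\backslash G_J}F_{S_{\alpha,m}(k)}(r'g)\,\overline{\phi_{S_{\alpha,m}(k)}(r')}\,dr'$, where $\phi_{S_{\alpha,m}(k)}$ collapses (after integrating out $Z_J$) to the $V_J$-coefficient of $\phi$ for the pair $(m_2,m_3)=(2km+\alpha,\,m(k+\tfrac{\alpha}{2m})^2)$ with $4mm_3-m_2^2=0$, which vanishes by Proposition \ref{Spectral-Jacobi}~(2). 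Since $\cH_m^{\rm c}$ is the orthogonal complement of $\cH_m^0$ in $\cH_m$, this suffices. Your unfolding is the same manipulation the paper performs immediately after the lemma to evaluate $\langle F_m,E_{\alpha}\rangle$, so the technique is consistent with its toolkit; what your version buys is a self-contained proof that makes transparent why the singularity of $S_{\alpha,m}(k)$ is exactly the condition detected by the cuspidality criterion, while the paper's version additionally ties $E_{\alpha}$ to the incomplete theta series of \cite{Be-Sc}, a link it still needs afterwards to argue that these series account for all of $F_m^{\rm c}$. The convergence and Fubini steps you defer are at the same level of rigor as the paper's own treatment.
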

\begin{proof}
We can verify $F_{S_{\alpha,m}(k)}(g)=F_{S_{\alpha,m}}(n(k,0,0,0)g)$ for $k\in\Z$. Once the first assertion is proved, we can say that this implies the second one. 
To show the first assertion we remark that, for each fixed $g\in G$, $F_{S_{\alpha,m}(k)}(\gamma_{\alpha,m}(k)^{-1}rg)$ belong to $L^2_m(V_J\backslash G_J)\otimes V_{\Lambda}^*$ as functions in $r\in G_J$, for $F_m$ is rapidly decreasing since so is the cusp form $F$. 
Here  $L^2_m(V_J\backslash G_J)$ denotes the space of square-integrable functions on $V_J\backslash G_J$ satisfying the right equivariance with respect to the central character of $N_J$ indexed by $m$. 
 Following \cite[Section 4.4]{Be-Sc} we introduce
\[
\sum_{\gamma\in Z_J(\Z)(G_J(\Z)\cap \gamma_{\alpha,m}^{-1}V_J\gamma_{\alpha,m})\backslash G_J(\Z)}F_{S_{\alpha,m}(k)}(\gamma_{\alpha,m}(k)^{-1}\gamma_{\alpha,m}\gamma\cdot rg)
\]
(for $Z_J(\Z)$ see Section \ref{Groups}). 
This is independent of $k\in\Z$ by $\gamma_{\alpha,m}(k)^{-1}\gamma_{\alpha,m}=n(-k,0,0,0)$ and $F_{S_{\alpha,m}(k)}(g)=F_{S_{\alpha,m}}(n(k,0,0,0)g)$ as above for $k\in\Z$. In fact, this coincides with 
\[
\sum_{\gamma\in Z_J(\Z)(G_J(\Z)\cap \gamma_{\alpha,m}^{-1}V_J\gamma_{\alpha,m})\backslash G_J(\Z)}F_{S_{\alpha,m}}(\gamma\cdot rg).
\]
Now let us note that we can verify that $Z_J(\Z)(G_J(\Z)\cap \gamma_{\alpha,m}^{-1}V_J\gamma_{\alpha,m})$ is a subgroup of finite index for $N_S(\Z)$ by a direct calculation of matrices. We therefore see that $E_{\alpha}(F_{S_{\alpha,m}}(*g))(r)$ is equal to
\[
\frac{1}{[N_S(\Z):Z_J(\Z)(G_J(\Z)\cap \gamma_{\alpha,m}^{-1}V_J\gamma_{\alpha,m})]}\sum_{\gamma\in Z_J(\Z)(G_J(\Z)\cap \gamma_{\alpha,m}^{-1}V_J\gamma_{\alpha,m})\backslash G_J(\Z)}F_{S_{\alpha,m}}(\gamma\cdot rg).
\]
By \cite[Section 4.4,~Corollary 4.4.3]{Be-Sc} we have $E_{\alpha}(F_{S_{\alpha,m}}(*g))(r)\in\cH_m^{\rm c}\otimes V_{\Lambda}^*$. 
\end{proof}
We now want to show that $F_m^c(rg)$ belongs to the finite dimensional $\C$-vector space spanned by $\{E_{\alpha}(F_{S_{\alpha,m}}(*g))(r)\mid 1\le \alpha\le 2|m|~\text{s.t.}~\frac{\alpha^2}{4m}\in\Z\}$. If this is settled we then see 
\[
F_m^c(rg)=\sum_{\begin{subarray}{c}
1\le\alpha\le 2m\\
\text{s.t.~$\frac{\alpha^2}{4m}\in\Z$}
\end{subarray}}b_{m,\alpha}(F)E_{\alpha}(F_{S_{\alpha,m}}(*g))(r)
\]
with constants $\{b_{m,\alpha}(F)\}$ depending on $m$ and $\alpha$. 
Here let us note that, for any $k\in\Z$, 
\begin{equation}\label{EisenPoincare-eq}
E_{\alpha}(F_{S_{\alpha,m}(k)}(*g))(r)=E_{\alpha}(F_{S_{\alpha,m}}(*g))(r),
\end{equation}
which is nothing but Lemma \ref{Jacobi-EisenPoincare} part 2. To see the assertion above we first note that the inner product $\displaystyle\int_{G_J(\Z)\backslash G_J}\langle F_m(rg),E_{\alpha}(F_{S_{\alpha,m}(k)}(*g))(r)\rangle dr$ 
with a $K$-invariant inner product $\langle *,*\rangle$ of $(\tau^*,V^*)$ is calculated to be
\begin{align*}
\int_{G_J(\Z)\backslash G_J}\sum_{N_S(\Z)\backslash G_J(\Z)}\langle F_m(\gamma rg),F_{S_{\alpha,m}(k)}(\gamma rg)\rangle dr
&=\int_{N_S(\Z)\backslash G_J}\langle F_m(rg),F_{S_{\alpha,m}(k)}(rg)\rangle dr\\
&=\int_{N_S\backslash G_J}\int_{N_S(\Z)\backslash N_S}\langle F_m(n\overset{\cdot}{r}g),F_{S_{\alpha,m}(k)}(nrg)\rangle dndr\\
&=\int_{N_S\backslash G_J}\langle F_{S_{\alpha,m}(k)}(rg),F_{S_{\alpha,m}(k)}(rg)\rangle dr\ge 0,
\end{align*}
where the notation $dr$~(respectively~$dn$) uniformly denotes the quotient measures of $G_J(\Z)\backslash G_J$, $N_S(\Z)\backslash G_J$ or $N_S\backslash G_J$ induced by an invariant measure of $G_J$~(respectively~the quotient measure of $N_S(\Z)\backslash N_S$ induced by an invariant measure of $N_S$). 
From this and (\ref{EisenPoincare-eq}) we deduce that $F_m(rg)$ is orthogonal to the $\C$-span of the $E_{\alpha}(F_{S_{\alpha,m}}(*g))(r)$s if and only if $F_m$ has no terms indexed by singular semi-integral matrices. 
If $F^c_m$ belongs to the orthogonal complement of such $\C$-span in $\cH_m^c$ we then have $F^c_m\equiv 0$ from Proposition \ref{Spectral-Jacobi} (2). Thereby $F_m^c$ is a linear combination of the $E_{\alpha}(F_{S_{\alpha,m}}(*g))(r)$s.

As the next step we remark that $F_{S_{\alpha,m}}(g)$ is equal to
\[
\underset{\gamma_{\alpha,m}N_S(\Z)\gamma_{\alpha,m}^{-1}\backslash N_S}{\displaystyle\int}F(\gamma_{\alpha,m}^{-1}n(0,u_1,u_2,u_3)\gamma_{\alpha,m}g)\e(-\tr(
\begin{pmatrix}
m & 0\\
0 & 0
\end{pmatrix}
\begin{pmatrix}
u_1 & u_2\\
u_2 & u_3
\end{pmatrix})du_1du_2du_3.
\]
We then consider the expansion of  $F_{S_{\alpha,m}}(g)$ along 
${}^tN_L(L_{\alpha,m})\backslash {}^tN_L\simeq\R/L_{\alpha,m}$~(for the notation $N_L$ and $L_{\alpha,m}$ see Sections \ref{Groups} and \ref{FJ-expansion} respectively), 
where ${}^{t}N_L(L_{\alpha,m}):=\{{}^tn(u_0,0,0,0)\mid u_0\in L_{\alpha,m}\}\subset{}^tN_L$. For $n\in \widehat{L_{\alpha,m}}\setminus\{0\}$ we put
\[
F_{S_{\alpha,m},n}(g):=\underset{\R/L_{\alpha,m}}{\displaystyle\int}F_{S_{\alpha,m}}(\gamma_{\alpha,m}^{-1}{}^tn(u_0,0,0,0)\gamma_{\alpha,m}g)\e(-nu_0)du_0,
\]
where recall that $du_0$ is the measure normalized so that $\vol(\R/L_{\alpha,m})=1$. 
We then have
\begin{equation}\label{ExpansionSingCoeff}
F_{S_{\alpha,m}}(g)=\sum_{n\in\widehat{L_{\alpha,m}}\setminus\{0\}}F_{S_{\alpha,m},n}(g).
\end{equation}
The following lemma implies that the explicit formulas for the Whittaker functions we have reviewed are useful to know $F_{S_{\alpha,m}}$ explicitly. 
\begin{lem}\label{Whittaker-F_m}
We have 
\[
F_{S_{\alpha,m},n}(({}^t\gamma_{\alpha,m}\xi)^{-1}g)\in W_{\psi_{n,m},\pi}(\tl^*)^0,
\] 
where recall that $\xi:=
\begin{pmatrix}
J'_2 & 0_2\\
0_2 & J'_2
\end{pmatrix}$ with $J'_2=
\begin{pmatrix}
0 & 1\\
1 & 0
\end{pmatrix}$. The function $F_{S_{\alpha,m},n}(({}^t\gamma_{\alpha,m}\xi)^{-1}g)$ is therefore a constant multiple of a Whittaker function for $\pi$ with $K$-type $\tl^*$ attached to $\psi_{n,m}$.
\end{lem}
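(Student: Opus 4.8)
The plan is to verify directly that $\Phi(g):=F_{S_{\alpha,m},n}(({}^t\gamma_{\alpha,m}\xi)^{-1}g)$ meets the four defining conditions of a member of $W_{\psi_{n,m},\pi}(\tl^*)^0$: left $N_0$-equivariance against $\psi_{n,m}$, right $K$-equivariance against $\tl^*$, membership in the image of an intertwining operator out of $\pi$, and rapid decrease. Unravelling the two defining integrals, $\Phi$ is a single Fourier coefficient of the cusp form $F$ taken over the product unipotent $U:=N_S\cdot(\gamma_{\alpha,m}^{-1}\,{}^tN_L\,\gamma_{\alpha,m})$, against a multiplier that restricts to $\chi_{S_{\alpha,m}}\colon n(0,u_1,u_2,u_3)\mapsto\e(\tr(S_{\alpha,m}\left(\begin{smallmatrix}u_1&u_2\\u_2&u_3\end{smallmatrix}\right)))$ on $N_S$ and to $\e(n\,\cdot\,)$ on the conjugated transpose line. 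Since left translation by the full (non-periodised) completion of a subgroup along which one has extracted a Fourier coefficient reproduces the associated character, $F_{S_{\alpha,m},n}$ transforms on the left under each of $N_S$ and $\gamma_{\alpha,m}^{-1}\,{}^tN_L\,\gamma_{\alpha,m}$ by the indicated characters, and $\Phi$ then inherits the transformation law $\Phi(n_0g)=\Xi\big(({}^t\gamma_{\alpha,m}\xi)\,n_0\,({}^t\gamma_{\alpha,m}\xi)^{-1}\big)\,\Phi(g)$ for $n_0\in N_0$, where $\Xi$ is the left multiplier of $F_{S_{\alpha,m},n}$ on $U$.

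The heart of the argument, and the step I expect to be the main obstacle, is the matrix computation showing that $U$ is a genuine four-dimensional unipotent subgroup of $Sp(2,\R)$ on which the two partial equivariances fit together into a single character $\Xi$, and that conjugation by $({}^t\gamma_{\alpha,m}\xi)$ carries $N_0$ bijectively onto $U$ while pulling $\Xi$ back to $\psi_{n,m}$. Here the flip $\xi$ interchanges the two symplectic coordinate pairs, moving the diagonal entry $m$ of $S_{\alpha,m}$ that pairs with $u_1$ onto the $u_3$-slot of the standard non-degenerate character $\psi_{n,m}(n(u_0,u_1,u_2,u_3))=\e(nu_0+mu_3)$; meanwhile conjugation by the lower-triangular ${}^t\gamma_{\alpha,m}={}^tn(\tfrac{\alpha}{2m},0,0,0)$ straightens the conjugated transpose line back into the $N_L$-direction and absorbs the off-diagonal entry $\tfrac{\alpha}{2}$ and the lower-right entry $\tfrac{\alpha^2}{4m}$ of $S_{\alpha,m}$. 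The delicate points are to verify the commutator relations that make $U$ a group and force the $N_S$-equivariance to survive the $u_0$-averaging, and to track indices closely enough to see that the residual $u_3$-frequency is exactly $m$ (not an $\alpha$-dependent multiple) while the $u_1,u_2$-frequencies cancel, so that $\Xi\big(({}^t\gamma_{\alpha,m}\xi)\,n_0\,({}^t\gamma_{\alpha,m}\xi)^{-1}\big)=\psi_{n,m}(n_0)$ on the nose.

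The remaining conditions are comparatively routine. Right $K$-equivariance is immediate: every integration defining $F_{S_{\alpha,m},n}$ acts on the left, and the fixed left translation by $({}^t\gamma_{\alpha,m}\xi)^{-1}$ leaves the right variable untouched, so $\Phi$ inherits the weight $\tl^*$ of $F$ and lies in $C_{\psi_{n,m},\tl^*}^{\infty}(N_0\backslash G/K)$. That $\Phi$ lies in the image of an intertwining operator out of $\pi$ follows because forming a Fourier coefficient is a $(\g,K)$-morphism commuting with the right regular action, so $\Phi$ realises $\pi$ inside $C^{\infty}_{\psi_{n,m}}(N_0\backslash G)$ and its restriction to the $K$-type $\tl$ lies in $W_{\psi_{n,m},\pi}(\tl^*)$. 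Finally, rapid decrease of $\Phi$ is inherited from that of the cusp form $F$, since Fourier coefficients of a rapidly decreasing function along unipotent subgroups are again rapidly decreasing and left translation by a fixed element preserves this property; hence $\Phi\in W_{\psi_{n,m},\pi}(\tl^*)^0$. Because $\psi_{n,m}$ is non-degenerate (as $n,m\neq 0$), $\dim W_{\psi_{n,m},\pi}(\tl^*)\le 1$ by Wallach's theorem, which yields the asserted conclusion that $\Phi$ is a constant multiple of the Whittaker function for $\pi$ with $K$-type $\tl^*$ attached to $\psi_{n,m}$.
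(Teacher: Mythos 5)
Your proposal is correct and follows essentially the same route as the paper: both identify $F_{S_{\alpha,m},n}$ as a Fourier coefficient of $F$ along the unipotent group $U=N_S\cdot\gamma_{\alpha,m}^{-1}\,{}^tN_L\,\gamma_{\alpha,m}$, use $\xi$ to swap the two symplectic coordinate pairs (turning ${}^tN_L$ into $N_L$ and moving the entry $m$ of $S_{\alpha,m}$ into the $u_3$-slot) and ${}^t\gamma_{\alpha,m}$ to absorb the entries $\tfrac{\alpha}{2}$ and $\tfrac{\alpha^2}{4m}$, so that $(U,\Xi)$ is carried to $(N_0,\psi_{n,m})$, after which rapid decrease of cusp forms and Wallach's multiplicity one give the conclusion. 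The only presentational difference is that the paper first inserts $\xi$ via the left $Sp(2,\Z)$-invariance $F(\xi g)=F(g)$ and then conjugates, whereas you phrase everything as a change of variables; the matrix computations you flag as the main obstacle are exactly the ones the paper carries out.
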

\begin{proof}
We first note that $F(\xi g)=F(g)$ for $g\in G$ by the left $Sp(2,\Z)$-invariance of $F$. We thereby see that $F_{S_{\alpha,m}}(g)$ is equal to
\begin{align*}
&\underset{\gamma_{\alpha,m}N_S(\Z)\gamma_{\alpha,m}^{-1}\backslash N_S}{\displaystyle\int}F(\xi\gamma_{\alpha,m}^{-1}n(0,u_1,u_2,u_3)\gamma_{\alpha,m}g)\e(-\tr(
\begin{pmatrix}
m & 0\\
0 & 0
\end{pmatrix}
\begin{pmatrix}
u_1 & u_2\\
u_2 & u_3
\end{pmatrix}))du_1du_2du_3\\
=&\underset{\gamma_{\alpha,m}N_S(\Z)\gamma_{\alpha,m}^{-1}\backslash N_S}{\displaystyle\int}F(\xi\gamma_{\alpha,m}^{-1}\xi^{-1}\cdot\xi n(0,u_1,u_2,u_3)\xi^{-1}\cdot\xi\gamma_{\alpha,m}\xi^{-1}\cdot\xi g)\\
&\times\e(-\tr(J'_2
\begin{pmatrix}
m & 0\\
0 & 0
\end{pmatrix}{J'_2}^{-1}\cdot J'_2
\begin{pmatrix}
u_1 & u_2\\
u_2 & u_3
\end{pmatrix}{J'_2}^{-1})du_1du_2du_3\\
=&\underset{\gamma_{\alpha,m}N_S(\Z)\gamma_{\alpha,m}^{-1}\backslash N_S}{\displaystyle\int}F({}^t\gamma_{\alpha,m}^{-1}\cdot n(0,u_3,u_2,u_1)\cdot{}^t\gamma_{\alpha,m}\cdot\xi g)\\
&\times\e(-\tr(
\begin{pmatrix}
0 & 0\\
0 & m
\end{pmatrix}
\begin{pmatrix}
u_3 & u_2\\
u_2 & u_1
\end{pmatrix}))du_1du_2du_3.
\end{align*}
Let $N_0(L_{\alpha,m}):={}^t\gamma_{\alpha,m}(N_S(\Z)\rtimes N_L(L_{\alpha,m})){}^t\gamma_{\alpha,m}^{-1}$. 
From this we deduce $F_{S_{\alpha,m},n}(g)=$
\[
\underset{N_0(L_{\alpha,m})\backslash N_0}{\displaystyle\int}F(({}^t\gamma_{\alpha,m}\xi)^{-1}\cdot n(u_0,u_3,u_2,u_1)\cdot{}^t\gamma_{\alpha,m}\xi g)\psi_{n,m}(n(u_0,u_3,u_2,u_1))^{-1}du_0du_1du_2du_3,
\]
for $n\in\widehat{L_{\alpha,m}}\setminus\{0\}$, 
where note that 
\[
\xi n(u_0,0,0,0)\xi^{-1}={}^tn(u_0,0,0,0).
\]
We now verify the left equivariance of $F_{S_{\alpha,m},n}$ with respect to $\psi_{n,m}$, namely
\[
F_{S_{\alpha,m},n}(({}^t\gamma_{\alpha,m}\xi)^{-1}n(t_0,t_1,t_2,t_3)g)=\psi_{n,m}(n(t_0,t_1,t_2,t_3))F_{S_{\alpha,m},n}(({}^t\gamma_{\alpha,m}\xi)^{-1}g)
\]
for $(t_0,t_1,t_2,t_3)\in\R^4$. 
We therefore see that $F_{S_{\alpha,m},n}(({}^t\gamma_{\alpha,m}\xi)^{-1}g)\in W_{\psi_{n,m},\pi}(\tl^*)^0$, which is uniquely determined up to constant multiples.
\end{proof}
From (\ref{ExpansionSingCoeff}) we deduce 
\[
E_{\alpha}(F_{S_{\alpha,m}}(*g)(r)=\sum_{n\in\widehat{L_{\alpha,m}}\setminus\{0\}}E_{\alpha}(F_{S_{\alpha,m},n}(*g))(r),
\]
which can be regarded as a Fourier expansion along ${}^tN_L(L_{\alpha,m})\backslash {}^tN_L\simeq\R/L_{\alpha,m}$. 
This leads to
\[
F_m^{\rm c}(rg)=\sum_{\begin{subarray}{c}
1\le\alpha\le 2m\\
\text{s.t.~$\frac{\alpha^2}{4m}\in\Z$}
\end{subarray}}\sum_{n\in\widehat{L_{\alpha,m}}\setminus\{0\}}b_{m,\alpha}(F)E_{\alpha}(F_{S_{\alpha,m},n}(*g))(r).
\]
\subsection*{(II-2)~The contribution from $\cH_m^{0}$}
We now describe $F_m^0\in\cH_m^0\otimes V_{\Lambda}^*$. By Proposition \ref{Spectral-Jacobi} (1) we have known that $\cH_m^0$ decomposes discretely into irreducible pieces and each irreducible unitary representation $\rho_{m,\pi_1}$ occurs in $\cH_m^0$ with a finite multiplicity. 

Let us denote the representation space of $\rho_{m,\pi_1}$ by $\cH_{m,\pi_1}$. 
Every $\rho_{m,\pi_1}$-isotypic part of $F_m^0(rg)$ is a finite sum of $\phi_{\pi_1}^{(i)}(\cH_{m,\pi_1})\boxtimes V_{\Lambda}^*$-valued functions $F_{m,\pi_1}^{(i)}(rg)$ in $g\in G$ with $\phi^{(i)}$ ranging over a basis of $\Hom_{G_J}(\rho_{m,\pi_1},\cH_m^0)$, and each $F_{m,\pi_1}^{(i)}$ satisfies the left and right equivariance 
\[
F_{m,i}^{(\pi_1)}(rr_0gk)=R_{G_J}(r_0)\boxtimes{\tl^*(k)}^{-1}F_{m,i}^{(\pi_1)}(rg)\quad\forall(r,r_0,g,k)\in G_J\times G_J\times G\times K,
\]
where $R_{G_J}$ denotes the right regular representation of $G_J$ on $\phi^{(i)}(\cH_{m,\pi_1})$. 
Since $\phi^{(i)}$ intertwines $R_{G_J}$ with $\rho_{m,\pi_1}$ we can therefore say that $F_{m,i}^{(\pi_1)}(rg)$ is the image of  of the Fourier-Jacobi type spherical function of type $(\rho_{m,\pi_1},\pi;\tl^*)$ by the intertwining operator $\phi_{\pi_1}^{(i)}$. 
More precisely, up to constant multiples, $F_{m,i}^{(\pi_1)}(rr_0g)$ with $r,r_0\in G_J$ is written as
\begin{align*}
\sum_{
\begin{subarray}{c}
j\in J,~0\le k\le d_{\Lambda}\\
\text{s.t.}~l=l(j,k)\in L
\end{subarray}}\phi_{\pi_1}^{(i)}(c_{j,k}^{(\pi_1)}(g)w_l\otimes u_j^m)(rr_0)\otimes v_{k}^*
&=\sum_{
\begin{subarray}{c}
j\in J,~0\le k\le d_{\Lambda}\\
\text{s.t.}~l=l(j,k)\in L
\end{subarray}}\phi_{\pi_1}^{(i)}(\rho_{m,\pi_1}(r_0)(c_{j,k}^{(\pi_1)}(g)w_l\otimes u_j^m))(r)\otimes v_{k}^*,\\
&=\sum_{
\begin{subarray}{c}
j\in J,~0\le k\le d_{\Lambda}\\
\text{s.t.}~l=l(j,k)\in L
\end{subarray}}\phi_{\pi_1}^{(i)}(c_{j,k}^{(\pi_1)}(r_0g)w_l\otimes u_j^m)(r)\otimes v_{k}^*,
\end{align*}
with the notation in Sections \ref{Result-Hirano} and \ref{FJ-expansion}. 
For this let us note that the working assumption on the Fourier-Jacobi type spherical functions implies the uniqueness of these functions, up to scalars. 
From this we therefore obtain 
\[
F_m^0(ra_J)=\underset{\pi_1\in\widehat{\widetilde{SL}_2(\R)},m(\pi_1)\not=0}{\sum}\sum_{i=1}^{m(\pi_1)}b_{m,i}^{(\pi_1)}(F)
\underset{\begin{subarray}{c}
j\in J,~0\le k\le d_{\Lambda}\\
\text{s.t.}~l=l(j,k)\in L
\end{subarray}}{\sum}c_{j,k}^{(\pi_1)}(a_J)\phi_{\pi_1}^{(i)}(w_l\otimes u_j^m)(r)\otimes v_k^*
\]
as in the assertion by putting $g=a_J\in A_J$ and $r_0=1$.
As a result we have completed the proof of the theorem.
\begin{rem}
When $n\not=0$ the function $F_{S_{\alpha,m}(k),n}$ can be explicitly understood for the large discrete series, $P_J$-principal series and principal series of $G$ since explicit formulas for the Whittaker functions in $W_{\pi,\psi_{n,m}}(\tl^*)^0$ are available. On the other hand, the function $F_{S_{\alpha,m}(k)}$ is understood by the notion of ``Siegel-Whittaker functions''~(cf.~\cite{Is-1},~\cite{Go-Od},~\cite{Mi-1},~\cite{Mo-2}) for singular symmetric matrices. However, no explicit formula for them has not been available yet. 
The idea to expand $F_{S_{\alpha,m}}$ in terms of $F_{S_{\alpha,m},n}$ is inspired by Moriyama \cite[Section 1.2]{Mo-2}.
\end{rem}
\subsection{An explicit description of Jacobi cusp forms contributing to the $F_m$-terms for $m\not=0$}\label{Explicit-FJ}
We provide some explicit description of Jacobi cusp forms $\phi_{\pi_1}^{(i)}(w_l\otimes u_j^m)$. This together with the explicit Fourier-Jacobi type spherical functions~(cf.~Section \ref{Result-Hirano}) leads to some explicit (partial) description of the $F_m$-terms for $m\not=0$. For the case of holomorphic discrete series we are able to have a simpler description of the Fourier-Jacobi expansion with the help of the Fourier expansion along the minimal parabolic subgroup~(cf.~\cite{Nr-1}). We should remark that the holomorphic case deals with vector valued holomorphic cusp forms as well as scalar valued ones. 

To this end we prepare a couple of ingredients.
\begin{enumerate}
\item We introduce a subgroup $B:=\left\{\left.
\begin{pmatrix}
1 &  x\\
0 & 1
\end{pmatrix}
\begin{pmatrix}
\sqrt{y} & 0\\
0 & \sqrt{y}^{-1}
\end{pmatrix}~\right|~x\in\R,~y\in\R_{>0}\right\}$ of $SL_2(\R)$. 
As $SL_2(\R)$ is viewed as a subgroup of $G_J$ we can regard $B$ as a subgroup of $G_J$ by embedding it into the $SL_2(\R)$-factor of $G_J$. 
For this we note that there is a bijection 
\[
B\ni\begin{pmatrix}
1 &  x\\
0 & 1
\end{pmatrix}
\begin{pmatrix}
\sqrt{y} & 0\\
0 & \sqrt{y}^{-1}
\end{pmatrix}\mapsto x+\sqrt{-1}y\in{\frak h},
\]
where ${\frak h}$ denotes the complex upper half plane. 
Hereafter we put $z:=x+\sqrt{-1}y\in{\frak h}$. 
In addition we use the notation $n_J:=n(u_0,u_1,u_2)$ to denote an element of $N_J$. 
\item As a basis of $\cU_m\simeq L^2(\R)$ we take the Hermite functions 
\[
\{h_j(t)=e^{2\pi|m|t^2}\dfrac{d^j}{dt^j}(e^{-4\pi|m|t^2})=H_j(t)e^{-2\pi|m|t^2}\}_{j\ge 0},
\]
where $H_j(t)$ is the Hermite polynomial indexed by $j$. For this we recall that $j$ corresponds to ${\rm sign}(m)(j+\displaystyle\frac{1}{2})\in J$~(cf.~Section \ref{UnitaryRepJacobi}). 
With $h_j$ we note that, using (\ref{Explicit-Weil}), we have
\begin{align*}
\theta_{\alpha}(\omega_m(b)h_j)(n_J)=\sum_{k\in\Z}&y^{\frac{1}{4}}H_j(\sqrt{y}(u_0+k+\frac{\alpha}{2m}))e^{-2\pi |m| y(u_0+k+\frac{\alpha}{2m})^2}\\
&\times\e(m(u_0+k+\frac{\alpha}{2m})^2x+mu_1+(2km+\alpha)u_2).
\end{align*}

When $m$ is a positive integer and $j=0$ we can verify that this is the lift of the holomorphic theta series $\theta_{m,\alpha}$~(cf.~\cite[p58~(4)]{E-Z}) to a Jacob form on the group $G_J$. 
For the proof we use the argument analogous to \cite[Theorem 1.4]{E-Z}. More precisely we note that the factor of automorphy is replaced by that for Jacobi forms of half-integral weights in order to apply the argument of \cite[Theorem 1.4]{E-Z}.  In fact, $\theta_{m,\alpha}$ has the weight $\frac{1}{2}$ and index $m$. We will also need $\theta_{\alpha}(\omega_m(b)h_j)(n_J)$ for a negative index $m$. When $m<0$ and $j=0$ this is verified to be a Jacobi form on $G_J$ lifted from the anti-holomorphic theta series, which is a complex conjugate of a holomorphic theta series of the form $\theta_{-m,-\alpha}$. In what follows, we deal with Jacobi forms on ${\frak h}\times\C$ and cusp forms on ${\frak h}$ as automorphic forms on the groups $G_J$ and $SL_2(\R)$ respectively. 
\item Furthermore, from Theorem \ref{F-J-exp-mainthm} we recall that $\{\phi_{\pi_1}^{(i)}\}_{1\le i\le m(\pi_1)}$ has denoted a basis of $\Hom_{G_J}(\rho_{m,\pi_1}, \cH_m^0)$. 
Let $\{\sum_{1\le\alpha\le 2m}f^{(i)}_{\alpha}\theta_{\alpha}\}_{1\le i\le m(\pi_1)}$ be a basis of  $\cS_{\pi_1}(\widetilde{SL}_2(\Z);\Phi_m)$ with scalar-valued cusp forms $f^{(i)}_{\alpha}$ with respect to $\widetilde{\Gamma(4m)}$. We assume that 
\[
\sum_{1\le\alpha\le 2|m|}f^{(i)}_{\alpha}\theta_{\alpha}~\text{corresponds to}~\phi_{\pi_1}^{(i)}
\]
for each $i$ with $1\le i\le m(\pi_1)$ by the generalized Eichler-Zagier correspondence~(cf.~Theorem \ref{Eichler-ZagierCorresp}). 
The Jacobi cusp forms we are going to describe is $\phi_{\pi_1}^{(i)}(w_l\otimes h_j)(n_Jb)$ for some specified extremal index $(l,j)=(l(j_0,k_0),j_0)$ with some $k_0$ in $0\le k_0\le d_{\Lambda}$~(for the notation $l(j_0,k_0)$ see Section \ref{Result-Hirano}). It is expressed as a sum over $1\le\alpha\le2|m|$ of Jacobi forms denoted by $\phi_{m,\alpha}^{(i)}(\pi_1)(n_Jb)$.
\end{enumerate}
\subsection*{(I)~The case of holomorphic discrete series representation}
Let $\pi_{\lambda}$ be a holomorphic discrete series representation with Harish Chandra parameter $\lambda\in\Xi_{I}$. 
From Theorem \ref{FJ-holom-antiholom-DS} we recall that $\dim\cJ_{\rho_{\pi_1,m},\pi_{\lambda}}(\tl^*)^0=1$ if and only if $m>0$ and $\pi_1=\cD_{n_1}^+$ with $\lambda_2+\frac{3}{2}\le n_1\le\lambda_1+\frac{1}{2}$, 
where recall that $\tl^*$ denotes the contragredient of the minimal $K$-type $\tl$ of $\pi_{\lambda}$. 
We explicitly describe Jacobi forms appearing in the Fourier-Jacobi expansion of cusp forms generating $\pi_{\lambda}$ of weight $\tl^*$~(cf.~Corollary \ref{FJ-exp-fourcases} (1)), which are nothing but holomorphic Siegel cusp forms including vector valued ones.
\subsection*{(I-1)~An example related to Corollary \ref{FJ-exp-fourcases} (1)}
Let $\cS_{\pi_1}(\widetilde{SL}_2(\Z),\Phi_m)$ be the space of $\Phi_m$-valued holomorphic cusp forms of half integral weight $n_1$, where $\lambda_2+\frac{3}{2}\le n_1\le \lambda_1+\frac{1}{2}$. We put $j_0:=\frac{1}{2}$ and $k_0:=n_1-\lambda_2-\frac{3}{2}$. Then we have $l(\frac{1}{2},k_0)=n_1$. 

For each $i$ with $1\le i\le m(\cD_{n_1}^+)$ we see that $c_{\frac{1}{2},k_0}^{(\cD_{n_1}^+)}(a_J)\phi_{\cD_{n_1}^+}^{(i)}(w_{n_1}\otimes h_0)(n_Jb)=$
\[
a_1^{\lambda_1+\lambda_2-n_1+\frac{5}{2}}e^{-2\pi ma_1^2}\sum_{1\le\alpha\le 2m}f_{\alpha}^{(i)}(b)\theta_{\alpha}(\omega_m(b)h_0)(n_J)
\]
for $(n_J,b)\in N_J\times B$, up to constant multiples, where $\sum_{1\le\alpha\le 2m}f_{\alpha}^{(i)}\theta_{\alpha}\in\cS_{\pi_1}(\widetilde{SL}_2(\Z),\Phi_m)$ with holomorphic cusp forms $f_{\alpha}^{(i)}$ with respect to $\widetilde{\Gamma(4m)}$. We recall that holomorphic cusp forms $f_{\alpha}^{(i)}$ have the Fourier expansion
\[
f_{\alpha}^{(i)}(b)=\sum_{l>0,~l\in\frac{1}{4m}\Z}c_{\alpha}^{(i)}(l)y^{n_1}q^l~(q=\e(x+\sqrt{-1}y))
\]
with Fourier coefficients $\{c_{\alpha}^{(i)}(l)\}$. 

For $i$ with $1\le i\le m(\cD_{n_1}^+)$ we introduce
\[
\phi_{m,\alpha}^{(i)}(\cD_{n_1}^+)(n_Jb):=\sum_{
\begin{subarray}{c}
t\in\Z\\
l\in\frac{1}{4m}\Z\setminus\{0\}
\end{subarray}}c_{\alpha}^{(i)}(l)y^{\frac{n_1}{2}+\frac{1}{4}}e^{-2\pi my(t+\frac{\alpha}{2m})^2}\e(m(u_0^2z)+(2mt+\alpha)u_0z)\exp(-2\pi ly)\e(\Tr(T_{t,l}X),
\]
where $T_{t,l}=
\begin{pmatrix}
m & \frac{2mt+\alpha}{2}\\
\frac{2mt+\alpha}{2} & m(t+\frac{\alpha}{2m})^2+l
\end{pmatrix}$ and $X=
\begin{pmatrix}
u_1 & u_2\\
u_2 & x
\end{pmatrix}$. We then have
\[
c_{\frac{1}{2},k_0}^{(\cD_{n_1}^+)}(a_J)\phi_{\cD_{n_1}^+}^{(i)}(w_{n_1}\otimes h_0)(n_Jb)=
a_1^{\lambda_1+\lambda_2-n_1+\frac{5}{2}}e^{-2\pi ma_1^2}\sum_{1\le \alpha\le 2m}\phi_{m,\alpha}^{(i)}(\cD_{n_1}^+)(n_Jb).
\]
The form $\sum_{1\le \alpha\le 2m}\phi_{m,\alpha}^{(i)}(\cD_{n_1}^+)(n_Jb)$ is a holomorphic Jacobi cusp forms on $G_J$ with respect to $G_J(\Z)$.
\subsection*{(I-2)~Another viewpoint in terms of the Fourier expansion along the minimal parabolic subgroup}
For general indexes $(j,k,l)$ with $j\in J$,~$0\le k\le d_{\Lambda}$ and $l\in L_{\cD_{n_1}}$~(for $L_{\cD_{n_1}}$ see (\ref{index-genuine})) it is possible to write $c_{j,k}^{(\cD_{n_1}^+)}(a_J)\phi_{\cD_{n_1}^+}^{(i)}(w_l\otimes h_j)(n_Jb)$ more explicitly. It is obtained inductively from $c_{\frac{1}{2},k_0}(a_J)\phi_{\cD_{n_1}^+}^{(i)}(w_{n_1}\otimes h_0)(n_Jb)$ by the recurrence relations \cite[(6.4),~(6.5)]{Hi-1}, the differential operators $V^{\pm}$~(cf.~(\ref{Infinitesimal-MaassOp})) and the differential operators raising the index of $h_j$, where note that $V^{\pm}$ correspond to the Maass weight raising or lowering operators in the classical setting~(cf.~\cite[Chapter 3,~Section 3.2]{Bu}). 
However, with this approach, to obtain a further more explicit  description of the Fourier-Jacobi expansion in Corollary \ref{FJ-exp-fourcases} (1) we cannot avoid a very complicated computation especially for vector valued holomorphic cusp forms.

Instead we use the theory of the Fourier expansion along the minimal parabolic subgroup~(cf.~\cite{Nr-1}). 
From \cite[Theorem 9.6,~Section 10]{Nr-1}, we can deduce another expression of the Fourier expansion of $F$. Using the coordinate $n(u_0,u_1,u_2,u_3)a_0$ for $N_0A_0$~(cf.~Section \ref{Groups}) we have $F(n(u_0,u_1,u_2,u_3)a_0)=$
\[
\sum_{T=
\left(
\begin{smallmatrix}
t_1 & t_2/2\\
t_2/2 & t_3
\end{smallmatrix}\right)>0}\sum_{k=0}^{d_{\Lambda}}C_T(F)a_1^{\Lambda_1-k}a_2^{\Lambda_2+k}(u_0+t_2/2t_1)^k\e(\Tr(T
\begin{pmatrix}
(a_1^2+a_2^2u_0^2)\sqrt{-1}+u_1 & a_2^2u_0\sqrt{-1}+u_2\\
a_2^2u_0\sqrt{-1}+u_2 & a_2^2\sqrt{-1}+u_3
\end{pmatrix}))v_k^*,
\]
where $T$ ranges over the set of positive definite semi-integral matrices of degree two and $C_T(F)$ denotes the Fourier coefficient for $T$. 
For this we recall that $\Lambda=(\Lambda_1,\Lambda_2)=(\lambda_1+1,\lambda_2+2)$ has denoted the Blattner parameter of the holomorphic discrete series $\pi_{\lambda}$ with $\lambda=(\lambda_1,\lambda_2)\in\Xi_I$~(cf.~Section \ref{DS-rep}). 
This is noting but a reformulation of the classical Fourier expansion of $F$ by the coordinate of $N_0A_0$. 
Following the well known manner as in \cite[Theorem 6.1]{E-Z}, we have the Fourier-Jacobi expansion
\[
F(n(u_0,u_1,u_2,u_3)a_0)=\sum_{m\in\Z_{>0}}\sum_{k=0}^{d_{\Lambda}}a_1^{\Lambda_1-k}\e(m(a_1^2\sqrt{-1}+u_1))
\phi^{(k)}_m(u_0,u_2,u_3,a_2)v_k^*
\]
from the classical expansion above, where $\phi_m^{(k)}(u_0,u_2,u_3,a_2):=$
\[
\sum_{\begin{subarray}{c}
t_2,t_3\in\Z\\
4mt_3-t_2^2>0
\end{subarray}}C_{\left(
\begin{smallmatrix}
m & t_2/2\\
t_2/2 & t_3
\end{smallmatrix}\right)}(F)a_2^{\Lambda_2+k}(u_0+t_2/2m)^k\e(m(a_2^2u_0^2\sqrt{-1})+t_2(a_2^2u_0\sqrt{-1}+u_2)+
t_3(a_2^2\sqrt{-1}+u_3)).
\]
The function $\e(mu_1)\phi_m^{(k)}(u_0,u_2,u_3,a_2)$ is a cuspidal Jacobi form on $G_J$, whose left $G_J(\Z)$-invariance follows from that of $F$. 

We now put 
\[
h_{\alpha}(b):=\sum_{N>0}c_{\alpha}(N)\sqrt{y}^{\Lambda_2-\frac{1}{2}}q^{\frac{N}{4m}}
\]
with
\[
c_{\alpha}(N):=
\begin{cases}
C_{\left(
\begin{smallmatrix}
m & t_2/2\\
t_2/2 & \frac{N+t_2^2}{4m}
\end{smallmatrix}\right)}(F)&(t_2\equiv \alpha\mod 2m,~N\equiv -\alpha^2\mod 4m)\\
0&(\text{otherwise})
\end{cases},
\]
and replace the coordinate $n(u_0,u_1,u_2,u_3)a_0$ by $n_Jn(0,0,0,x)\diag(a_1,\sqrt{y},a_1^{-1},\sqrt{y}^{-1})$, which coincides with $n(u_0,u_1+u_0^2x,u_2+u_0x,x)\diag(a_1,\sqrt{y},a_1^{-1},\sqrt{y}^{-1})$. 
Let us apply the theta decomposition~(cf.~\cite[Section 5]{E-Z}) to $\phi_m^{(k)}$ and obtain
\[
\e(m(u_1+u_0^2x))\phi_m^{(k)}(u_0,u_2+u_0x,x,\sqrt{y})=\sum_{1\le \alpha\le 2m}h_{\alpha}(b)\theta_{\alpha}(\omega(b)\eta_k)(n_J)
\]
with a function $\eta_k(t):=t^{k}\exp(-2\pi myt^2)\in L^2(\R)$ in $t$. 
We should remark that $h_{\alpha}$ is independent of $k$. In addition, for the definition of $c_{\alpha}(N)$, we remark that it depends only on $t_2\mod 2m$ and $\det(T)=N$. This is verified by the well known property $C_{{}^t\gamma T\gamma}(F)=C_T(F)$ for $\gamma\in SL_2(\Z)$, which follows from the left $Sp(2,\Z)$-invariance of $F$. The $G_J(\Z)$-invariance of $\e(mu_0)\phi_m^{(0)}(u_0,u_2,u_3,a_2)$ implies that 
\[
\sum_{1\le \alpha\le 2m}h_{\alpha}(b)\theta_{\alpha}\in\cS_{\cD_{\Lambda_2-\frac{1}{2}}^+}(\widetilde{SL}_2(\Z);\Phi_m),
\]
which follows from the argument as in the former part of the proof for Theorem \ref{Eichler-ZagierCorresp}. 
To explain this in more detail we claim that $h_{\alpha}$ is proved to be holomorphic by its Fourier expansion and its weight is verified to be $\Lambda_2-\frac{1}{2}$ since the weight of $v_0^*$ with respect to $\sqrt{-1}
T_2=\frac{1}{2}(H-Z)$ is $\Lambda_2$~(cf.~Section \ref{RepMaxCpt}) and $\theta_{\alpha}(\omega_m(b)\eta_0)(n_J)~(=\theta_{m,\alpha})$ is of weight $\frac{1}{2}$.  

It is curious that the present argument needs only a single $\cS_{\cD_{\Lambda_2-\frac{1}{2}}^+}(\widetilde{SL}_2(\Z);\Phi_m)$ to describe the $F_m$-term of the Fourier-Jacobi expansion of $F$ while the description of $F_m$ by Corollary \ref{FJ-exp-fourcases} (1) needs $\cS_{\cD_{n_1}}^+(\widetilde{SL}_2(\Z),\Phi_m)$ for $n_1$ with $\lambda_2+\frac{3}{2}\le n_1\le \lambda_1+\frac{1}{2}$. 
Here we note that $\Lambda_2-\frac{1}{2}=\lambda_2+\frac{3}{2}$.

We can verify that $\e(m(u_1+u_0^2x))\phi_m^{(0)}(u_0,u_2+u_0x,x,\sqrt{y})$ is a holomorphic Jacobi cusp form on $G_J$ for the case of $n_1=\lambda_2+\frac{3}{2}$, given in (I-1). 
Now let us introduce the root vectors $E_{e_1\pm e_2}$  of the restricted roots $e_1\pm e_2$ for the Lie algebra of $A_0$~(cf.~\cite[Section 1.4]{Nr-1}). Then we further remark that $\phi_m^{(k)}$ is inductively obtained from $\phi_m^{(0)}$ by the recurrence relation
\[
\phi_m^{(k+1)}=-\frac{1}{8\pi m a_1}(dR(E_{e_1-e_2})-4\pi\sqrt{-1}dR(E_{e_1+e_2}))\phi_m^{(k)}-k\phi_m^{(k-1)},~\phi_m^{(-1)}\equiv 0\quad(0\le k< d_{\Lambda}),
\]
where $dR$ denotes the differential of the right translation $R$ of $G_J$. 
For this we note that the differential operator $\frac{1}{a_1}(dR(E_{e_1-e_2})-4\pi\sqrt{-1}dR(E_{e_1+e_2}))$ coincides with the infinitesimal action $a_2^{-1}d\eta(E_{e_1-e_2})+4\pi a_2\sqrt{-1}d\eta(E_{e_1+e_2})$~(cf.~\cite[p569]{Nr-1}), where $d\eta$ denotes the differential of the ``generic representation'' $\eta$ of $N_0$~(cf.~\cite[Remark 4.4 (1)]{Nr-1}) with the parameter $\xi_1=m$.
\begin{rem}
(1)~The theory of the Fourier expansion along the minimal parabolic subgroup is known only for holomorphic modular forms on tube domains~(cf.~\cite{Nr-1},~\cite{Nr-2},~\cite{Nr-3}). For non-holomorphic cases there is completely no such theory.\\
(2)~In the classical setting  Ibukiyama-Kyomura \cite{Ib-Ky} discussed the notion of vector valued Jacobi forms  which appear in the Fourier-Jacobi expansions of holomorphic vector valued Siegel modular forms of degree two.  They constructed a differential operator inducing a linear isomorphism between a space of the aforementioned vector valued Jacobi forms and a direct sum of some spaces of holomorphic scalar valued Jacobi forms. This work yields another way to know that the Fourier-Jacobi coefficients of holomorphic vector valued Siegel modular forms are the functions obtained as the translations of holomorphic scalar valued Jacobi forms by differential operators.  
\end{rem}
\subsection*{(II)~The case of large discrete series representations with Harish Chandra parameters in $\Xi_{II}$}
Let $\pi_{\lambda}$ be a large discrete series representation with Harish Chandra parameter $\lambda\in\Xi_{II}$. From Theorems \ref{explicit-FJ-I},~\ref{explicit-FJ-II} we recall that $\dim\cJ_{\rho_{\pi_1,m},\pi_{\lambda}}(\tl^*)^0=1$ if and only if 
\begin{itemize}
\item $m>0$ and $\pi_1=\cP_s^{\tau}~(\tau=\pm\frac{1}{2},~s\in\sqrt{-1}\R)$ or $\cC_s^{\tau}~(\tau=\pm\frac{1}{2},~0<s<\frac{1}{2})$,
\item $m>0$ and $\pi_1=\cD_{n_1}^-$ with $n_1\in\frac{1}{2}\Z_{\ge 3}\setminus\Z$ and $n_1\le-\lambda_2+\frac{1}{2}$,
\item $m<0$ and $\pi_1=\cD_{n_1}^+$ with $n_1\in\frac{1}{2}\Z_{\ge 3}\setminus\Z$ and $n_1>\lambda_1+\frac{1}{2}$,
\end{itemize}
where recall that $\tl$ denotes the minimal $K$-type of $\pi_{\lambda}$. The examples below for this case appear in the Fourier-Jacobi expansions of cusp forms generating $\pi_{\lambda}$ of weight $\tl^*$~(cf.~Corollary \ref{FJ-exp-fourcases} (2)). 
It is worthwhile to remark that Jacobi forms in the second and third cases (c.f~(II-2),~(II-3)) are skew-holomorphic Jacobi cusp forms.
\subsection*{(II-1)~The case of $\lambda\in\Xi_{II}$,~$\pi_1=\cP_s^{\tau}$ and $m>0$}
For this case $\cS_{\pi_1}(\widetilde{SL}_2(\Z);\Phi_m)$ can be referred to as the space of $\Phi_m$-valued Maass cusp forms of weight $\tau\in\{\pm\frac{1}{2}\}$. Let $\tau=-\frac{1}{2}$ and $j_0=\frac{1}{2}$ and take $k_0$ so that $l(\frac{1}{2},k_0)=-\frac{1}{2}$, then $k_0=-\lambda_2\ge0$. 

For each $i$ with $1\le i\le m(\cP_s^{-1/2})$ we see that $c_{\frac{1}{2},-\lambda_2}^{(\cP_s^{-1/2})}(a_J)\phi_{\cP_s^{\tau}}^{(i)}(w_{-\frac{1}{2}}\otimes h_0)(n_Jb)$
\[
=e^{2\pi ma_1^2}G_{2,3}^{3,0}\left(4\pi m a_1^2\left|
\begin{array}{c}
\dfrac{2s+5+2d_{\Lambda}}{4},\dfrac{-2s+5+2d_{\Lambda}}{4}\\
\dfrac{\lambda_1+2}{2},\dfrac{\lambda_1+3}{2},\dfrac{d_{\Lambda}+2}{2}
\end{array}\right.\right)\sum_{1\le \alpha\le 2m}f_{\alpha}^{(i)}(b)\theta_{\alpha}(\omega_m(b)h_0)(n_J)
\]
for $(n_J,b)\in N_J\times B$.
 Each cusp form $f^{(i)}_{\alpha}$ has a Fourier expansion
\[
f_{\alpha}^{(i)}(b)=\sum_{l\not=0,~l\in\frac{1}{4m}\Z}c_{\alpha}^{(i)}(l)W_{-{\rm sign}(l)\frac{1}{4},s}(4\pi|l|y)\e(lx)
\]
with Fourier coefficients $\{c_{\alpha}^{(i)}(l)\}$ (for $W_{\kappa,\mu}$ see \cite[Chapter 16]{W-W} and Theorem \ref{LargeDSWhittaker}). 
By solving the differential equation arising from the infinitesimal action of the Casimir operator, $W_{-{\rm sign}(l)\frac{1}{4},s}(4\pi|l|y)\e(lx)$ is verified to be a Whittaker function on $\widetilde{SL}_2(\R)$ for $\cP_s^{-1/2}$ with respect to the additive character $\e(lx)$ indexed by $l$. As a reference of the Fourier expansion we cite \cite[Section 2]{Sar}. For this we note that the parameter $s$ is replaced by $s-\frac{1}{2}$ in \cite[Section 2]{Sar}. The difference of the parameter $s$ is understood by noting that $\cP_s^{-1/2}$ is realized by the normalized parabolic induction. 

For $i$ with $1\le i\le m(\cP_s^{-1/2})$ we introduce 
\[
\phi_{m,\alpha}^{(i)}(\cP_s^{-1/2})(n_Jb):=f_{\alpha}^{(i)}(b)\theta_{\alpha}(\omega_m(b)h_0)(n_J).
\]
This can be written as
\[
\sum_{
\begin{subarray}{c}
t\in\Z\\
l\in\frac{1}{4m}\Z\setminus\{0\}
\end{subarray}}c_{\alpha}^{(i)}(l)y^{\frac{1}{4}}e^{-2\pi my(t+\frac{\alpha}{2m})^2}\e(m(u_0^2z)+(2tm+\alpha)u_0z)W_{-{\rm sign}(l)\frac{1}{4},s}(4\pi|l|y)\e(\Tr(T_{t,l}X),
\]
where $T_{t,l}=
\begin{pmatrix}
m & \frac{2tm+\alpha}{2}\\
\frac{2tm+\alpha}{2} & m(t+\frac{\alpha}{2m})^2+l
\end{pmatrix}$ and $X=
\begin{pmatrix}
u_1 & u_2\\
u_2 & x
\end{pmatrix}$. We then have 
\begin{align*}
&c_{\frac{1}{2},-\lambda_2}^{(\cP_s^{-1/2})}(a_J)\phi_{\cP_s^{-1/2}}^{(i)}(w_{-\frac{1}{2}}\otimes h_0)(n_Jb)=\\
&e^{2\pi ma_1^2}G_{2,3}^{3,0}\left(4\pi ma_1^2\left|
\begin{array}{c}
\dfrac{2s+5+2d_{\Lambda}}{4},\dfrac{-2s+5+2d_{\Lambda}}{4}\\
\dfrac{\lambda_1+2}{2},\dfrac{\lambda_1+3}{2},\dfrac{d_{\Lambda}+2}{2}
\end{array}\right.\right)\sum_{1\le \alpha\le 2m}\phi_{m,\alpha}^{(i)}(\cP_{s}^{-1/2})(n_Jb).
\end{align*}
The form $\sum_{1\le \alpha\le 2m}\phi_{m,\alpha}^{(i)}(\cP_{s}^{-1/2})(n_Jb)$ can be referred to as a Maass Jacobi cusp form on $G_J$.
\subsection*{(II-2)~The case of $\lambda\in\Xi_{II}$,~$\pi_1=\cD^-_{n_1}$ and $m>0$}
For this case $\cS_{\pi_1}(\widetilde{SL}_2(\Z),\Phi_m)$ is the space of $\Phi_m$-valued anti-holomorphic cusp forms of half  integral weight $-n_1$, where $3/2\le n_1\le-\lambda_2+\frac{1}{2}$. Let us take $j_0=\frac{1}{2}$ and $k_0=-n_1+\frac{1}{2}-\lambda_2$. Then $l(j_0,k_0)=-n_1$. For each $i$ with $1\le i\le m(\cD_{n_1}^-)$ we see that $c_{\frac{1}{2},k_0}^{(\cD_{n_1}^-)}(a_J)\phi_{\cD_{n_1}^-}^{(i)}(w_{-n_1}\otimes h_0)(n_Jb)=$
\[
(2\pi ma_1^2)^{\frac{3+2\lambda_1}{4}}W_{-\frac{1}{4}(1+2k_0),-\frac{1}{4}}(2\pi ma_1^2)\sum_{1\le\alpha\le 2m}f_{\alpha}^{(i)}(b)\theta_{\alpha}(\omega_m(b)h_0)(n_J)
\]
for $(n_J,b)\in N_J\times B$, where $\sum_{1\le\alpha\le 2m}f_{\alpha}^{(i)}\theta_{\alpha}\in\cS_{\cD_{n_1}^{-}}(\widetilde{SL}_2(\Z),\Phi_m)$ with anti-holomorphic cusp forms $f_{\alpha}^{(i)}$ with respect to $\widetilde{\Gamma(4m)}$. We note that each  $f_{\alpha}^{(i)}$ has a Fourier expansion
\[
f_{\alpha}^{(i)}(b)=\sum_{l>0,~l\in\frac{1}{4m}\Z}c_{\alpha}^{(i)}(l)y^{n_1/2}\bar{q}^l
\]
with Fourier coefficients $\{c_{\alpha}^{(i)}(l)\}$, and that $\theta_{\alpha}(\omega_m(b)h_0)(n_J)$ has been  described in the beginning of this section. Let 
\[
\phi_{m,\alpha}^{(i)}(\cD_{n_1}^-)(n_Jb):=f_{\alpha}^{(i)}(b)\theta_{\alpha}(\omega_m(b)h_0)(n_J),
\]
which can be written as
\[
\sum_{
\begin{subarray}{c}
t\in\Z\\
l\in\frac{1}{4m}\Z_{<0}
\end{subarray}}c_d^{(i)}(-l)y^{\frac{n_1}{2}+\frac{1}{4}}e^{-2\pi my(t+\frac{\alpha}{2m})^2}\e(m(u_0^2z)+(2mt+\alpha)u_0z)\exp(2\pi ly)\e(\Tr(T_{t,l}X)).
\]
We then have $c_{\frac{1}{2},k_0}^{(\cD_{n_1}^-)}(a_J)\phi_{\cD_{n_1}^-}^{(i)}(w_{-n_1}\otimes h_0)(n_Jb)=$
\[
(2\pi ma_1^2)^{\frac{3+2\lambda_1}{4}}W_{-\frac{1}{4}(1+2k_0),-\frac{1}{4}}(2\pi ma_1^2)\sum_{1\le \alpha\le 2m}\phi_{m,\alpha}^{(i)}(\cD_{n_1}^-)(n_Jb).
\]
The sum $\sum_{1\le\alpha\le 2m}\phi_{m,\alpha}^{(i)}(\cD_{n_1}^-)(nb)$ is a skew-holomorphic Jacobi cusp form on $G_J$ since it is a sum of products of anti-holomorphic cusp forms and holomorphic Jacobi theta series.\\ 
\subsection*{(II-3)~The case of $\lambda\in\Xi_{II},~\pi_1=\cD_{n_1}^+$ and $m<0$}
The space $\cS_{\pi_1}(\widetilde{SL}_2(\Z),\Phi_m)$ is the space of $\Phi_m$-valued holomorphic cusp forms of half integral weight $n_1$, where $n_1>\lambda_1+\frac{1}{2}$. We put $j_0=-\frac{1}{2}$ and $k_0:=n_1-\lambda_2-\frac{1}{2}$. Then $l(-\frac{1}{2},k_0)=n_1$. For this we note that $k_0>\lambda_1+\frac{1}{2}-\lambda_2-\frac{1}{2}=\lambda_1-\lambda_2=d_{\Lambda}-1$. Thereby $k_0=d_{\Lambda}$ is the only one possible choice of $k_0$. With this choice $n_1=\lambda_1+\frac{3}{2}>\frac{3}{2}$. 

For each $i$ with $1\le i\le m(\cD_{n_1}^+)$ we see that $c_{-\frac{1}{2},d_{\Lambda}}^{(\cD_{n_1}^+)}(a_J)\phi_{\cD_{n_1}^+}^{(i)}(w_{n_1}\otimes h_0)(n_Jb)=$
\[
e^{-2\pi ma_1^2}G_{2,3}^{3,0}\left(-4\pi ma_1^2\left|
\begin{array}{c}
\dfrac{2\lambda_1+6}{4},\dfrac{2\lambda_1+8}{4}\\
\dfrac{\lambda_1+2}{2},\dfrac{\lambda_1+3}{2},\dfrac{-\lambda_2+2}{2}
\end{array}\right.\right)\sum_{1\le\alpha\le 2|m|}f_{\alpha}^{(i)}(b)\theta_{\alpha}(\omega_m(b)h_0)(n_J)
\]
for $(n_J,b)\in N_J\times B$, where $\sum_{1\le \alpha\le 2|m|}f_{\alpha}^{(i)}\theta_{\alpha}\in \cS_{\pi_1}(\widetilde{SL}_2(\Z),\Phi_m)$ with holomorphic cusp forms $f_{\alpha}^{(i)}$ with respect to $\widetilde{\Gamma(4m)}$. 
Each $f_{\alpha}^{(i)}$ has the Fourier expansion
\[
f_{\alpha}^{(i)}(b)=\sum_{l>0,~l\in\frac{1}{4m}\Z}c_{\alpha}^{(i)}(l)y^{n_1/2}q^l
\]
with Fourier coefficients $\{c_{\alpha}^{(i)}(l)\}$. 
Put 
\[
\phi_{m,\alpha}^{(i)}(\cD_{n_1}^+)(n_Jb):=f_{\alpha}^{(i)}(b)\theta_{\alpha}(\omega_m(b)h_0)(n_J),
\]
which coincides with 
\[
\sum_{
\begin{subarray}{c}
t\in\Z\\
l\in\frac{1}{4m}\Z_{<0}
\end{subarray}}c_{\alpha}^{(i)}(l)y^{\frac{n_1}{2}+\frac{1}{4}}e^{2\pi my(t+\frac{\alpha}{2m})^2}\e(m(u_0^2\bar{z})+(2mt+\alpha)u_0\bar{z})\exp(-2\pi ly)\e(\Tr(T_{t,l}X).
\]
Then we have $c_{-\frac{1}{2},d_{\Lambda}}^{(\cD_{n_1}^+)}(a_J)\phi_{\cD_{n_1}^+}^{(i)}(w_{n_1}\otimes h_0)(n_Jb)=$
\[
e^{-2\pi ma_1^2}G_{2,3}^{3,0}\left(-4\pi ma_1^2\left|
\begin{array}{c}
\dfrac{2\lambda_1+6}{4},\dfrac{2\lambda_1+8}{4}\\
\dfrac{\lambda_1+2}{2},\dfrac{\lambda_1+3}{2},\dfrac{-\lambda_2+2}{2}
\end{array}\right.\right)\sum_{1\le\alpha\le 2|m|}\phi_{m,\alpha}^{(i)}(\cD_{n_1}^+)(n_Jb).
\]
The sum $\sum_{1\le\alpha\le 2m}\phi_{m,\alpha}^{(i)}(\cD_{n_1}^+)(n_Jb)$ is a skew-holomorphic Jacobi cusp form on $G_J$ since it is a sum of products of holomorphic cusp forms and anti-holomorphic Jacobi theta series.
\subsection*{(III)~The case of $P_J$-principal series representations with $\sigma=(\cD_{n}^+,\epsilon)$}
Let $\pi$ be an irreducible $P_J$-principal series representation with $\sigma=(\cD_n^+,\epsilon)$ and let $\tl$ be the corner $K$-type of $\pi$. 
From Theorems \ref{explicit-FJ-PJ-I},~\ref{explicit-FJ-PJ-II},~\ref{explicit-FJ-PJ-III} and \ref{explicit-FJ-PJ-IV} we recall that there is a possibility of $\dim\cJ_{\rho_{\pi_1,m},\pi}(\tl^*)^0=1$ only if
\begin{itemize}
\item $m>0$ and $\pi_1=\cP_s^{\tau}~(\tau=\pm\frac{1}{2},~s\in\sqrt{-1}\R)$, $\cC_s^{\tau}~(\tau=\pm\frac{1}{2},~0<s<\frac{1}{2})$ or $\cD_{\frac{1}{2}}^-$ with $n=1$,
\item $m>0$ and $\pi_1=\cD_{n_1}^+$ with $n_1\in\frac{1}{2}\Z_{\ge 3}\setminus\{0\}$ and $n_1\le n-\frac{1}{2}$,
\item $m<0$ and $\pi_1=\cD_{n_1}^+$ with $n_1\in\frac{1}{2}\Z_{\ge 3}\setminus\Z$ and $n_1>n-\frac{1}{2}$.
\end{itemize}
Throughout the case (III) we let 
\[
\delta:=
\begin{cases}
0&(\text{when the corner $K$-type is one dimensional})\\
1&(\text{when the corner $K$-type is two dimensional})
\end{cases}.
\] 
The examples below appear in the Fourier-Jacobi expansions of cusp forms generating $\pi$ of wight $\tl^*$~(cf.~Corollary \ref{FJ-exp-fourcases} (3)). 
\subsection*{(III-1) The case of $\pi_1=\cP_s^{\tau}$ and $m>0$}
As in the case of (II-1) $\cS_{\pi_1}(\tilde{SL}_2(\Z);\Phi_m)$ is the space of $\Phi_m$-valued Maass cusp forms of weight $\tau\in\{\pm\displaystyle\frac{1}{2}\}$. 
Suppose that $\tau=
\begin{cases}
-\displaystyle\frac{1}{2}&(\text{$n$:even})\\
\displaystyle\frac{1}{2}&(\text{$n$:odd})
\end{cases}$. We take $(j_0,k_0)=(\dfrac{1}{2},\delta)$. Then $l(j_0,k_0)=n-\dfrac{1}{2}$. 
Let $\{\sum_{1\le\alpha\le  2m}f_{\alpha}^{(i)}\theta_{\alpha}\}_{1\le i\le m(\cP_{s}^{\tau})}$ be a basis of $\cS_{\cP_{s}^{\tau}}(SL_2(\Z);\Phi_m)$ and recall that we have let $\{\phi_{\cP_{s}^{\tau}}^{(i)}\}_{1\le i\le m(\cP_{s}^{\tau})}$ be a corresponding basis of $\Hom_{G_J}(\rho_{m,\cP_{s}^{\tau}},\cH_m^0)$. 
We have 
\begin{align*}
c_{\frac{1}{2},\delta}^{(\cP_{s}^{1/2})}(a_J)\phi_{\cP_{s}^{1/2}}^{(i)}(w_{n-1/2}&\otimes h_{0})(n_Jb)=
e^{2\pi ma_1^2}G_{2,3}^{3,0}\left(4\pi ma_1^2\left|
\begin{array}{c}
\displaystyle\frac{z_0+\delta+5/2}{2},\displaystyle\frac{-z_0+\delta+5/2}{2}\\
\displaystyle\frac{n+\delta+1}{2},\displaystyle\frac{z+2}{2},\displaystyle\frac{-z+2}{2}
\end{array}
\right.\right)\\
&\times\sum_{1\le \alpha\le 2m}(\frac{1}{\prod_{1\le l\le \frac{n-1}{2}}(s+2l-1/2)}\cdot (2V^+)^{\frac{n-1}{2}}\cdot f_{\alpha}^{(i)})(b)\theta_{\alpha}(\omega_m(b)h_0)(n_J),
\end{align*}
when $n$ is odd and $\tau=\displaystyle\frac{1}{2}$, and 
\begin{align*}
c_{\frac{1}{2},\delta}^{(\cP_{s}^{-1/2})}(a_J)\phi_{\cP_s^{-1/2}}^{(i)}(w_{n-1/2}&\otimes h_{0})(n_Jb)=e^{2\pi ma_1^2}G_{2,3}^{3,0}\left(4\pi ma_1^2\left|
\begin{array}{c}
\displaystyle\frac{z_0+\delta+5/2}{2},\displaystyle\frac{-z_0+\delta+5/2}{2}\\
\displaystyle\frac{n+\delta+1}{2},\displaystyle\frac{z+2}{2},\displaystyle\frac{-z+2}{2}
\end{array}
\right.\right)\\
&\times\sum_{1\le \alpha\le 2m}(\frac{1}{\prod_{1\le l\le\frac{n}{2}}(s+2l-3/2)}\cdot (2V^+)^{n/2}\cdot f_{\alpha}^{(i)})(b)\theta_{\alpha}(\omega_m(b)h_0)(n_J),
\end{align*}
when $n$ is even and $\tau=-\displaystyle\frac{1}{2}$. 
Here we have remarked in (I-2) that the infinitesimal action by $V^+$~(cf.~(\ref{Infinitesimal-MaassOp})) is known as the Maass weight raising operator in the classical setting.

As in (II-1) one wants to understand the Jacobi cusp forms $\phi^{(i)}_{m,\alpha}(\cP_{s}^{\tau})$ explicitly also for this case. However, the higher derivatives of $f_{\alpha}^{(i)}$ with respect to $V^+$ are too complicated to write down explicitly. We do not thus 
go into further details. 
\subsection*{(III-2)~The case of $\pi_1=\cD_{n_1}^+$ and $m>0$}
As in (I-1) and (II-3) $\cS_{\pi_1}(\widetilde{SL}_2(\Z),\Phi_m)$ denotes the space of $\Phi_m$-valued holomorphic cusp forms of half integral weight $n_1$, where $n_1\le n-\displaystyle\frac{1}{2}$. 

As in (III-1) we put $(j_0,k_0)=(\displaystyle\frac{1}{2},\delta)$. Then $l(\frac{1}{2},\delta)=n-\displaystyle\frac{1}{2}$. 
Suppose $n_1+\displaystyle\frac{1}{2}\equiv n\mod 2$ and $n_1<n-\frac{1}{2}$. We have
\begin{align*}
c_{\frac{1}{2},\delta}^{(\cD_{n_1}^+)}(a_J)\phi_{\cD_{n_1}^+}^{(i)}(w_{n-\frac{1}{2}}&\otimes h_0)(n_Jb)=e^{2\pi ma_1^2}G_{2,3}^{3,0}\left(4\pi ma_1^2\left|~
\begin{array}{c}
\displaystyle\frac{n_1+\delta+3/2}{2},\displaystyle\frac{-n_1+\delta+7/2}{2}\\
\displaystyle\frac{n+\delta+1}{2},\displaystyle\frac{z+2}{2},\displaystyle\frac{-z+2}{2}
\end{array}\right.\right)\\
&\times\sum_{1\le \alpha\le 2m}(\displaystyle\frac{1}{\prod_{1\le l\le\frac{n-n_1-1/2}{2}}(2n_1+2(l-1))}(2V^+)^{\frac{n-n_1-1/2}{2}}\cdot f_{\alpha}^{(i)})(b)\theta_{\alpha}(\omega_m(b)h_0)(n_J).
\end{align*}
For $n_1=n-\displaystyle\frac{1}{2}$ we have $c_{\frac{1}{2},\delta}^{(\cD_{n-\frac{1}{2}}^+)}(a_J)\phi_{\cD_{n-\frac{1}{2}}}^{(i)}(w_{n-\frac{1}{2}}\otimes h_0)(n_Jb)=$
\[
e^{2\pi ma_1^2}G_{2,3}^{3,0}\left(4\pi ma_1^2\left|~
\begin{array}{c}
\displaystyle\frac{n+\delta+1}{2},\displaystyle\frac{-n+\delta+4}{2}\\
\displaystyle\frac{n+\delta+1}{2},\displaystyle\frac{z+2}{2},\displaystyle\frac{-z+2}{2}
\end{array}\right.\right)\sum_{1\le \alpha\le 2m}\phi_{m,\alpha}^{(i)}(\cD_{n-\frac{1}{2}}^+)(n_Jb).
\]
Here we put 
\[
\phi_{m,\alpha}^{(i)}(\cD_{n_1}^+)(n_Jb):=f_{\alpha}^{(i)}(b)\theta_{\alpha}(\omega_m(b)h_0)(n_J),
\]
which coincides with 
\[
\sum_{
\begin{subarray}{c}
t\in\Z\\
l\in\frac{1}{4m}\Z_{>0}
\end{subarray}}c_d(l)y^{\frac{n_1}{2}+\frac{1}{4}}e^{-2\pi my(t+\frac{\alpha}{2m})^2}\e(m(u_0^2z)+(2mt+\alpha)u_0z)\exp(-2\pi ly)\e(\Tr(T_{t,l}X),
\]
where  $T_{t,l}$ and $X$ are as in (I-1). 
The form $\sum_{1\le \alpha\le 2m}\phi_{m,\alpha}^{(i)}(\cD_{n-\frac{1}{2}}^+)(n_Jb)$ is a holomorphic Jacobi cusp form since this is a sum of products of holomorphic cusp forms and holomorphic Jacobi theta series.
\subsection*{(III-3)~The case of $\pi_1=\cD_{n_1}^+$ and $m<0$}
Let  $\cS_{\pi_1}(\widetilde{SL}_2(\Z),\Phi_m)$ and $\sum_{1\le \alpha\le 2|m|}f_{\alpha}^{(i)}\theta_{\alpha}$ be as in (III-2), and assume $n_1>n-\displaystyle\frac{1}{2}$. 

We take $(j_0,k_0)=(n-n_1,\delta)$, and then $l(n-n_1,\delta)=n_1$. We have
\begin{align*}
c_{n-n_1,\delta}^{(\cD_{n_1}^+)}(a_J)\phi_{\cD_{n_1}^+}^{(i)}(w_{n_1}\otimes h_{n_1-n-\frac{1}{2}})(n_Jb)&=e^{-2\pi ma_1^2}G_{2,3}^{3,0}\left(-4\pi ma_1^2\left|~
\begin{array}{c}
\displaystyle\frac{n_1+\frac{3}{2}}{2},\displaystyle\frac{n_1+\frac{5}{2}}{2}\\
\displaystyle\frac{n+\delta+1}{2},\displaystyle\frac{z+2}{2},\displaystyle\frac{-z+2}{2}
\end{array}\right.\right)\\
&\times\sum_{1\le \alpha\le 2m}f_{\alpha}^{(i)}(b)\theta_{\alpha}(\omega_m(b)h_{n_1-n-\frac{1}{2}})(n_J).
\end{align*}
We introduce a new kind of a Jacobi form 
\[
\phi_{m,\alpha}^{(i)}(\cD_{n_1}^+)(n_Jb):=f_{\alpha}^{(i)}(b)\theta_{\alpha}(\omega_m(b)h_{n_1-n+\frac{1}{2}})(n_J),
\]
which equals to
\begin{align*}
\sum_{
\begin{subarray}{c}
t\in\Z\\
l\in\frac{1}{4|m|}\Z_{>0}
\end{subarray}}c_{\alpha}^{(i)}(l)y^{\frac{2n_1+1}{4}}H_{n_1-n-\frac{1}{2}}(\sqrt{y}(u_0+t+\frac{\alpha}{2m}))e^{2\pi m(y+\frac{\alpha}{2m})^2}\\
\times\e(m(u_0^2\bar{z})+(2mt+\alpha)u_0\bar{z})\exp(-2\pi ly)\e(\Tr(T_{t,l}X)
\end{align*}
with $T_{t,l}$ and $X$ as in (I-1). We have 
\begin{align*}
&c_{n-n_1,\delta}^{(\cD_{n_1}^+)}(a_J)\phi_{\cD_{n_1}^+}^{(i)}(w_{n_1}\otimes h_{n_1-n-\frac{1}{2}})(n_Jb)=\\
&e^{-2\pi ma_1^2}G_{2,3}^{3,0}\left(-4\pi ma_1^2\left|~
\begin{array}{c}
\displaystyle\frac{n+\frac{3}{2}}{2},\displaystyle\frac{n_1+\frac{5}{2}}{2}\\
\displaystyle\frac{n+\delta+1}{2},\displaystyle\frac{z+2}{2},\displaystyle\frac{-z+2}{2}
\end{array}\right.\right)\sum_{1\le \alpha\le 2|m|}\phi_{m,\alpha}^{(i)}(\cD_{n_1}^+)(n_Jb).
\end{align*}
\subsection*{(IV)~The case of principal series representations}
In what follows, let $\pi$ be an even principal series representation with $\sigma=(1,1)$ or an odd principal series representation with $\sigma=(1,-1)$. The notion of even principal series includes spherical principal series while odd principal series are non-spherical. From Section \ref{PS-rep} we recall that $\pi$ has $\tau_{(0,0)}$~(respectively~$\tau_{(1,0)}$ and $\tau_{(0.-1)}$) as a minimal $K$-type when $\pi$ is former~(respectively~the latter). Note that the multiplicity one $K$-type of $\pi$ is of the form $\tau_{(n,n)}$ or $\tau_{(n,n-1)}$ with $n\in\Z$~(cf.~Section \ref{PS-rep}) and that $n=0$~(respectively~$n=1$) for $\tau_{(0,0)}$ or $\tau_{(0,-1)}$~(respectively~$\tau_{(1,0)}$). To illustrate the examples of Jacobi cusp forms uniformly for the two cases we use the notation 
\[
\delta:=
\begin{cases}
0 & (\text{when $\pi$ is even and $\sigma=(1,1)$})\\
1 & (\text{when $\pi$ is odd and $\sigma=(1,-1)$})
\end{cases}
\]
and 
\[
(\tilde{z},\tilde{z}'):=
\begin{cases}
(z_2,z_1) & (\text{when $\pi$ is odd, $\sigma=(1,-1)$ and has the minimal $K$-type $\tau_{(1,0)}$})\\
(z_1,z_2) & (\text{otherwise})
\end{cases}
\]
(recall that $(\tilde{z},\tilde{z}')$ is introduced just before Theorem \ref{explicit-FJ-P-II}).

From Theorems \ref{explicit-FJ-P-I} and \ref{explicit-FJ-P-II} we know that there is a possibility of $\dim\cJ_{\rho_{\pi_1,m},\pi}(\tl^*)^{00}=1$ only if
\begin{itemize}
\item $m>0$ and $\pi_1=\cP_s^{\tau}~(\tau=\pm\frac{1}{2},~s\in\sqrt{-1}\R)$, $\cC_s^{\tau}~(\tau=\pm\frac{1}{2},~0<s<\frac{1}{2})$ or $\cD_{n_1}^-$,
\item $m<0$ and $\pi_1=\cP_s^{\tau}~(\tau=\pm\frac{1}{2},~s\in\sqrt{-1}\R)$, $\cC_s^{\tau}~(\tau=\pm\frac{1}{2},~0<s<\frac{1}{2})$ or $\cD_{n_1}^+$
\end{itemize}
for the multiplicity one $K$-types $\tl$ of $\pi$. 
The following examples appear in the Fourier-Jacobi expansion of cusp forms generating $\pi$ whose weights are given by the contragredients of the minimal $K$-types just mentioned~(cf.~Corollary \ref{FJ-exp-fourcases} (4)).
\subsection*{(IV-1)~The case of $\pi_1=\cP_s^{\tau}$}
As in the case of (II-1) $\cS_{\pi_1}(\widetilde{SL}_2(\Z);\Phi_m)$ is the space of $\Phi_m$-valued Maass cusp forms of weight $\tau\in\{\pm\dfrac{1}{2}\}$.\\
(i)~Suppose first that $m>0$ and choose $\tau_{(0,-1)}$~(respectively~$\tau_{(0,0)}$) as a minimal $K$-type of $\pi$ when $\pi$ is odd~(respectively~even). We take $(j_0,k_0)=(\frac{1}{2},\delta)$ and then $l(j_0,k_0)=-\displaystyle\frac{1}{2}$. For this case we see that $\tau$ should be $-\dfrac{1}{2}$. 
For each $i$ with $1\le i\le m(\cP_s^{-1/2})$ we have introduced $\sum_{1\le\alpha\le 2m}f_{\alpha}^{(i)}\theta_{\alpha}\in\cS_{\cP_s^{-1/2}}(SL_2(\Z);\Phi_m)$ in (II-1). 

For each $i$ with $1\le i\le m(\cP_s^{-1/2})$ we see that $c_{\frac{1}{2},\delta}^{(\cP_s^{-1/2})}(a_J)\phi_{\cP_s^{\tau}}^{(i)}(w_{-\frac{1}{2}}\otimes h_0)(n_Jb)$
\[
=e^{2\pi ma_1^2}G_{4,0}^{3,4}\left(4\pi m a_1^2\left|
\begin{array}{c}
\dfrac{s+\delta+\frac{5}{2}}{2},\dfrac{-s+\delta+\frac{5}{2}}{2},\dfrac{3+\delta}{2}\\
\dfrac{z_1+2+\delta}{2},\dfrac{-z_1+2+\delta}{2},\dfrac{z_2+2}{2},\dfrac{-z_2+2}{2}
\end{array}\right.\right)\sum_{1\le \alpha\le 2m}f_{\alpha}^{(i)}(b)\theta_{\alpha}(\omega_m(b)h_0)(n_J)
\]
for $(n_J,b)\in N_J\times B$.

As in (II-1) let
\begin{align*}
&\phi_{m,\alpha}^{(i)}(\cP_s^{-1/2})(n_Jb):=\\
&\sum_{
\begin{subarray}{c}
t\in\Z\\
l\in\frac{1}{4m}\Z\setminus\{0\}
\end{subarray}}c_{\alpha}^{(i)}(l)y^{\frac{1}{4}}e^{-2\pi my(t+\frac{\alpha}{2m})^2}\e(m(u_0^2z)+(2tm+\alpha)u_0z)W_{-{\rm sign}(l)\frac{1}{4},s}(4\pi|l|y)\e(\Tr(T_{t,l}X)
\end{align*} 
for $i$ with $1\le i\le m(\cP_s^{-1/2})$. 
We then have 
\begin{align*}
&c_{\frac{1}{2},\delta}^{(\cP_s^{-1/2})}(a_J)\phi_{\cP_s^{-1/2}}^{(i)}(w_{-\frac{1}{2}}\otimes h_0)(n_Jb)=\\
&e^{2\pi ma_1^2}G_{4,0}^{3,4}\left(4\pi m a_1^2\left|
\begin{array}{c}
\dfrac{s+\delta+\frac{5}{2}}{2},\dfrac{-s+\delta+\frac{5}{2}}{2},\dfrac{3+\delta}{2}\\
\dfrac{z_1+\delta+2}{2},\dfrac{-z_1+\delta+2}{2},\dfrac{z_2+2}{2},\dfrac{-z_2+2}{2}
\end{array}\right.\right)\sum_{1\le \alpha\le 2m}\phi_{m,\alpha}^{(i)}(\cP_{s}^{-1/2})(n_Jb),
\end{align*}
where $\sum_{1\le \alpha\le 2m}\phi_{m,\alpha}^{(i)}(\cP_{s}^{-1/2})(n_Jb)$ is a Maass Jacobi cusp form.\\
(ii)~Suppose next that $m<0$ and choose $\tau_{(1,0)}$~(respectively~$\tau_{(0,0)}$) as a minimal $K$-type  of $\pi$ when $\pi$ is odd~(respectively~even). 
We take $(j_0,k_0)=(-\dfrac{1}{2},0)$ and then $l(j_0,k_0)=\dfrac{1}{2}$. We see that $\tau$ should be $\dfrac{1}{2}$. 
For each $i$ with $1\le i\le m(\cP_s^{1/2})$ let $\sum_{1\le \alpha\le 2|m|}f_{\alpha}^{(i)}\theta_{\alpha}\in\cS_{\cP_s^{1/2}}(SL_2(\Z),\Phi_m)$. From \cite[Section 2]{Sar} we know that each cusp form $f_{\alpha}^{(i)}$ has a Fourier expansion
\[
f_{\alpha}^{(i)}(b)=\sum_{l\not=0,~l\in\frac{1}{4m}\Z}c_{\alpha}^{(i)}(l)W_{{\rm sign}(l)\frac{1}{4},s}(4\pi|l|y)\e(lx)
\]
with Fourier coefficients $\{c_{\alpha}^{(i)}(l)\}$, 
where see \cite[Chapter 16]{W-W} and Theorem \ref{LargeDSWhittaker} for $W_{\kappa,\mu}$. 

For each $i$ with $1\le i\le m(\cP_s^{1/2})$ we see that $c_{-\frac{1}{2},0}^{(\cP_s^{1/2})}(a_J)\phi_{\cP_s^{\tau}}^{(i)}(w_{\frac{1}{2}}\otimes h_0)(n_Jb)=$
\[
e^{-2\pi ma_1^2}G_{4,0}^{3,4}\left(-4\pi m a_1^2\left|
\begin{array}{c}
\dfrac{s+\delta+\frac{5}{2}}{2},\dfrac{-s+\delta+\frac{5}{2}}{2},\dfrac{3+\delta}{2}\\
\dfrac{\tilde{z}+2}{2},\dfrac{-\tilde{z}+2}{2},\dfrac{\tilde{z}'+\delta+2}{2},\dfrac{-\tilde{z}'+\delta+2}{2}
\end{array}\right.\right)\sum_{1\le \alpha\le 2m}f_{\alpha}^{(i)}(b)\theta_{\alpha}(\omega_m(b)h_0)(n_J)
\]
for $(n_J,b)\in N_J\times B$. 
Now let 
\[
\phi_{m,\alpha}^{(i)}(\cP_s^{1/2})(n_Jb):=f_{\alpha}^{(i)}(b)\theta_{\alpha}(\omega_m(b)h_0)(n_J),
\]
which coincides with
\[
\sum_{
\begin{subarray}{c}
t\in\Z\\
l\in\frac{1}{4m}\Z\setminus\{0\}
\end{subarray}}c_{\alpha}^{(i)}(l)y^{\frac{1}{4}}e^{2\pi my(t+\frac{\alpha}{2m})^2}\e(m(u_0^2\bar{z})+(2tm+\alpha)u_0\bar{z})W_{{\rm sign}(l)\frac{1}{4},s}(4\pi|l|y)\e(\Tr(T_{t,l}X)
\]
for $i$ with $1\le i\le m(\cP_s^{1/2})$. 
We then have 
\begin{align*}
&c_{-\frac{1}{2},0}^{(\cP_s^{1/2})}(a_J)\phi_{\cP_s^{1/2}}^{(i)}(w_{\frac{1}{2}}\otimes h_0)(n_Jb)=\\
&e^{-2\pi ma_1^2}G_{4,0}^{3,4}\left(-4\pi m a_1^2\left|
\begin{array}{c}
\dfrac{s+\delta+\frac{5}{2}}{2},\dfrac{-s+\delta+\frac{5}{2}}{2},\dfrac{3+\delta}{2}\\
\dfrac{\tilde{z}+2}{2},\dfrac{-\tilde{z}+2}{2},\dfrac{\tilde{z}'+\delta+2}{2},\dfrac{-\tilde{z}'+\delta+2}{2}
\end{array}\right.\right)\sum_{1\le \alpha\le 2m}\phi_{m,\alpha}^{(i)}(\cP_{s}^{1/2})(n_Jb).
\end{align*}
The form $\sum_{1\le \alpha\le 2m}\phi_{m,\alpha}^{(i)}(\cP_{s}^{1/2})(n_Jb)$ is a Maass Jacobi cusp form.
\subsection*{(IV-2) The case of $\pi_1=\cD_{n_1}^-$ and $m>0$}
As in (II-2) $\cS_{\pi_1}(\widetilde{SL}_2(\Z),\Phi_m)$ is the space of $\Phi_m$-valued anti-holomorphic cusp forms of half  integral weight $-n_1$. Choose $\tau_{(1,0)}$~(respectively~$\tau_{(0,0)}$) as a minimal $K$-type of $\pi$ when $\pi$ is odd~(respectively~even). 

Suppose that $n_1>\displaystyle\frac{3}{2}-\delta$. Let us take $(j_0,k_0)=(n_1+\delta,0)$, and then  $l(j_0-\delta,k_0)=-n_1$, where we use the notation $j_0$ following Theorems \ref{explicit-FJ-P-I} and \ref{explicit-FJ-P-II}. 
For each $i$ with $1\le i\le m(\cD_{n_1}^-)$ we see that $c_{n_1,0}^{(\cD_{n_1}^-)}(a_J)\phi_{\cD_{n_1}^-}^{(i)}(w_{-n_1}\otimes h_{n_1-\frac{1}{2}})(n_Jb)=$
\[
e^{2\pi ma_1^2}G_{4,0}^{3,4}\left(4\pi m a_1^2\left|
\begin{array}{c}
\dfrac{n_1+\frac{5}{2}}{2},\dfrac{n_1+\frac{3}{2}}{2},\dfrac{3+\delta}{2}\\
\dfrac{\tilde{z}+2}{2},\dfrac{-\tilde{z}+2}{2},\dfrac{\tilde{z}'+\delta+2}{2},\dfrac{-\tilde{z}'+\delta+2}{2}
\end{array}\right.\right)
\sum_{1\le \alpha\le 2m}f_{\alpha}^{(i)}(b)\theta_{\alpha}(\omega_m(b)h_{n_1-\frac{1}{2}})(n_J)
\]
for $(n_J,b)\in N_J\times B$, where $\sum_{1\le\alpha\le 2m}f_{\alpha}^{(i)}\theta_{\alpha}\in\cS_{\cD_{n_1}^{-}}(\widetilde{SL}_2(\Z),\Phi_m)$ with anti-holomorphic cusp forms $f_{\alpha}^{(i)}$ with respect to $\widetilde{\Gamma(4m)}$ introduced in (II-2). 
We introduce 
\[
\phi_{m,\alpha}^{(i)}(\cD_{n_1}^-)(n_Jb):=f_{\alpha}^{(i)}(b)\theta_{\alpha}(\omega_m(b)h_{n_1-\frac{1}{2}})(n_J),
\]
which coincides with
\begin{align*}
&\sum_{
\begin{subarray}{c}
t\in\Z\\
l\in\frac{1}{4m}\Z_{<0}
\end{subarray}}c_{\alpha}(-l)y^{\frac{n_1}{2}+\frac{1}{4}}H_{n_1-\frac{1}{2}}(\sqrt{y}(u_0+t+\frac{\alpha}{2m}))e^{-2\pi my(t+\frac{\alpha}{2m})^2}\\
&\times\e(m(u_0^2z)+(2mt+\alpha)u_0z)\exp(2\pi ly)\e(\Tr(T_{t,l}X).
\end{align*}
We then have $c_{n_1,0}^{(\cD_{n_1}^-)}(a_J)\phi_{\cD_{n_1}^-}^{(i)}(w_{-n_1}\otimes\omega_m(b)h_{n_1-\frac{1}{2}})(n_Jb)=$
\[
e^{2\pi ma_1^2}G_{4,0}^{3,4}\left(4\pi m a_1^2\left|
\begin{array}{c}
\dfrac{n_1+\frac{5}{2}}{2},\dfrac{n_1+\frac{3}{2}}{2},\dfrac{3+\delta}{2}\\
\dfrac{\tilde{z}+2}{2},\dfrac{-\tilde{z}+2}{2},\dfrac{\tilde{z}'+\delta+2}{2},\dfrac{-\tilde{z}'+\delta+2}{2}
\end{array}\right.\right)\sum_{1\le \alpha\le 2m}\phi_{m,\alpha}^{(i)}(\cD_{n_1}^-)(n_Jb).
\]
\subsection*{(IV-3)~The case of $\pi_1=\cD_{n_1}^+$ and $m<0$}
For this case $\cS_{\pi_1}(\widetilde{SL}_2(\Z),\Phi_m)$ is the space of $\Phi_m$-valued holomorphic cusp forms of half integral weight $n_1$. Choose $\tau_{(1,0)}$~(respectively~$\tau_{(0,0)}$) as a minimal $K$-type of $\pi$ when $\pi$ is odd~(respectively~even).

Suppose that $n_1>\displaystyle\frac{3}{2}+\delta$. Take $(j_0,k_0)=(-n_1+\delta,\delta)$ and then $l(j_0,k_0)=n_1$. For each $i$ with $1\le i\le m(\cD_{n_1}^+)$ we see that $c_{-n_1+\delta,\delta}^{(\cD_{n_1}^+)}(a_J)\phi_{\cD_{n_1}^+}^{(i)}(w_{n_1}\otimes h_{n_1-\delta-\frac{1}{2}})(n_Jb)=$
\begin{align*}
&e^{-2\pi ma_1^2}G_{4,0}^{3,4}\left(-4\pi m a_1^2\left|
\begin{array}{c}
\dfrac{n_1+\frac{5}{2}}{2},\dfrac{n_1+\frac{3}{2}}{2},\dfrac{3}{2}\\
\dfrac{\tilde{z}+\delta+2}{2},\dfrac{-\tilde{z}+\delta+2}{2},\dfrac{\tilde{z}'+2}{2},\dfrac{-\tilde{z}'+2}{2}
\end{array}\right.\right)\\
&\times\sum_{1\le d\le 2|m|}f_{\alpha}^{(i)}(b)\theta_{\alpha}(\omega_m(b)h_{n_1-\delta-\frac{1}{2}})(n_J)
\end{align*}
for $(n_J,b)\in N_J\times B$, where $\sum_{1\le\alpha\le 2|m|}f_{\alpha}^{(i)}\theta_{\alpha}\in\cS_{\cD_{n_1}^{+}}(\widetilde{SL}_2(\Z),\Phi_m)$ with holomorphic cusp forms $f_{\alpha}^{(i)}$ with respect to $\widetilde{\Gamma(4m)}$ . Each $f_{\alpha}^{(i)}$ has the Fourier expansion as in (II-3). 
We then introduce 
\[
\phi_{m,\alpha}^{(i)}(\cD_{n_1}^+)(n_Jb):=f_{\alpha}^{(i)}(b)\theta_{\alpha}(\omega_m(b)h_{n_1-\delta-\frac{1}{2}})(n_J),
\]
which can be written as 
\begin{align*}
\sum_{
\begin{subarray}{c}
t\in\Z\\
l\in\frac{1}{4|m|}\Z_{>0}
\end{subarray}}&c_{\alpha}(l)y^{\frac{n_1}{2}+\frac{1}{4}}H_{n_1-\delta-\frac{1}{2}}(\sqrt{y}(u_0+t+\frac{\alpha}{2m}))e^{2\pi my(t+\frac{\alpha}{2m})^2}\\
&\times\e(m(u_0^2\bar{z})+(2mt+\alpha)u_0\bar{z})\exp(-2\pi ly)\e(\Tr(T_{t,l}X).
\end{align*}
We then have $c_{-n_1+\delta,\delta}^{(\cD_{n_1}^+)}(a_J)\phi_{\cD_{n_1}^+}^{(i)}(w_{n_1}\otimes h_{n_1-\delta-\frac{1}{2}})(n_Jb)=$
\[
e^{-2\pi ma_1^2}G_{4,0}^{3,4}\left(-4\pi m a_1^2\left|
\begin{array}{c}
\dfrac{n_1+\frac{5}{2}}{2},\dfrac{n_1+\frac{3}{2}}{2},\dfrac{3}{2}\\
\dfrac{\tilde{z}+\delta+2}{2},\dfrac{-\tilde{z}+\delta+2}{2},\dfrac{\tilde{z}'+2}{2},\dfrac{-\tilde{z}'+2}{2}
\end{array}\right.\right)\sum_{1\le \alpha\le 2|m|}\phi_{m,\alpha}^{(i)}(\cD_{n_1}^+)(n_Jb).
\]
\newpage
\section{Concluding remarks}
To conclude this paper we make a couple of remarks toward possible works in future.
\begin{enumerate}
\item As the discrete subgroup to define cusp forms on $Sp(2,\R)$ we have chosen the Siegel modular group $Sp(2,\Z)$. One naturally suggests the study for the cases of other discrete subgroups such as congruence subgroups of $Sp(2,\Z)$. We can say that it would be more or less possible. However, we cannot always say that the Fourier-Jacobi expansions for the cases of other discrete subgroups are deduced in a straightforward manner, following the $Sp(2,\Z)$-case.
 
For instance we use the left invariance with respect to $\xi\in Sp(2,\Z)$ to show Lemma \ref{Whittaker-F_m}, which is necessary to describe the $F_m^c$-term of the Fourier-Jacobi expansion for non-zero integers $m$. 
For discrete subgroups of $Sp(2,\R)$ without $\xi$ we need some modification of the argument. 
Another possible difficulty is that we will have to consider the representation theoretic Eichler-Zagier correspondence~(cf.~Theorem \ref{Eichler-ZagierCorresp}) for the case of a higher level, e.g.  if we study the Fourier-Jacobi expansion for the case of a congruence subgroup of a higher level. 
There are results on the Eichler-Zagier correspondences of higher level cases~(for instance see \cite{Ma-Ra}). 
In this paper we have not tried the representation theoretic reformulation of the correspondence for such cases. 
\item A natural question is whether we can consider the Fourier-Jacobi expansion for the groups other than $Sp(2,\R)$. Under suitable working assumptions we would be able to discuss the Fourier-Jacobi expansion in a more general setting. However, as is remarked already in the introduction, there seems no study of the Fourier-Jacobi type spherical functions other than Hirano's works \cite{Hi-1},~\cite{Hi-2} and \cite{Hi-3}. Only for the case of $Sp(2,\R)$ we can explicitly know the Fourier-Jacobi expansions of cusp forms including non-holomorphic ones. 

In addition, recall that we have to study the Whittaker functions for degenerate characters of the maximal unipotent subgroup to know the Fourier-Jacobi expansion of cusp forms. There seem to be quite a few papers taking up such Whittaker functions. Taku Ishii pointed me out that the paper \cite{Hz} by Hashizume studied the Whittaker functions on a general semi-simple real Lie group including the cases of degenerate characters. 
We also remark that Whittaker functions attached to degenerate characters are necessary to understand the Fourier expansion (not specialized to the Fourier-Jacobi one) for non-cuspidal automorphic forms, which we do not deal with in this paper.
\item It is possible to discuss the Fourier-Jacobi expansion in the adelic setting. Berndt-Schmidt \cite[Corollary 7.3.6]{Be-Sc} shows that the total space of adelic Jacobi cusp forms decomposes discretely into a sum of irreducible pieces with finite multiplicities. Berndt-Schmidt \cite[Theorem 7.3.3]{Be-Sc} can be viewed as the generalized Eichler-Zagier correspondence between the total spaces of adelic Jacobi cusp forms and $L^2(\bA)$-valued cusp forms on an adelic metaplectic group, where $L^2(\bA)$ denotes the  $L^2$-space on the adele ring $\bA$ of $\Q$. For this see the argument around the equation (6.8) in \cite{Ps-2}. 
The adelic treatment of Jacobi forms by Berndt-Schmidt \cite{Be-Sc} can be of great use for the adelic Fourier-Jacobi expansion, collaborating with the notion of a global Whittaker model and a global Fourier-Jacobi model. We also refer to Ikeda \cite{Ik-1} for an adelic Fourier-Jacobi expansion of Einsenstein series, which has an important application to a refinement of the regularized Siegel-Weil formula \cite{Kud-Ra} by Kudla-Rallis.

However, with the ingredients above we know the Fourier-Jacobi expansion only in an abstract manner. 
We are interested in an explicit description of the Fourier-Jacobi expansion in terms of spherical functions on the real group of $Sp(2,\R)$. We have thus followed the non-adelic formulation in this paper. 
\end{enumerate}

\newpage
\noindent
Hiro-aki Narita\\
Department of Mathematics\\
Faculty of Science and Engineering\\
Waseda University\\
3-4-1 Okubo, Shinjuku-ku, Tokyo 169-8555\\
JAPAN\\
E-mail:~hnarita@waseda.jp
\end{document}